\documentclass[12pt]{amsart}

\usepackage[utf8]{inputenc}
\usepackage[english]{babel}
\usepackage{amsmath,amssymb,amsfonts,amsthm}
\usepackage{graphicx}
\usepackage{hyperref}
 \usepackage[draft]{fixme}
 %\usepackage{todonotes}
%\usepackage{tikz}
%\usetikzlibrary{arrows}
\newcounter{thm}

\numberwithin{thm}{section}

\newtheorem{theorem}[thm]{Theorem}
\newtheorem*{theorem*}{Theorem}
\newtheorem{lemma}[thm]{Lemma}
\newtheorem{proposition}[thm]{Proposition}
\newtheorem*{conjecture*}{Conjecture}
\newtheorem{corollary}[thm]{Corollary}

\newtheorem{remark}[thm]{Remark}
\theoremstyle{definition}
\newtheorem{definition}[thm]{Definition}

\numberwithin{equation}{section}

\newcommand{\diam}{\operatorname{diam}}

\renewcommand{\Re}{\mathop{\mathrm{Re}}}
\renewcommand{\Im}{\mathop{\mathrm{Im}}}
\newcommand{\eps}{\varepsilon}
\newcommand{\id}{\, id \,}

\newcommand{\bbC}{\mathbb C}
\newcommand{\CC}{\mathbb C}
\newcommand{\bbR}{\mathbb R}
\newcommand{\RR}{\mathbb R}
\newcommand{\bbQ}{\mathbb Q}

\newcommand{\bbN}{\mathbb N}

\newcommand{\bbZ}{\mathbb Z}
\newcommand{\ZZ}{\mathbb Z}

\newcommand{\cD}{\mathcal D}
\newcommand{\cI}{\mathcal I}
\newcommand{\cH}{\mathcal H}
\newcommand{\cR}{\mathcal R}
\newcommand{\cU}{\mathcal U}
\newcommand{\cV}{\mathcal V}

\newcommand{\cP}{\mathcal P}
\newcommand{\rot}{\mathrm{rot}\,}
\newcommand{\dist}{\mathrm{dist}\,}
\newcommand{\dis}{\mathrm{dis}\,}
\renewcommand{\mod}{\operatorname{mod}}
\newcommand{\rdif}{\cR_{\text{dif}}}
\newcommand{\cren}{\cR_{\text{cyl}}}
\newcommand{\crenn}{\cR_{\text{cyl},n}}
\newcommand{\crenN}{\cR_{\text{cyl},N}}
\newcommand{\deps}{\cD_\eps}
\newcommand{\dcreps}{\cD^{\text{cr}}_\eps}
\newcommand{\depsr}{\cD_\eps^\RR}
\newcommand{\dcrepsr}{\cD^{\text{cr},\RR}_\eps}
\newcommand{\tl}{\tilde}
\newcommand{\Ker}{\mathrm{Ker}\,}

\title[Renormalization and Arnold tongues]{Renormalization of circle maps and smoothness of Arnold tongues}
\author{N. Goncharuk and M. Yampolsky}
\begin{document}
\maketitle
\begin{abstract}
 We study the global behavior of the renormalization operator on a specially constructed Banach manifold that has cubic critical circle maps on its boundary and circle diffeomorphisms in its interior. As an application, we prove results on smoothness of irrational Arnold tongues.
\end{abstract}

\section{Introduction}
Arnold tongues are one of the most familiar images in one-dimensional dynamics (Figure~\ref{fig-tongues}).
Consider the two real parameter family of standard (or Arnold) maps
$$f_{a,b}(z)=z+a+\frac{b}{2\pi}\sin 2\pi z \mod\bbZ,\; a,b\in[0,1].$$
\begin{figure}[h!]
  \centerline{\includegraphics[width=0.4\textwidth]{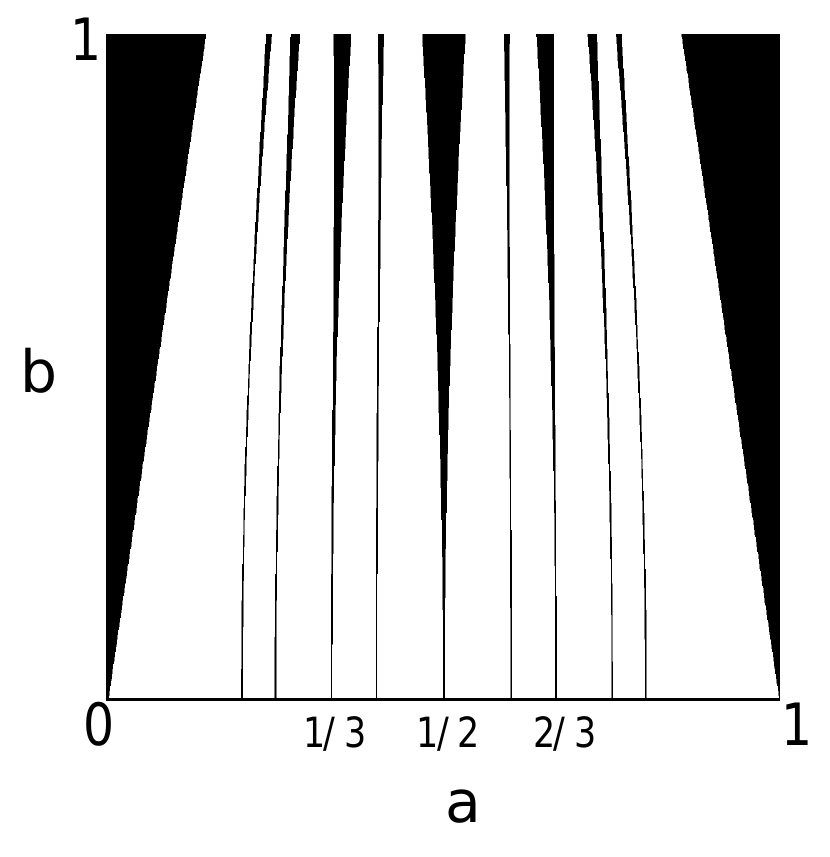}}
  \caption{\label{fig-tongues}Arnold tongues.}
  \end{figure}
They are analytic diffeomorphisms of the circle $\RR/\ZZ$ for $b\in[0,1)$, and analytic homeomorphisms with a single cubic critical point (critical circle maps) for $b=1$. The rotation number $\rot (f_{a,b})$ is a non-decreassing function of $a$. For each fixed $b>0$, the graph of the  function
  $$a\mapsto\rot(f_{a,b})$$
  is a ``devil's staircase'': a continuous non-decreasing curve with flat ``steps'' at rational heights. By definition, Arnold $\alpha$-tongue is the set of parameters $(a,b)$ which corresponds to a rotation number $\alpha$. For a rational $\alpha=p/q$, this set is bounded by two curves (graphs over the $b$-axis) which correspond to the algebraic condition of having a periodic orbit with combinatorial rotation number $p/q$ and the unit multiplier. The boundaries of a rational tongue are thus algebraic. They meet at the point $(p/q,0)$ corresponding to the rigid rotation by $p/q$, cutting out a sharp tongue-looking shape. For an irrational $\alpha$, the tongue is a curve -- a continuous graph over the second coordinate. The  question of smoothness of irrational tongues is deep and fundamental; it is the central theme of this paper.

In 2001, L.~Slammert \cite{Slammert} proved $C^1$-smoothness of \emph{open} irrational Arnold tongues (that is, without the endpoints $b=1$ corresponding to critical circle maps). His argument rests on the theorem of Douady and Yoccoz  that for a circle diffeomorphism $f$ there exists a unique  {\it (-1)-measure} $\lambda$ which satisfies the invariance law
$$\int\xi(f(x))\frac{1}{f'(x)}d\lambda=\int\xi d\lambda\text{ for a test function }\xi.$$
Slammert used these measures to describe the tangent bundle of an irrational tongue.

 Much stronger results can be obtained by imposing arithmetic conditions on $\alpha$. The  strongest of them is due to E.~Risler \cite{Risler} who showed that open Arnold tongues corresponding to Herman numbers $\alpha$ are analytic curves. The Herman class (defined by Yoccoz \cite{Yoccoz2002})  consists of rotation numbers $\alpha$ such that if $\rot f = \alpha$ for an analytic  circle diffeomorphism $f$, then it is analytically conjugate to the rotation by $\rot f$.
Risler also showed that in a larger Brjuno class of rotation numbers, the curves are locally analytic for sufficiently small values of $b$, and, furthermore, form a foliation by analytic curves over $0\leq b<b_0$ given a Brjuno condition with a uniform rate of convergence (see \cite{Risler} for the details). Using quasiconformal surgery, similar results were later obtained in the standard family in \cite{FaGe}.

%Recall that the rotation number of an orientation-preserving circle homeomorphism $f\colon \bbR/\bbZ\to \bbR/\bbZ$ is defined as follows:
%$$
%\rot f = \lim_{n\to \infty}\frac{F^n(x)}{n} \mod 1,
%$$
%where $F\colon \bbR\to \bbR$ is a lift of $f$ to the real line.

%\begin{definition}
% 
%\end{definition}

  Note that the above results do not address the question of the degree of  smoothness of Arnold tongues at the ends $b=1$, which is quite subtle. Indeed, the above proofs break down when circle maps develop critical points.
  In \cite{LlaveLuque}, De la Llave and Luque
  %considered several analytic families of analytic circle maps that contain critical circle maps. One example is the 
%Arnold's maps
%$$ f_{a,b} (z)= z+a + \frac {b}{2\pi} \sin 2\pi z\operatorname{ mod }\ZZ,$$
%which   are diffeomorphisms of the circle for $b\in[0,1)$, and are homeomorphisms with a single cubic critical point for $b=1$ (critical circle maps).
   performed numerical experiments on the smoothness of Arnold tongues at  $b=1$.
  Based on the results of these experiments, they conjectured that Arnold tongues that correspond to Diophantine rotation numbers are finitely smooth at the ends. In particular, they conjectured that the tongue that corresponds to the golden ratio is $C^2$-smooth at the endpoint. De la Llave and Luque suggested an explanation of this fact that involves the behaviour of a conjectural renormalization operator on a neighborhood of critical circle maps. 

In the same way as for the standard family, given any parametric family $f_\mu$, $\mu\in\bbR^n$ of circle homeomorphisms, the Arnold $\alpha$-tongue can be defined as the set of parameters $\mu $ for which $\rot(f_\mu)=\alpha$. The results of Slammert and Risler, as well as the conjectures of De la Llave and Luque suitably translate into this general setting.

Recently, in \cite{GY}, we developed a new approach to results of \cite{Risler} by constructing an analytic renormalization operator for which Brjuno rotations form a hyperbolic invariant set with a codimension-one stable foliation. Furthermore, the analytic stable submanifolds coincide with analytic conjugacy classes of rotations. For an analytic family of diffeomorphisms which crosses an $\alpha$-leaf of this foliation transversally, the Arnold $\alpha$-tongue will be an analytic codimension-one surface, implying the above quoted results of Risler. In the present paper, we define a novel renormalization framework, which combines the renormalization of diffeomorphisms and renormalization of critical circle maps developed by the second author (see \cite{Ya3,Ya4} and references therein).

The new renormalization operator has two hyperbolic horseshoes: the one consisting of rigid rotations with a single unstable direction as in \cite{GY} and another ``nontrivial'' one consisting of critical circle maps with two unstable directions. Arnold tongues in this picture lie in the stable-unstable manifolds, whose smoothness at critical circle maps is quantified in terms of the Lyapunov exponents of the second horseshoe. This is, roughly, what was pictured in \cite{LlaveLuque}. We give  estimates of the expansion factors of renormalization of critical circle maps, and use this to give a lower bound on the smoothness of closed Arnold tongues.

Of course, renormalization of diffeomorphisms we defined in \cite{GY} cannot be directly extended to maps with critical points. The method we use to put the two horseshoes under one roof is new and will likely be useful in other contexts. Another notable step in our construction is extension of the results on existence, uniqueness, and general properties of (-1)-measures to maps with critical points. This is a necessary part of our proofs, and also allows us to extend Slammert's result to closed, rather than open, Arnold $\alpha$-tongues for {\it all} irrational $\alpha$. But it is also of an independent interest: (-1)-measures are a useful technical tool in the study of dynamics, but also provide a new description of the stable tangent bundle of renormalization. In \S~\ref{sec-unifhyp} we use (-1)-measures to give a new proof of renormalization expansion, with explicit bounds.
Another useful tool developed in this paper is a theorem on the smoothness of stable-unstable manifolds in a general Banach space setting, which has been previously missing in the literature.

%  In this paper we justify this explanation, and give a lower bound $k>1$ on the smoothness of Arnold tongues in terms of the Lyapunov exponents of a renormalization operator.
  
%%  \begin{remark}
%%    \label{rem-slammert}
%%    The $C^1$-smoothness of \emph{open} irrational Arnold tongues (that is, without the endpoints corresponding to critical circle maps) was established by L.~Slammert in \cite{Slammert}. It turns out that the same technique can be used to show $C^1$ smoothness of closed Arnold tongues, as we demonstrate in \S\ref{sec-measure} (see Corollary~\ref{cor:C1}). Our proof of this weaker result does not use any assumptions on Lyapunov exponents of the action of renormalization.
%%    Instead, we derive it from a theorem about invariant (-1)-measures of critical circle maps, which is interesting in its own right. 
  %%%  \end{remark}

Let us proceed with formulating our main results. 
Let $\deps$ be the set of analytic maps $f\colon \bbC/\bbZ \to \bbC/\bbZ$ that have bounded analytic continuations to the strip of width $\eps$ around $\bbR/\bbZ$. Equipped with the sup-norm in this strip, $\deps$ is a complex Banach manifold. Its real slice $\depsr$ consists of circle-preserving maps. Let $\dcreps \subset \deps$ be the Banach submanifold of analytic maps  $f\colon \bbC/\bbZ \to \bbC/\bbZ$  with $f'(0)=0$, $f''(0)=0$, $f'''(0)> 0$, that extend to the strip of width $\eps$ around $\bbR/\bbZ$. Its real slice $\dcrepsr=\dcreps\cap \depsr$ consists of {\it critical circle maps} -- analytic circle homeomorphisms with a single cubic critical point at the origin. 

In \cite{Ya3}, the second author constructed the cylinder renormalization operator $\mathcal R_{cyl}$ on an open neighborhood of $\dcrepsr$ in $\dcreps$ for sufficiently large $\eps$, and showed that it is hyperbolic, with one unstable direction, on an invariant horseshoe-like set $\mathcal I$. This result is formulated below, see Theorem \ref{th-horseshoe}.

In \cite{GY}, we defined a renormalization operator $\rdif$ on a neighborhood of rotations in  the space $\deps$ for sufficiently large $\eps$ (the space of analytic diffeomorphisms of an annulus), and proved its hyperbolicity at Brjuno rotations, with a single unstable direction corresponding to the rotation angle. The construction was motivated by Risler's result \cite{Risler}, and by  Yoccoz's result \cite{Yoc} on linearizations of circle diffeomorphisms.

In both cases, the renormalization of a circle map was defined to be the first return map to  a fundamental domain $[0, f^n(0)]$ for a suitably chosen $n$, in a certain analytic chart on $[0, f^n(0)] / f^n \sim  \bbR/\bbZ$. The key to either construction lied in the choice of a specific analytic chart. 
Let us generally say that a smooth mapping $\cR$ of $\depsr$ to itself is a renormalization operator if for any circle map $f$ in its domain there exists $N$ such that the circle map $\cR f$ is conjugate, via an analytic circle map, to a first-return map under $f$ to a quotient of the fundamental domain $[0, f^{N}(0)] / f^N$.

\begin{theorem}
\label{th-operator}
For a sufficiently large $\eps$, there exists a Banach manifold  $D$, a codimension-1 submanifold $D_0\subset D$, a projection $p \colon D\to \deps$ (defined on a certain subset of $D$) such that $p(D_0)=\dcreps$, and an operator $\mathcal R \colon D\to D$ such that the following holds:
 \begin{itemize}
  \item $\mathcal R$ is an analytic operator with compact derivative on its domain, and preserves $D_0$;
  \item $\mathcal R$ commutes with the projection: $p \mathcal R = \mathcal R p$ on the respective domains of definition;
  \item $\mathcal R$ contracts on fibers $p^{-1} f$;
  \item The restriction of $p \mathcal R p^{-1}$ to circle homeomorphisms  is a renormalization operator, in the sense described above;
  \item There exists a set $\Lambda \subset D_0$ invariant under $\mathcal R$ such that $p \Lambda = \mathcal I$, $\mathcal R$ is uniformly hyperbolic on a set $\Lambda\subset D_0$, with two-dimensional unstable subspaces at any trajectory in $\Lambda$. These unstable spaces intersect $D_0$ on one-dimentional subspaces.
  
 \end{itemize}

\end{theorem}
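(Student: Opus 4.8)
The plan is to realize $D$ as a total space that simultaneously carries the cylinder renormalization $\cren$ near the critical horseshoe $\cI$ and the diffeomorphism renormalization $\rdif$ near rotations, with $p$ recording only the underlying circle map. Concretely, a point of $D$ will consist of a circle map $f$ in a suitable domain of $\deps$ together with auxiliary analytic data $\phi$ encoding a normalization of a renormalization chart for $f$: on the critical locus this datum is canonically the Yampolsky cylinder chart, and the set of such canonically normalized pairs is $D_0$, so that $p(D_0)=\dcreps$; off the critical locus $\phi$ is allowed to vary in a one-parameter family (roughly, governed by how far $f$ is from being critical, e.g.\ by $f'(0)$ together with a normalization), which is what makes $D_0$ a codimension-one submanifold. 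The operator is $\cR(f,\phi)=(\cR f,\Psi(f,\phi))$, where the first component is the common recipe ``pass to the first-return map to $[0,f^N(0)]$ read in the chart determined by $\phi$'' — which specializes to $\cren$ on a neighborhood of $\dcreps$ and to $\rdif$ near rotations (here the same symbol $\cR$ is used for the induced operator on $p(D)$) — and $\Psi$ is the induced action on chart data. By construction $p\cR=\cR p$, and the bullet asserting that $p\cR p^{-1}$ restricted to real circle homeomorphisms is a renormalization operator is simply the unwinding of this recipe: $p\cR p^{-1}f$ is conjugate, by the analytic circle map implementing the chart $\phi$, to a first-return map of $f$ to $[0,f^N(0)]/f^N$.

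The first bullet is then largely inherited: $\cren$ and $\rdif$ are analytic with compact derivative in their respective regimes — compactness coming, as usual, from renormalization being post-composition with restriction to a strictly larger strip of analyticity — and $\Psi$ is analytic, so $\cR$ is analytic with compact derivative; and $\cR$ preserves $D_0$ because $\cren$ sends critical circle maps to critical circle maps and preserves the canonical normalization. Contraction on the fibers $p^{-1}f$ is arranged directly in the definition of $\Psi$, by ``slaving'' the chart datum to the dynamics so that $\partial_\phi\Psi$ is a fiberwise contraction. For the last bullet, set $\Lambda\subset D_0$ to be the lift of $\cI$, i.e.\ the graph over $\cI$ of the attracting canonical chart datum, so that $p\Lambda=\cI$ and $\Lambda$ is $\cR$-invariant because $\cI$ is $\cren$-invariant. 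Restricted to $D_0$, $\cR$ is conjugate to $\cren$ on $\cI$, which by Theorem~\ref{th-horseshoe} is uniformly hyperbolic with one-dimensional unstable spaces, giving the hyperbolic splitting of $TD_0$ along $\Lambda$. It remains to produce the second unstable direction, transverse to $D_0$: deforming $f$ off the critical locus turns it into a diffeomorphism, and the corresponding transverse vector should play the role of the ``rotation-number'' unstable direction of $\rdif$. One shows this direction is uniformly expanded while the stable direction is uniformly contracted; together with compactness of the derivative of $\cR$ (so that the rest of the spectrum lies in a disk of radius $<1$), this yields uniform hyperbolicity on $\Lambda$ with two-dimensional unstable subspaces, and since the $\cren$-unstable direction lies in $TD_0$ whereas the transverse one does not, the unstable subspace meets $TD_0$ in a one-dimensional subspace.

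The main obstacle is twofold: constructing $D$ itself, and the transverse expansion estimate. Gluing the $\rdif$- and $\cren$-charts into one Banach manifold requires that the two renormalization schemes, which use genuinely different uniformizing coordinates, agree — after the auxiliary change of variables encoded by $\phi$ — on the overlap region; checking that the resulting transition maps are analytic and that the gluing preserves the compactness of the derivative is delicate. Even more delicate is the transverse expansion, since the estimates of \cite{GY} are proved only near rotations and break down at critical circle maps. Here I would use the theory of $(-1)$-measures extended to maps with critical points, developed elsewhere in the paper: the stable subspace at a point of $\Lambda$ is characterized as the kernel of the pairing against the $(-1)$-measure $\lambda$, and a direct evaluation of this pairing along the transverse vector — combined with the general theorem on smoothness of stable--unstable manifolds in Banach spaces — shows it is nonzero and grows under iteration, which is exactly the required expansion.
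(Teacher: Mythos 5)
Your high-level architecture — lift $\cren$ near the critical horseshoe $\cI$ to a Banach manifold $D$ containing a codimension-one slice $D_0$ over $\dcreps$, with a transverse direction recording departure from criticality, and recover hyperbolicity on $\Lambda$ from hyperbolicity of $\cren$ plus a transverse expansion estimate — matches the paper's strategy. But the two things you flag as "the main obstacle," namely what $D$ actually is and where the transverse expansion comes from, are precisely where the paper's construction differs from yours, and your proposed resolutions of both are not correct.

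First, the paper does \emph{not} glue $\rdif$ and $\cren$ into one operator. There is no overlap region, no chart-transition compatibility to check, and $\rdif$ is not used at all in the construction of $D$ or $\cR$ in Theorem~\ref{th-operator}. (Results about $\rdif$ from \cite{GY} reappear only later, in \S~\ref{sec-derivatives}, to estimate the stable leaves far from $D_0$; that is logically downstream of Theorem~\ref{th-operator}.) The manifold $D$ is a space of triples $(F,H_a,G)$ with $H_a(z)=z^3-az$ the explicit cubic unfolding, $D_0=\{a=0\}$, and $p(F,H_a,G)=F\circ\pi_{a,G}\circ H_a\circ G$, where $\pi_{a,G}$ is a quasiconformal correction making $\pi_{a,G}\circ H_a\circ G$ commute with the unit translation (Lemma~\ref{lem-pi-exist}). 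A point of $D$ is thus a \emph{factorization} of a map near-critical in a precise sense, not a pair (map, chart datum). In particular the fibers $p^{-1}(f)$ are infinite-dimensional: they parametrize different factorizations, and the fiber contraction is proved (Lemma~\ref{lem-fibers}) from the smoothing effect of conjugating $G$ by the perturbed Fatou coordinate, not by engineering a "slaved" chart datum. Your identification of the transverse coordinate with "$f'(0)$ together with a normalization" does not capture the structure: for a generic map $f$ near a cubic critical one there are two quadratic critical points, and the parameter $a$ in $H_a$ tracks that unfolding canonically, which is what makes everything analytic across $D_0$.

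Second, the transverse expansion is not obtained via $(-1)$-measures or via Theorem~\ref{th-Banach-op}. Theorem~\ref{th-Banach-op} is a smoothness theorem for already-known invariant manifolds and plays no role in establishing the hyperbolicity in Theorem~\ref{th-operator}. The mechanism is elementary: by definition of $\cR$ on triples, $\tilde a=(\Psi'(0))^2 a$, and $\Psi'(0)$ is made large by Lemma~\ref{lem-Psi-estim}; so $d\cR$ has a uniformly expanding factor on $\partial/\partial a$. Lemma~\ref{lem-distributions} then builds invariant unstable cone fields for $\mathcal R$ in $TD$ spanned by $\eta_{\hat f}$ (lifted from the unstable direction of $\cren$) and $\partial/\partial a$, and produces the two-dimensional unstable bundle meeting $TD_0$ in $\eta_{\hat f}$ by standard cone arguments. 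The $(-1)$-measures are used elsewhere — to characterize the stable codimension-one condition in Lemma~\ref{lem-weakstable} and in \S~\ref{sec-unifhyp} for explicit expansion bounds — but they are not what makes the transverse direction expand in the proof of Theorem~\ref{th-operator}. As written, your proposal leaves both of these key steps open, and the route you sketch for each (chart gluing of two incompatible renormalizations, and an expansion estimate via a pairing with $(-1)$-measures plus the Banach-space smoothness theorem) does not lead to a proof.
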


 The Banach manifold $D$ will be the space of triples described in Sec. \ref{sec-triples} below; a similar construction  was used before in \cite{GorYa}.

Using this result, we will prove the following conditional result on the smoothness of Arnold tongues.
\begin{definition}
Let $R$ be a smooth operator in the Banach space. Suppose that for some orbit $g_n = R^n g$, $R$ has an invariant direction field $l_{g_n}\in T_{g_n}D$, that is, $d|_{g_n}R \, l_{g_n} = l_{g_{n+1}}$. We will say that \emph{the maximal (resp. minimal) expansion rate} of $R$ along $l_{g_n}$ is
$$\lambda_{max}=\limsup_{n\to \infty} \sup_{m\ge 0} \sqrt[n]{\|dR^n|_{l_{g_m}}\|}; \qquad \lambda_{min}=\liminf_{n\to \infty} \inf_{m\ge 0} \sqrt[n]{\|dR^n|_{l_{g_m}}\|}.$$
\end{definition}
The definition of the expansion rate is close to the definition of the Lyapunov exponent. Note also that if $g_n$ is a $q$-periodic orbit, then both expansion rates coincide with the root of order $q$ of the modulus of the eigenvalue of $R^q$ that corresponds to the eigenvector contained in $l_{g_0}$.

Let $f_\mu$ be an analytic family of analytic self-maps of $\bbC/\bbZ$ defined  in a neighborhood of $\bbR/\bbZ$ for $\mu_j\in (\mathbb C^n, 0)$.  Suppose that for real $\mu_j$ and $\mu_1<0$, $f_\mu$ is a diffeomorphism of $\bbR/\bbZ$. Suppose that   for $\mu_1=0$, the maps $f_\mu$ are cubic critical circle maps with $0$ as a critical point. Suppose that $f'_{\mu}(0)$ has a zero of order  1 at $\mu_1=0$: when $\mu_1$ encircles zero, $f'_\mu(0)$ makes one turn around zero.
Finally, assume that $\frac {\partial f}{\partial \mu_2}>0$ on the circle whenever $\mu_1=0$.
An example to keep in mind is
the Arnold family, normalized as
\begin{equation}
  \label{eq:arnoldfamily}
  f_{\mu_1, \mu_2} (z)= z+\mu_2 + \left(\mu_1+\frac {1}{2\pi}\right) \sin 2\pi z.
  \end{equation}

\begin{theorem}
\label{th-smoothness}
Let $g\in \mathcal I$ be a cubic critical circle map with $\rot g=\alpha$ where $\alpha$ is of a bounded type. Let $\hat g \in p^{-1}(g)$  belong to $\Lambda$. Suppose that the renormalization operator  $\mathcal R\colon D\to D$ has an invariant direction field $\xi_{\mathcal R^n \hat g}\subset E^u_{\mathcal R^n \hat g}$, such that   $\xi_{\mathcal R^n \hat g}$ is uniformly transversal to $D_0$. Let $\lambda_1$ be the minimal expansion rate along the direction field $E^u_{\mathcal R^k \hat g} \cap D_0$, and let $\lambda_2$ be the maximal expansion rate along $\xi_{\mathcal R^k \hat g}$. Assume $\lambda_1>\lambda_2$.

Then in any $C^\omega$-family $f_{\mu}$,  $\mu = (\mu_1,\dots, \mu_n)\in \bbR^n$,  as above with $\rot f_{0} = \alpha$, the Arnold $\alpha$-tongue  is given by a function $\mu_2=\mu_2(\mu_1, \mu_3, \dots, \mu_n)$ that is at least $C^k$-smooth at $0$, where $k=\lfloor \frac{\log\lambda_1}{\log \lambda_2} \rfloor$.
\end{theorem}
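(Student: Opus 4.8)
The plan is to realize the Arnold $\alpha$-tongue as the transversal intersection of the lifted parameter family with an invariant manifold of the renormalization orbit of $\hat g$ — namely the center-stable manifold that \emph{retains} the slow unstable line $\xi$ and \emph{drops} the fast one — and to transport the regularity of that manifold to the tongue by transversality. First I would lift $f_\mu$, via the projection $p$, to a $C^\omega$ family $\hat f_\mu\in D$; the standing hypotheses on $f_\mu$ (the maps are critical exactly on $\{\mu_1=0\}$, and $f'_\mu(0)$ has a simple zero there) are precisely what makes this lift possible and guarantees that $\hat f_\mu$ meets $D_0$ transversally along $\{\mu_1=0\}$. Since $\rot f_0=\alpha$, the critical circle map $f_0$ and hence $\hat f_0$ lies in $\{\,h\in D:\rot p(h)=\alpha\,\}$ and renormalizes into a fixed neighborhood of the horseshoe orbit $\hat g_n=\mathcal R^n\hat g$ (here $\alpha$ of bounded type keeps this orbit in the compact set $\Lambda$). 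Along that orbit the splitting of Theorem~\ref{th-operator} reads $T_{\hat g_n}D=E^s_{\hat g_n}\oplus(E^u_{\hat g_n}\cap TD_0)\oplus\langle\xi_{\hat g_n}\rangle$; using invariance of $D_0$ and contraction of $\mathcal R$ on $p$-fibers one checks $E^s_{\hat g_n}\subset TD_0$, so $E^u_{\hat g_n}\cap TD_0$ is the single unstable direction of $\mathcal R|_{D_0}$ (the ``rotation-number direction''), with worst-case expansion $\lambda_1$, while $\xi_{\hat g_n}$ is the complementary ``criticality'' direction, with worst-case expansion $\lambda_2<\lambda_1$.

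Next I would apply the Banach-space invariant-manifold theorem with sharp rates (announced in the introduction) to the sequentially hyperbolic orbit $\hat g_n$, treating $\langle\xi_{\hat g_n}\rangle$ as the center. Since $\mathcal R$ has compact derivative and $\lambda_1>\lambda_2$, this produces an $\mathcal R$-invariant codimension-one manifold $W^{cs}$ through the orbit, tangent at $\hat g_n$ to $E^s_{\hat g_n}\oplus\langle\xi_{\hat g_n}\rangle$, of class $C^k$ with $k=\lfloor\log\lambda_1/\log\lambda_2\rfloor$: the only obstructions to $C^k$-regularity are resonances $\lambda_1=\lambda_2^{\,j}$ with $j\le k$, which are excluded by $\lambda_2^{\,k}<\lambda_1$. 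I then claim that locally
$$W^{cs}=\{\,h\in D:\rot p(h)=\alpha\,\}.$$
For ``$\subseteq$'': if $h\in W^{cs}$ then $\mathcal R^n h$ shadows $\hat g_n$ modulo the $\xi$-direction; since $p\mathcal R=\mathcal R p$ and $p\mathcal R p^{-1}$ restricted to circle homeomorphisms is a renormalization operator, the combinatorics of $p(\mathcal R^n h)$ are those of $\hat g_n$, i.e.\ $\rot p(\mathcal R^n h)=G^n(\alpha)$ (Gauss map), which forces $\rot p(h)=\alpha$. For ``$\supseteq$'': inside $D_0$ one has $W^{cs}\cap D_0=W^s_{D_0}(\hat g)$, which by Theorem~\ref{th-horseshoe} is exactly the analytic set of cubic critical circle maps of rotation number $\alpha$; since the normal line to $W^{cs}$ is the rotation-number direction $E^u\cap TD_0$ and not $\xi$, both sets have codimension one and agree to first order along $D_0$, and renormalization-invariance propagates the equality to a neighborhood. (Bounded type is used again: all maps of rotation number $\alpha$ follow the one combinatorial orbit of $\hat g$.)

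Finally I would intersect with the family and invoke the $(-1)$-measures. The hypothesis $\partial f_\mu/\partial\mu_2>0$ on the circle at $\mu_1=0$, together with the extension of Douady--Yoccoz $(-1)$-measures to critical circle maps proved in this paper, gives $\frac{\partial}{\partial\mu_2}\rot(f_\mu)\big|_{\mu=0}\ne 0$, since that derivative is the integral of the positive test function $\partial f/\partial\mu_2$ against a positive $(-1)$-measure. Equivalently, $d\hat f(\partial/\partial\mu_2)|_0$ has a nonzero component along $E^u\cap TD_0$ modulo $TW^{cs}$, so $\hat f_\mu$ is transversal to $W^{cs}$ at $\hat f_0$, with $\partial/\partial\mu_2$ a transversal direction. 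By the preimage theorem, $\{\mu:\rot f_\mu=\alpha\}=\hat f^{-1}(W^{cs})$ (using the displayed identity) is a $C^k$ codimension-one submanifold of $(\RR^n,0)$, and since $\partial/\partial\mu_2$ is transversal to it, the implicit function theorem presents it as a graph $\mu_2=\mu_2(\mu_1,\mu_3,\dots,\mu_n)$ of class $C^k$ near $0$, as asserted. (Along $\mu_1=0$ the graph lies in the analytic leaf $W^s_{D_0}(\hat g)$, so the endpoint of the tongue is a critical circle map of rotation number $\alpha$; the drop to finite smoothness comes from approaching along the diffeomorphism side $\mu_1<0$, where only the $C^k$ manifold $W^{cs}$ is available.)

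The main obstacle I expect is Step~2, specifically the inclusion ``$\supseteq$'': one must rule out that maps of rotation number $\alpha$ leave $W^{cs}$, which amounts to showing that the rotation-number direction is genuinely the fast unstable line $E^u\cap TD_0$ and to controlling the holonomy of the stable foliation in the rotation-number parameter — this is exactly where $\alpha$ of bounded type (single combinatorial orbit, uniform hyperbolicity along it) is indispensable. The remaining technical ingredient, the sharp non-autonomous invariant-manifold theorem in the compact-derivative Banach setting, is supplied by the abstract theorem referred to above.
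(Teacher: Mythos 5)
Your overall picture of the mechanism — a codimension-one invariant manifold along the orbit of $\hat g$ tangent to $E^s\oplus\langle\xi\rangle$, with the rotation-number direction $E^u\cap TD_0$ playing the fast unstable role and the $\partial/\partial\mu_2$ transversality supplied by the positivity of the $(-1)$-measure — is the right one, and your endgame (reduce to a neighborhood of $\Lambda$ via $\cren^n$, lift by $p$, apply the implicit function theorem) agrees with the paper. The reduction to $f_0$ close to $\mathcal I$ and the use of Lemma~\ref{lem-lift} also match.

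The gap is at the very first substantive step. You propose to \emph{construct} a $C^k$ center-stable manifold $W^{cs}$ by invoking a Banach-space invariant-manifold theorem with sharp rates, and then to identify $W^{cs}$ with the set $\{\rot p(\hat f)=\alpha\}$. But the paper explicitly warns that this construction route is unavailable here: the graph-transform / Hadamard--Perron theorems with smoothness from a spectral gap (e.g.\ the results of El Bialy cited in the introduction) require either global nonlinearity estimates or smooth bump functions, neither of which is available in this Banach setting. Theorem~\ref{th-Banach-op} is not an existence theorem — it takes as hypotheses a sequence of manifolds $M_n$ that are already known to exist, to be $C^1$ with uniformly bounded derivative up to $D_0$, and to be $C^k$ with uniform bounds away from $D_0$, and proves only that the $k$-th derivatives extend continuously to $D_0$. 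The existence and these a priori bounds are supplied independently: the manifolds $M_n=\{\rot p(\hat f)=\rot p(\hat g_n)\}\cap B_\delta(\hat g_n)$ are Risler-analytic in $D_-$ (Lemma~\ref{lem-deriv-diff-rot}, \ref{lem-local-bound-deriv}, \ref{lem-qQ}) and form the stable foliation of $\cren$ on $D_0$; the $C^1$ bound near $D_0$ comes from the $(-1)$-measures (the unnumbered lemma in \S~\ref{sec-stableunstable}). Because the paper works directly with the rotation-number level set, the identification step you flag as ``the main obstacle'' — ruling out that maps with rotation number $\alpha$ leave $W^{cs}$, i.e.\ your ``$\supseteq$'' inclusion — simply does not arise: there is no separately constructed $W^{cs}$ to compare with. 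So your difficulty is real, but the cure is not to push harder on the identification; it is to dispense with the abstract construction of $W^{cs}$ altogether and instead prove a boundary-regularity statement for the level set you already have.
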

Since $\mathcal R$ contracts on fibers of $p$ that project to $D_0$, we will see that $\lambda_1,\lambda_2$ only depend on $\alpha$ and not on $g$, $\hat g$. 

\begin{remark}
 The Arnold tongues for $\mu_1<0$ are analytic curves due to  Risler's theorem \cite{Risler,GY}. This result shows that they are at least finitely smooth at $\mu_1=0$. It is an open question whether these curves are analytic up to $\mu_1=0$.
\end{remark}

\begin{remark}
\label{rem-transversality}
 The condition $df/d\mu_2>0$ can be replaced by the condition that the vector field $df/d\mu_2$ is transversal to the surface $\{rot f = \alpha\}$ for $\mu=0$, see Sec. \ref{sec-stableunstable}.
\end{remark}

\begin{remark}
 For a rotation number $\alpha$ with a periodic continued fraction,  the assumptions of this theorem are equivalent to the condition that for some $g\in \Lambda$ with $\rot g = \alpha$, $d \mathcal R^q|_{\hat g}$ has two distinct  unstable eigenvalues, and the one with the larger absolute value corresponds to $E^u_{\hat g}\cap D_0$ (that is, to critical circle maps).
 
 Remark \ref{rem-periodic} shows that this assumption always holds true; however, for some periodic orbits,  $1< \frac{\log \lambda_1}{\log\lambda_2} <2$ and the conclusion of the theorem does not guarantee  smoothness of Arnold tongues higher than $C^1$.  
\end{remark}

In hyperbolic dynamics, it is standard to expect that if a hyperbolic map has a gap $(\lambda_2, \lambda_1)\subset \bbR$ in its spectrum, then the corresponding invariant manifolds are smooth, and the degree of smoothness is at least $\lfloor \log \lambda_1/\log \lambda_2 \rfloor$. This, together with Theorem~\ref{th-operator}, would imply Theorem~\ref{th-smoothness}, since the codimension-1 manifolds $\{\rot f = \text{const}\}$ form a ``weakly stable'' invariant foliation that corresponds to $\lambda_2$ and all stable eigenvalues of $\mathcal R$, and the corresponding spectral gap is $(\lambda_2, \lambda_1)$. In a Banach space setting, such statements were proven in \cite{ElBialy}.
However, \cite{ElBialy} requires either global estimates on the nonlinearity of the operator or the existence of smooth cut-off functions (that are scarce in Banach spaces), so we cannot refer to these results.
We  prove the smoothness directly in Theorem \ref{th-Banach-op}, see \S~\ref{sec-stableunstable}. We do not need to prove the existence of the invariant  manifold; we study the smoothness of the existing invariant manifold. As a result, we only use hyperbolicity of the operator, with no further assumptions.

% The previous lemma immediately implies this theorem at points of the Arnold tongue that are in a small neihborhood of a horseshoe; otherwise, we will have to renormalize our family several times before we apply it.

The inequality for expansion rates required in Theorem \ref{th-smoothness} holds for the golden ratio rotation number. Indeed, the two top unstable eigenvalues of $\mathcal R$ at the period-two periodic orbit that corresponds to the critical map $f$ with the golden ratio rotation number were estimated rigorously numerically in \cite[Theorem 3.2 and p.19]{Mest}. They equal $\gamma_1\approx -2.83$ (corresponding to critical maps) and $\gamma_2\approx 1.66$.
Since $\frac{\log 2.83}{\log 1.66} = 2.05$, this agrees with the numerical results of \cite{LlaveLuque} on $C^2$-smoothness of Arnold tongues that correspond to the golden ratio rotation number.  

%%\includegraphics[width=0.4\textwidth]{renorm-foliation}
%%\hfil
%\includegraphics[width=0.4\textwidth]{renorm-critical}
\begin{figure}[h]
  \caption{\label{fig-foliation}A schematic illustration of the hyperbolic picture for renormalization. The attractor in the space of diffeomorphisms is rigid rotations (left); the stable foliation has codimension $1$. The golden-mean leaf is indicated. It extends to a fixed point in the space of critical circle maps (right); the smoothness of the leaf at the critical boundary is determined by the spectral gap.}
\centerline{\includegraphics[width=0.5\textwidth]{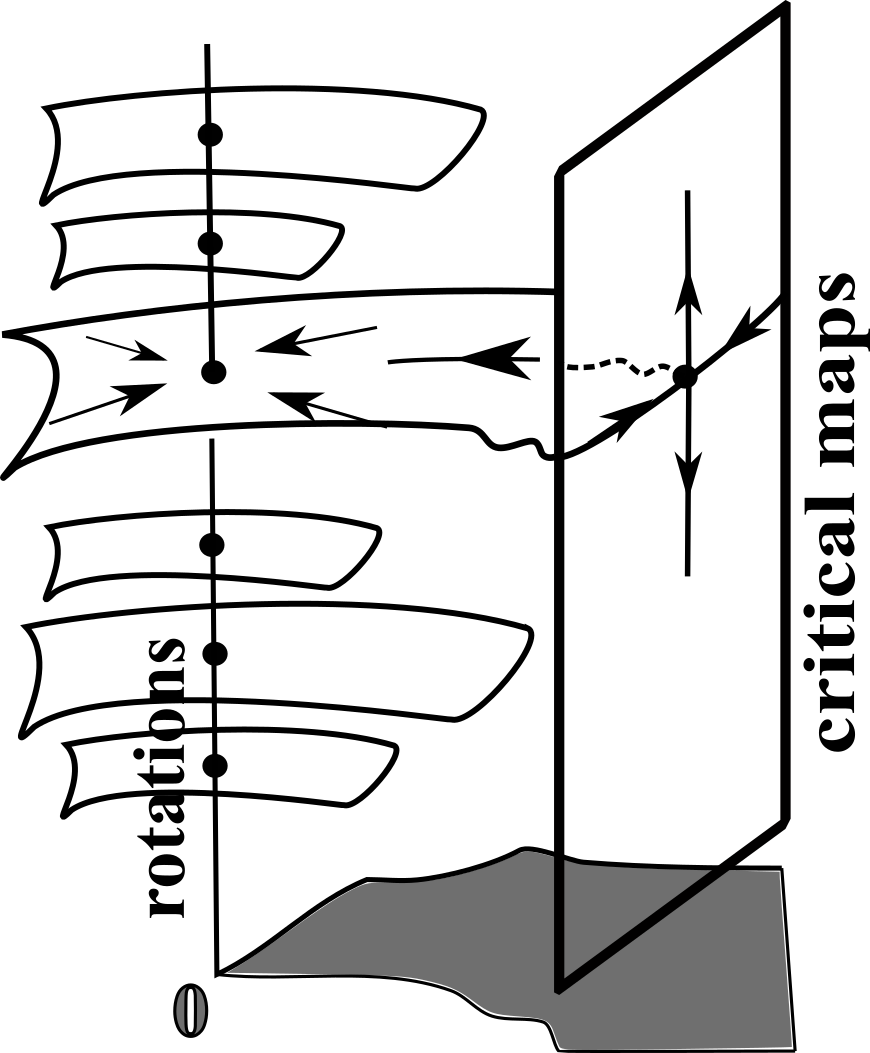}}
\end{figure}

Also, this inequality holds for orbits in $\Lambda$ that correspond to rotation numbers with a sufficiently high type, due to the next result. Thus the inequality holds for the two cases that are believed to be most extreme: high-type rotation numbers (with large denominators of continued fraction convergents), and the golden ratio (with the smallest possible denominators of continued fraction convergents).  It is an open question whether the inequality holds for all Herman numbers.

Consider the Gauss map $G(x)=\{\frac 1 x\}$ and set
$$\alpha_{n}=G^n(\alpha), a_n=[1/\alpha_{n}]$$
when defined.
Numbers $a_n$ are the coefficients of the continued fractional expansion of $\alpha$, and we have $$\alpha = \cfrac{1}{a_0 + \cfrac{1}{a_1+\dots}},$$
which we will abbreviate as 
$$\alpha=[a_0,a_1,\ldots].$$
We also consider the continued fraction convergents
$$\frac{p_n}{q_n}=[a_0,\ldots,a_{n-1}]$$
and write $\alpha \sim \{p_n/q_n\}$ for the correspondence between the number $\alpha$ and its continued fractional convergents.
\begin{proposition}
\label{prop-exponents}
For a real number $\alpha = [a_1,a_2,\dots]$ with $a_k>n$, let $g\in \mathcal I$ have rotation number $\alpha$ and let $\hat g \in p^{-1}g$  belong to $\Lambda$.

Then the minimal expansion rate $\lambda_1$ of the renormalization operator $\mathcal R\colon D\to D$ along the distribution $E^u_{\hat g}\cap D_0$ tends to infinity as $n\to \infty$. Also, $\mathcal R$ has an invariant line field in $E^u_{R^n (\hat g)}$ transversal to $D_0$, and the corresponding maximal expansion rate $\lambda_2 $ remains bounded as $n\to \infty$.
\end{proposition}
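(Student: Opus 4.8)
The plan is to reduce the statement to two essentially independent pieces: (1) a lower bound on the expansion rate $\lambda_1$ along the critical-circle-map direction $E^u_{\hat g}\cap D_0$ that grows with $n$ when all partial quotients $a_k>n$, and (2) a uniform upper bound on the expansion rate $\lambda_2$ along the transversal ``diffeomorphism'' line field, independent of $n$. For (1), I would use the identification, furnished by Theorem~\ref{th-operator}, of the restriction of $\mathcal R$ to $D_0$ (after projecting by $p$) with the cylinder renormalization $\cren$ of critical circle maps on the horseshoe $\cI$ of Theorem~\ref{th-horseshoe}. The key point is that when $\rot g=[a_1,a_2,\dots]$ has a large partial quotient $a_k$, one step of renormalization of the critical circle map involves composing roughly $a_k$ iterates of $f$ near the critical point; the derivative of renormalization in the unstable (parameter-like) direction picks up a factor that scales like the $k$-th return time, which is at least $\prod a_j$. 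Quantitatively, I would invoke the (-1)-measure machinery developed in \S\ref{sec-unifhyp} — the new proof of renormalization expansion with explicit bounds mentioned in the introduction — to write $\|d\cren^n|_{l}\|$ in terms of an integral involving the invariant $(-1)$-measure and the return times, and then extract a bound of the form $\lambda_1\gtrsim (\inf_k a_k)^{c}$ for some $c>0$. Letting $\inf_k a_k>n\to\infty$ gives $\lambda_1\to\infty$.

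For (2), the transversal direction lives, morally, in the space of diffeomorphisms, where $\mathcal R$ restricts (via $p$) to the diffeomorphism renormalization $\rdif$ of \cite{GY}. Here one already knows from \cite{GY} that $\rdif$ is hyperbolic at Brjuno rotations with a single unstable direction corresponding to the rotation angle, and — crucially — that the expansion along that unstable direction is governed by the derivative of the Gauss map, i.e.\ by factors of order $\alpha_k^{-1}\in(a_k,a_k+1)$ at the $k$-th step, contributing an $n$-th root that is comparable to the geometric mean $\left(\prod\alpha_k^{-1}\right)^{1/n}$. Since both $\lambda_1$ and $\lambda_2$ are built from the same return times, naively this would make $\lambda_2$ grow too. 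The resolution, and the reason the transversal line field must be chosen carefully, is that the relevant transversal direction is \emph{not} the full unstable direction of $\rdif$ but its projection to $D$; the existence of an invariant line field $\xi_{\mathcal R^n\hat g}\subset E^u_{\mathcal R^n\hat g}$ transversal to $D_0$ follows from the two-dimensionality of $E^u$ (Theorem~\ref{th-operator}) together with a cone-field argument, and along this particular field the expansion is bounded because the fiberwise contraction of $\mathcal R$ (third bullet of Theorem~\ref{th-operator}) absorbs the large return-time factors, leaving a universal constant. Concretely, I would set up an invariant cone around the would-be transversal direction, show $d\mathcal R$ maps it into itself with a derivative bounded by a constant $C$ independent of $n$ on that cone (using that the only source of large expansion, the critical iterates, is confined to $T D_0$), and conclude $\lambda_2\le C$.

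The main obstacle I anticipate is making the dichotomy between the two directions quantitatively clean: one must show that the large factor of order $\prod a_k$ that appears in one renormalization step acts \emph{only} along $TD_0$ and contributes \emph{nothing} (up to universal constants) to the transversal direction, despite both directions being defined on the same Banach manifold $D$ and $\mathcal R$ not being literally block-diagonal. This is where the geometry of the projection $p$ and the fiber contraction are essential: I would exploit $p\mathcal R=\mathcal R p$ to push the transversal direction down to the diffeomorphism side, use the \cite{GY} estimates there, and then lift back, controlling the correction terms by the fiber-contraction constant. A secondary technical point is ensuring the invariant line field $\xi$ exists globally along the orbit (not just asymptotically) and is uniformly transversal — this should follow from a standard graph-transform/cone argument in $E^u$, using that $\dim E^u=2$ so that transversality to the one-dimensional $E^u\cap D_0$ is an open condition preserved by $d\mathcal R$ once the eigenvalue separation $\lambda_1>\lambda_2$ from part (1) is in force.
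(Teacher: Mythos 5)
Your treatment of $\lambda_1$ is in the right spirit and matches the paper's route: the paper proves a lower bound $L_{\cren f}(d\cren\,1)\ge c/|J_n|$ in terms of the $(-1)$-measure (Theorem~\ref{th-dR-estim}), and then uses the fact that $J_n$ is one of $q_{n+1}-2$ disjoint intervals, so $|J_n|<1/(q_{n+1}-2)$, which becomes very small when all $a_k>n$ (Lemma~\ref{lem-Lyap-htype}). Your heuristic about the derivative picking up a factor like the return time is the correct intuition.

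Your argument for $\lambda_2$, however, rests on a misidentification. The transversal line field $\xi$ is \emph{not} (a projection of) the unstable direction of $\rdif$ from \cite{GY}. That direction governs changes of rotation number and its expansion indeed scales like the Gauss-map derivative $\sim a_k^2$; but this is the role of $\eta_{\hat g}$ (whose $dp$-projection is the unstable direction of $\cren$), not of $\xi$. The direction $\xi$ is transversal to the critical hypersurface $D_0=\{a=0\}$ in the space of triples and measures \emph{criticality}, not rotation angle. By the very definition of $\mathcal R$ on triples (Definition~\ref{def-R}), the $a$-coordinate transforms by $\tilde a = a\,(\Psi'_N(0))^2$, so the expansion along $\xi$ is literally $(\Psi'_N(0))^2$, computed along the orbit. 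For fixed $N$ this is a continuous function on the compact closure $\overline{\mathcal I}$, hence uniformly bounded, completely independently of the partial quotients $a_k$. Consequently there is nothing for the "fiberwise contraction" to absorb: the return-time factors $\prod a_k$ simply never appear in $\lambda_2$. Your anticipated difficulty — that $\mathcal R$ is "not literally block-diagonal" and that one must show the large factors are confined to $TD_0$ — is resolved in the paper not by a delicate decoupling estimate but by the construction of the triple space itself, which makes the $a$-direction dynamics explicit and upper-triangular; this is the content of Lemma~\ref{lem-distributions}, whence the formula $\lambda_2 = \limsup_n\sup_m\sqrt[n]{(\Psi'_{m,n}(0))^2}$.

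Your remarks on the existence of $\xi$ via a cone argument are essentially aligned with what the paper does in Lemma~\ref{lem-distributions}, but note that the paper also gives an alternative construction via $(-1)$-measures (Lemma~\ref{lem-weakstable}) that does not presuppose $\lambda_1>\lambda_2$; this avoids the mild circularity in asserting the eigenvalue gap "from part (1)" before $\lambda_2$ has been bounded.
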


This, together with the above, immediately implies the corollary.

\begin{corollary}
\label{cor-smoothness-htype}
 Let $f_\mu$, $\mu = (\mu_1,\dots, \mu_n)\in \bbR^n$ be a family of analytic circle maps as above. Suppose that $\alpha=[a_1,a_2,\dots]$ with $n<a_k<m$. For sufficiently large $n$, the Arnold $\alpha$-tongue in the family $f_\mu$ is (at least) $C^k$ smooth, where $k=k(n)$ tends to infinity as $n\to \infty$.
\end{corollary}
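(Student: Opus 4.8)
The plan is to combine Proposition \ref{prop-exponents} with Theorem \ref{th-smoothness} in an essentially mechanical way; the substance of the statement is already contained in those two results, so the corollary is a matter of checking that the hypotheses of Theorem \ref{th-smoothness} are met for all sufficiently high types. First I would fix $n$ large and let $\alpha = [a_1, a_2, \dots]$ with $n < a_k < m$; since $\alpha$ has bounded type it is in particular admissible as the rotation number in Theorem \ref{th-smoothness}. Pick $g \in \mathcal I$ with $\rot g = \alpha$ and $\hat g \in p^{-1}(g) \cap \Lambda$, which exist because $p\Lambda = \mathcal I$. Proposition \ref{prop-exponents} hands us an $\mathcal R$-invariant line field $\xi_{\mathcal R^k \hat g} \subset E^u_{\mathcal R^k \hat g}$ transversal to $D_0$, with maximal expansion rate $\lambda_2$ bounded uniformly in $n$, while the minimal expansion rate $\lambda_1$ along $E^u \cap D_0$ tends to $\infty$ with $n$. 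The one gap I would need to fill is \emph{uniform} transversality of $\xi$ to $D_0$ along the orbit (Theorem \ref{th-smoothness} asks for this, whereas Proposition \ref{prop-exponents} only asserts transversality): this follows because $\mathcal R$ is uniformly hyperbolic on $\Lambda$ with continuously-varying splitting, the line field $\xi$ and the complementary direction $E^u \cap D_0$ inside the two-dimensional $E^u$ are both $\mathcal R$-invariant, and a continuous nowhere-vanishing angle on the compact orbit closure is bounded below — alternatively one observes that $\xi$ is characterized dynamically by the larger exponent, and uniform hyperbolicity forces the angle between the two invariant sub-line-fields to stay away from $0$.

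With that in hand, since $\lambda_1 \to \infty$ and $\lambda_2$ stays bounded, for $n$ large enough we get $\lambda_1 > \lambda_2$, so the spectral-gap hypothesis of Theorem \ref{th-smoothness} holds. Theorem \ref{th-smoothness} then applies verbatim: in the given $C^\omega$ family $f_\mu$ (which, as noted before the corollary, is assumed to be of the "Arnold-type" form with $f'_\mu(0)$ having a simple zero at $\mu_1 = 0$ and $\partial f/\partial \mu_2 > 0$ on the circle), the $\alpha$-tongue is the graph $\mu_2 = \mu_2(\mu_1, \mu_3, \dots, \mu_n)$, which is $C^k$-smooth at $0$ with $k = \lfloor \log \lambda_1 / \log \lambda_2 \rfloor$. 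Finally, define $k(n) = \lfloor \log \lambda_1(n) / \log \lambda_2(n) \rfloor$; because $\log \lambda_1(n) \to \infty$ and $\log \lambda_2(n)$ stays bounded (and bounded away from a value that would make the quotient degenerate, since $\lambda_2 > 1$ on an unstable direction), we have $k(n) \to \infty$, which is exactly the assertion.

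The only step that takes any real thought is the uniform-transversality reduction described above; everything else is bookkeeping, patching the conclusion of Proposition \ref{prop-exponents} into the hypothesis slot of Theorem \ref{th-smoothness} and tracking how $k$ depends on $n$. I would present the argument in three short paragraphs mirroring the three above: (i) set up $g, \hat g$ and invoke Proposition \ref{prop-exponents}; (ii) upgrade transversality to uniform transversality using compactness of $\Lambda$ and continuity of the hyperbolic splitting; (iii) apply Theorem \ref{th-smoothness} and read off that $k(n) \to \infty$.
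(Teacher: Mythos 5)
Your proof is correct and follows the same route as the paper: combine Proposition~\ref{prop-exponents} (which gives $\lambda_1\to\infty$ with $\lambda_2$ bounded for high-type $\alpha$) with Theorem~\ref{th-smoothness}, and note that bounded type ($n<a_k<m$) is required so that the hypotheses of Theorem~\ref{th-smoothness} apply. The paper itself simply says the corollary is immediate; your extra paragraph on upgrading transversality of $\xi$ to \emph{uniform} transversality is a genuine detail worth checking, and your compactness argument works (though, had you had access to the proof of Lemma~\ref{lem-distributions}, you would have seen that $\xi$ is built there as an intersection of invariant cones $|x|<|y|$ inside the two-dimensional unstable bundle, which already makes the uniform transversality to $D_0$ automatic).
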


 One can extend this result to rotation numbers such that in the corresponding  sequence $a_k$, the numbers with $a_k<n $ occur sufficiently rarely.

\begin{theorem}
\label{th-smoothness-htype2}
Let $f_\mu$, $\mu = (\mu_1,\dots, \mu_n)\in \bbR^n$ be a family of analytic circle maps as above. For any $\delta\ge 0$, for sufficiently large $K$,   if  $\alpha=[a_1,a_2,\dots]$ is of bounded type and for all $m$, for sufficiently large $n$, we have $$\frac{\#\{k \mid a_k<K, m\le k<m+n\}}{n}\le \delta,$$ then the Arnold $\alpha$-tongue in the family $f_\mu$ is (at least) $C^k$ smooth, where $k=k(K, \delta)$ tends to infinity as $K\to \infty$.
\end{theorem}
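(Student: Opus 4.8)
The plan is to deduce this from Theorem~\ref{th-smoothness}. Fix $\alpha$ as in the statement, choose $g\in\mathcal I$ with $\rot g=\alpha$ and $\hat g\in\Lambda\cap p^{-1}(g)$, and recall that by the remark following Theorem~\ref{th-smoothness} the relevant expansion rates depend only on $\alpha$. What has to be produced is: an invariant line field $\xi\subset E^u$ uniformly transversal to $D_0$; the gap $\lambda_1>\lambda_2$ between the minimal expansion rate $\lambda_1$ along $E^u\cap D_0$ and the maximal expansion rate $\lambda_2$ along $\xi$; and then the lower bound $k=\lfloor\log\lambda_1/\log\lambda_2\rfloor\to\infty$ as $K\to\infty$. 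The whole point is that, for $\alpha$ of bounded type, $\lambda_2$ stays bounded while $\lambda_1$ can be made large by taking $K$ large, because the density hypothesis forces most of the renormalization steps to cross large partial quotients.

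First I would record the coefficient-by-coefficient form of the expansion estimate that underlies Proposition~\ref{prop-exponents}. Since $\mathcal R$ acts on rotation numbers as the Gauss map, $\rot(\mathcal R^j\hat g)=G^j(\alpha)$, so the $j$-th renormalization step crosses the partial quotient $a_j$. The estimates of \S~\ref{sec-unifhyp} (in particular the $(-1)$-measure description of the stable tangent bundle) should give a function $\sigma\colon\mathbb{N}\to(0,\infty)$ with $\sigma(a)\to\infty$ as $a\to\infty$, and an absolute constant $C_\ast\ge 1$, such that along the invariant line field inside $E^u\cap D_0$
\[ \bigl\|d\mathcal R^n|_{E^u_{\mathcal R^m\hat g}\cap D_0}\bigr\|\ \ge\ C_\ast^{-1}\prod_{j=0}^{n-1}\sigma(a_{m+j}) \qquad(m\ge 0,\ n\ge 1) \]
uniformly over orbits in $\Lambda$. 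Because $\sigma(a)\to\infty$, the quantity $\sigma_0:=\inf_{a\ge 1}\sigma(a)$ is a minimum over finitely many values, hence an absolute positive constant.

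Next comes the combinatorial step. The density hypothesis guarantees that, once $n$ is large enough (uniformly in $m$), every window $\{m,\dots,m+n-1\}$ contains at most $\delta n$ indices $k$ with $a_k<K$, hence at least $(1-\delta)n$ indices with $a_k\ge K$. Feeding this into the product estimate and bounding the remaining factors below by $\sigma_0$ yields
\[ \sqrt[n]{\bigl\|d\mathcal R^n|_{E^u_{\mathcal R^m\hat g}\cap D_0}\bigr\|}\ \ge\ C_\ast^{-1/n}\,\sigma(K)^{\,1-\delta}\,\sigma_0^{\,\delta}, \]
so $\lambda_1\ge\sigma(K)^{1-\delta}\sigma_0^{\delta}$; for $\delta<1$ this tends to infinity as $K\to\infty$. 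On the other hand, since $\alpha$ is of bounded type, $\lambda_2$ is bounded above by a constant $\Lambda_0$ that can be chosen independently of $\alpha$: the transversal unstable direction is controlled by the renormalization of the diffeomorphic (and near-critical) data, whose scaling obeys the real a priori bounds, exactly as in Proposition~\ref{prop-exponents} and \cite{GY}. Thus for $K$ large we have $\lambda_1>\Lambda_0\ge\lambda_2$; the resulting dominated splitting of the two-dimensional $E^u$ into $E^u\cap D_0$ and a complementary invariant line field provides the required uniformly transversal field $\xi_{\mathcal R^n\hat g}$, by the same construction as in the proof of Proposition~\ref{prop-exponents}. Theorem~\ref{th-smoothness} then applies and gives $C^k$-smoothness of the $\alpha$-tongue with
\[ k=\left\lfloor\frac{\log\lambda_1}{\log\lambda_2}\right\rfloor\ \ge\ \left\lfloor\frac{(1-\delta)\log\sigma(K)+\delta\log\sigma_0}{\log\Lambda_0}\right\rfloor=:k(K,\delta), \]
which tends to infinity as $K\to\infty$.

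The main obstacle is the product estimate of the second paragraph: the uniform, partial-quotient-by-partial-quotient lower bound on the expansion of renormalization along the $D_0$-unstable direction, with expansion factor tending to infinity with the partial quotient. This is the quantitative content behind Proposition~\ref{prop-exponents} and has to be read off from the explicit hyperbolicity bounds of \S~\ref{sec-unifhyp}, not invoked as a black box, since sporadic small partial quotients prevent one from dominating $\alpha$ by a pure high-type rotation number. A secondary point is to check that the bound $\Lambda_0$ on $\lambda_2$ is genuinely independent of the type of $\alpha$, and to set up the dominated-splitting / invariant-section argument that produces $\xi$ from the spectral gap $\lambda_1>\lambda_2$.
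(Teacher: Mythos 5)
Your overall strategy matches the paper's: bound $\lambda_1$ from below using the density of large partial quotients, bound $\lambda_2$ from above universally, and invoke Theorem~\ref{th-smoothness}. The paper proves exactly this in Lemma~\ref{lem-Lyap-htype}, and your reduction of the combinatorics to a geometric-mean estimate is the right idea. But the key expansion estimate you posit — the multiplicative cocycle bound
$\|d\mathcal R^n|_{E^u\cap D_0}\|\ge C_\ast^{-1}\prod_{j=0}^{n-1}\sigma(a_{m+j})$
with a per-step factor $\sigma(a)\to\infty$ depending on a single partial quotient — is not the form in which the paper obtains the bound, and you correctly flag it as the main gap. The paper instead uses Theorem~\ref{th-dR-estim} directly as a single $n$-step estimate: $L_{\cren^{m+n}f}(d\cren^n\,1)\ge c/|J_{nN}|$ where $J_{nN}$ is the shortest interval of the form $g^l(I_{nN})$ in the dynamical partition of $g=\cren^m f$. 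Since there are $q_{nN+1}-2$ disjoint such intervals on the circle, $|J_{nN}|<1/(q_{nN+1}-2)$, and the density hypothesis forces the denominators $q_{nN+1}$ of $G^{mN}(\alpha)$ to outgrow any geometric progression $A^{nN}$ once $K$ is large. The multiplicative structure appears only implicitly, at the level of the denominators (via $q_{n+1}\ge a_{n+1}q_n$), and one never needs to factor the derivative cocycle step by step. Your cocycle form \emph{can} be derived by iterating the identity $L_{\cren f}(d\cren\,\xi)=(d\cren\,1)L_f(\xi)$ from Lemma~\ref{lem-mu-zero} together with the single-step case of Theorem~\ref{th-dR-estim}, but the direct $n$-step bound is cleaner and sidesteps having to characterize the per-step factor.

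Two smaller inaccuracies are worth flagging. First, your statement that ``$\rot(\mathcal R^j\hat g)=G^j(\alpha)$'' is off by a factor of $N$: the operator $\mathcal R=\crenN$ advances $N$ partial quotients per step, so $\rot(\mathcal R^j\hat g)=G^{jN}(\alpha)$ and the single-step expansion factor is controlled by an $(N+1)$-block of partial quotients, not an individual $a_{m+j}$. This doesn't damage the combinatorics (the density hypothesis is uniform over windows), but it means $\sigma$ cannot literally be a function of one coefficient. Second, the bound on $\lambda_2$ is not a consequence of $\alpha$ being of bounded type; it is the universal bound $\sup_{\overline{\mathcal I}}(\Psi'_N(0))^2$ over the precompact horseshoe closure via Lemma~\ref{lem-distributions}. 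The bounded-type hypothesis is used elsewhere — in applying Theorem~\ref{th-smoothness}, it ensures the orbit $\{\hat g_n\}$ lies in a compact set $\Lambda_c$ so that derivative bounds on $\mathcal R$ are uniform.
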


Let us conclude the introduction with a brief guide to the layout of the paper. In \S~\ref{sec-cylren} we recall the basic setting of cylinder renormalization for critical circle maps. In \S~\ref{sec-measure} we construct (-1)-measures for critical circle maps and prove an extension of Slammert's result on smoothness of open Arnold tongues to the endpoints. In \S~\ref{sec-triples} we describe a Banach manifold setting in which renormalization horseshoes of diffeomorphisms and critical circle maps can be combined, and prove Theorem~\ref{th-operator}. In \S~\ref{sec-derivatives} we give geometric estimates for the stable foliation of renormalization away from critical circle maps. Section~\ref{sec-stableunstable} contains the statement of a general result on the smoothness of stable-unstable manifolds in a Banach space setting (Theorem \ref{th-Banach-op}) and the proof of Theorem \ref{th-smoothness} modulo Theorem \ref{th-Banach-op}. In \S~\ref{sec-Banach-op} we prove Theorem~\ref{th-Banach-op}. Finally, \S~\ref{sec-unifhyp} contains a new construction of the expanding direction of renormalization of critical circle maps, with explicit estimates on the expansion rate as needed to derive Proposition~\ref{prop-exponents} and Theorem~\ref{th-smoothness-htype2}.

\section{Cylinder renormalization.}
\label{sec-cylren}
\subsection{Cylinder renormalization for cubic critical circle maps}
\label{sec-crit}
We are going to briefly recall the cylinder renormalization operator 
$\cren$ on $\mathcal D^{cr}_\eps$ constructed in \cite{Ya3} by the second author.
%%Here we describe the explicit construction of $\mathcal R_{cyl}$ and formulate the results on its hyperbolicity.
As before, $p_n/q_n$ will stand for the continued fractional convergents of $\rot f$.

\subsection*{Construction of the cylinder renormalization}

% The renormalizable part of the space $\mathcal D^{cr}$ is divided into countably many open domains $S_n$; let $n=n(f)$ be the number of the domain that contains $f$. $\mathcal R_{cyl}$ will be analytic in $S_n$, the union $\mathcal U=\cap S_n$ is the open set in Theorem \ref{}.

In \cite{Ya3}, \cite{Ya4}, the second author showed that  for any $f\in \dcrepsr$ there exists $N(f)$ such that for all $n\geq N(f)$, the iterate $f^{q_n}$ has two complex conjugate repelling fixed points joined by a crescent-like fundamental domain $C_f$ for the iterate $f^{q_n}$. The iterate $f^{q_n}$ glues the boundary curves of the crescent $C_f$ (minus the two repelling points at the ends) into a Riemann surface whose conformal type is $\CC/\ZZ$. 
%$N\in\NN$ and constructed a horseshoe-like set $\mathcal I \subset \mathcal D^{cr}_\eps$  such that for some its neighborhood $\mathcal U \subset \mathcal D^{cr}_\eps$, for any $f\in \mathcal U$ with $\rot f\sim p_n/q_n$, the map  $f^{q_N}$ has two complex conjugate hyperbolic repellors joined by a crescent-like fundamental domain $C$ of $f^{q_n}$.
The number $N(f)$ can be chosen uniformly on bounded subsets of $\dcrepsr$. Furthermore, if a critical circle map $f$ has a fundamental crescent $C_f$ for an iterate $f^{q_n}$, as above, then the same is true for non circle-preserving analytic maps in an open neighborhood of $f$ in  $\dcreps$.

Let us fix a bounded open set $\cU^\RR\subset\dcrepsr$ on which  $N=N(f)$ can be chosen uniformly. 
The fundamental crescent $C_f$ for the iterate $f^{q_N}$ can be chosen to move holomorphically with $f$ in an open neighborhood $\cU\supset \cU^\RR$ in the space $\dcreps$ of non circle-preserving maps, and to contain $0$ in its interior. There is a unique real-symmetric uniformizing coordinate $\Psi_f$ which sends the quotient  $C_f/f^{q_N}$ to $\CC/\ZZ$ with $\Psi_f(0)=0$. 
%For any $f\in \mathcal U$, consider a crescent $C$ that joins hyperbolic fixed points of $f^{q_N}$ and contains zero. The quotient space $C/f^{q_N}$ is  a cylinder; it is normalized by $\Psi \colon \Pi \to \bbC/\bbZ$.
The map $\Psi_f$  depends analytically on $f\in \mathcal U$, and conjugates $f^{q_N}$ to the shift $z\to z+1$ (for odd $n$) and to the shift $z\to z-1$ (for even $n$)  in $C \cup f^{q_N}(C)$. Note that when $f\in\cU^\RR$ and the $N$-th digit of the continued fraction of $f$ is sufficiently large, the map $f^{q_N}$ is near-parabolic, and we can think of $\Psi_f$ as the perturbed Fatou coordinate of $f^{q_N}$ (see e.g. \cite{Sh}).

Let $P$ be the first-return map to $C$ under the action of $f$, defined in a neighborhood of $C\cap \bbR/\bbZ$.
By definition,
$$\cren f := \Psi_f P \Psi_f^{-1}.$$
%where a post-composition with a shift is added to $\Psi$ if necessary to guarantee that the critical point of $\mathcal R_{cyl} f$ is at zero.
%This map is restricted to the strip of width $\eps$ around the real axis.
% Clearly, the same holds in a small neighborhood $\mathcal U$ of $\mathcal I$, since $\Psi$ depends analytically on $f$.
%
%  We will use the following estimate.
% \begin{lemma}
% \label{lem-Psi-estim}
%  For any $c$ and sufficiently large $C$, one can choose $N$ such that for sufficiently small semi-neighborhood $\mathcal U$ of $\mathcal I$, $\Psi_f'(0)>c$ for $f\in \mathcal U$, $\Pi_\eps \subset \Psi_f (\Pi_\eps) $, and the distortion of $\Psi_f$ is bounded by $C$.
% \end{lemma}

% Note that the above construction does not provide an analytic operator on a neighborhood of the closure of $\mathcal I$. Indeed, the closure of $\mathcal I$ contains parabolic circle maps. The chart $\Psi_{cyl}$ is not analytic on $f$ in a neighborhood of such map.

\noindent
 We are ready to formulate \cite[Theorem 3.7]{Ya4}.
\begin{theorem}[\cite{Ya4}]
  \label{th-horseshoe}
  There exists a choice of an open bounded set $\cU\subset \dcreps$ as above and a corresponding choice of $N$ such that the following holds.
  With these choices, $\cren$ is an analytic operator $\cU\to \dcreps$, which is real-symmetric, and preserves $\dcrepsr$.  This operator has a hyperbolic invariant set $\cI\subset \dcrepsr\cap \cU$ with one-dimensional unstable direction.
  The set $\cI$ is pre-compact in the sense of uniform convergence. There is a map
$\iota\colon  {\mathcal I}\to \bbN^{\bbZ}$   that takes any $f\in {\mathcal I}$ to a bi-infinite sequence such that the positive part of this sequence coincides with the continued fractional expansion of $\rot f$, and $\iota$ conjugates $\cren|_{{\mathcal I}}$ to the $N$-th power of the Bernoulli shift.
\end{theorem}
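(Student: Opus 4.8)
The plan is to read the structural assertions directly off the construction of $\cren$ recalled above, and to reduce the existence and hyperbolicity of the horseshoe $\cI$ to two well-known analytic facts about renormalization of cubic critical circle maps. First I would fix the bounded open set $\cU\subset\dcreps$ and the level $N$ so that a fundamental crescent $C_f$ for $f^{q_N}$ exists and moves holomorphically over $\cU$ with $0$ in its interior; then $\cren f=\Psi_f P\Psi_f^{-1}$ is manifestly analytic on $\cU$, real-symmetry of $C_f$, $\Psi_f$ and $P$ makes $\cren$ real-symmetric so that it preserves the real slice $\dcrepsr$, and, because $\Psi_f(0)=0$ and the branch of $P$ used carries the cubic critical point of $f$ back through $0$, the renormalization of a critical circle map is again a cubic critical circle map. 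One point I would record for later use is that $\Psi_f$ spreads the crescent $C_f$ over a neighborhood of $\bbR/\bbZ$ that is \emph{strictly wider} than the strip of width $\eps$, so that $\cren$ in fact takes values in $\cD^{\mathrm{cr}}_{\eps'}$ for some $\eps'>\eps$; hence $d\cren$ factors through the compact inclusion $\cD_{\eps'}\hookrightarrow\cD_\eps$ and is a compact operator. The boundedness of the image and the uniformity of $N$ over $\cU$ are then a matter of fixing constants.

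The substance of the theorem I would extract from two inputs. The first is the complex \emph{a priori} bounds: there is a universal $\mu>0$ such that at every sufficiently deep renormalization level a cubic critical circle map renormalizes to a map admitting a holomorphic extension to an annular neighborhood of its real fundamental domain of modulus at least $\mu$, with uniformly bounded distortion, and, crucially, $\mu$ is independent of the combinatorial type — the unbounded-type case (large continued-fraction digits, where $f^{q_N}$ is near-parabolic) being controlled through the perturbed Fatou coordinate picture mentioned above. The second is exponential convergence of renormalization: there are universal $C>0$ and $\theta<1$ such that $\rot f=\rot g$ implies $\operatorname{dist}(\cren^m f,\cren^m g)\le C\theta^m$ for all $m$ for which both sides are defined. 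I would prove this by the standard pull-back / Schwarz-lemma mechanism: on the complex neighborhoods furnished by the \emph{a priori} bounds, one block of $\cren$ contains a holomorphic inclusion $\Omega_1\hookrightarrow\Omega_2$ with $\overline{\Omega_1}\Subset\Omega_2$, which strictly contracts the Kobayashi metric by a definite factor; iterating gives the geometric rate, uniform over the combinatorics because $\mu$ is.

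With these in hand I would set $\cI$ to be the set of cubic critical circle maps admitting a bi-infinite $\cren$-orbit confined to a fixed compact subset of $\dcrepsr\cap\cU$; the \emph{a priori} bounds force every forward $\cren$-orbit to enter such a compact set, so $\cI$ is nonempty, compact, and consists of maps with definite complex extensions — this is the precompactness claim. For the coding $\iota$: the forward part of $\iota(f)$ is simply the continued fraction of $\rot f$ (the asserted description of the positive part), every one-sided forward sequence occurs because every admissible rotation number is realized, e.g. by classical renormalizations of cubic Arnold maps, and a prescribed past is realized by a backward orbit produced via a standard compactness/diagonal argument — take maps whose rotation numbers have the prescribed digits down to level $-k$, form the orbit segments $\cren^{j}$ of these up to level $0$, and extract a limiting bi-infinite orbit using the compactness from the \emph{a priori} bounds — this backward orbit being unique because if $\iota(f)=\iota(g)$ then $f,g$ are $\cren^k$-images of maps with equal rotation numbers, so $\operatorname{dist}(f,g)\le C\theta^k\to0$. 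Since $\cren$ advances the continued fraction by the block of $N$ digits, $\iota\colon\cI\to\bbN^{\bbZ}$ is then a homeomorphism conjugating $\cren|_{\cI}$ to $\sigma^N$. Finally, along any orbit in $\cI$ the tangent space splits as $E^s\oplus E^u$ with $E^u$ one-dimensional and spanned by the rotation-number direction (which is tangent to $\dcrepsr$): $d\cren$ contracts $E^s$ uniformly — the infinitesimal form of exponential convergence — and expands $E^u$, since $\cren$ acts on the rotation number as the $N$-th iterate of the Gauss map $G(x)=\{1/x\}$, which is uniformly expanding; compactness of $d\cren$ makes this splitting robust and $E^s$ a genuine codimension-one stable space, and all constants are uniform over $\cI$ because $\mu$ and $\theta$ are. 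I expect the exponential convergence estimate with a rate $\theta<1$ uniform across \emph{all} combinatorial types — precisely where the near-parabolic, large-digit behavior must be tamed — to be by far the main obstacle, with the uniform complex \emph{a priori} bounds a close second; by comparison, compactness of $d\cren$, real-symmetry, and the symbolic bookkeeping are routine once those two estimates are available.
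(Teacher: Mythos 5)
This statement is quoted verbatim from \cite[Theorem 3.7]{Ya4}; the present paper does not prove it but imports it as a black box, so there is no in-paper proof to compare against. Your reconstruction is broadly aligned with the general philosophy of the hyperbolicity programme (complex \emph{a priori} bounds, exponential convergence, compact derivative, symbolic coding), but it contains one serious gap and one step that is stated too optimistically.

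The serious gap is the existence and one-dimensionality of the unstable direction. You assert that $E^u$ is ``spanned by the rotation-number direction'' and that it is expanded ``since $\cren$ acts on the rotation number as the $N$-th iterate of the Gauss map.'' This does not work as stated: the rotation number $\rot\colon\dcrepsr\to\bbR$ is merely continuous, not differentiable, so there is no canonical ``rotation-number direction'' in $T_f\dcreps$, and combinatorial expansion under the Gauss map on $\bbR$ does not translate into an expanding eigenvalue of the compact operator $d\cren$. Establishing that $d\cren$ has exactly one expanding eigenvalue transverse to the codimension-one stable lamination is in fact the deepest part of Yampolsky's theorem; in \cite{Ya4} it is obtained not from the Gauss map but from a combination of quasiconformal rigidity of towers (which makes the level sets $\{\rot f=\alpha\}$ analytic codimension-one submanifolds), Lyubich's small-orbits theorem, and spectral theory of compact operators — an argument that shows a contracting spectrum on the stable side would force a contradiction. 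Indeed the present paper, in \S\ref{sec-unifhyp}, advertises ``a new construction of the expanding direction of renormalization'' via $(-1)$-measures precisely because the original construction is delicate; you would need one of these mechanisms rather than the Gauss-map heuristic.

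A secondary weak point is the passage from $C^0$ exponential convergence (your claimed $\operatorname{dist}(\cren^m f,\cren^m g)\le C\theta^m$ for $\rot f=\rot g$) to uniform contraction of $d\cren$ on $E^s$. These are not the same assertion; upgrading orbit convergence to a uniform bound on the derivative along the stable lamination requires showing that the stable leaves are genuine analytic submanifolds with uniformly bounded geometry, so that Cauchy estimates apply. Your Schwarz-lemma mechanism is in the right spirit but, across all combinatorial types including the near-parabolic ones, this uniformity is itself a theorem (complex bounds plus McMullen-type rigidity), not a routine Kobayashi-metric contraction. I agree with your self-assessment that these two estimates are where the work lies, but the sketch as written leaves them at the level of expectation rather than proof, and the unstable-direction argument as written would fail.
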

The following theorem describes the stable foliation of this hyperbolic set.
\begin{theorem}{\cite[Theorem 3.8]{Ya4}}
  \label{th-convergence}
  If $f,g\in\dcrepsr$ and  $\rot f=\rot g$ is irrational, then
  $\dist( \cren^n f, \cren^n g) \to 0$
  at an eventually uniform geometric rate. Furthermore, all limit points of the sequence $\{\cren^n f\}$ lie in the closure of $\cI$.
\end{theorem}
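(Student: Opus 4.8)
The final statement is the standard \emph{$C^0$-rigidity} (exponential contraction) property of cylinder renormalization, and I would organize the proof in three stages: (1) precompactness of the renormalization orbit via complex a priori bounds; (2) identification of the accumulation set of $\{\cren^n f\}$ with $\overline{\cI}$; (3) the exponential convergence of $\cren^n f$ and $\cren^n g$ when $\rot f=\rot g$.

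For (1) and (2) the key input is the theorem on complex a priori (``beau'') bounds for critical circle maps, which is the analytic core of the theory (due to the second author in the holomorphic category, and to de Faria and de Melo in the real-analytic one): for $f\in\dcrepsr$ with irrational rotation number there is an $n_0(f)$, uniform on bounded subsets of $\dcrepsr$, so that for $n\ge n_0(f)$ the map $\cren^n f$ extends holomorphically to a fixed neighborhood strictly containing the $\eps$-strip with a uniform sup-norm bound, and after finitely many further renormalizations these bounds become independent of $f$. By normality, $\{\cren^n f\}_{n\ge n_0(f)}$ then lies in a precompact subset $\mathcal K\subset\cU$ of the $\deps$-topology. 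If $h=\lim_k\cren^{n_k}f$ is a limit point, then, using precompactness of $\{\cren^{m}f\}$ for every fixed $m$ and a diagonal extraction, $h$ can be realized as the $0$-coordinate of a bi-infinite $\cren$-orbit $(h_j)_{j\in\ZZ}$ entirely contained in $\mathcal K$; hence $h$ lies in the maximal $\cren$-invariant set inside $\cU$, which by Theorem~\ref{th-horseshoe} is exactly $\overline{\cI}$. This proves the second assertion.

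For (3), fix $f,g\in\dcrepsr$ with $\rot f=\rot g=\alpha$ irrational. For $n$ large, $\cren^n f$ and $\cren^n g$ are combinatorially identical at every finite depth (this is governed by the continued fraction of $\alpha$) and both carry the complex bounds from stage (1). I would run the Sullivan--McMullen pull-back argument: the complex bounds furnish, for each $k\ge 0$, a $K$-quasiconformal conjugacy between $\cren^n f$ and $\cren^n g$ on a fixed complex cylinder with $K$ independent of $n$; pulling this conjugacy back through $k$ renormalization windows gains a definite amount of conformality at each step, since the fresh dilatation is supported in the outermost window and is pushed toward the ideal boundary. Hence the optimal dilatation $K_n$ between $\cren^n f$ and $\cren^n g$ tends to $1$ --- geometrically, once the beau bounds are in force. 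As $K$-quasiconformal maps of a fixed domain with $K\to 1$ form a precompact family all of whose limits are conformal, one gets $\dist(\cren^n f,\cren^n g)\to 0$, and the geometric decay of $K_n$ together with a Cauchy estimate on a slightly smaller strip upgrades this to the eventually uniform geometric rate. (Alternatively, once $\dist(\cren^n f,\cren^n g)\to 0$ is known, the geometric rate can be extracted from the uniform hyperbolicity of $\overline{\cI}$: for $n$ large $\cren^n f\in W^s_{\mathrm{loc}}(\varphi_n)$ and $\cren^n g\in W^s_{\mathrm{loc}}(\psi_n)$ with $\varphi_n,\psi_n\in\overline{\cI}$, the $C^0$ convergence forces $\varphi_n$ and $\psi_n$ to be forward asymptotic inside the hyperbolic set hence exponentially close, and the uniform contraction along stable manifolds closes the estimate.)

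I expect stage (3) to be the main obstacle, and more precisely the fact that uniform hyperbolicity of the horseshoe alone is \emph{not} sufficient: two maps with the same rotation number could a priori land on the stable manifolds of distinct orbits of $\overline{\cI}$ differing in their ``backward combinatorics'', and ruling this out is genuine rigidity rather than soft dynamics. The only route I know is the quasiconformal pull-back, and it rests entirely on the complex a priori bounds --- the uniform geometric control of the near-parabolic Fatou coordinates $\Psi_f$ as the continued-fraction digits of $\rot f$ vary --- with the quantitative ``beau'' form of those bounds being exactly what turns the qualitative convergence into a geometric, and eventually uniform, rate.
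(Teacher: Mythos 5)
The paper does not prove this result --- it is quoted verbatim from \cite[Theorem 3.8]{Ya4} as background, so there is no ``paper's own proof'' to compare against; the meaningful comparison is with the argument in \cite{Ya4} and the de Faria--de Melo/McMullen literature it builds on. With that understanding, your reconstruction correctly identifies the three pillars of the actual proof: precompactness of the renormalization orbit from complex a priori (``beau'') bounds; McMullen-style quasiconformal pull-back to produce $C^0$-rigidity of conjugate critical circle maps; and the hyperbolicity of $\overline\cI$ to upgrade $C^0$-convergence into an eventually uniform geometric rate. Your diagnosis that hyperbolicity of the horseshoe alone cannot close the argument --- since two maps with the same forward combinatorics could {\it a priori} shadow distinct backward orbits in $\overline\cI$ --- is precisely the point, and it is where the genuine rigidity work is done in \cite{Ya4}.

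Two small caveats, neither fatal. First, Theorem~\ref{th-horseshoe} as stated here asserts that $\cI$ is a hyperbolic set conjugate to a shift, not that $\overline\cI$ is the maximal $\cren$-invariant set in $\cU$; the latter is true but requires the beau bounds to force every infinitely renormalizable orbit into the horseshoe, so it is a consequence you are invoking rather than a consequence of the cited statement. Second, the phrase ``pushed toward the ideal boundary'' is a suggestive paraphrase: what actually occurs is that the dilatation is absorbed into ever-deeper renormalization levels, where by the beau bounds the geometry becomes universal and the conjugacy asymptotically conformal. Both are cosmetic; the route you chose is the correct one, and the emphasis you place on the quantitative form of the complex bounds as the analytic engine of the geometric rate is well placed.
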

An analogue of this result for the formalism of critical commuting pairs was earlier established by de Faria and de Melo \cite{dFdM2} under the assumption that the rotation number is of bounded type.
We will also use the following estimate. Here and below we assume that the crescent $f^{q_{N}}(C)$ contains $0$, and $\Psi_f(0)=0$.
 \begin{lemma}
 \label{lem-Psi-estim}
 For any $\eps, K$ and any sufficiently large $C$, one can choose $\cU$ and  $N$ as above so that
 for any $f\in \cU$,   we have  $\Psi_f'(0)>K$ and $\Pi_{K\eps} \subset \Psi_f (\Pi_\eps) $. 
 
 Also, for any $\delta>0$,  for sufficiently large $C$, the distortion of $\Psi_f$ is bounded by $C$ on the curvilinear rectangle  $\Psi^{-1}_f([a,a+1]\times[-i\eps, i\eps] )$ with $-1+\delta<a<\delta $: $$\frac 1C<\left|\frac{\Psi_f'(k)}{\Psi'_f(l)}\right|<C\text{ for }k, l\in [a, a+1]\times[-i\eps, i\eps]$$ and $$\left|\frac{\Psi''(k)}{\Psi'(k)}\right| < C\Psi'(0).$$
 \end{lemma}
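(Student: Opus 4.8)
The plan is to extract all three estimates from the near-parabolic picture: when the $N$-th continued fraction digit of $f$ is large (which we can force by shrinking $\cU$ and increasing $N$, via the uniform choice of $N(f)$ on bounded sets), the iterate $f^{q_N}$ is near-parabolic, and $\Psi_f$ is the perturbed Fatou coordinate. The Fatou coordinate of a near-parabolic map has derivative of order $1/\sigma$ at generic points, where $\sigma \to 0$ is the size of the parabolic perturbation; since the multiplier of $f^{q_N}$ at its repelling fixed points tends to $1$ as $a_N \to \infty$, we get $\Psi_f'(0) \to \infty$. So the first step is to invoke the Inou–Shishikura / Yoccoz near-parabolic renormalization machinery (as in \cite{Sh}, \cite{Ya3}) to get uniform control on the perturbed Fatou coordinate: precisely, for $a_N = C'$ large enough, $\Psi_f' (0) > K$ on all of $\cU$, with $\cU$ an appropriate neighborhood of the corresponding real slice.

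Second, for the inclusion $\Pi_{K\eps} \subset \Psi_f(\Pi_\eps)$: since $\Psi_f$ conjugates $f^{q_N}$ to the unit translation on $C \cup f^{q_N}(C)$ and $\Psi_f(0)=0$, the image $\Psi_f(\Pi_\eps)$ is a translation-invariant (mod $1$) region in $\CC/\ZZ$; it suffices to show it contains a horizontal strip of half-width $K\eps$. The crescent $C_f$ has definite conformal modulus — bounded below uniformly on $\cU$ because $N$ is chosen uniformly — so its uniformization to $\CC/\ZZ$ covers a strip whose height is comparable to $\Psi_f'(0)$ times $\eps$ near $0$; by the derivative bound from Step 1 and the Koebe distortion theorem applied on round disks inside the crescent, this strip has half-width $\gtrsim \Psi_f'(0)\eps/\const > K\eps$ once $C$ (hence $\Psi_f'(0)$) is large. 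Here one has to be a little careful that the strip $\Pi_\eps$ around the real circle actually lies inside the domain of $\Psi_f$, i.e.\ inside $C \cup f^{q_N}(C)$; this is part of the standing assumption that the crescent contains $0$ in its interior and moves holomorphically, and $\eps$ is fixed first, so one shrinks nothing further here — the crescent is wide enough near $\bbR/\bbZ$ by construction in \cite{Ya3}.

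Third, the distortion bounds on the curvilinear rectangle $\Psi_f^{-1}([a,a+1]\times[-i\eps,i\eps])$ with $a$ near $0$: this rectangle is the preimage of one fundamental domain of the translation, which under $\Psi_f^{-1}$ is essentially $C_f$ itself (or a bounded iterate of it under $f^{q_N}$), a region of bounded Euclidean size and bounded geometry. On such a region $\Psi_f$ is univalent with non-vanishing derivative, so by compactness of $\cU$ and Cauchy estimates the ratio $\Psi_f'(k)/\Psi_f'(l)$ is bounded above and below by constants depending only on $\eps$ and the geometry — that is the content of the first displayed inequality. For the second displayed inequality, $|\Psi_f''(k)/\Psi_f'(k)| \le \const$ on the rectangle by the same Cauchy-estimate / bounded-geometry argument (it is the logarithmic derivative of a univalent map on a region of definite size), and since $\Psi_f'(0) > K \ge 1$, bounding by $\const$ is bounding by $C\Psi_f'(0)$ for $C$ large. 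The dependence on $\delta$ enters because as $a \to -1$ or $a \to 0$ the rectangle's preimage approaches the repelling fixed points of $f^{q_N}$ where $\Psi_f$ has a logarithmic singularity; staying $\delta$ away from the endpoints keeps us at definite distance from those singularities, at the cost of a distortion constant $C = C(\delta) \to \infty$ as $\delta \to 0$.

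The main obstacle is Step 2 — the quantitative inclusion $\Pi_{K\eps}\subset\Psi_f(\Pi_\eps)$ — because it requires a genuinely two-sided geometric estimate on the perturbed Fatou coordinate: not just $\Psi_f'(0)$ large, but uniform control on the shape and height of the image strip over a full period, which in the near-parabolic regime is where the Fatou coordinate's geometry is most delicate (the petals degenerate as $\sigma\to 0$). I expect to handle it by combining the a priori bounds on the crescent's modulus from \cite{Ya3} with the Inou–Shishikura control on the Fatou coordinate of the near-parabolic map, exactly as these two ingredients are coordinated in \cite{Ya3,Ya4}, rather than proving anything new about near-parabolic renormalization here.
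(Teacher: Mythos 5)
Your Steps~2 and~3 are essentially the paper's route: the paper proves the lemma by combining the Koebe Distortion Theorem with the observation that the real interval $\Psi_f^{-1}[0,1]$ has small length (it is comparable to $|I_N|\lesssim \delta^N$ by real {\it a priori} bounds, Theorem~\ref{th-intervals}), and that for $-1+\delta<a<\delta$ the curvilinear rectangle is compactly contained in the crescent at a definite hyperbolic distance from the repelling fixed points. Your distortion and logarithmic-derivative estimates, and the explanation of the $\delta$-dependence, match this.

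Step~1, however, has a genuine gap. You extract $\Psi_f'(0)>K$ from the near-parabolic picture, claiming that one can ``force'' the $N$-th continued-fraction digit of $f$ to be large by shrinking $\cU$ and increasing $N$. This is not possible with the $\cU$ the lemma is about: $\cU$ must remain an open neighborhood of the entire invariant horseshoe $\cI$ (cf.\ Theorem~\ref{th-horseshoe}), and maps in $\cI$ realize \emph{all} bi-infinite digit sequences, in particular bounded ones. Increasing $N$ does not make $a_N(f)$ large uniformly over such a $\cU$, so the Inou--Shishikura / perturbed-Fatou-coordinate estimates with $\sigma\to 0$ do not apply uniformly, and the multiplier of the repelling fixed points of $f^{q_N}$ need not tend to $1$. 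The fix is the paper's more elementary route, which you already use implicitly later: $\Psi_f$ is univalent on the crescent $C_f\cup f^{q_N}(C_f)$, a region of bounded geometry at scale $|I_N|$; it maps the small interval $\Psi_f^{-1}[0,1]\subset\RR/\ZZ$ (of length $\lesssim\delta^N$, uniformly over $\cU$ by real {\it a priori} bounds) onto the unit interval; Koebe distortion then forces $\Psi_f'(0)\asymp 1/|\Psi_f^{-1}[0,1]|\to\infty$ as $N\to\infty$, with no near-parabolic hypothesis needed. Once that is in place, your argument for $\Pi_{K\eps}\subset\Psi_f(\Pi_\eps)$ (modulus of the crescent bounded below plus distortion control on a full period) goes through, and your ``main obstacle'' in Step~2 is handled by the same Koebe input rather than by anything specific to Inou--Shishikura.
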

The proof is by a reference to the Koebe Distortion Theorem and an upper estimate on the length of the interval $\Psi^{-1}[0,1]$, see \cite[p.20]{Ya4}.

%%\fixme{Please check the new statement.}

\subsection{Real {\it a priori} bounds}

Let $I_n = [ f^{q_n}(0), 0]$. Recall that the dynamical partition $\mathcal P_n$ of the map $f$ is formed by the intervals $$f^l(I_n), 0\le l<q_{n+1}\text{, and }f^l(I_{n+1}), 0\le l < q_n.$$ The intervals of $\cP_n$ cover the circle and have disjoint interiors.
For further reference, let us formulate a statement of real {\it a priori} bounds for smooth critical circle maps; see \cite{dFdM1} for the proofs.
%%will use real apriori bounds to estimate the lengths of the intervals in the partition $\mathcal P_n$.
 \begin{theorem}[Theorem 3.1 \cite{dFdM1} Parts  (a), (e)]
\label{th-realbounds}
There exists a universal constant $C_0>1$ such that for every $C^3$-smooth critical circle map $f$ with a single critical point $0$, there exists $N_1=N_1(f)$ such that for any $n\geq N_1$ the following holds.
\begin{enumerate}
\item[(a)]
  Any two adjacent intervals $I,J$ of the partition $\mathcal P_n$ are $C_0$-commensurable:
  $$1/{C_0}<|I|/|J|< C_0;$$

\item [(b)] For any $0<i\le j \le q_{n+1}$, the distortion of the restriction of $f^{j-i}$ to $f^{i}(I_n)$ is bounded by $C_0$.
\end{enumerate}
The number $N_1$ can be chosen uniformly on a $C^2$-precompact set of critical circle maps with irrational rotation numbers.
\end{theorem}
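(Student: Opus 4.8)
\emph{Proof proposal.} This is the classical statement of real (geometric) \emph{a priori} bounds for critical circle maps, due in this generality to Swiatek, Herman, Graczyk--Swiatek and de Faria--de Melo; the plan is to follow the standard route, reducing everything to cross-ratio distortion and Koebe-type control. First I would fix the combinatorial skeleton: by Yoccoz's no-wandering-interval theorem a critical circle map $f$ with irrational rotation number $\rho$ is topologically conjugate to the rigid rotation $R_\rho$, so the dynamical partitions $\mathcal P_n$ have exactly the combinatorics of the rotation --- for each $n$ the intervals $f^i(I_{n+1})$, $0\le i<q_n$, together with $f^i(I_n)$, $0\le i<q_{n+1}$, tile $\RR/\ZZ$ with pairwise disjoint interiors, $\mathcal P_{n+1}$ refines $\mathcal P_n$, and $\diam \mathcal P_n\to 0$. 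I would also record the nesting relation $I_{n-1}=I_{n+1}\cup\bigcup_{j=1}^{a_n}f^{jq_n}(I_n)$ (endpoints identified), which fixes every adjacency relation of $\mathcal P_n$ that enters part~(a).

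The analytic engine is the cross-ratio inequality. For nested intervals $T\subset M$ let $\mathbf{Cr}(M,T)$ denote the cross-ratio of $T$ inside $M$ (a ratio of Poincar\'e lengths). The Swiatek--Herman cross-ratio inequality bounds, by a constant $K=K(f)$, the distortion of $\mathbf{Cr}$ along any orbit segment $M,f(M),\dots,f^{n-1}(M)$ with pairwise disjoint terms; crucially this remains true across the cubic critical point, since the power map $x\mapsto x^3$ distorts nested cross-ratios by at most a bounded factor. Away from any fixed neighbourhood of $0$, the $C^3$ Koebe-space estimate gives the sharper statement that $\mathbf{Cr}$ is essentially preserved once the intervals involved are small --- which is exactly why the conclusion is claimed only for $n\ge N_1$; and any orbit segment of length $<q_{n+1}$ meets a fixed neighbourhood of $0$ only boundedly often. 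Combined with the Koebe distortion principle --- a monotone branch whose domain sits with a definite cross-ratio margin inside a larger disjoint-orbit domain has distortion bounded in terms of that margin --- these are all the real-variable tools required.

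Granting this, part~(a) is proven by a bootstrap in $n$ whose heart is to bound $|I_{n+1}|/|I_n|$ (and, via the dynamics, the ratios of all adjacent intervals of $\mathcal P_n$) away from $0$ and $\infty$. The mechanism: if a pair of adjacent intervals of $\mathcal P_n$ were wildly disproportionate, pulling that disproportion back along the pairwise-disjoint orbit segments $\{f^i(I_m)\}$ via the cross-ratio inequality would make some cross-ratio violate the Swiatek--Herman bound, a contradiction --- \emph{unless} the disproportion is created at the single passage near $0$, where it can be at most a bounded (cube-power) factor. A base estimate at one scale (from the cubic non-flatness at $0$, or a compactness argument) then propagates by this cross-ratio/Koebe bootstrap to a uniform commensurability constant $C_0$ valid for all $n\ge N_1$. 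Part~(b) follows from~(a) by Koebe: for $0<i\le j\le q_{n+1}$ the intervals $f^i(I_n),\dots,f^j(I_n)$ lie in $\mathcal P_n$ and are therefore pairwise disjoint, while~(a) supplies the definite Koebe space on either side, bounding the distortion of $f^{j-i}$ on $f^i(I_n)$ by a universal $C_0$. Uniformity of $C_0$ and $N_1$ on a $C^2$-precompact family of critical circle maps with irrational rotation number is then automatic, since every constant above depends on $f$ only through its $C^2$-data and a uniform non-flatness bound at the cubic point, while irrationality enters only through the topological no-wandering-interval theorem.

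The step I expect to be the real obstacle --- and the reason this is genuinely harder than the diffeomorphism case --- is the passage through the critical point: a monotone branch $f^n|_M$ whose domain straddles $0$ is not a diffeomorphism, so Koebe does not apply to it directly, and one must instead exploit that the cubic singularity has bounded cross-ratio distortion and is crossed only boundedly often per orbit segment of length $<q_{n+1}$. Making this precise and tracking the resulting constants is the technical core of de Faria--de Melo's Theorem~3.1, which in the present paper I would simply cite in the stated form rather than reproduce.
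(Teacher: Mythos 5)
Your proposal correctly identifies that the paper does not prove this statement but cites it as Theorem~3.1 of de Faria--de Melo, and your sketch of the underlying cross-ratio/Koebe argument (Swiatek--Herman cross-ratio inequality, bounded distortion away from the critical point, controlled passage through the cubic singularity, bootstrap in $n$) is the standard and correct route to the real \emph{a priori} bounds, with uniformity on $C^2$-precompact families following exactly as you indicate. Since you conclude by deferring to the citation, this is essentially the same treatment the paper gives.
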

%%%It is a well-known fact that both bounds are universal on the invariant horseshoe $\mathcal I$. \fixme{add some reasonable explanation, E.g. dFdM1 Proposition 3.8 involves arguments for the case of negative Schwartzian.}

Real {\it a priori} bounds  imply the following geometric estimate on $|I_n|$:
\begin{theorem}[Theorem 3.1  \cite{dFdM1} Part (b)]
  \label{th-intervals}
  There exists $\delta\in(0,1)$ such that for any $C^3$-smooth critical circle map $f$ with an irrational rotation number $\rho$ there exists $N_2=N_2(f)$ such that for all $n\geq N_2$ all 
  %%%For each irrational rotation number $\rho$, there exists $\delta_\rho>0$ such that for any analytic cubic critical map $f$ with $\rot (f) = \rho$,
  intervals of the partition $\mathcal P_n$ are shorter than $\delta^n$.

  The number $N_2$ can be chosen uniformly on a $C^2$-precompact set of critical circle maps with irrational rotation numbers.
%%% These estimates are uniform on $\mathcal I$.
\end{theorem}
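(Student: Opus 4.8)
The plan is to derive the exponential bound from the real \emph{a priori} bounds (Theorem~\ref{th-realbounds}(a)) by a standard two–step refinement argument, the only care needed being the bookkeeping of the combinatorics of the partitions $\mathcal P_n$. Recall the skeleton: $\mathcal P_{n+1}$ refines $\mathcal P_n$; the intervals of $\mathcal P_n$ are the ``long'' ones $f^l(I_n)$, $0\le l<q_{n+1}$, and the ``short'' ones $f^l(I_{n+1})$, $0\le l<q_n$. A long interval $f^l(I_n)$ is cut by $\mathcal P_{n+1}$ into $a_{n+2}+1\ge 2$ consecutive subintervals, namely $f^l(I_{n+2})$ together with the $a_{n+2}$ images $f^{\,l+q_n+iq_{n+1}}(I_{n+1})$, $0\le i<a_{n+2}$, all of which are genuine intervals of $\mathcal P_{n+1}$. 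A short interval $f^l(I_{n+1})$ of $\mathcal P_n$, on the other hand, is untouched by $\mathcal P_{n+1}$; but since $q_n<q_{n+2}$ it is in fact a \emph{long} interval of $\mathcal P_{n+1}$, hence is cut into $\ge 2$ consecutive pieces in $\mathcal P_{n+2}$. Thus: for every $T\in\mathcal P_n$ there is $m=m(T)\in\{n+1,n+2\}$ such that $T$ is a union of at least two consecutive intervals of $\mathcal P_m$, each contained in $T$.

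Next I turn this into a shrinking estimate. Fix $n\ge N_1$, where $N_1=N_1(f)$ is the constant of Theorem~\ref{th-realbounds}, so that part (a) of that theorem applies at levels $n+1$ and $n+2$. Let $T\in\mathcal P_n$ and let $J$ be any interval of $\mathcal P_{m(T)}$ contained in $T$. By the previous paragraph $J$ has a neighbor $J'$ in $\mathcal P_{m(T)}$ that is also contained in $T$, and by Theorem~\ref{th-realbounds}(a) one has $|J'|\ge |J|/C_0$; since $|J|+|J'|\le |T|$, this forces $|J|\le \theta\,|T|$ with $\theta:=\frac{C_0}{C_0+1}\in(0,1)$. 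Because $\mathcal P_{n+2}$ refines $\mathcal P_{m(T)}$, every interval of $\mathcal P_{n+2}$ contained in $T$ has length $\le\theta\,|T|$ as well. Writing $\mu_n:=\max_{J\in\mathcal P_n}|J|\ (\le 1)$ and taking the maximum over $T\in\mathcal P_n$, we obtain $\mu_{n+2}\le\theta\,\mu_n$ for all $n\ge N_1$, and hence, by iteration, $\mu_n\le\theta^{\lfloor (n-N_1)/2\rfloor}$ for all $n\ge N_1$.

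It remains to absorb the loss coming from the index shift. Put $\delta:=\theta^{1/4}\in(0,1)$ and $N_2:=2N_1+2$. For $n\ge N_2$ we have $\lfloor (n-N_1)/2\rfloor\ge (n-N_1-1)/2\ge n/4$, so $\mu_n\le\theta^{n/4}=\delta^{\,n}$; shrinking $\theta^{1/4}$ to any slightly larger $\delta\in(0,1)$ makes the inequality strict, giving $|J|<\delta^n$ for every $J\in\mathcal P_n$ and every $n\ge N_2$, as claimed. For the uniformity clause: $\theta$, and hence $\delta$, is universal, while $N_1$ — and therefore $N_2=2N_1+2$ — can be chosen uniformly on a $C^2$–precompact set of critical circle maps with irrational rotation numbers, by the last sentence of Theorem~\ref{th-realbounds}.

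The substantive point — the only place any thought is required — is the combinatorial claim of the first paragraph: one must check that \emph{every} interval of $\mathcal P_n$, including the short ones that survive verbatim into $\mathcal P_{n+1}$, is genuinely split into at least two pieces within two renormalization levels (the short ones only split at level $n+2$, which is precisely why one cannot argue with a single level and why the factor $\lfloor (n-N_1)/2\rfloor$, rather than $n-N_1$, appears). Once that is settled, the argument uses nothing beyond a mechanical iteration of the real \emph{a priori} bounds; there is no analytic difficulty, and in particular no distortion control (Theorem~\ref{th-realbounds}(b)) is needed.
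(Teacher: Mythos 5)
Your proof is correct. Note that the paper itself does not prove this statement: it is imported verbatim from de Faria--de Melo \cite{dFdM1} (Theorem 3.1(b)), with the reader referred there for the proof. What you have written is a valid self-contained derivation of part (b) from part (a), and it is essentially the standard argument: the combinatorial bookkeeping in your first paragraph is accurate (each $f^l(I_n)$, $0\le l<q_{n+1}$, splits in $\mathcal P_{n+1}$ into $f^l(I_{n+2})$ and the $a_{n+2}$ intervals $f^{l+q_n+iq_{n+1}}(I_{n+1})$, all with admissible indices since $l+q_n+(a_{n+2}-1)q_{n+1}<q_{n+2}$; the short intervals $f^l(I_{n+1})$, $l<q_n<q_{n+2}$, survive one level and split at the next), the commensurability of the two consecutive pieces inside $T$ gives $|J|\le\frac{C_0}{C_0+1}|T|$, and the two-level contraction $\mu_{n+2}\le\theta\mu_n$ yields the exponential bound with a universal $\delta$ after absorbing the index shift exactly as you do. The uniformity clause also follows correctly from the uniformity of $N_1$ in Theorem~\ref{th-realbounds}. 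You are also right that no distortion control (part (b) of the a priori bounds) is needed for this statement.
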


Let us note that since maps in $\cI$ are infinitely many times backward renormalizable, for all of them one can set $N_1=N_2=1$ in the above statements.

\section{$C^1$-smoothness of irrational Arnold tongues}
\label{sec-measure}
In this section, we prove that all Arnold tongues that correspond to irrational rotation numbers are $C^1$-smooth at critical circle maps. This result does not require an inequality on eigenvalues from Theorem \ref{th-smoothness} and applies to all irrational rotation  numbers.

The proof is inspired by, and partly based on, the results of L. Slammert, see \cite{Slammert}.
Recall that $\depsr$ is the Banach manifold of $\bbR/\bbZ$-preserving analytic maps defined in a strip of wdth $\eps$.
%%The tangent spaces to  $\mathcal D_\eps$ at all its points are identified with the space of analytic vector fields in the strip of width $\eps$.
The Banach submanifold $\dcrepsr$ of $\depsr$ consists of cubic critical circle maps with $f'(0)=f''(0)=0$, and its tangent space is formed by analytic vector fields in the strip of width $\eps$ with $v'(0)=v''(0)=0$. 

\begin{definition}
 The (-1)-measure of a smooth circle map $f$ is any measure that satisfies
 $$\int \phi(x)d\mu = \int f'(f^{-1}(x))\phi(f^{-1}(x)) d\mu$$ for any continuous test function $\phi$.
\end{definition}
Note, that for circle diffemorphisms this is equivalent to the definition of a (-1)-measure used by Slammert in \cite{Slammert}:
$$
\int \xi (f(x)) \frac 1{f'(x)} d\mu = \int \xi d\mu,
$$
if we use test functions of the form  $\xi(x) = \phi(f^{-1}(x))f'(f^{-1}(x))$.
Slammert attributes the following result to Douady and Yoccoz (see \cite{dMPugh}):
\begin{theorem}
\label{th-1meas}
 A circle diffeomorphism with an irrational rotation number has a unique (-1)-measure.
\end{theorem}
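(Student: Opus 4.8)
The plan is to realize a $(-1)$-measure as the normalized positive eigenmeasure of a transfer operator, to obtain existence by a soft compactness argument, to identify the eigenvalue with $1$ via Denjoy's distortion estimates, and then to deduce uniqueness from those same estimates by a Hilbert-metric contraction along the dynamical partitions. To begin, I would rewrite the invariance law as $f^{*}\mu=\tfrac{1}{f'}\,\mu$, i.e.\ as $\Phi\mu=\mu$, where $\Phi\mu:=f_{*}\!\bigl(\tfrac{1}{f'}\mu\bigr)$ is the weak-$*$ continuous adjoint, on Borel measures, of the positive operator $\mathcal L\xi:=(\xi\circ f)/f'$ on $C(\bbR/\bbZ)$. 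A direct computation gives $\Phi^{n}\mu=f^{n}_{*}\!\bigl(\tfrac{1}{(f^{n})'}\mu\bigr)$ and $\Phi^{n}\mu(\bbR/\bbZ)=\int \tfrac{1}{(f^{n})'}\,d\mu$. Since $\Phi$ preserves nonnegativity and $\Phi\mu(\bbR/\bbZ)=\int \tfrac{1}{f'}\,d\mu>0$, the normalized map $\mu\mapsto\Phi\mu/\Phi\mu(\bbR/\bbZ)$ is a weak-$*$ continuous self-map of the weak-$*$ compact convex set of probability measures, so by the Schauder--Tychonoff theorem it has a fixed point $\mu_{0}\ge0$; thus $\Phi\mu_{0}=\lambda\mu_{0}$ with $\lambda=\int \tfrac{1}{f'}\,d\mu_{0}>0$. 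Evaluating this along $n=q_{k}$ and comparing total masses gives $\lambda^{q_{k}}=\int \tfrac{1}{(f^{q_{k}})'}\,d\mu_{0}\in[e^{-V},e^{V}]$, where $V:=\mathrm{Var}_{\bbR/\bbZ}(\log f')$ and I have used Denjoy's uniform bound $\bigl\|\log (f^{q_{k}})'\bigr\|_{\infty}\le V$, valid because $J,f(J),\dots,f^{q_{k}-1}(J)$ are pairwise disjoint. Letting $k\to\infty$ forces $\lambda=1$, so $\mu_{0}$ is a $(-1)$-measure.

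For uniqueness I would argue directly that any two $(-1)$-measures are proportional. A nonnegative $(-1)$-measure $\mu$ has full support (its support is a nonempty closed $f$-invariant set, and $f$ is minimal by Denjoy) and no atoms, since $\Phi\mu=\mu$ gives $\mu(\{f^{q_{k}}p\})=\mu(\{p\})/(f^{q_{k}})'(p)\ge e^{-V}\mu(\{p\})$ at the infinitely many distinct points $f^{q_{k}}p$. From $f^{*}\mu=\tfrac{1}{f'}\mu$ one obtains the transport identities $\mu\bigl(f^{j}(K)\bigr)=\int_{K}\tfrac{1}{(f^{j})'}\,d\mu$; combining these (with $K=I_{n}$, resp.\ $K=I_{n+1}$, whose first $q_{n+1}$, resp.\ $q_{n}$, iterates are pairwise disjoint) with Denjoy's bounded distortion yields two facts: (a) every $(-1)$-measure has bounded geometry, i.e.\ adjacent intervals of each dynamical partition $\mathcal P_{n}$ carry comparable mass, with constant depending only on $V$; and (b) for two $(-1)$-measures $\mu_{1},\mu_{2}$, the family of ratios $\mu_{1}(J)/\mu_{2}(J)$, $J\in\mathcal P_{n}$, has Hilbert-metric oscillation that is contracted by a definite factor (depending only on $V$) upon passing from $\mathcal P_{n}$ to $\mathcal P_{n+1}$. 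Hence these ratios converge, uniformly in $J$, to a single constant $c$; since $\bigcup_{n}\mathcal P_{n}$ generates the Borel $\sigma$-algebra, $\mu_{1}=c\,\mu_{2}$, and $c=1$ when both are probability measures.

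The existence half is soft — weak-$*$ compactness of measures substitutes for the norm compactness that is unavailable in $C(\bbR/\bbZ)$ — whereas the substance of the theorem, and the step I expect to be the real obstacle, is the uniqueness: specifically the bounded-geometry estimate (a) and the contraction estimate (b). Both are consequences of Denjoy's uniform distortion control of the iterates $f^{q_{k}}$ (equivalently of the Denjoy--Koksma inequality), and carrying them out carefully is precisely the content of the classical argument of Douady and Yoccoz recorded in the cited references \cite{Slammert, dMPugh}.
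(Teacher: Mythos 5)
The paper does not actually prove this statement; it is quoted from the literature (Slammert attributes it to Douady--Yoccoz, with the reference \cite{dMPugh}), so there is no paper proof to match against. The paper does prove the analogous uniqueness statement for \emph{critical} circle maps (Step~3 of the proof of Theorem~\ref{th-measure}), and it does so by a completely different device: it uses the tangent--space characterization $T_f\{\rot=\alpha\}=\{v:\int v\circ f^{-1}\,d\mu_f=0\}$ to write a suitable linear combination $c_1\mu+c_2\tilde\mu$ that annihilates \emph{all} tangent vector fields, and then shows by a Fourier cut-off trick that such a charge must vanish.

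Your existence argument is correct and clean: $\Phi$ is the weak-$*$ adjoint of $\mathcal L\xi=(\xi\circ f)/f'$, the normalized map $\mu\mapsto\Phi\mu/\Phi\mu(\bbR/\bbZ)$ is a weak-$*$ continuous self-map of a weak-$*$ compact convex set, so a fixed point $\mu_0$ with $\Phi\mu_0=\lambda\mu_0$ exists, and Denjoy's inequality $\|\log(f^{q_k})'\|_\infty\le V$ forces $\lambda^{q_k}\in[e^{-V},e^V]$ and hence $\lambda=1$. The non-atomicity and transport identities are likewise fine. The gap is in the uniqueness step, specifically claim~(b). As you state it --- ``the Hilbert-metric oscillation of the ratios $\mu_1(J)/\mu_2(J)$, $J\in\mathcal P_n$, is contracted by a definite factor upon passing from $\mathcal P_n$ to $\mathcal P_{n+1}$'' --- this is false on general grounds: $\mathcal P_{n+1}$ refines $\mathcal P_n$, so each coarse ratio $\mu_1(J)/\mu_2(J)$ is a $\mu_2$-weighted \emph{average} of the fine ratios over $J'\subset J$, and averaging can only shrink, never enlarge, the oscillation; refinement therefore cannot decrease it, let alone by a definite factor. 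What Denjoy's inequality actually gives is a \emph{bounded} distortion factor $e^{V}$ along $q_n$-orbits, uniform in $n$, which yields (via the transport identities) a uniform \emph{bound} on the oscillation, not a contraction. To get $\mu_1\propto\mu_2$ from bounded oscillation you would still need a genuine Birkhoff-type contraction of a positive transfer step, and Denjoy by itself supplies no mechanism for a factor strictly less than $1$. You flag (b) yourself as ``the real obstacle'' and defer it to the references, which is honest, but it means the uniqueness half is not established by the argument as written.
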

This measure might be singular for Liouville rotation numbers. If  $f$ is conjugate to a rotation via a circle diffeomorphism $h$ (for instance, if the rotation number of $f$ is Herman), then it is easy to check that its (-1)-measure $\mu_f$ is absolutely continuous with density $(h'(x))^2$.

We can synthesize the following from \cite[Theorem 2.8]{Slammert}:
\begin{theorem}
  Let $g$ be a circle diffeomorphism with an irrational rotation number $\alpha$, and let $\mu$ be the (-1)-measure of $g$. Then the 
    condition $\int v|_{g^{-1}(x)} d\mu =0$ defines the tangent space to $\{\rot f=\alpha\}$ at $g$.
  \end{theorem}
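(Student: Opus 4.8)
The plan is to characterize the tangent space to the tongue $\{\rot f=\alpha\}$ at a diffeomorphism $g$ by linearizing the cohomological equation that governs the conjugacy. Recall that $\rot f$ is constant along a path $f_t$ through $g$ exactly when each $f_t$ is topologically conjugate to $g$; infinitesimally, writing $f_t = g + t v + o(t)$, this should amount to the linearized conjugacy equation $u\circ g - g'\cdot u = v$ for some continuous $u$ (a vector field generating the infinitesimal conjugacy). So I would first argue that $v\in T_g\{\rot f=\alpha\}$ if and only if the equation $v = u\circ g - g' \cdot u$ is solvable with $u$ continuous (this is essentially the statement that the tongue is a smooth manifold with this tangent space, which is the content of Slammert's Theorem 2.8 that we are allowed to cite). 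The subtlety of whether "solvable for continuous $u$" is the right infinitesimal condition — as opposed to merely measurable, or $L^2(\mu)$ — is where Slammert's analysis (and the Douady–Yoccoz measure) does the work, and I would lean on it rather than reprove it.

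The second, and genuinely linear-algebraic, step is to show that the image of the coboundary operator $L_g u := u\circ g - g'\cdot u$ acting on continuous functions $u$ coincides with the hyperplane $\{v : \int v|_{g^{-1}(x)}\, d\mu = 0\}$, where $\mu$ is the $(-1)$-measure of $g$. For the inclusion $\subseteq$: if $v = u\circ g - g'\cdot u$, then substitute $v|_{g^{-1}(x)} = u(x) - g'(g^{-1}(x))\, u(g^{-1}(x))$ and integrate against $\mu$. The first term gives $\int u\, d\mu$; the second term is $\int g'(g^{-1}(x))\, u(g^{-1}(x))\, d\mu$, which by the defining invariance of the $(-1)$-measure (the version $\int \phi\, d\mu = \int f'(f^{-1}(x))\phi(f^{-1}(x))\,d\mu$, applied with $\phi = u$) equals $\int u\, d\mu$. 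Hence the two contributions cancel and $\int v|_{g^{-1}(x)}\, d\mu = 0$. This shows $\operatorname{Im} L_g$ lies in the stated codimension-one subspace; and since the functional $v\mapsto \int v|_{g^{-1}(x)}\, d\mu$ is continuous and not identically zero (it is nonzero on, say, $v$ = constant), its kernel has codimension one, so it suffices to know $\operatorname{Im} L_g$ has codimension at most one — again exactly what Slammert's description of the tangent bundle gives, or what one reads off from the fact that the tongue is a $C^1$ hypersurface.

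The main obstacle is the step that is not pure linear algebra: establishing that the tangent space to the tongue is indeed the image of $L_g$ on \emph{continuous} vector fields, with no loss. This requires controlling the regularity of the infinitesimal conjugacy $u$ — one must know that a small perturbation of $g$ that keeps the rotation number can be conjugated back to $g$ by a homeomorphism depending differentiably (in the appropriate weak sense) on the perturbation, so that its generator is continuous and depends linearly on $v$. For Liouville $\alpha$ the measure $\mu$ may be singular and $g$ need not be smoothly linearizable, so one genuinely needs Slammert's functional-analytic framework (built on the Douady–Yoccoz uniqueness theorem, our Theorem~\ref{th-1meas}) rather than a naive linearization argument; that is why I would phrase this proof as a synthesis of \cite[Theorem 2.8]{Slammert} together with the cancellation computation above, rather than attempt a self-contained argument. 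Everything else — the substitution, the use of the invariance law, the codimension count — is routine once that input is in hand.
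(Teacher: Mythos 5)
Your proposal is correct and follows essentially the paper's own route: the paper states this theorem as a direct synthesis of Slammert's Theorem~2.8 (which supplies both that the tongue has a well-defined tangent space at $g$ and that this space is the image of the coboundary operator), and then carries out exactly your cancellation computation — substituting $v|_{g^{-1}(x)} = u(x) - g'(g^{-1}(x))u(g^{-1}(x))$ and invoking the $(-1)$-measure invariance — explicitly in the Herman-number case, where the generator can be taken analytic. The concluding codimension-one observation is likewise the paper's closing step, so you have reproduced the intended argument.
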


Note, that
if $\alpha$ is a Herman number (see \cite{Yoccoz2002} for the definition of the class of Herman numbers $\cH$),
we can prove this directly without referring to \cite{Slammert}.
Recall that all circle diffeomorphisms with Herman rotation numbers are analytically conjugate to  rotations \cite{Yoccoz2002}; as shown in \cite{GY}, the conjugacy depends analytically on the diffeomorphism in the Banach submanifold $\{\rot f=\alpha\}$. Let $h$ be the conjugacy, $g(x) = h^{-1}(h(x)+\alpha)$. Consider the probability measure $\mu$ with density proportional to  $(h'(x))^2$.  Then $\mu$ satisfies
\begin{equation}
\label{eq-1}
 \int \phi(x) d\mu = \int (g)'(g^{-1}(x)) \phi(g^{-1}(x)) d\mu.
\end{equation}
 The tangent space to $\{\rot f=\alpha\}$ at $g$ consists of vector fields of the form $$v = \frac{d}{d\xi} (id+\eps w) g(id + \eps w)^{-1} = w|_{g(x)} - g'(x) w$$ where $w$ is analytic. For any such vector field, we have
$$\int v|_{g^{-1}(x)} d\mu= \int w - g'(g^{-1}(x)) w(g^{-1}(x)) d\mu = \int w d\mu - \int w d\mu =0,$$
the second identity is due to \eqref{eq-1}. Thus the codimension-1 condition $$\int v|_{g^{-1}(x)} d\mu =0$$ defines the tangent space to $\{\rot f=\alpha\}$ at $g$.

\smallskip

We will prove the following two theorems:
\begin{theorem}
\label{th-measure}
 A cubic critical circle map $f$ with an irrational  rotation number has a unique (-1)-measure $\mu_f$. The tangent space to the  local analytic manifold $\{\rot g = \alpha, g\in \dcrepsr\}$, at the critical circle map $f$ coincides with $\{v\in T \dcrepsr\mid \int v|_{f^{-1}(x)} d\mu_f=0\}$.

 If $g_n\to g$ are critical circle maps with irrational rotation numbers, then the corresponding (-1)-measures weakly converge.
\end{theorem}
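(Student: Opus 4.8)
\emph{Plan.} I would establish the three assertions of Theorem~\ref{th-measure} in turn — existence of a $(-1)$-measure, its uniqueness, the tangent-space formula, and the weak continuity — with uniqueness being the main obstacle.

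\emph{Existence.} I would obtain $\mu_f$ by a limiting argument from the diffeomorphism case. Given a cubic critical circle map $f$ with $\rot f=\alpha\notin\QQ$, approximate it in the $C^1$ topology on $\RR/\ZZ$ by analytic circle diffeomorphisms $f_j$ with $\rot f_j=\alpha$: smooth out the critical point and compensate by an additive constant, which may be taken tending to $0$ since $\alpha$ is irrational and $\rot$ is continuous and non-decreasing in the constant. Each $f_j$ has a unique $(-1)$-measure $\mu_j$ (Theorem~\ref{th-1meas}), normalised to be a probability measure; extract a weak-$*$ limit $\mu_f$. Since $f_j'\to f'$ and $f_j^{-1}\to f^{-1}$ uniformly, for every continuous $\phi$ the integrand $f_j'(f_j^{-1}(x))\phi(f_j^{-1}(x))$ converges uniformly to $f'(f^{-1}(x))\phi(f^{-1}(x))$, so the defining identity passes to the limit and $\mu_f$ is a $(-1)$-measure of $f$. (Alternatively one could produce one directly by a Schauder--Tychonoff fixed point for the normalised dual transfer operator $\mu\mapsto T\mu/\|T\mu\|$, $T\mu(\phi)=\int f'(f^{-1}(x))\phi(f^{-1}(x))\,d\mu$, on probability measures off the single point $f(0)$ where it degenerates, but the limiting construction is also convenient below.)

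\emph{Uniqueness (the crux).} The Douady--Yoccoz argument uses that $f$ is a diffeomorphism, so the point is to replace the Denjoy-type distortion control by the real \emph{a priori} bounds of Theorem~\ref{th-realbounds}. I would start from the reformulation: $\mu$ is a $(-1)$-measure iff $f_*\mu = f'(f^{-1}(\cdot))\,\mu$, whence $f^n_*\mu=(f^n)'(f^{-n}(\cdot))\,\mu$ for all $n$. Given two $(-1)$-probability-measures $\mu,\nu$, set $\eta=\mu+\nu$ (again a $(-1)$-measure) and $\rho=d\mu/d\eta\in[0,1]$. The orbit relations $\eta(\{x\})=f'(x)\,\eta(\{f(x)\})$ first force $\eta$ to be non-atomic — off the orbit of $0$ because $\sum_{k\in\ZZ}\eta(\{f^k(x)\})=\eta(\{x\})\sum_{k\in\ZZ}1/(f^k)'(x)=\infty$ unless $\eta(\{x\})=0$, using that $(f^{q_n})'(x)$ is bounded over $n$ (real \emph{a priori} bounds), and on the orbit of $0$ because $\eta(\{0\})=f'(0)\,\eta(\{f(0)\})=0$ and the same kind of iteration along the post-critical orbit. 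Hence, comparing $f^n_*\mu=(f^n)'(f^{-n}(\cdot))\,\rho\,\eta$ with $f^n_*\mu=(\rho\circ f^{-n})\,(f^n)'(f^{-n}(\cdot))\,\eta$ gives $\rho=\rho\circ f^{-1}$ $\eta$-a.e., i.e.\ $\rho$ is an $f$-invariant measurable function. It then remains to prove that $f$ is \emph{ergodic} with respect to $\eta$, for then $\rho\equiv\tfrac12$ and $\mu=\nu$. This is where the real \emph{a priori} bounds and the bounded geometry of the dynamical partitions $\mathcal P_n$ (Theorems~\ref{th-realbounds}, \ref{th-intervals}) enter, exactly as Denjoy distortion does in the diffeomorphism case: they control the $\eta$-measures of the $f$-images of the intervals of $\mathcal P_n$ tightly enough to preclude a nontrivial $f$-invariant splitting of $\eta$. (An alternative to keep in reserve is to transport the problem down the renormalization tower: a $(-1)$-measure restricts, Kac-style, to a $(-1)$-measure of the first-return map to the crescent $C$ and transforms under the uniformisation $\Psi_f$ by $\mu\mapsto(\Psi_f)_*((\Psi_f')^{-1}\mu)$, so $\mu_f$ is reconstructed from $\mu_{\cren^n f}$ on a fundamental domain shrinking to a point; combined with Theorem~\ref{th-convergence} and Lemma~\ref{lem-Psi-estim} this pins $\mu_f$ down. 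I expect the ergodicity step, equivalently the nonexistence of two mutually singular $(-1)$-measures, to be the hardest point either way.)

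\emph{Tangent space.} Following Slammert, I would identify the continuous linear functional $v\mapsto\int v|_{f^{-1}(x)}\,d\mu_f$ with the derivative of the rotation number: for $f_t=f+tv+o(t)$ one has $\tfrac{d}{dt}|_0 f_t^{q_n}(0)=\sum_{j=0}^{q_n-1}(f^{q_n-1-j})'(f^{j+1}(0))\,v(f^j(0))$, and the normalised atomic measures $\big(\sum_j(f^{q_n-1-j})'(f^{j+1}(0))\big)^{-1}\sum_j(f^{q_n-1-j})'(f^{j+1}(0))\,\delta_{f^j(0)}$ converge weakly to the $f^{-1}$-pushforward of $\mu_f$; the convergence and the identification of the limit are where the real \emph{a priori} bounds again replace Slammert's Denjoy estimates. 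Hence this functional annihilates every $v=\dot\gamma(0)$ with $\gamma$ a curve through $f$ inside $\{\rot g=\alpha\}$, so $T_f\{\rot g=\alpha,\,g\in\dcrepsr\}$ is contained in its kernel; since the functional is not identically zero (on $v\equiv 1\in T_f\dcrepsr$ it equals $\mu_f(\RR/\ZZ)=1$), its kernel is closed of codimension one, and as the local analytic manifold $\{\rot g=\alpha\}\cap\dcrepsr$ is also of codimension one the two subspaces coincide.

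\emph{Weak continuity.} This is immediate from uniqueness: if $g_n\to g$ are critical circle maps with irrational rotation numbers, the probability measures $\mu_{g_n}$ are tight, and any weak-$*$ limit point $\mu$ is — by the same passage to the limit used for existence, now with $g_n'\to g'$ and $g_n^{-1}\to g^{-1}$ uniformly — a $(-1)$-measure of $g$, hence equals $\mu_g$; since the space of probability measures on $\RR/\ZZ$ is compact and metrizable in the weak-$*$ topology, $\mu_{g_n}\to\mu_g$.
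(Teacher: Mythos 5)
Your existence argument and the derivation of weak continuity from uniqueness both match the paper's approach. But your uniqueness and tangent-space arguments take a genuinely different route, and it is precisely there that the crucial gaps are.

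For uniqueness, you reduce to ergodicity of $f$ with respect to the quasi-invariant measure $\eta=\mu+\nu$ and then wave at the real \emph{a priori} bounds to preclude a nontrivial invariant splitting, conceding that you expect this to be ``the hardest point either way.'' In fact the paper \emph{avoids} ergodicity entirely. Its proof goes through Step~2: the local manifold $\{\rot g=\alpha\}\cap\dcrepsr$ sits inside a complex-analytic stable manifold $W$ of the renormalization operator (from \cite{Ya4}), and maps in $W$ are quasiconformally conjugate with conjugacy depending analytically on $W$; hence every tangent vector at $f$ has the form $v=w\circ f-f'\cdot w$ with $w$ a real-symmetric qc vector field, and the $(-1)$-identity kills $\int v|_{f^{-1}(x)}\,d\mu$ for \emph{any} $(-1)$-measure $\mu$. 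Given two $(-1)$-measures $\mu,\tilde\mu$, a combination $c_1\mu+c_2\tilde\mu$ then annihilates all of $T_f\dcrepsr$ (automatic on the codimension-one tangent space; one free constant for the remaining direction), and a short lemma shows that any charge annihilating $\{v(f^{-1}(\cdot)):v\in T_f\dcrepsr\}$ is zero, because adding a small correction $a_1\sin nz+a_2\cos nz$ with $a_1,a_2\to 0$ promotes the conclusion from $T_f\dcrepsr$ to all vector fields. If you want to pursue the ergodicity route you would have to actually supply the Denjoy-style argument with respect to a possibly singular $\eta$; it is not a small step, and the paper never proves it.

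For the tangent-space formula you also deviate, relying on the Slammert-style weak convergence of normalized atomic measures $\propto\sum_j(f^{q_n-1-j})'(f^{j+1}(0))\,\delta_{f^j(0)}$ to $f^{-1}_*\mu_f$. This convergence is itself not established and is nontrivial for a critical circle map (it is not automatic from the real \emph{a priori} bounds; you would need to show these empirical measures have a unique weak-$*$ limit and identify it). The paper instead obtains the containment $T_f\{\rot g=\alpha\}\cap\dcrepsr\subset\ker\!\big(v\mapsto\int v|_{f^{-1}(x)}\,d\mu_f\big)$ directly from the qc representation of tangent vectors and closes by a codimension count. Your parenthetical alternative --- transporting the $(-1)$-measure down the renormalization tower via the first-return map and $\Psi_f$ and using Theorem~\ref{th-convergence} --- is an interesting idea but equally unproven. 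To fix your write-up you should either (i) replace both sketches with the paper's argument through the stable manifold $W$ of \cite{Ya4}, or (ii) genuinely carry out the ergodicity and weak-limit steps, which would constitute new content beyond the paper.
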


\begin{theorem}
\label{th-C1}
 For any sequence $f_n\to f$ where $f_n\in \depsr$ are circle diffeomorphisms and $f$ is a cubic critical circle map with $f'(0)=f''(0)=0$, if $\rot f_n=\rot f = \alpha\in \bbR\setminus \bbQ$, then (-1)-measures $\mu_n$ of $f_n$ converge to the (-1)-measure $\mu_f$ of $f$. Consequently, the tangent spaces to the local analytic manifolds $\{\rot g = \alpha\}$ at $f_n$ converge to  the space given by  $ \int v|_{f^{-1}(x)} d\mu_f =0$ which intersects $T\dcrepsr$ on the tangent space to the local analytic manifold $\{\rot g = \alpha, g\in \dcrepsr\}$, at $f$.
\end{theorem}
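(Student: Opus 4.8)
\emph{Proof proposal for Theorem~\ref{th-C1}.} The plan is to reduce everything to weak-$*$ compactness of probability measures on the circle together with the uniqueness assertion of Theorem~\ref{th-measure}. Normalize all $(-1)$-measures to be probability measures; then the sequence $\mu_n$ has weak limit points. I would show that \emph{every} weak limit point $\mu_\infty$ is a $(-1)$-measure of $f$. Pass to a subsequence with $\mu_n\to\mu_\infty$ weakly, and let $n\to\infty$ in the defining identity
$$\int \phi\, d\mu_n = \int f_n'(f_n^{-1}(x))\,\phi(f_n^{-1}(x))\, d\mu_n, \qquad \phi\in C(\RR/\ZZ).$$
The left-hand side tends to $\int\phi\,d\mu_\infty$ by weak convergence. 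For the right-hand side: $f_n\to f$ in $\depsr$ forces $f_n\to f$ in $C^1(\RR/\ZZ)$ by Cauchy estimates on a slightly narrower strip, and since $f_n, f$ are homeomorphisms of the compact circle with $f_n\to f$ uniformly, we get $f_n^{-1}\to f^{-1}$ uniformly. Hence $x\mapsto f_n'(f_n^{-1}(x))\,\phi(f_n^{-1}(x))$ converges uniformly to $x\mapsto f'(f^{-1}(x))\,\phi(f^{-1}(x))$, and the latter is a genuine continuous function because $f'$ is continuous (it merely vanishes at the critical point, it is not singular). Uniform convergence of these integrands plus weak convergence of the probability measures $\mu_n$ shows the right-hand side tends to $\int f'(f^{-1}(x))\,\phi(f^{-1}(x))\,d\mu_\infty$. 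Thus $\mu_\infty$ satisfies the $(-1)$-measure identity for $f$, so by the uniqueness part of Theorem~\ref{th-measure}, $\mu_\infty=\mu_f$. Since every subsequence of $\{\mu_n\}$ has a sub-subsequence converging to $\mu_f$, the whole sequence converges: $\mu_n\to\mu_f$ weakly.

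For the tangent-space statement, let $B$ be the model Banach space of analytic vector fields on the strip. The tangent plane to $\{\rot g=\alpha\}$ at the diffeomorphism $f_n$ is $H_n=\Ker\ell_n$, where $\ell_n(v)=\int v(f_n^{-1}(x))\,d\mu_n$ is a bounded functional on $B$ (by the description of the tangent space synthesized from \cite{Slammert} above), and $H_\infty:=\Ker\ell_\infty$ with $\ell_\infty(v)=\int v(f^{-1}(x))\,d\mu_f$ is the codimension-one subspace figuring in the statement; by Theorem~\ref{th-measure}, $H_\infty\cap T\dcrepsr$ is the tangent space to $\{\rot g=\alpha,\ g\in\dcrepsr\}$ at $f$. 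All these functionals are normalized consistently: on the constant vector field $\mathbf 1$ (the rotation direction) one has $\ell_n(\mathbf 1)=\ell_\infty(\mathbf 1)=1$, and $\mathbf 1\notin\Ker\ell_n$, so $B=H_n\oplus\RR\mathbf 1$. From $\mu_n\to\mu_f$ weakly and $f_n^{-1}\to f^{-1}$ uniformly, together with the equicontinuity of the unit ball of $B$ on $\RR/\ZZ$ (Cauchy estimates once more), I get $\|\ell_n-\ell_\infty\|\to 0$. Convergence of the codimension-one kernels is then routine: given $v\in H_\infty$, the vectors $v_n:=v-\ell_n(v)\mathbf 1$ lie in $H_n$ and $v_n\to v$ because $\ell_n(v)\to\ell_\infty(v)=0$; conversely any norm-limit of a sequence $v_n\in H_n$ lies in $H_\infty$ since $\ell_n(v_n)=0$ and $\ell_n\to\ell_\infty$. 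Intersecting with $T\dcrepsr=\{v\in B:v'(0)=v''(0)=0\}$ and invoking the second assertion of Theorem~\ref{th-measure} then identifies $H_\infty\cap T\dcrepsr$ with the tangent space to $\{\rot g=\alpha,\ g\in\dcrepsr\}$ at $f$, completing the argument.

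The one genuinely delicate point is the passage to the limit in the $(-1)$-measure identity at the critical map. A priori one might worry either that $\mu_n$ escapes to a measure not governed by the dynamics of $f$, or that the $f^{-1}$ appearing in $f$'s identity is too singular to control under the limit. Both worries dissolve: weak-$*$ compactness keeps all mass on the compact circle with the probability normalization intact, and the identity involves only the composition $f'\circ f^{-1}$ — never the derivative of $f^{-1}$ — which is a continuous function even though $f^{-1}$ has a vertical tangent at the critical value. All the substantive analytic work (existence of $\mu_f$, its uniqueness, and the description of the tangent space of the critical tongue) has already been carried out in Theorem~\ref{th-measure}; Theorem~\ref{th-C1} is a soft continuity consequence of it.
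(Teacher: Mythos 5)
Your proposal is correct and follows essentially the same line as the paper's: weak-$*$ compactness of the (normalized) $(-1)$-measures on the compact circle, passage to the limit in the invariance identity using the uniform convergence $f_n'\circ f_n^{-1}\to f'\circ f^{-1}$ (together with the standard fact that $\int g_n\,d\mu_n\to\int g\,d\mu$ when $g_n\to g$ uniformly and $\mu_n\to\mu$ weak-$*$), and finally the uniqueness assertion to identify every weak limit point with $\mu_f$. The paper packages this inside a single four-step ``simultaneous proof'' of Theorems~\ref{th-measure} and~\ref{th-C1}, and is terser on the tangent-space conclusion; you instead cite Theorem~\ref{th-measure} as a black box and make the tangent-space convergence explicit via the normalized functionals $\ell_n$ and the operator-norm estimate $\|\ell_n-\ell_\infty\|\to 0$ (where your invocation of equicontinuity of the unit ball of $\depsr$ is exactly what is needed to upgrade pointwise weak-$*$ convergence to uniformity over that ball). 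The two presentations are logically equivalent; yours is slightly more modular but not a genuinely different route.
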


\noindent
The convergence of the tangent spaces has an immediate corollary:
\begin{corollary}
  \label{cor:C1}

Consider a $C^\omega$-smooth family $f_{\mu}$, $\mu\in \bbR^n$, defined on a semi-neigh\-bor\-hood $\mu_1\in (-\eps, 0]$, $\mu_k \in (-\eps,\eps)$, such that $f_{\mu}$ is an analytic circle diffeomorphism for $\mu_1<0$ and a cubic critical circle map for $\mu_1=0$. Assume that $\frac {\partial f}{\partial \mu_2}>0$ on the circle whenever $\mu_1=0$. Then for each irrational rotation number $\alpha$, the Arnold $\alpha$-tongue  is given by a function $\mu_2=\mu_2(\mu_1, \mu_3, \dots, \mu_n)$ that is at least $C^1$-smooth at $0$.
  \end{corollary}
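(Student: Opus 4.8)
\emph{Proof plan.} The plan is to realize the $\alpha$-tongue near $0$ as the graph of an implicitly defined function $\mu_2=\psi(\mu_1,\mu_3,\dots,\mu_n)$ and to read off its $C^1$-regularity up to $\mu_1=0$ from the continuous dependence of $(-1)$-measures supplied by Theorems~\ref{th-measure} and~\ref{th-C1}.

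First I would set up transversality. Let $f$ be a critical circle map in the family with $\rot f=\alpha$ (so $\mu_1=0$) and let $\mu_f$ be its $(-1)$-measure, a nonzero positive measure. Since $\partial f/\partial\mu_2>0$ on the whole circle, the vector $v=\partial f/\partial\mu_2$ satisfies $\int v|_{f^{-1}(x)}\,d\mu_f>0$, so by Theorems~\ref{th-measure} and~\ref{th-C1} it is transversal, at $f$, both to $\{\rot g=\alpha\}$ in $\depsr$ and to $\{\rot g=\alpha,\ g\in\dcrepsr\}$ in $\dcrepsr$. Because $(\mu,x)\mapsto(\partial f_\mu/\partial\mu_2)(x)$ is continuous and the circle is compact, $\partial f_\mu/\partial\mu_2>0$ on the circle for all $\mu$ near $0$, and hence (by Slammert's theorem \cite{Slammert} in the $C^\omega$-family setting, or again via the $(-1)$-measure description of the tangent space) the same transversality holds at every $f_\mu$ with $\mu_1<0$ close to $0$. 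Next I would note that for each fixed $(\mu_1,\mu_3,\dots,\mu_n)$ near $0$ the map $\mu_2\mapsto\rot f_\mu$ is continuous and non-decreasing, and since $\alpha$ is irrational its level set $\{\rot f_\mu=\alpha\}$ is a single point $\psi(\mu_1,\mu_3,\dots,\mu_n)$ (neither diffeomorphisms nor critical circle maps have wandering intervals, so the irrational tongue has empty interior); then continuity of the rotation number and uniqueness make $\psi$ continuous on the closed set $\{\mu_1\le0\}$ near $0$. It remains to show $\psi\in C^1$ up to $\mu_1=0$.

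On $\{\mu_1<0\}$, Slammert's theorem gives that $\{\rot f_\mu=\alpha\}$ is a $C^1$ Banach submanifold of $\depsr$ with tangent space at $f_\mu$ cut out by $v\mapsto\int v|_{f_\mu^{-1}(x)}\,d\mu_{f_\mu}$; transversal intersection with the finite-dimensional family, together with the implicit function theorem, makes $\psi$ of class $C^1$ on $\{\mu_1<0\}$, and differentiating the curve $s\mapsto f_{(\mu_1,\psi,\mu_3,\dots,s,\dots,\mu_n)}$ inside the tongue yields
\[
\frac{\partial\psi}{\partial\mu_j}\;=\;-\,\frac{\int v_j|_{f_\mu^{-1}(x)}\,d\mu_{f_\mu}}{\int v_2|_{f_\mu^{-1}(x)}\,d\mu_{f_\mu}},\qquad v_j:=\frac{\partial f_\mu}{\partial\mu_j},\quad j\in\{1,3,\dots,n\},
\]
evaluated at $\mu=(\mu_1,\psi(\mu_1,\mu_3,\dots,\mu_n),\mu_3,\dots,\mu_n)$. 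Then I would let $\mu_1\to0^-$ with $(\mu_3,\dots,\mu_n)$ fixed: the corresponding $f_\mu$ converge in $\depsr$ to a critical circle map $f_0$ with $\rot f_0=\alpha$, so by Theorem~\ref{th-C1} the measures $\mu_{f_\mu}$ converge weakly to $\mu_{f_0}$, while the test functions $x\mapsto v_j(f_\mu^{-1}(x))$ converge uniformly on the circle to $x\mapsto v_j(f_0^{-1}(x))$ (because $\partial f/\partial\mu_j$ is continuous in $(\mu,x)$ and inversion of circle homeomorphisms is $C^0$-continuous). Hence numerator and denominator above converge, the denominator to the positive number $\int v_2|_{f_0^{-1}(x)}\,d\mu_{f_0}$, so each $\partial\psi/\partial\mu_j$ extends continuously to $\{\mu_1=0\}$. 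A function that is $C^1$ on $\{\mu_1<0\}$, continuous up to $\{\mu_1=0\}$, and whose partials extend continuously to $\{\mu_1=0\}$ is $C^1$ on $\{\mu_1\le0\}$, which is the claim.

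The main obstacle is the last step: interchanging the limit $\mu_1\to0^-$ with integration against the $(-1)$-measures. This genuinely needs \emph{both} ingredients of Section~\ref{sec-measure} — weak convergence of the $(-1)$-measures (Theorem~\ref{th-C1}) \emph{and} uniform convergence of the moving test functions $v_j\circ f_\mu^{-1}$ — and it crucially uses that the denominator stays bounded away from zero in the limit, which is exactly the transversality provided by $\partial f/\partial\mu_2>0$ together with positivity of $\mu_{f_0}$. Everything else is the standard implicit-function-theorem packaging of Slammert's theorem on open tongues.
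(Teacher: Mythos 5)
Your argument is correct and is essentially the intended derivation behind the paper's one-line "immediate corollary": pass to the implicit function $\psi$ on $\{\mu_1<0\}$ via Slammert's description of $T\{\rot f=\alpha\}$, write the partials of $\psi$ as ratios of integrals against $(-1)$-measures, and push the formula to the boundary using the weak convergence of $(-1)$-measures from Theorem~\ref{th-C1}, uniform convergence of the moving test functions $v_j\circ f_\mu^{-1}$, and positivity of the limiting denominator supplied by the transversality hypothesis; the final "continuous extension of partials implies $C^1$ up to the boundary" step is a standard mean-value-theorem fact. This is the same route the paper takes, with the details of the convergence of tangent planes spelled out rather than asserted.
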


\begin{proof}[Simultaneous proof of Theorem \ref{th-measure} and Theorem \ref{th-C1}]
$\-$ \\
   \textbf{Step 1: Partial limits of tangent spaces are given by (-1)-measures of critical circle maps.}

   In assumptions of Theorem \ref{th-C1}, choose a weakly convergent subsequence $\mu_{n_k}$ from $\mu_n$. Let $\mu$ be the limit measure.
% For any orbit $x_k$ of $f$, by considering measures $\mu$ and $\mu_\delta$ of small neighborhoods of $x_k$ and by using \eqref{eq-1}, one can conclude that $\mu_\delta$-measure of a sufficiently small neigborhood of $x_0$ is small for all $\delta$, and thus $\mu$ is atomless.

Due to the weak convergence of measures, \eqref{eq-1} implies that
\begin{equation}
\label{eq-1-crit}
 \int \phi(x) d\mu = \int f'(f^{-1}(x)) \phi(f^{-1}(x)) d\mu
\end{equation}  for any continuous test function $\phi$. Indeed, the left-hand sides of \eqref{eq-1} converge to the left-hand side of \eqref{eq-1-crit}. In the right-hand side, the difference between the integrals of $(f_{n_k})'(f_{n_k}^{-1}(x))\phi(f_{n_k}^{-1}(x)) $ with respect to  $\mu_{n_k}$ and $\mu$ is small due to the weak convergence, and for any fixed  continuous test function, the distance between
$(f_{n_k})'(f_{n_k}^{-1}(x))\phi(f_{n_k}^{-1}(x))$ and $f'(f^{-1}(x)) \phi(f^{-1}(z)) $ is small for large ${n_k}$.

We conclude that as $k\to \infty$, the tangent spaces to $\{\rot g = \alpha\}$ at $f_{n_k}$ have a limit, namely a codimension-1 space of vector fields with  $\int v|_{f^{-1}(x)} d\mu =0$, where $\mu$ is a (-1)-measure for $f$.

\noindent
\textbf{Step 2: relation of (-1)-measures for critical circle maps to tangent spaces of $\{\rot f=\alpha\}$. }

We now prove that this limit space intersects the space of critical circle maps on the tangent space of the condition $\{\rot f=\alpha\}$.
By real {\it a priori} bounds,  all critical circle maps with $\{\rot g=\alpha \}$ are quasiconformally conjugate to $f$.
We are now going to use a stronger version of this statement, derived from the fact that the set $\{\rot g=\alpha\}\cap \dcrepsr$ lies inside a complex-analytic Banach manifold $W\subset\dcreps$, which is a stable manifold of the renormalization operator constructed in \cite{Ya4}. As shown in \cite{Ya4}, the maps in $W$ are quasiconformally conjugate in a neighborhood of $\RR/\ZZ$, with the conjugacy varying analytically on $W$. 
%
%Indeed, consider the family $f_\eps$ of critical circle maps with $\rot f=\eps\in \bbR\setminus \bbZ$, and consider the maps $z\mapsto P(z,\eps)$ that take $f^n(0)$ to $f^n_\eps(0) $. This is a holomorphic motion, and Mane-Sad-Sullivan lemma implies that it extends to the holomorphic motion of a neighborhood of $\bbR/\bbZ$ that is quasiconformal on $z$, with the Beltrami differential $\mu_\eps$ holomorphic on $\eps$ as an element of $L^\infty$. \fixme{TODO Krushkal states this, Quasiconformal Reflections across Polygonal Lines --- check}Note that $\mu_\eps$ is invariant under $f$. The derivative $w$ of this motion with respect to $\eps$ at $\eps=0$ is thus well-defined due to Ahlfors-Bers theorem \cite{}, is Holder continuous and satisfies $w_{\bar z}=\mu'|_{\eps=0}$;  we have $ f'_\eps = w|_{f(x)} - f'(x) w$.
The tangent space to $\{\rot g=\alpha\}\cap \dcrepsr$, which is the real slice of $W$,  consists of the vector fields of the form $v =  w|_{f(x)} - f'(x) w$ where $w$ is
a real-symmetric quasiconformal vector field varying analytically with $g$.
Hence,
%%%a solution of $w_{\bar z}=\mu'$. For any such vector field, we have
$$\int v|_{f^{-1}(x)} d\mu =  \int w - f'(f^{-1}(x)) w(f^{-1}(x)) d\mu = \int w d\mu - \int w d\mu =0.$$
Thus, the tangent space to $\{\rot g = \alpha\}\cap \dcrepsr$ lies in the linear subspace given by the condition $\int v|_{f^{-1}(x)} d\mu = 0$, and, by the dimension count, coincides with it.
%%thus $\mu$ defines the tangent space to $\{\rot g = \alpha\}$ in the space of vector fields with $v'(0)=v''(0)=0$.

\noindent
\textbf{Step 3: uniqueness of (-1)-measures for critical circle maps.}

Let us prove the first statement of Theorem \ref{th-measure}:  a critical circle map  $f$ has a unique (-1)-measure. This  also implies that the limit $\mu$ does not depend on the sequence $f_{n_k}$ and thus completes the proof of Theorem \ref{th-C1}.

%For a critical circle map $f$,  one (-1)-measure can be constructed as above using any sequence of diffeomorphisms $f_n\to f$.
Suppose that we have two  different (-1)-measures $\mu$ and $\tilde \mu$. Combine them with such coefficients $c_1,\;c_2$ that  $$\int v|_{f^{-1}(x)} d(c_1 \mu + c_2 \tilde \mu) =0$$ for all vector fields in $T_f\dcrepsr$. For the codimension $1$ tangent subspace  to $\{\rot g = \alpha\}$ this is automatic. For the remaining direction, this can be achieved by an appropriate choice of the constants.
%For the stable distribution, this follows from the properties of $\mu, \tilde \mu$, and for the one-dimensional unstable direction, we should choose appropriate $c_1, c_2$.
The following lemma applies to the charge $(c_1 \mu + c_2 \tilde \mu)$ and thus implies that $\mu = \tilde \mu$.

\begin{lemma}
 Any charge $\nu$ with $\int v(f^{-1}(z)) d\nu=0$ for any vector field $v\in T_f\dcrepsr$ is zero.
\end{lemma}
\begin{proof}
 For any vector field $v$, choose large $n$ and let $$\tilde v(z) = v(z) + a_1 \sin n z + a_2 \cos n z$$ where $a_1, a_2$ are chosen so that $\tilde v'(0)=\tilde v''(0)=0$.
 Then $\tilde v \in T_f\dcrepsr$, and $ a_1, a_2\to 0$ as $n\to \infty$. We have $$\left|\int v(f^{-1}(z)) d\nu\right| =$$ $$=\left|\int \tilde v(f^{-1}(z)) d\nu - a_1 \int \sin n(f^{-1}(z)) d\nu - a_2 \int \cos n(f^{-1}(z)) d\nu\right|\le$$ $$ \le c|a_1| + c|a_2| \to 0 \text{ where }c=\int 1 d\nu <\infty.$$ This implies $\int v(f^{-1}(z)) d\nu =0$ for any $v$, thus $\nu=0$.
\end{proof}

This completes the proof of  Theorem \ref{th-C1}.

\noindent
\textbf{Step 4: convergence of (-1)-measures for critical circle maps}

It remains to prove the first statement of Theorem \ref{th-measure}: if $g_n\to g$ are critical circle maps, the corresponding measures weakly converge. Indeed,  any weak limit of a sequence of (-1)-measures for $f_n$ is a (-1)-measure for $f$, and since such measure is unique, the statement follows.
\end{proof}

We conclude by proving the following lemma which will be useful to us in what follows, and is interesting in its own right.
\begin{lemma}
\label{lem-noatoms}
The (-1)-measure for a critical circle map $f$ with irrational rotation number cannot have atoms.
%%at images and preimages of the critical point.
\end{lemma}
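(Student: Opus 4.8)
The plan is to transport a hypothetical atom along the forward orbit of $f$, with masses governed by the derivative cocycle, and then to contradict the finiteness of $\mu$ by invoking the real \emph{a priori} bounds, which keep that cocycle bounded along the closest-return times $q_k$.

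First I would record a transport identity. Applying the defining relation of the $(-1)$-measure to a sequence of continuous functions decreasing to the indicator of a point $x$ (legitimate since $\mu$ is finite and $f'$ is bounded) gives
\[
\mu(\{x\}) \;=\; f'(x)\,\mu(\{f(x)\})\qquad\text{for every }x\in\bbR/\bbZ .
\]
Normalize $\mu(\bbR/\bbZ)=1$ and suppose $m:=\mu(\{a\})>0$. If $f^{j}(a)=0$ for some $j\ge 0$, then from $\mu(\{0\})=f'(0)\,\mu(\{f(0)\})=0$ (here $f'(0)=0$, as $0$ is a cubic critical point) and the transport identity applied successively at $f^{j-1}(a),\dots,a$ one gets $\mu(\{a\})=0$. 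So we may assume $f^{n}(a)\ne 0$ for all $n\ge 0$; then $f'(f^{i}(a))>0$ for all $i\ge 0$, and the transport identity iterates to
\[
\mu(\{f^{n}(a)\})\;=\;\frac{m}{(f^{n})'(a)}\,,\qquad (f^{n})'(a)=\prod_{i=0}^{n-1}f'(f^{i}(a))\in(0,\infty),\quad n\ge 0 .
\]

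Since $\rot f$ is irrational, the points $f^{n}(a)$, $n\ge 0$, are pairwise distinct, so finiteness of $\mu$ forces $m\sum_{n\ge 0}\bigl((f^{n})'(a)\bigr)^{-1}\le 1$; it remains to prove that this series diverges. This is where the real \emph{a priori} bounds enter. Because $a\ne 0$ and the partition intervals satisfy $|I_k|\to 0$ (Theorem~\ref{th-intervals}), for all large $k$ the point $a$ lies, bounded away from the critical point, inside an interval of the dynamical partition $\mathcal P_k$ on which the return iterate $f^{q_k}$ has distortion $\le C_0$ and which $f^{q_k}$ maps onto a $C_0$-commensurable interval (Theorem~\ref{th-realbounds}); hence $(f^{q_k})'(a)\le C_1$ for some $C_1=C_1(f)$ and all large $k$. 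Consequently $\mu(\{f^{q_k}(a)\})\ge m/C_1$ for infinitely many $k$, and the points $f^{q_k}(a)$ are distinct, so $\mu$ would have infinite mass unless $m=0$.

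The routine ingredients are the approximation argument for the transport identity and the bookkeeping with the cocycle. The one genuinely nontrivial input — and the step deserving care — is the uniform bound $(f^{q_k})'(a)\le C_1$: morally it says that, after rescaling, $f^{q_k}$ is uniformly close to a rigid rotation away from the critical point, which is precisely what the real \emph{a priori} bounds / bounded geometry of renormalized commuting pairs provide (see \cite{dFdM1}). The only subtlety is combinatorial: when the $\mathcal P_k$-interval containing $a$ sits near the top of the $I_k$-tower, the orbit of that interval begins to wrap around within $q_k$ steps, and there one should instead use the return time $q_{k+1}$ (or pass to a sub-interval) to keep the distortion bound; this changes nothing in the conclusion, since one still obtains infinitely many atoms of mass bounded below.
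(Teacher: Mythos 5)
Your proof follows the same strategy as the paper's: establish the transport identity $\mu(\{x\})=f'(x)\,\mu(\{f(x)\})$ by approximating indicators, dispose of preimages of the critical point, iterate to get $\mu(\{f^n(a)\})=m/(f^n)'(a)$, and contradict finiteness of $\mu$ by showing $(f^{q_k})'(a)$ stays bounded along return times, using real \emph{a priori} bounds. The one place you diverge is in how that bound is obtained: you appeal directly to the bounded-distortion statement on partition intervals (Theorem~\ref{th-realbounds}(b)), which forces you into the combinatorial caveat you flag at the end (and also leaves untreated the case $a\in f^l(I_{k+1})$, where that bound is not stated). The paper sidesteps this entirely by centering the argument at $p$ itself: it introduces $A_n(p)=[p,f^{q_n}(p)]$ and $B_n(p)=[f^{-q_n}(p),f^{2q_n}(p)]$, observes that the $q_n$-orbit of $B_n(p)$ meets the critical point at most three times (since $q_n$ is a first-return time), and then gets the uniform bound on $(f^{q_n})'(p)$ from the one-dimensional Koebe Distortion Theorem applied to a decomposition of $f^{q_n}|_{A_n(p)}$ into power maps and bounded-distortion pieces. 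Your version is morally correct, but the Koebe argument is cleaner because it does not depend on which partition interval $a$ lands in; if you want to keep your route, you should spell out the ``pass to a sub-interval / use $q_{k+1}$'' fix rather than assert that it changes nothing.
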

\begin{proof}
Fix $p$ and apply the definition of the (-1)-measure  for continuous test functions that approximate $\chi([p-\frac 1n, p+\frac 1n])$, then let $n\to \infty$. We get that $\mu_f(\{p\}) = \mu_f(\{f(p)\})\cdot f'(p)$. Iterating this relation, we get $\mu_f(\{p\}) = \mu_f(\{f^k(p)\})\cdot (f^k)'(p)$ for any $k>0$, $p\in \bbR/\bbZ$.

If $p$ is a preimage of a critical point, $f^k(p)=0$, this implies $\mu_f(\{p\}) = 0$.

Otherwise we get that
\begin{equation}
\label{eq-meas-series}
1\ge \mu_f ( \{p, f(p), f^2(p), \dots\}) = \mu_f(\{p\}) \cdot \left( 1+\sum_{k=1}^{\infty} \frac{1}{f^{(k)}(p)}\right).
                     \end{equation}
Let us prove that this series diverges.
%%%if $p$ is an image of a critical point.
Indeed, set $$A_n(p)=[p,f^{q_n}(p)],\text{ and } B_n(p)=[f^{-q_n}(p),f^{2q_n}(p)].$$
By real {\it a priori} bounds \ref{th-realbounds}, $f^{q_n}(B_n(p))$ contains a $C=C(f)$-scaled neighborhood of $f^{q_n}(A_n(p))$, and $f^{q_n}(A_n(p))$ is $C(f)$-commenurable with $A_n(p)$ (the value of $C$ becomes universal for large enough $n$). Furthermore, since $q_n$ is a first return iterate, the orbit $B_n(p)\mapsto\cdots\mapsto f^{q_n}(B_n(p))$ passes through each critical point of $f$ at most three times. Hence, by one-dimensional Koebe Distortion Theorem, the iterate $f^{q_n}|_{A_n(p)}$ decomposes into a universally bounded number of power maps and a universally bounded number of maps with $K(f)$-bounded distortion (again, $K$ becomes universal for large $n$). Hence, there exists $c=c(f)$ (also, universal for large $n$) such that $(f^{q_n}(p))'<c$.

%Indeed, if $f^{k}(0)=p$, then $p\in I_k$. Suppose that $q_n$ is sufficiently large so that $q_n< q_{n+1}-k$. Then  $f^{q_n}$ has bounded distortion on $f^k(I_n)$ due to Theorem \ref{th-realbounds} b, and takes it to the adjacent interval $f^{k+q_n}(I_n)$ of the partition $\mathcal P_n$, thus $(f^{q_n}(p))'< c$ for a universal $c$ due to Theorem \ref{th-realbounds} a.

Since this holds for any $n$, the series diverges, thus $\mu_f(\{p\})=0$.
\end{proof}

% The following lemma describes how the measures $\mu_f$ change under renormalization.
% \begin{lemma}
%  We have $\int d\mathcal R|_{f}v (\mathcal Rf^{-1}(x)) d\mu_{\mathcal R f}=0$ if and only if $\int v (f^{-1}(x)) d\mu_{f}=0$.
% \end{lemma}
% \begin{proof}
%  Lift a measure $\mu_{\mathcal R f}$ to a real line and transfer it to the union $I\cup f^{q_{n}}(I)$ where $I=[c, f^{q_n}(c)]$ using $\Psi$:
%  $ \mu (A) := \int_{\Psi(A)} \Psi'(f^{-1}(x)) d\mu_{\mathcal Rf}$.
%  This new measure satisfies
%  $ \int \phi(x) d\mu = \int (f^{q_{n+1}})'\cdot ((f^{q_{n+1}})^{-1}(x)) \phi((f^{q_{n+1}})^{-1}(x)) d\mu$ since the lift of $\mathcal Rf$ equals $\Psi f^{q_{n+1}}\Psi$.
%
%  Now, extend this measure to the circle using $\mu(A) : =  \int_{f(A)} f'(f^{-1}(x)) d\mu$. This new measure is well-defined due to \eqref{} and is an (-1)-measure due to the definition. Due to uniqueness of the (-1)-measure, it coincides with $\mu_f$.
%
%  We conclude that $\int_{\Psi(A)} \Psi'(f^{-1}(x)) d\mu_{\mathcal Rf} \int 1 d\mu_f$.
%
%  Due to the computation above,
%  $\int d\mathcal R|_{f}v (\mathcal Rf^{-1}(x)) d\mu_{\mathcal R f}=\int \Psi' $  TODO
% \end{proof}

%

\section{Renormalization in the space of triples}
\label{sec-triples}
In this section, we define a Banach space setting for renormalization in which the results for homeomorphisms and critical circle maps can be combined, and prove Theorem~\ref{th-operator}.
\subsection{Space of triples}
Recall that $\Pi_\eps$ is a strip of width $\eps$ around $\bbR/\bbZ$ in $\bbC/\bbZ$, and let
$$\bar \Pi_\eps=[-0.5,0.5]\times[-i\eps, i\eps]\subset \bbC.$$
Set
\begin{equation}
  \label{eq-triples1}
  H_{a}(z) = z^3 - az.
  \end{equation}

We start with a helpful lemma:
\begin{lemma}
\label{lem-pi-exist}
Let $\eps>0$ and consider a closed set $\Omega^\bbR$ of analytic maps $G\colon \bar\Pi_\eps\to \bbC$ such that  $G(0)=0$,  $G$ is $\bbR$-preserving, $\max_{\bar \Pi_{\eps}}|G''|$ is uniformly bounded, $G'|_{[-0.5,0.5]}$ is uniformly bounded away from zero.

For sufficiently small $\delta$ and a small neighborhood $\Omega\supset \Omega^{\bbR}$, we can choose  $\tau_1, \tau_2>0$, the domains  $K = K_{a, G}\subset H_a G (\bar  \Pi_\eps)$, and  maps $\pi_{a,G} \colon K\to \bbC$ with the following properties.

\begin{itemize}
\item For any $G\in \Omega$,  for any  $|a|<\delta$, the map $\pi_{a, G}$ is a biholomorphism between $K$ and $\pi_{a, G}(K)$, continuous on $\overline  K$.  It depends analytically on $a,G$.

\item There exists $0<\tau_1<\eps, 0<\tau_2$ such that for any $G\in \Omega$, $|a|<\delta$, the map $\pi_{a,G}\circ H_{a} \circ G$ is defined in $\bar  \Pi_{\tau_1}$ and commutes with the unit translation. The projection of the domain $$\pi_{a,G}\circ H_a\circ G(\bar \Pi_{\tau_1}) \subset \bbC$$ to $\bbC/\bbZ$ contains $\Pi_{\tau_2}$.

 %?\item For $G\in \Omega^{\bbR}$, $K_{0, G}$ contains a strip around $[(G(-0.5))^3, (G(0.5))^3]$, and the size of this strip is uniformly bounded away from zero.

% \item There exists $\nu>0$ such that for any $G\in \Omega$,  for any  $|a|<\delta$, the image of $\bar  \Pi_\eps$ under the map  $\pi_{a,G}\circ H_{a} \circ G$ (whenever defined) covers $\Pi_{\nu}$.

\item  If $G\in \Omega^{\bbR}$ and $a\in \bbR$, the map $\pi_{a,G}$ is $\mathbb R$-preserving.

%\item There exists $\nu$, $\delta$ such that for $|a|<\delta$, $G\in \Omega$,    we can set $\nu=0.9\eps$.

\item   If $H_{a} \circ G$ commutes with the unit translation, then $\pi_{a,G}=\id$.

\end{itemize}

\end{lemma}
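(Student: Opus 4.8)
The natural approach is to reduce the lemma to a uniformization problem for a holomorphically varying family of annuli and then quote the standard analytic dependence of uniformizing coordinates.

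\textbf{Reduction.} Write $F_{a,G}:=H_a\circ G$ and let $E_\pm=\{\pm 0.5\}\times[-i\tau_1,i\tau_1]$ be the vertical edges of $\bar\Pi_{\tau_1}$, with arcs $\gamma_\pm:=F_{a,G}(E_\pm)$. Since the only pairs $z,z+1$ both lying in $\bar\Pi_{\tau_1}$ have $z\in E_-$, $z+1\in E_+$, a map $\pi$ holomorphic near $\gamma_-\cup\gamma_+$ makes $\pi\circ F_{a,G}$ commute with the unit translation on $\bar\Pi_{\tau_1}$ if and only if it conjugates the \emph{edge gluing} $g_{a,G}:=F_{a,G}|_{E_+}\circ(z\mapsto z+1)\circ(F_{a,G}|_{E_-})^{-1}\colon\gamma_-\to\gamma_+$ to $\zeta\mapsto\zeta+1$, i.e. $\pi\circ g_{a,G}=\pi+1$ on $\gamma_-$. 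The germ $g_{a,G}$ is holomorphic between two analytic arcs, depends analytically on $(a,G)$, and equals $z\mapsto z+1$ exactly when $H_a\circ G$ commutes with the unit translation. So it suffices to construct, analytically in $(a,G)$, a biholomorphism $\pi_{a,G}$ of a simply connected neighbourhood $K_{a,G}$ of $F_{a,G}(\bar\Pi_{\tau_1})$ inside $F_{a,G}(\bar\Pi_\eps)$ (note $0=F_{a,G}(0)\in K_{a,G}$, and $0\notin\gamma_\pm$ since $G(\pm0.5)\ne 0$), satisfying this conjugacy, normalized by $\pi_{a,G}(0)=0$ (and real-symmetric in the real case), and reducing to the identity when $g_{a,G}=(z\mapsto z+1)$.

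\textbf{Construction.} For $\delta$, $\tau_1$ and a neighbourhood $\Omega\supset\Omega^\bbR$ all sufficiently small, form the Riemann surface $\mathcal S_{a,G}$ by gluing $\gamma_-$ to $\gamma_+$ in $K_{a,G}$ along $g_{a,G}$; one checks it is a topological annulus of modulus bounded above and below uniformly in $(a,G)$, with the interior of $K_{a,G}$ embedded in it. Let $\Psi_{a,G}\colon\mathcal S_{a,G}\to C_{a,G}$ uniformize onto a flat cylinder $C_{a,G}=\{c_1<\Im\zeta<c_2\}/\bbZ$, normalized so the core curve (image of the real trace $F_{a,G}([-0.5,0.5])$) maps to $\bbR/\bbZ$, the class $[0]$ maps to $0$, and orientation is preserved; this pins down $\Psi_{a,G}$, and in the real case it is equivariant for the anti-holomorphic involution of $\mathcal S_{a,G}$. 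Composing the embedding $K_{a,G}^{\circ}\hookrightarrow\mathcal S_{a,G}$ with $\Psi_{a,G}$ and lifting through $\bbC\to\bbC/\bbZ$ (the branch taking $0$ to $0$) gives $\pi_{a,G}\colon K_{a,G}\to\bbC$: injective, holomorphic, with $\pi_{a,G}\circ g_{a,G}=\pi_{a,G}+1$ and $\pi_{a,G}(0)=0$; shrinking $K_{a,G}$ slightly makes it continuous on $\overline{K_{a,G}}$.

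\textbf{Verification.} (i) Analyticity: $\gamma_\pm$ and $g_{a,G}$ vary analytically with $(a,G)$, hence $\{\mathcal S_{a,G}\}$ is a holomorphic family of annuli, and by the standard analytic dependence of uniformizing coordinates (cf. \cite{Ya3,Ya4}) so is $\pi_{a,G}$. (ii) Real case: equivariance of $\Psi_{a,G}$ makes $\pi_{a,G}$ real-symmetric when $G\in\Omega^\bbR$, $a\in\bbR$. (iii) Quantitative part: $F_{a,G}$ has uniformly bounded distortion on $\bar\Pi_\eps$ (from $\max|G''|$ bounded, $|G'|$ bounded away from $0$ on $[-0.5,0.5]$, $|a|<\delta$), and $|\pi_{a,G}'(0)|$ is controlled by the modulus of $\mathcal S_{a,G}$, so the Koebe distortion theorem yields, for small $\tau_1$, that $\pi_{a,G}\circ F_{a,G}$ is defined on $\bar\Pi_{\tau_1}$ and that its image projects to a set containing a fixed $\Pi_{\tau_2}$ (on $\bbR/\bbZ$ it is a circle homeomorphism, hence surjective), with distortion bounds as in Lemma~\ref{lem-Psi-estim}. (iv) Identity case: if $g_{a,G}=(z\mapsto z+1)$ then $F_{a,G}$ descends to a circle homeomorphism on $\overline{\Pi_{\tau_1}}\subset\bbC/\bbZ$, $\mathcal S_{a,G}$ is canonically a subdomain of $\bbC/\bbZ$ with core $\bbR/\bbZ$, and one verifies the normalized uniformization lifts to the identity, so $\pi_{a,G}=\id$.

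\textbf{Main obstacle.} Two points need care. The first is the degeneracy at $a=0$: there $H_a\circ G=G^3$ has a cubic critical point at $0$, so $F_{a,G}(\bar\Pi_\eps)$ is a thrice-covered image near $0$; one must check that $K_{a,G}$ can still be chosen simply connected (the cube of the thin region $G(\bar\Pi_{\tau_1})$ around the arc $G([-0.5,0.5])$ is simply connected) and that $\mathcal S_{a,G}$ and $\pi_{a,G}$ vary continuously through $a=0$. The second, which I expect to be the genuinely delicate point, is matching the normalization so that $\pi_{a,G}$ is simultaneously analytic in $(a,G)$ and \emph{exactly} the identity in case (iv): the naive "uniformize onto a round cylinder" normalization is analytic but need not restrict to the inclusion when $\mathcal S_{a,G}$ already lies in $\bbC/\bbZ$, so one must either show the core-curve/marked-point normalization forces $\id$ there or adjust the construction by an analytically chosen periodic correction to enforce it.
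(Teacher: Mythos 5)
Your reduction is the same as the paper's: everything hinges on producing, analytically in $(a,G)$, a conformal change of variable on a fundamental quadrilateral $K$ that conjugates the edge-gluing map $H_a\circ G\circ(G^{-1}\circ H_a^{-1}(\cdot)+1)$ to the unit translation. Where you diverge is in how that conjugacy is built, and it is exactly at the point you flag as the ``genuinely delicate point'' that your construction breaks. Uniformizing the glued annulus $\mathcal S_{a,G}$ onto a \emph{round} cylinder $\{c_1<\Im\zeta<c_2\}/\bbZ$ cannot satisfy the fourth bullet: when $H_a\circ G$ already commutes with the unit translation, $\mathcal S_{a,G}$ is a subannulus of $\bbC/\bbZ$ with non-round boundary, so its normalized uniformization onto a round cylinder is \emph{not} the inclusion, and no marked-point or core-curve normalization will make it so (indeed ``core curve maps to $\bbR/\bbZ$'' is not even a legitimate normalization off the real slice). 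The ``analytically chosen periodic correction'' you invoke is precisely the missing construction, and property (iv) is load-bearing for the whole paper: it is what makes $p\circ j=\id$ in Lemmas \ref{lem-lift-cr} and \ref{lem-lift}, i.e.\ what guarantees that the triple formalism actually projects onto the given circle maps. So as written the proposal does not prove the lemma.

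The paper avoids this trap by \emph{not} prescribing the target domain at all. It writes down an explicit quasiconformal interpolation $\Xi$ on $\bar\Pi_\nu$ (the linear homotopy between $H_aG$ restricted to the two vertical edges), checks directly from the hypotheses on $G'$ and $G''$ that $|\Xi_{\bar z}/\Xi_z|$ is uniformly bounded away from $1$, extends the Beltrami coefficient periodically, solves the Beltrami equation by Ahlfors--Bers, and sets $\pi_{a,G}=\psi\circ\Xi^{-1}$. This is also where your other black box gets filled: the ``standard analytic dependence of uniformizing coordinates'' you cite is, in this setting, exactly the analytic dependence of Ahlfors--Bers solutions on the Beltrami coefficient together with the explicit formula for $\Xi$; note that your version of the argument never actually uses the quantitative hypotheses $\max|G''|\le C$, $\min G'\ge 1/C$, which in the paper are what make the dilatation bound, and hence all the uniform constants $\delta,\tau_1,\tau_2$, work. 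Your remark about the cubic degeneracy at $a=0$ is correct but a non-issue in the paper's setup, since $K$ is taken to be the curvilinear quadrilateral bounded by the images of the vertical edges of $\bar\Pi_\nu$, which stays away from the triple point of $H_0\circ G$ for small $\nu$. If you want to salvage the uniformization route, you must replace the round target by one that reduces to $\mathcal S_{a,G}$ itself in the commuting case while remaining analytic in $(a,G)$ --- which in practice means reintroducing an interpolation like $\Xi$ and is then the paper's proof.
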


\begin{proof}
Choose small $\nu$ and consider the image of the left side and the right side of $\bar \Pi_\nu$  under $H_a \circ G$. Join endpoints of these curves by segments. If $\nu=\nu(\Omega^{\bbR})>0$  is sufficiently small,  $a$ is sufficiently close to zero, and $G\in \Omega$ is sufficiently close to $\Omega^{\bbR}$, the bounds on $G', G''$ imply that  these curves bound a curvilinear rectangle $K$ with non-self-intersecting boundary and  $K\subset H_a \circ G(\bar  \Pi_\eps)$. 

%Note that for $G\in \Omega^\bbR, a=0, $ this domain $K$ is a strip around $[G(-0.5)^3, G(0.5)^3]$ of width at least $\nu/C$ with uniformly bounded $C$.

The map $H_a\circ G ( G^{-1}\circ H^{-1}_a(z)+1)$  takes the left side of $K$ to its right side. Consider a smooth map $\Xi$ that depends analytically on $a, G$, takes $\bar \Pi_\nu$ to $K$ and conjugates  $z\to z+1$ to $H_a\circ G ( G^{-1}\circ H^{-1}_a(z)+1)$. This map  $\Xi$  is a  linear homotopy between the restrictions of the map $ H_a\circ G$ to the left and to the right boundary of $\bar \Pi_\nu$:
$$
\Xi (x+iy)= H_a G (-0.5+iy)\cdot  (0.5-x) + H_a G (0.5+iy) \cdot (0.5+x).
$$
Note that if $G$ preserves the real line and $a$ is real, then $\Xi$ preserves the real line.

Let us verify that the fraction  $\left| \Xi_{\bar z} / \Xi_z\right|$ is bounded away from 1:  we have
$$
\begin{aligned}
\Xi_x =& H_a G (0.5+ iy) - H_a G(-0.5+iy),   \\
-i\Xi_y =&  (H_a G)'(-0.5+iy) \cdot (0.5-x) + (H_a G)'(0.5+iy)\cdot  (0.5+x)
\end{aligned}
$$
and
$$
\frac {\Xi_{\bar z}} {\Xi_z} =  \frac{\Xi_x+i\Xi_y}{\Xi_x-i\Xi_y}
$$
Due to the bound on $G'$, the values $H_0G(0.5)$, $-H_0G(-0.5)$, $(H_0G)'(\pm 0.5)$  are positive, uniformly bounded and bounded away from zero for $G\in \Omega^{\bbR}$. Due to our bound on $G''$, we can choose small $\Omega\supset \Omega^{\bbR}$,  $\nu$, and $\delta$, so that  both $\Xi_x$ and $-i\Xi_y$ are close to $\bbR^+$, uniformly bounded and bounded away from zero for all $|a|<\delta, (x,y)\in \bar \Pi_\nu$. Thus the fraction  $\left| \Xi_{\bar z} / \Xi_z\right|$ is  uniformly  bounded away from 1.

Define a Beltrami differential $\mu$ on the plane by extending $ \Xi_{\bar z} / \Xi_z$ to the horizontal strip of width $\nu$ periodically, and by setting $\mu=0$ outside the strip. Let $\psi$ be a solution of the Beltrami equation $\psi_{\bar z}/\psi_z =\mu$ with the Beltrami differential $\mu$, normalized by $\psi(0)=0, \psi(1)=1, \psi(\infty)=\infty$; it exists due to the Ahlfors-Bers-Boyarski theorem \cite{Ahl-B}.
Set $\pi_{a, G} = \psi\circ \Xi^{-1}$. By construction, $\pi_{a,G}$ is an analytic univalent map in $K$ that extends continuously to its boundary and conjugates $$H_a\circ G ( G^{-1}\circ H^{-1}_a(z)+1)$$ to  the unit translation; thus  $\pi_{a,G}\circ H_{a} \circ G$ commutes with the unit translation on its domain. 
If $G\in \Omega^{\bbR}$, then $\Xi$ is $\mathbb R$-symmetric, thus the solution of the Beltrami equation is $\mathbb R$-symmetric. We conclude that in this case, $\pi_{a,G}$ preserves the real axis.

Let us prove that the map  $\pi_{a, G}$ depends analytically on $a,G$. Indeed, $\Xi $  (thus $\mu$) depends on $a,G$ analytically by construction, and $\psi$ depends on $a,G$ analytically due to the Ahlfors-Bers-Boyarski theorem. It is easy to see that an analytic compositional difference of two (non-analytic) maps depends analytically on a parameter if both maps depend analytically on a parameter.
 This implies the statement.

If $\delta$, $\Omega\supset\Omega^{\bbR}$ are sufficiently small, there exists $\tau_1>0$ such that $H_a G (\bar  \Pi_{\tau_1}) \subset K$ for $|a|<\delta,G\in \Omega$. Thus $\pi_{a,G} H_a G$ is well-defined in $\bar \Pi_{\tau_1}$.

{\it A posteriori}, we know that the solution of the Beltrami equation $$\psi = \pi_{a,G}\circ\Xi$$ is smooth, with non-degenerate differential in $\bar \Pi_{\nu}$. Since it depends analytically on $G$ in $\Omega$, the norm of its differential is bounded from below on $\bar \Pi_{\nu}$ for $G\in \Omega^{\bbR}$. The set $\Xi^{-1} (H_0G(\bar \Pi_{\tau_1}))\subset \bar \Pi_{\nu}$ contains a strip around $[-0.5, 0.5]$ of width uniformly bounded from below, thus the projection of $\pi_{0,G}(H_0G(\bar \Pi_{\tau_1})) = \psi(\Xi^{-1} (H_0G(\bar \Pi_{\tau_1}))$ to $\bbC/\bbZ$  contains a strip around $\bbR/\bbZ$ of width uniformly bounded from below. Hence we can fix $\tau_2>0$ so that for small $\delta$ and $\Omega\supset \Omega^{\bbR}$, for $|a|<\delta, G\in \Omega$, the projection of $\pi_{a, G}H_aG(\bar \Pi_{\tau_1})$ to $\bbC/\bbZ$ contains $\Pi_{\tau_2}$.

%For $G\in \Omega^{\bbR}$, the map $\pi_{a,G}$ is a biholomorphism defined in a strip $K$ around $[G(-0.5)^3, G(0.5)^3]$ of width bounded below due to construction.

%For $a=0, G=\id$, and small $\eps$, we have $H_a G (\bar \Pi_{\eps})\subset K$, thus  the map $\pi_{0,\id}H_0 G$ is defined in $\bar \Pi_{\eps}$. Thus for all close $a,G$, the map $\pi_{a,G}H_a G$ is defined in $\bar \Pi_{0.9\eps}$, and we can choose $\eta=0.9\eps$.

 If $H_a G$ commutes with the unit translation, then $H_a\circ G ( G^{-1}\circ H^{-1}_a(z)+1)$ coincides with the unit translation on the left side of the rectangle $K$, then $\Xi=\psi=\id$ and thus $\pi_{a,G}=\id$.

\end{proof}
% We now set
% \begin{equation}
%   \label{eq-triples2}
%   s = \pi_{0,\id}\circ H_0,
% \end{equation}
%   where $\pi_{a,G}$ is as in Lemma~\ref{lem-pi-exist}.
%

\begin{definition}
  \label{def-triples}
The Banach manifold $D=D_{\eps, \eps'}$ in Theorem \ref{th-operator} is the space of triples of maps $(F, H_{a}, G)$ where
\begin{itemize}
 \item $F\colon \bbC/\bbZ\to \bbC/\bbZ$ is conformal on $\Pi_{\eps'} \subset \bbC/\bbZ$;
 \item $G\colon \bbC\to \bbC$ satisfies $G(0)=0, G'(0)=1$ and is  conformal in a neighborhood of $\bar \Pi_\eps$.
\end{itemize}
The values of $\eps, \eps'$ will be chosen later. With the uniform norm in the domains of $F$, $G$ specified above,  $D$ is an analytic Banach manifold. Let $D_0\subset D$ be given by $a=0$.

We will say that the triple is $\bbR$-preserving if its $F$, $G$-components preserve the real line and $a$ is real.
\end{definition}

%%Now, construct a projection of $D$ to the space $\mathcal D_\eps$ of conformal maps in $\Pi_\eps$.

% These four curves If $\eps =\eps(C)$  is sufficiently small,  and $a$ is sufficiently close to zero, estimates on $G'$ near $\pm 1/2$ imply that  these curves bound a curvilinear rectangle $K$ with non-self-intersecting boundary.

Using Lemma~\ref{lem-pi-exist}, we define the projection $p\colon D\to \mathcal D_\tau$.
We set
$$p \colon \hat f = (F, H_{a}, G) \mapsto F \circ \pi_{a, G} \circ H_{a} \circ G.$$
We will use the same notation $p(\hat f)$ for an analytic extension of $p(\hat f)$ to any larger strips. Note that the projection $p$ is not defined on the whole space $D$; its domain depends on $\tau$. 

%Note that if $G$ is close to identity and $a$ is close to zero, then the domain of $p(\hat f)$ is only slightly smaller than  $\Pi_\eps$: indeed,  $\pi_{a,G}\circ G$ is close to $s=\pi_{0, \id}\circ G$, and $F$ is defined on $s(\Pi_\eps)$. This motivates the choice of the domain for $F$ in the definition of the space of triples.

The next lemmas  show that we can lift critical circle  maps and analytic families of circle maps that approach critical circle  maps to the space of triples.
\begin{lemma}
\label{lem-lift-cr}
For any $\eps, \eps'>0$, there exists an analytic map $$j\colon f\to \hat f =(F, H_0, G),\; j\colon \mathcal D_{\eps}^{cr} \to D_{\eps, \eps'}$$ such that $p(j(f))=f$ on the domain of $j$, and $j$ is  defined for all critical circle maps $f\in \mathcal D_{\eps}^{cr, \bbR}$ that  have no preimages of the critical value $f(0)$ in $\Pi_\eps$ except zero.
\end{lemma}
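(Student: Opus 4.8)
The plan is to construct the triple $\hat f=(F,H_0,G)$ explicitly so that the composition $F\circ\pi_{0,G}\circ H_0\circ G$ recovers $f$. Since we are on $D_0$ (that is, $a=0$), we have $H_0(z)=z^3$, and by the last bullet of Lemma~\ref{lem-pi-exist} we must check whether $H_0\circ G$ commutes with the unit translation; if it does, then $\pi_{0,G}=\id$ and the construction simplifies. The natural choice is to let $G$ be a cube root of $f$ near the critical point — more precisely, since $f'(0)=f''(0)=0$, $f'''(0)>0$, the map $f$ has a local expansion $f(z)=f(0)+c\,z^3(1+o(1))$ with $c=f'''(0)/6>0$, so we may write $f(z)-f(0)=(\phi(z))^3$ for a map $\phi$ analytic near $0$ with $\phi(0)=0$, $\phi'(0)=c^{1/3}>0$; rescaling, set $G(z)=\phi(z)/\phi'(0)$ so that $G(0)=0$, $G'(0)=1$ and $H_0\circ G=G^3=(f-f(0))/\phi'(0)^3$ near the origin. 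Then $F$ is forced: $F(w)=f(0)+\phi'(0)^3\, w$ on the image, so that $F\circ H_0\circ G=f$ near $0$.

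First I would make precise the domain of definition. The cube root $\phi$ is only defined locally near $0$; to extend $G$ to all of $\bar\Pi_\eps$ (as required by Definition~\ref{def-triples}) one uses that $f$, being a homeomorphism of $\bbR/\bbZ$ with its only critical point at $0$, has $f(z)-f(0)$ nonvanishing on $\Pi_\eps\setminus\{0\}$ — this is precisely where the hypothesis that $f$ has no preimages of the critical value $f(0)$ in $\Pi_\eps$ other than $0$ enters: it guarantees that $w\mapsto w-f(0)$ is zero-free on $f(\Pi_\eps)\setminus\{f(0)\}$ pulled back, equivalently that $f-f(0)$ vanishes on $\Pi_\eps$ only at $0$, and vanishes there to order exactly $3$. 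Hence $f(z)-f(0)$ admits a single-valued analytic cube root on a neighborhood of $\bar\Pi_\eps$ (the strip is simply connected after lifting, and the only zero is a triple zero at the origin), giving $G$ conformal on a neighborhood of $\bar\Pi_\eps$ with $G(0)=0$, $G'(0)=1$ after normalization; $G$ is $\bbR$-preserving because $f-f(0)$ is real and positive to the right of $0$ on $\bbR$. Next I would observe that with this $G$ we have $H_0\circ G=G^3$, and since $f$ commutes with the unit translation and $G^3=(f-f(0))/\phi'(0)^3$, the map $G^3$ does \emph{not} in general commute with the unit translation (because $f(z+1)=f(z)+1$, so $G(z+1)^3=G(z)^3+1/\phi'(0)^3$, a translation by a nonzero constant, not by $1$). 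So $\pi_{0,G}$ is genuinely nontrivial; but $\pi_{0,G}$ is exactly the uniformizing map provided by Lemma~\ref{lem-pi-exist} that turns $G^3$ into something commuting with the unit translation, and the recipe $F:=$ (the correction making $F\circ\pi_{0,G}\circ G^3$ equal $f$) still works — one sets $F=f\circ (G^3)^{-1}\circ\pi_{0,G}^{-1}$ on $\pi_{0,G}(K)$, which is well-defined and conformal on $\Pi_{\eps'}$ for $\eps'\le\tau_2$ because $G$ and $\pi_{0,G}$ are biholomorphisms onto their images and $f\circ(G^3)^{-1}$ is single-valued there. Then $p(j(f))=F\circ\pi_{0,G}\circ H_0\circ G=f$ by construction.

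Then I would verify analyticity of $j$ in $f$. The map $f\mapsto G$ is analytic: $G=((f-f(0))/\phi'(0)^3)^{1/3}$ with the branch pinned by $G'(0)>0$, and taking a cube root of a nonvanishing-up-to-a-triple-zero analytic function depends analytically on that function in the sup norm (write $G(z)=z\cdot h(z)^{1/3}$ with $h$ analytic, $h(0)\ne 0$, and $h$ depends linearly, hence analytically, on the Taylor/sup data of $f$; the cube root of a function bounded away from $0$ is analytic). Given $G$, the map $G\mapsto\pi_{0,G}$ is analytic by Lemma~\ref{lem-pi-exist}, and $f\mapsto F=f\circ(G^3)^{-1}\circ\pi_{0,G}^{-1}$ is then an analytic-compositional combination of analytically-varying data, hence analytic (the composition of analytically-varying conformal maps is analytic in the sup norm on compacts, as already used in the proof of Lemma~\ref{lem-pi-exist}). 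The $\bbR$-preserving claim follows since all three constituents are $\bbR$-symmetric when $f$ is.

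The main obstacle — and the step deserving real care rather than a one-line dismissal — is the \emph{global} construction of $G$: showing that $f-f(0)$ has a single-valued analytic cube root on a full neighborhood of $\bar\Pi_\eps$, not merely near $0$. This is where the hypothesis on preimages of the critical value is used, and one must check the winding-number/monodromy condition: lifting to the universal cover $\bbC$ of $\bbC/\bbZ$, the function $z\mapsto f(z)-f(0)$ has, in any period strip intersected with $\Pi_\eps$, a single zero of order $3$ at $0$ (by the hypothesis), so $\log(f(z)-f(0))$ changes by a multiple of $2\pi i$ that is a multiple of $3$ around any loop, making $(f-f(0))^{1/3}$ single-valued there; patching period strips uses $f(z+1)-f(0)=(f(z)-f(0))+1$ and one checks the cube roots glue after the translation correction — a small but genuine bookkeeping point. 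Everything else is the routine verification that the forced definitions of $F$ and the domains $\eps,\eps'$ are consistent, which follows from Lemma~\ref{lem-pi-exist} and the Koebe-type distortion control already in hand.
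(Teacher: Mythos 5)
Your construction is correct and matches the paper's: both take $G$ to be a cube root of $f-f(0)$ (single-valued on the simply connected rectangle $\bar\Pi_\eps$ precisely because $0$ is the only preimage of the critical value there) and then determine $F$ so that $p(j(f))=f$. The paper streamlines the bookkeeping by first using the unnormalized $G=\sqrt[3]{f-f(0)}$, for which $H_0G=f-f(0)$ commutes with the unit translation so $\pi_{0,G}=\id$ and $F(z)=z+f(0)$, and only then rescaling $G\mapsto CG$ to enforce $G'(0)=1$ (turning $\pi_{0,G}$ into $z\mapsto z/C^3$); you normalize $G$ from the outset and leave $\pi_{0,G}$ and $F$ in implicit form, which is equivalent but slightly heavier (and note that your worry about ``patching period strips'' is moot since $G$ lives on the simply connected rectangle $\bar\Pi_\eps\subset\bbC$, not on the cylinder).
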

\begin{proof}
 Set $F (z)= z+f(0)$, $G(z) = \sqrt[3]{f(z)-f(0)}$. For $G$, we choose the branch of the cubic root that preserves the real line for critical circle maps $f\in \mathcal D_{\eps}^{cr, \bbR}$, which defines the branch unambiguously in a neighborhood of $\mathcal D_{\eps}^{cr, \bbR}$ in $\mathcal D_{\eps}^{cr}$.    Set $j( f) = (F, H_0, G)$. Then $G$ is well-defined in a strip $\Pi_{\eps}$ since it contains no preimages of $f(0)$ under $f$ except zero, the  map $H_0\circ G=f(z)-f(0)$ commutes with the shift by $1$, and thus $\pi_{0, G}=\id$. We get $F\circ \pi_{0, G} \circ H_0 \circ G = f$.

 We have $G(0)=0$; to guarantee $G'(0)=1$, we will have to replace $G$ by $C\cdot G$ for an appropriate $C$. This replaces $\pi_{0, G}$ by $z\mapsto z/C^3 $ and does not affect the equality  $F\circ \pi_{0, G} \circ H_0 \circ G = f$.
\end{proof}

\begin{lemma}
\label{lem-lift}
 Let $f_\mu$ be an analytic family of self-maps of $\bbC/\bbZ$ defined for $\mu_j\in (\mathbb C^k, 0)$, $z\in \Pi_\eps$. Suppose that $f_\mu^{-1}(f_\mu(0))=0$ in $\Pi_{\eps}$.  Suppose that for real $\mu_j$ and $\mu_1<0$, $f_\mu$ is a diffeomorphism of $\bbR/\bbZ$. Suppose that   for $\mu_1=0$, the maps $f_\mu$ are cubic critical circle maps with $0$ as a critical point. Assume that $f'_{\mu}(0)$ has a zero of order  1 at $\mu_1=0$: when $\mu_1$ encircles zero, $f'_\mu(0)$ makes one turn around zero.

 Then for some $\delta$, for all $\mu$ with $ \mu_1\in(-\delta, 0)$ and $\dist (\mu, 0)<\delta$, we can define homeomorphisms  $h_\mu$ and triples $F_\mu, H_{a(\mu)}, G_\mu$ such that $F_\mu$ is holomorphic everywhere,  $G_\mu$ is a holomorphic map defined in $\bar  \Pi_{\eps}$, all these objects depend analytically on $\mu$, and $$h_\mu^{-1} f_\mu h_\mu=p(F_\mu, H_\mu, G_\mu).$$
 
 For $\mu_1=0$ and real $\mu_j$, the triple $(F_\mu, H_{a(\mu)}, G_\mu)$ coincides with $j(f)$ from the previous lemma, and $h_\mu=\id$.
\end{lemma}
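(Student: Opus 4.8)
The plan is to reduce the lemma to the analytic implicit function theorem, taking the lift $j(f_\mu)$ of Lemma~\ref{lem-lift-cr} at $\mu_1=0$ as the base point. I would introduce the analytic map
\begin{equation*}
\Phi\colon (a,F,G,h)\longmapsto h\circ p(F,H_a,G)\circ h^{-1},
\end{equation*}
where $(F,H_a,G)$ ranges over triples near $j(f_{(0,\mu_2,\dots,\mu_k)})$ and $h$ over analytic circle maps near $\id$. By Lemma~\ref{lem-pi-exist} the projection $p$ is analytic in $(a,F,G)$, and composition is analytic on the relevant strips, so $\Phi$ is an analytic map of Banach manifolds with $\Phi(0,F^0_\mu,G^0_\mu,\id)=f_{(0,\mu_2,\dots)}=:f_0$, where $(F^0_\mu,H_0,G^0_\mu)=j(f_{(0,\mu_2,\dots)})$. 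The lemma is then exactly the statement that $\Phi(a,F,G,h)=f_\mu$ can be solved for $(a(\mu),F_\mu,G_\mu,h_\mu)$ depending analytically on $\mu$, with $(a,h)=(0,\id)$ and $(F,G)$ the $j$-data on $\{\mu_1=0,\ \mu_j\in\RR\}$.

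\textbf{Surjectivity of $d\Phi$.} The core of the argument is the surjectivity of $d\Phi$ at the base point onto the tangent space to the space of analytic circle maps at $f_0$, after passing to a slice transverse to the kernel of $d\Phi$. By the construction of $j$, $p(F,H_0,G^0_\mu)=F\circ(f_0-f_0(0))$, so $\partial_F\Phi$ at the base point is $\dot F\mapsto\dot F\circ(f_0-f_0(0))$. Because $0$ is a cubic critical point of $f_0$, every such vector field has $v'(0)=v''(0)=0$; conversely any $v$ with $v'(0)=v''(0)=0$ is of this form with $\dot F=v\circ(f_0-f_0(0))^{-1}$, which is holomorphic everywhere on $\CC/\ZZ$ because the third-order vanishing of $v$ cancels the cube-root branching of $(f_0-f_0(0))^{-1}$. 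Hence $\partial_F\Phi$ covers $T_{f_0}\dcrepsr$, of complex codimension $2$, and the two transverse directions are supplied by $\partial_a$ and $\partial_h$: from $(H_a\circ G)'(0)=-a$ a one-jet computation gives $\partial_a\big(p(F,H_a,G)'(0)\big)\big|_{a=0}\neq 0$ (so $\partial_a$ moves $f'(0)$ off $0$), and conjugating $f_0$ by $\id+\eps w$ changes $f''(0)$ by $-\eps f_0'''(0)w(0)+O(\eps^2)$ while keeping $f'(0)=0$ to first order (so $\partial_h$ supplies the remaining direction). Therefore $d\Phi$ is onto with complemented kernel. Choosing an $\RR$-symmetric slice transverse to this kernel and containing the locus $\{(0,F,G,\id):(F,H_0,G)=j(g)\}$ (which meets the kernel trivially), the analytic implicit function theorem, applied with the parameters $\mu$, produces analytic $(a(\mu),F_\mu,G_\mu,h_\mu)$ solving $\Phi=f_\mu$; local uniqueness on the slice forces $(a,h)=(0,\id)$ and $(F_\mu,H_0,G_\mu)=j(f_\mu)$ on $\{\mu_1=0,\ \mu_j\in\RR\}$, and $\RR$-symmetry gives $\RR$-preserving triples and real $a(\mu)$ for real $\mu$ with $\mu_1<0$. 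Finally $p(F,H_a,G)'(0)=F'\cdot\pi'_{a,G}(0)\cdot(-a)$ shows $f'_\mu(0)=-a(\mu)\cdot(1+o(1))$ up to a nonvanishing analytic factor and corrections vanishing at $\mu_1=0$, so $a(\mu)$ inherits from $f'_\mu(0)$ the simple zero and the single turn around $0$ as $\mu_1$ encircles $0$; in particular $a(\mu)\to0$.

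\textbf{Main obstacle.} The hard part, I expect, is not this computation but the bookkeeping of domains. One must run $\Phi$ and the implicit function theorem in function spaces with compatible strip widths: $p(F,H_a,G)$ is only guaranteed on the thin strip $\Pi_{\tau_1}$ of Lemma~\ref{lem-pi-exist} (further shrunk by $h^{-1}$), so $\Phi$ lands in some $\cD_\tau$ and one compares with $f_\mu|_{\Pi_\tau}$, which is harmless; conversely $G_\mu$ must come out conformal on the \emph{closed} rectangle $\bar\Pi_\eps$ of Definition~\ref{def-triples}, whereas the base datum $G^0_\mu=\sqrt[3]{f_\mu-f_\mu(0)}$ is a priori conformal only on the open strip, so one takes the domain of $f_\mu$ slightly larger than $\Pi_\eps$ (or shrinks $\eps$ once), and it is precisely the hypothesis $f_\mu^{-1}(f_\mu(0))=\{0\}$ in $\Pi_\eps$ that makes this branch of the cube root single-valued and $j(f_\mu)$ available. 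One also has to keep $(a,F,G)$ in the domain of $\pi_{a,G}$ and $(g,h)$ in the domain of composition throughout; since all base data are interior points and the implicit-function-theorem neighborhood may be shrunk, a single $\delta=\delta(\eps,f_0)$ works, locally uniformly in $(\mu_2,\dots,\mu_k)$. An alternative, more hands-on route builds $G_\mu$ directly as the branch of $H_{a(\mu)}^{-1}$, applied after a near-identity correction, pulling back $f_\mu-f_\mu(0)$ and continuing the branch from $\mu_1=0$; there the no-extra-preimages hypothesis visibly excludes monodromy around the two critical points that split off the cusp, but the analytic dependence on $\mu$ is harder to control.
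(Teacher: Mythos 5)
Your proposal takes a genuinely different route from the paper. The paper constructs the lift explicitly: it tracks the pair of quadratic critical points of $f_\mu$ that split off the cubic cusp as $\mu_1$ leaves $0$, extracts from their midpoint and separation (raised to the power $2/3$) analytic functions giving $F_\mu, h_\mu$ (translations) and $A(\mu)$, and then defines $G_\mu$ by factoring the $3$-valued map $H^{-1}_{A(\mu)}(F_\mu h_\mu)^{-1}f_\mu$ through the matched critical values, with the no-extra-preimages hypothesis ruling out residual branching. You instead solve $\Phi(a,F,G,h)=f_\mu$, where $\Phi(a,F,G,h)=h\circ p(F,H_a,G)\circ h^{-1}$, by the analytic implicit function theorem on a slice parametrized by $(g,a,t)$. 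That plan is workable, and the computations showing that $\partial_a$ moves $f'(0)$ (since the derivative of $F'(\pi_{a,G}(0))\cdot\pi_{a,G}'(0)\cdot(-a)$ in $a$ at $a=0$ is $-F'(0)\pi_{0,G}'(0)\neq 0$) and that conjugating by $\id+\eps w$ changes $f''(0)$ by $-\eps\,w(0)f_0'''(0)$ while fixing $f'(0)$ are both correct.

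However, one intermediate claim is false: you assert that any $v$ with $v'(0)=v''(0)=0$ equals $\dot F\circ(f_0-f_0(0))$ with $\dot F=v\circ(f_0-f_0(0))^{-1}$ holomorphic, because ``the third-order vanishing of $v$ cancels the cube-root branching.'' It does not. Near the critical value, $(f_0-f_0(0))^{-1}$ is a Puiseux series in $w^{1/3}$, and substituting it into $v(z)=v(0)+b_3z^3+b_4z^4+\dots$ produces a $w^{4/3}$ term with coefficient proportional to $b_4$; in the model $f_0(z)=z^3$ the choice $v(z)=z^4$ already gives $\dot F(w)=w^{4/3}$, which is multivalued. Thus $\partial_F\Phi$ alone only covers the pullbacks $u\circ(f_0-f_0(0))$, which form an infinite-codimension subspace of $\{v:v'(0)=v''(0)=0\}$, not the whole tangent space $T_{f_0}\dcreps$. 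Luckily this faulty claim is not what your slice argument actually uses: since $\Phi(0,j(g),\id)=p(j(g))=g$ identically, the $j(g)$-locus --- which varies $F$ \emph{and} $G$ together --- trivially surjects onto $T_{f_0}\dcreps$, and combined with $\partial_a,\partial_t$ makes $d(\Phi|_{\mathrm{slice}})$ an isomorphism. If you replace the $\partial_F$-only claim by this remark, the IFT and $\RR$-symmetry arguments go through, and your domain/strip-width bookkeeping concerns are indeed the remaining issues to settle. The paper's direct construction sidesteps this entirely and additionally records that $F_\mu$ and $h_\mu$ may be taken to be translations and that $a(\mu)$ has a simple zero at $\mu_1=0$, which is where the monodromy hypothesis on $f_\mu'(0)$ is really spent.
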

\begin{proof}
Slightly abusing the notation, we will keep the notation $f_\mu$ for a lift $f_\mu\colon \bar \Pi_\eps\to \bbC$.

We need to satisfy $$h_\mu F_\mu \pi_{a(\mu),G_\mu} H_{a(\mu)} G_\mu h_\mu^{-1}(z)  = f_\mu(z)\text{ for }G_\mu(0)=0, G'_\mu(0)=1.$$ First, we will satisfy these equalities without the requirement $G_\mu'(0)=1$, and then we will make the final adjustments.

Since $f_\mu$ for $\mu_1=0$ is a cubic critical map, for small $\mu_1\neq 0$, $f_{\mu}$ has a pair of quadratic critical points $c_1, c_2$ close to zero, given by $(f_\mu)'(c_{1,2})=0$.
 When $\mu_1$ makes one turn around zero,  these critical points make half of a turn each and exchange; this follows from the fact that $(f_\mu)'(0)$ has a simple root at $\mu_1=0$. Namely, it is clear that the pair of critical points depends analytically on $\mu$ when $\mu_1\neq 0$, thus we only need to count the number of turns; since $$(c_1)'_{\mu_1} = -(f''_{z\mu_1}/f''_z)|_{c_1}  \sim \text{const}/ c_1,$$ we have $c_1\sim \sqrt{\mu_1}$ and this number is 1/2.

 Let  images of $c_1, c_2$ under $f$ be $d_1, d_2$. Then $d_1$, $d_2$ make $3/2$ of the turn each around $f(0)$ when $\mu_1$ makes 1 turn around zero, and they also exchange, since $$f_\mu(c_1)-f_\mu(0) \sim f'_z(c_1) c_1 \sim \mu_1^{3/2}.$$  Also, they are complex conjugate for real $\mu$, $\mu_1<0$.

 Both maps $F_\mu, h_\mu$ will be translations. We set $$F_\mu h_\mu (z)=z+(d_1+d_2)/2;$$ note that this composition depends analytically on $\mu$ (the translations $F_\mu, h_\mu$ themselves will be chosen later). Then the critical values of the map $(F_\mu h_\mu)^{-1}f_\mu$ are symmetric with respect to $0$ and purely imaginary for real $\mu, \mu_1<0$. Also, they make $3/2$  of the turn each around $0$ when $\mu_1$ makes 1 turn around zero. The difference between these points is still $d_1-d_2$.

  Set $$A(\mu)= \left(\frac{3\sqrt{3}(d_1-d_2)}{4}\right)^{2/3};$$ we choose the branch of the cubic root that is real for $\mu_1\in \bbR_{-}$ (that is, when $d_1- d_2$ is purely imaginary).  Then the critical values of $H_A$, namely $\pm \frac{2A}{3}\sqrt{A/3}$, coincide with the critical values $\pm \frac{(d_1-d_2)}{2}$ of  $(F_\mu h_\mu)^{-1}f_\mu$. Since $d_1-d_2$ makes $3/2$ turns around the origin as $\mu_1$ makes one turn around the origin, $A$ depends analytically on $\mu$ and has a simple zero at $\mu_1=0$.

Consider the 3-valued map  $H_A^{-1}(F_\mu h_\mu)^{-1}f_\mu$. It is defined in $\bar \Pi_{\eps}$.  For each $\mu$, it does not branch at critical points of $f_\mu$ since critical values of $H_A$ and $(F_\mu h_\mu)^{-1}f_\mu$ coincide. It might branch at other preimages of critical values of $f_\mu$, but our assumptions imply that the strip $\Pi_{\eps}$ does not contain such points.  Also, it is bounded on $\bar \Pi_\eps$. Thus this formula for each $\mu$ defines three distinct holomorphic functions in $\bar\Pi_\eps$ that depend analytically on $\mu$. We choose the one that is real for real $\mu$, $\mu_1<0$, and let it be equal to $G_\mu h_\mu$. Finally, we choose the translation  $h_\mu$ so that $G_\mu (0)=0$. Since $F_\mu h_\mu$ was chosen above, this uniquely defines $F_\mu$.

Note that for $\mu_1=0$,  we get $$A=0,  \; F_\mu h_\mu (z)=z+f_{\mu}(0),\; G_{\mu}(z) = H_0^{-1}(f_\mu(z)-f_\mu(0))\text{, and }h_\mu=\id,$$
so for critical maps in the family $f_\mu$, the lift coincides with $j(f_\mu)$ from Lemma \ref{lem-lift-cr}.

We now have  $h_\mu F_\mu H_{A(\mu)} G_\mu h_\mu^{-1}(z)  = f_\mu(z)$, all maps depend analytically on $\mu$, and $G_{\mu}(0)=0$.  
Since $H_{A(\mu)}\circ G_\mu$ is a circle map, $\pi_{A(\mu),G_\mu}=\id$ (see the last statement of Lemma \ref{lem-pi-exist}).

It remains to adjust our triple to achieve $G_\mu'(0)=1$.
Note that  $G_\mu'(0)\neq 0$ for $\mu_1=0$ since $f$ has a cubic critical point and $H_0^{-1}(z)=\sqrt[3]{z}$, so $G_\mu'(0)=C_\mu$ is a nonzero analytic function for small $\mu$.
Since $$H_A \circ Cz = C^3 H_{A/C^2}z,$$ we can make this adjustment while preserving analytic dependence of $A, F, G$ on $\mu$: we replace $G_\mu$ by $\frac{1}{C_\mu}G_\mu$, $A$ by $A/C_\mu^2$, $F_\mu$ does not change, and $\pi_{a,G} $ becomes $z\mapsto z\cdot C_\mu^3$.  This completes the proof.
\end{proof}

\subsection{Renormalization for triples}

Recall that the renormalization operator $\cren \colon \mathcal U\to \mathcal D_{\eps}^{cr, \bbR}$ leaves invariant the horseshoe-like set $\mathcal I$. Consider a small open neighborhood $\mathcal V$ of $\mathcal I$  in the space $\mathcal D_\eps$ so that the function $q_N(f)$ extends to  $\mathcal V$ as a locally constant function. Now $q_N$ is defined even for non-$\bbR$-preserving $f\in\mathcal V$.

If $\mathcal V$ is sufficiently small, then for each map $f\in \mathcal V$,  there exists a fundamental crescent $C=C_f$ and the perturbed Fatou coordinate $\Psi=\Psi_f$ with $\Psi(0)=0$ that conjugates $f^{q_N}$ to the unit translation  in $f^{q_N}(C)\cup C$, and is real-symmetric for real-symmetric $f$.  Thus the operator $\cren$ extends to $\mathcal V$. Moreover, due to the compactness of  $\bar  {\mathcal I}$, all maps $\cren f$ for $f\in \mathcal V$ extend analytically to the same strip $\Pi_c$ around the real axis, that is, $\cren \colon \mathcal V\to \mathcal D_{c}$ is well-defined.

The construction of renormalization $\cren \colon \mathcal U\to \mathcal D_{\eps}^{cr}$ for critical circle maps motivates the following definition of the renormalization in the space of triples, also used in \cite{GorYa}.
\begin{definition}
\label{def-R}
The renormalization in the space of triples $D$ is defined in the following way: for any triple $(F, H_a, G)\in p^{-1}(\mathcal V)$, the triple $\mathcal R (F, H_a, G)=(\tilde F, H_{\tilde a}, \tilde G)$ is given by 
 $$\tilde G = \Psi'(0)  G \Psi^{-1} \text{ restricted to } \bar \Pi_\eps,$$
  $\tilde a = a (\Psi'(0))^2$ so that $$H_{\tilde a} = \Psi'(0)^3 \cdot H_a (\Psi'(0))^{-1},$$
  and
 $$\tilde F = \cren f  \circ (\pi_{\tilde a, \tilde G} H_{\tilde a} \tilde G)^{-1}  \text{ restricted to } \Pi_{\eps'}, $$
where $f = p(F, H_a, G)$.
 \end{definition}
Since $ \cren f =\Psi \circ P \circ \Psi^{-1}$ where $P$ is an iterate of $f = F \pi_{a,G}H_a  G $, the composition that gives $ \cren f  $ will end with terms  $H_a G \Psi^{-1} = \Psi'(0)^{-3} H_{\tilde a} \tilde G$. Since $\pi_{a,G}$ are bijections, the inverse map  in the definition of $\tilde F$ does not produce any branching and $\tilde F$ is well-defined.

 Note that $\tilde F$ commutes with the unit translation as the compositional difference of two maps $\cren f$ and  $\pi_{\tilde a, \tilde G} H_{\tilde a} \tilde G_a$ which both commute with the unit translation.

Recall that the construction of the map $(a,G)\to \pi_{a,G}$, and thus the construction of $p$, depends on the set $\Omega^{\bbR}$ of maps $G$ with certain uniform estimates. The resulting operator $\mathcal R$ will only be defined on a certain subset of the space $D$ of triples where the estimates on the $G$-components hold true. This set  $\Omega^{\bbR}$ will be fixed in Lemma \ref{lem-horseshoe}.

\begin{lemma}
\label{lem-projections}
 For any triple $\hat f$ such that $p(\hat f)\in \mathcal V$, we have $p (\mathcal R \hat f) = \cren(p(\hat f))$ whenever the left-hand side is defined, i.e. the renormalization operator $\mathcal R$  projects to the renormalization operator $\cren$ on $\mathcal V$.
\end{lemma}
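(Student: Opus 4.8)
The plan is to unwind the definition of $p$ together with Definition~\ref{def-R}, cancel compositions, and observe that everything collapses onto $\cren f$; the single non-formal ingredient is the factorization of $\cren f$ already recorded right after Definition~\ref{def-R}. First, note what the hypothesis ``$p(\mathcal R\hat f)$ is defined'' supplies: that $p(\hat f)\in\mathcal V$, so that $\Psi=\Psi_f$, and hence $\mathcal R\hat f=(\tilde F,H_{\tilde a},\tilde G)$, is defined; and that the $G$-component $\tilde G$ lies in the set $\Omega$ with $|\tilde a|<\delta$, so that the biholomorphism $\pi_{\tilde a,\tilde G}$ of Lemma~\ref{lem-pi-exist} exists. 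Write $f=p(\hat f)$. Since $\cren f=\Psi\circ P\circ\Psi^{-1}$ with $P$ an iterate $f^m$ of $f=F\circ\pi_{a,G}\circ H_a\circ G$, the composition defining $\cren f$ ends, read from the right, with $H_a\circ G\circ\Psi^{-1}$; using $\tilde G=\Psi'(0)\,G\,\Psi^{-1}$, $\tilde a=a\,\Psi'(0)^2$ and the scaling identity $H_a(Cw)=C^3H_{a/C^2}(w)$ with $C=\Psi'(0)^{-1}$, this equals $D\circ H_{\tilde a}\circ\tilde G$, where $D$ is the dilation $w\mapsto\Psi'(0)^{-3}w$. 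Thus $\cren f=Q\circ H_{\tilde a}\circ\tilde G$ with $Q=\Psi\circ f^{m-1}\circ F\circ\pi_{a,G}\circ D$ a composition of conformal maps.

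Next, fix once and for all a real-symmetric branch of the cubic inverse, so that $\bigl(\pi_{\tilde a,\tilde G}\,H_{\tilde a}\,\tilde G\bigr)^{-1}=\tilde G^{-1}\circ H_{\tilde a}^{-1}\circ\pi_{\tilde a,\tilde G}^{-1}$, where $\tilde G^{-1}$ and $\pi_{\tilde a,\tilde G}^{-1}$ are genuine inverses and $H_{\tilde a}^{-1}$ is the branch fixing $0$ (which for $\tilde a\neq0$ is unambiguous since then $H_{\tilde a}$ is conformal near $0$, and for $\tilde a=0$ is the real-symmetric cube root). By Definition~\ref{def-R},
$$
\tilde F=\cren f\circ\bigl(\pi_{\tilde a,\tilde G}H_{\tilde a}\tilde G\bigr)^{-1}=Q\circ H_{\tilde a}\circ\tilde G\circ\tilde G^{-1}\circ H_{\tilde a}^{-1}\circ\pi_{\tilde a,\tilde G}^{-1}=Q\circ\pi_{\tilde a,\tilde G}^{-1},
$$
a composition of conformal maps: this is precisely the non-branching remark following Definition~\ref{def-R}, and it also shows $\mathcal R\hat f\in D$, provided $\eps'$ is chosen small enough that $\pi_{\tilde a,\tilde G}^{-1}(\Pi_{\eps'})$ lies in the domain of $Q$, while $\cren f$ extends conformally to the fixed strip $\Pi_c$ by compactness of $\bar{\mathcal I}$ (in the critical case $a=0$, the cube-root singularity of $H_{\tilde a}^{-1}$ is exactly cancelled by the cubic critical point of $\cren f$, as in Lemma~\ref{lem-lift-cr}). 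Consequently
$$
p(\mathcal R\hat f)=\tilde F\circ\pi_{\tilde a,\tilde G}\circ H_{\tilde a}\circ\tilde G=Q\circ\pi_{\tilde a,\tilde G}^{-1}\circ\pi_{\tilde a,\tilde G}\circ H_{\tilde a}\circ\tilde G=Q\circ H_{\tilde a}\circ\tilde G=\cren f=\cren(p(\hat f))
$$
on a strip around $\bbR/\bbZ$. Both sides are analytic self-maps of $\bbC/\bbZ$ — the left extended as usual to the largest strip on which the composition is defined, the right conformal on $\Pi_c$ — and they agree on an open set, hence everywhere.

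The only step that is more than a formal manipulation sits inside the second paragraph: verifying that the chosen branch of $\bigl(\pi_{\tilde a,\tilde G}H_{\tilde a}\tilde G\bigr)^{-1}$ is genuinely inverted by the forward composition $\pi_{\tilde a,\tilde G}\circ H_{\tilde a}\circ\tilde G$ on a full neighborhood of $\bbR/\bbZ$, and that the resulting conformal map $\tilde F$ is defined on all of $\Pi_{\eps'}$; this reduces to checking that the constants $\eps,\eps',\tau_1,\tau_2$ from Lemma~\ref{lem-pi-exist} together with the neighborhood $\mathcal V$ and the data of $\cren$ can be chosen compatibly, which is exactly what those choices are designed to ensure. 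Granting it, the factorization $\cren f=Q\circ H_{\tilde a}\circ\tilde G$ makes every cancellation automatic, so both $\mathcal R\hat f\in D$ and the commutation $p\mathcal R=\cren p$ follow at once.
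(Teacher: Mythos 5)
Your proof is correct and follows essentially the same route as the paper: the paper's own proof is the one-line cancellation $p(\mathcal R\hat f)=\tilde F\circ\pi_{\tilde a,\tilde G}H_{\tilde a}\tilde G=\cren f$ coming directly from Definition~\ref{def-R}, with the factorization $\cren f=Q\circ H_{\tilde a}\circ\tilde G$ (and hence well-definedness of $\tilde F$) already recorded in the paragraph following that definition. You have simply unwound that remark in explicit detail and made the branch bookkeeping visible, which is fine but not a different method.
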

\begin{proof}
Indeed,
$$ p  ( \mathcal R (F, H_a, G)) = \tilde F \circ \pi_{\tilde a, \tilde G} H_{\tilde a} \tilde G = \cren f =\cren (p (F, H_a, G)).$$

\end{proof}

Recall that the construction of the renormalization operator $\cren$ depends on the choice of the iterate $q_N$.
In the assumptions of Lemma \ref{lem-pi-exist}, let   $U_{\delta, \Omega}$ be the set of triples $(F,H_a, G)$ such that $0\le |a|\le \delta,\; G\in \Omega$.

 \begin{lemma}
\label{lem-compact}
For sufficiently small  $\eps>0$ and a suitable choice of $\eps'>0$ in the definition of the space of triples $D$, for a set  \begin{multline}
\Omega^{\bbR} = \{G\colon \bar \Pi_{\eps}\to \bbC, G(0)=0, G'(0)=1, G\text{ preserves } \bbR  \mid \\  \sup_{\bar  \Pi_\eps}|G''|\le C, \min_{[-0.5, 0.5]}G'\ge 1/C\}\end{multline} with any sufficiently large $C$,
%for sufficiently large $N$ in the definition of $\cren$, the renormalization operator $\mathcal R\colon D\to D$  is analytic in  $p^{-1}(\mathcal V)$ (wherever defined).
 we can choose its neighborhood $\Omega\supset \Omega^{\bbR}$, $N$,  and a small $\delta$ such that the renormalization operator $\mathcal R\colon D\to D$  is defined, analytic, and has a compact differential on  the ``truncated fibers'' $U_{ \delta,  \Omega} \cap p^{-1}(\mathcal V)$ of the projection $p$.

The images of truncated fibers  $\mathcal R(U_{\Omega,\delta} \cap p^{-1}(\mathcal V))$ are precompact, and the projection $p \colon \mathcal R(U_{\Omega,\delta} \cap p^{-1}(\mathcal V)) \to \mathcal D_{\eps}$ is defined.

Finally, the set $\mathcal R(U_{\Omega^{\bbR},0} \cap p^{-1}(\mathcal V))$ is contained in $U_{\Omega^{\bbR},0}$.

%The set $\Omega^{\bbR}$ can be chosen arbitrarily large: for any compact set of $\mathbb R$-preserving maps in the space of analytic maps in $\bar  \Pi_{\eps}$, $\Omega^{\bbR}$ can be chosen to contain it.
\end{lemma}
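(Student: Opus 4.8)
The plan is to fix the constants in the order
$$\eps\ \rightsquigarrow\ C\ \rightsquigarrow\ (\delta_0,\Omega_0,\tau_1,\tau_2)\ \rightsquigarrow\ (\eps',N)\ \rightsquigarrow\ (\mathcal V,\delta,\Omega),$$
where $(\delta_0,\Omega_0,\tau_1,\tau_2)$ --- and with them the maps $\pi_{a,G}$ and the projection $p$ --- come from Lemma~\ref{lem-pi-exist} applied to $\Omega^\bbR$, and then to verify the listed properties in turn, the invariance $\mathcal R(U_{\Omega^\bbR,0}\cap p^{-1}(\mathcal V))\subset U_{\Omega^\bbR,0}$ being the one that carries real content. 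The engine throughout is Lemma~\ref{lem-Psi-estim}: for a suitable choice of $\mathcal U$ and $N$, on each fundamental domain $\Psi_f^{-1}(\bar\Pi_\eps)$ of the perturbed Fatou coordinate $\Psi=\Psi_f$ one has $\tfrac1{C_0}\le|\Psi'(\zeta)|/|\Psi'(0)|\le C_0$ and $|\Psi''(\zeta)/\Psi'(\zeta)|\le C_0|\Psi'(0)|$ for a constant $C_0=C_0(\eps)$, while $|\Psi'(0)|>K$ and $\Pi_{K\eps}\subset\Psi_f(\Pi_\eps)$ for any prescribed $K$ once $N$ is large. One sets $C:=\max(2C_0,4C_0^3)$, $K:=16CC_0+1$; the matching of strip widths --- that $\Pi_{\tau_1}$ contains the thin fundamental crescents of all $f\in\mathcal V$, that $\tau_2$ exceeds the width needed to feed $p(\hat f)$ into $\cren$, that $\eps'<\tau_2$, and that $\Pi_{\tau_1}$ lies inside the common strip $\Pi_c$ of the maps $\cren f$ --- is routine bookkeeping which I suppress.

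\emph{Well-definedness of $\mathcal R$.} Take $\hat f=(F,H_a,G)\in U_{\delta,\Omega}\cap p^{-1}(\mathcal V)$ and $f=p(\hat f)\in\mathcal V$, so $\Psi=\Psi_f$ exists (real-symmetric when $f$ is) and conjugates $f^{q_N}$ to the unit translation. Since $\Psi^{-1}$ is defined on $\Pi_{K\eps}\supset\bar\Pi_\eps$ and, by $|(\Psi^{-1})'|\le C_0/|\Psi'(0)|$ and $\Psi(0)=0$, carries $\bar\Pi_\eps$ into a disc about $0$ of radius $\le 2C_0/|\Psi'(0)|$ --- which lies inside $\bar\Pi_\eps$ and in the domain of $G$ --- the map $\tilde G=\Psi'(0)\,G\Psi^{-1}|_{\bar\Pi_\eps}$ is holomorphic on a neighborhood of $\bar\Pi_\eps$, with $\tilde G(0)=0$ and $\tilde G'(0)=\Psi'(0)\,G'(0)\,(\Psi^{-1})'(0)=1$, while $\tilde a=a\,\Psi'(0)^2$ has $|\tilde a|\le\delta M^2$ with $M:=\sup_{\mathcal V}|\Psi_f'(0)|<\infty$. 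For $\tilde F=\cren f\circ(\pi_{\tilde a,\tilde G}H_{\tilde a}\tilde G)^{-1}|_{\Pi_{\eps'}}$: the identity $H_a\bigl(G(\Psi^{-1}(w))\bigr)=\Psi'(0)^{-3}H_{\tilde a}\bigl(\tilde G(w)\bigr)$ together with injectivity of $\pi_{a,G}$ and $\pi_{\tilde a,\tilde G}$ shows that $\pi_{\tilde a,\tilde G}H_{\tilde a}\tilde G(z_1)=\pi_{\tilde a,\tilde G}H_{\tilde a}\tilde G(z_2)$ forces $f(\Psi^{-1}(z_1))=f(\Psi^{-1}(z_2))$, hence $\cren f(z_1)=\Psi P\Psi^{-1}(z_1)=\Psi P\Psi^{-1}(z_2)=\cren f(z_2)$; so $\cren f$ factors through $\pi_{\tilde a,\tilde G}H_{\tilde a}\tilde G$, making $\tilde F$ single-valued and --- the critical structures of the two factors matching by construction --- conformal. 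Granting $\tilde G\in\Omega_0$ and $|\tilde a|<\delta_0$ (which hold when $\delta<\delta_0/M^2$ and $\delta,\Omega,\mathcal V$ are small, see below), $\mathcal R\hat f=(\tilde F,H_{\tilde a},\tilde G)$ is a triple in $D$ at which $p$ is defined, and $p(\mathcal R\hat f)=\cren f$ by Lemma~\ref{lem-projections}.

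\emph{The $\tilde G$-estimate and invariance of $U_{\Omega^\bbR,0}$ --- the crux.} For $G\in\Omega^\bbR$ and $\zeta=\Psi^{-1}(z)$, $z\in\bar\Pi_\eps$, differentiating $\tilde G=\Psi'(0)\,G\Psi^{-1}$ gives
$$\tilde G'(z)=\frac{\Psi'(0)\,G'(\zeta)}{\Psi'(\zeta)},\qquad \tilde G''(z)=\Psi'(0)\Bigl(\frac{G''(\zeta)}{\Psi'(\zeta)^2}-\frac{G'(\zeta)}{\Psi'(\zeta)^2}\cdot\frac{\Psi''(\zeta)}{\Psi'(\zeta)}\Bigr).$$
Lemma~\ref{lem-Psi-estim} gives $|\Psi'(\zeta)|\ge|\Psi'(0)|/C_0$, hence $|\zeta|\le 2C_0/|\Psi'(0)|$, hence $|G'(\zeta)-1|\le C|\zeta|\le\tfrac12$ once $|\Psi'(0)|>4CC_0$; combined with $|G''(\zeta)|\le C$ and $|\Psi''(\zeta)|\le C_0|\Psi'(0)|\,|\Psi'(\zeta)|$ this yields, on $\bar\Pi_\eps$,
$$\tfrac1{2C_0}\le|\tilde G'(z)|\le 2C_0,\qquad |\tilde G''(z)|\le\frac{C\,C_0^2}{|\Psi'(0)|}+2C_0^3.$$
Since $C\ge 2C_0$, $C\ge 4C_0^3$ and $|\Psi'(0)|>K>2C_0^2$, the right-hand sides are $\ge 1/C$ and $\le C$, so $\tilde G\in\Omega^\bbR$. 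If in addition $a=0$ and the input triple is $\bbR$-preserving, then $\tilde a=0$, $\Psi$ is real-symmetric, $\tilde G$ preserves $\bbR$, and thus $\mathcal R(U_{\Omega^\bbR,0}\cap p^{-1}(\mathcal V))\subset U_{\Omega^\bbR,0}$. For a general $\hat f\in U_{\delta,\Omega}\cap p^{-1}(\mathcal V)$ the same computation applies with $G$ merely close to $\Omega^\bbR$; since $a$ is then small and $p(\hat f)\in\mathcal V$ is close to $\mathcal I$, the whole triple is close to an $\bbR$-preserving one, so $\Psi_f$ and $\tilde G$ are close to real-symmetric, which puts $\tilde G$ in any prescribed neighborhood of $\Omega^\bbR$ --- in particular in $\Omega_0$ --- once $\delta,\Omega,\mathcal V$ are small; this discharges the loose end above.

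\emph{Analyticity, compactness, precompactness.} Analyticity of $\mathcal R$ is soft: $f\mapsto\Psi_f$ is analytic on $\mathcal V$, $(a,G)\mapsto\pi_{a,G}$ is analytic by Lemma~\ref{lem-pi-exist}, $\cren$ is analytic by Theorem~\ref{th-horseshoe}, and composition and inversion of analytically-varying biholomorphisms are analytic operations. Compactness of $d\mathcal R$ and precompactness of the image come from the standard strip-widening argument: each non-constant component of $\mathcal R\hat f$ is holomorphic, and uniformly bounded over the fiber, on a strip strictly wider than the one carrying its norm --- $\tilde G$ because $\Psi^{-1}$ is defined on $\Pi_{K\eps}$ with $K\eps$ far exceeding the width on which the $G$-component is normed; $\tilde F=\cren f\circ(\pi_{\tilde a,\tilde G}H_{\tilde a}\tilde G)^{-1}$ because it extends to $\Pi_{\tau_2}\supset\Pi_{\eps'}$ while $\cren$ is bounded and analytic on the precompact set $\mathcal V$; and the $H$-component because $\tilde a$ ranges over a bounded subset of $\bbC$ --- so $\mathcal R$ factors through the compact inclusions $\mathcal D_{K\eps}\hookrightarrow\mathcal D_\eps$ and $\mathcal D_{\tau_2}\hookrightarrow\mathcal D_{\eps'}$ (Montel), giving both assertions. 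The main obstacle is the $\tilde G$-estimate: the invariance of the class $\Omega^\bbR$ under $\mathcal R$ is precisely what makes the operator iterable, and securing it requires controlling simultaneously the distortion and the ratio $\Psi''/\Psi'$ of the perturbed Fatou coordinate on a fundamental domain --- the full strength of Lemma~\ref{lem-Psi-estim}, with $N$ taken large.
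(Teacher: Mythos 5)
Your proposal is correct and follows essentially the same route as the paper's proof: the core is the estimate on $\tilde G'$ and $\tilde G''$ via the chain rule and the distortion bounds of Lemma~\ref{lem-Psi-estim} with $N$ large enough to make $\Psi'(0)$ dominate, yielding $\tilde G\in\Omega^\bbR$; compactness and precompactness via the strip-widening (Montel) argument, with $\eps'<\tau_2$ from Lemma~\ref{lem-pi-exist}; and the extension of $p(\mathcal R\hat f)$ to $\Pi_\eps$ because it equals $\cren f$. The only differences are cosmetic (you spell out the single-valuedness of $\tilde F$, and you bound $\Psi''/\Psi'$ directly rather than packaging the distortion data into a single constant $\dis\Psi$ as the paper does), so no further comment is needed.
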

\begin{proof}

Note that for any $f\in \mathcal V$, the renormalized map  $\cren f$ is defined in a certain strip  $\Pi_{c}$; choose $\eps<c/2$, then the map $\cren f$ is defined in $\Pi_{2\eps}$. We will use this $\eps$  in the definition  of $D$. The value of $\eps'$ will be chosen later.

Since the map $\pi_{a,G}$ is analytic on $a, G$, the projection $p$ is analytic. Since the uniformizing coordinate  $\Psi=\Psi_f$  is defined in $\mathcal V$ and  analytic on $f$, the operator $\mathcal R$ is defined and analytic on the intersection of the domain of $\pi_{a,G}$ with $p^{-1}(\mathcal V)$.

Now let us check that its differential is compact.
Lemma \ref{lem-Psi-estim} implies that  for sufficiently large $N$, the map   $\tilde G= \Psi'(0) G \Psi^{-1}$ is defined in a rectangle  $\{-1<\Re z <1, -2\eps<\Im z<2\eps\}$ that covers $\bar  \Pi_\eps$. It remains to show that for a suitable choice of $N$,  $\eps'$, and $\Omega$, the map $\tilde F$ also extends analytically to a domain that contains $\Pi_{\eps'}$ in its interior. This will imply both compactness statements of the lemma.

Let $(\tilde F, H_{\tilde a}, \tilde G) = \mathcal R(F, H_a, G)$.
Note that   $$\tilde G'(z) = \Psi'(0) G'|_{\Psi^{-1}(z)} \frac{1}{\Psi'(\Psi^{-1}(z))}$$ and
\begin{equation}
 \label{eq-G}
 \tilde G''(z) = \Psi'(0) G''|_{\Psi^{-1}(z)} \frac{1}{\Psi'(\Psi^{-1}(z))^2} - \Psi'(0) G'|_{\Psi^{-1}(z)} \frac{\Psi''|_{\Psi^{-1}(z)} }{\Psi'(\Psi^{-1}(z))^3}.
\end{equation}

Let $$\dis \Psi = \sup_{\mathcal V}\sup_{ a, b\in \Psi^{-1}(\bar \Pi_{\eps})}\left(\frac{\Psi'(a)}{\Psi'(b)},  \frac{\Psi''(a) }{\Psi'(a)^2}\right)$$ be the maximal distortion of $\Psi$; due to Lemma \ref{lem-Psi-estim}, this is finite and independent of $N$. Consider any $C$ with $C>2 (\dis \Psi)^2$, and use it in the definition of  $\Omega^{\bbR}$.

Choose $N$ so that $\Psi'(0)> 2 C \cdot \dis \Psi$; this is possible due to Lemma \ref{lem-Psi-estim}.  
Due to the normalization $G(0)=0$ and $G'(0)=1$, we conclude that if $G\in \Omega^{\bbR}$, then in $\bar  \Pi_{\eps}$,   $$|\tilde G'|> (\dis \Psi)^{-1} \cdot |G'|_{\Psi^{-1}(z)}| > (\dis \Psi)^{-1} (1-\frac{C \cdot \dis \Psi}{|\Psi'(0)|}) > 0.5 (\dis \Psi)^{-1} > \frac{1}{C}$$ and $$|\tilde G''|<C\frac{(\dis \Psi)^2}{ |\Psi'(0)|}+ (\dis \Psi)^2 (1 + \frac{C \cdot \dis \Psi}{|\Psi'(0)|}) < 0.5 (\dis \Psi)^2 + 1.5 (\dis \Psi)^2<C.$$

Thus $G\in \Omega^{\bbR}$ implies $\tilde G\in \Omega^{\bbR}$, and we have proved the last statement of the lemma. 

Use Lemma \ref{lem-pi-exist} to define $\pi_{a,G}$, choose $\delta,  \Omega\supset \Omega^{\bbR}$, and  $\eps'=\tau_2/2$ such that for all $G\in \Omega$, we have $\pi_{0, G}H_0 G(\bar \Pi_{\eps})\supset \Pi_{2\eps'} $. We will use this $\eps'$ in the definition of the space of triples  $D=D_{\eps, \eps'}$.

Since  $\cren f = \tilde F\circ \pi_{\tilde a,\tilde G}\circ H_{\tilde a}\circ \tilde G$ is defined in $\Pi_{2\eps}$, we conclude that $\tilde F$ analytically extends to $\pi_{\tilde a,\tilde G}\circ H_{\tilde a}\circ \tilde G(\bar  \Pi_{\eps}) \supset \Pi_{2\eps'}$.

% Lemma \ref{lem-pi-exist} (applied to $2\eps$ instead of $\eps$) implies that for large $N$ and small $\delta$,  for $|a|<\delta$,  the map  $\pi_{\tilde a,\tilde G}$ is well-defined; also, the domain of $\pi_{\tilde a, \tilde G}\circ H_{\tilde a}\circ \tilde G$ includes $\Pi_{1.8 \eps}$.

%On the other hand, for any $f\in \mathcal V$, the renormalized map  $\cren f$ is defined in a certain strip  $\Pi_{c}$; if $\eps<c/2$, the map $\cren f$ is defined in $\Pi_{2\eps}$.
%Since  $\cren f = \tilde F\circ \pi_{\tilde a,\tilde G}\circ H_{\tilde a}\circ \tilde G$, we conclude that for large $N$ and $0<\tilde a<\delta$, the map  $\tilde F$ analytically extends to $s(\Pi_{1.8\eps})$.

Since we restrict $\tilde F, \tilde G$  to smaller domains $\bar \Pi_\eps, \Pi_{\eps'}$ in the definition of $\mathcal R$, this operator $\mathcal R$ has compact derivative. We have also proved that the set $\mathcal R(U_{\Omega, \delta}\cap p^{-1}(\mathcal V))$ is pre-compact.

The above arguments  imply that for sufficiently small $\Omega\supset \Omega^{\bbR}$ and $\delta$, on the image of $\hat f\in U_{\Omega, \delta}\cap p^{-1}(\mathcal V)$ under $\mathcal R$, the map $\pi_{\tilde a, \tilde G} $ is defined and $p(\mathcal R \hat f)=\pi_{\tilde a,\tilde G}\circ H_{\tilde a}\circ \tilde G$ is well-defined in a certain strip $\Pi_{\tau_1}$ around $\bbR/\bbZ$. This map extends to the strip $\Pi_\eps$ since it equals $\cren f$. Thus $p\colon \mathcal R(U_{\Omega, \delta}\cap p^{-1}(\mathcal V)) \to \mathcal D_{\eps}$ is defined. This completes the proof. 
\end{proof}

\begin{lemma}
\label{lem-fibers}
In the assumptions of Lemma \ref{lem-compact}, the operator $\mathcal R$ in the space of triples uniformly contracts on truncated fibers $p^{-1}(f) \cap U_{\Omega, 0}$ of the projection $p$ for critical maps $f\in \mathcal V\cap \mathcal D^{cr}_\eps$.

In more details, for any set $\Omega^{\bbR}$ as in Lemma \ref{lem-compact} we can choose its neighborhood $\Omega$, $N$, and constants $0<\lambda<1$, $c$, such that for any two triples $(F_1, H_0, G_1)$ and $(F_2, H_0, G_2)$ in $U_{\Omega, 0}$  with $$p(F_1, H_0, G_1) = p(F_2, H_0, G_2)\in \mathcal V\cap \mathcal D^{cr}_{\eps},$$  we have
$$\dist (\mathcal R^n (F_1, H_0, G_1), \mathcal R^n (F_2, H_0, G_2)) <c \lambda^n \dist((F_1, H_0, G_1), (F_2, H_0, G_2)). $$
\end{lemma}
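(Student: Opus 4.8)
The plan is to exploit the fact that renormalization of triples over a \emph{fixed} critical circle map $f$ reduces, after projecting away, to a self-map of a finite-dimensional-looking fiber where contraction is easy to read off. The key observation is that two triples $(F_1,H_0,G_1)$ and $(F_2,H_0,G_2)$ with the same projection $f=p(F_i,H_0,G_i)$ and the same $a$-component $a=0$ differ only in how the cube root $G$ and the translation $F$ are chosen. Since $H_0\circ G_i = G_i^3$ and $F_i(z)=z+f(0)$ (because $a=0$ forces $\pi_{0,G_i}=\id$, as $H_0\circ G_i$ is a circle map), both $F_i$ are in fact equal, and $G_1^3 = G_2^3 - \text{const}$; matching the normalizations $G_i(0)=0$, $G_i'(0)=1$ one sees that $G_1$ and $G_2$ differ by a factor that is a cube root of unity times a unit, so $G_1 = \zeta G_2$ with $\zeta^3=1$, and after the normalization $G_1'(0)=G_2'(0)=1$ we get $G_1\equiv G_2$ unless we are allowing genuinely distinct branches. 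So I would first argue that on $U_{\Omega,0}$ over a fixed $f$ the fiber is in fact a \emph{single point}, or at worst a finite set indexed by branch choices, and that $\mathcal R$ permutes these finitely many points; this already gives the contraction estimate trivially with $\lambda$ as small as we like on a fixed fiber, but we need uniformity across $f\in\mathcal V\cap\mathcal D^{cr}_\eps$ and across the pair of triples, so the real content is a quantitative version of this.

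More carefully: from Definition~\ref{def-R}, with $a=0$ we have $\tilde a = 0$, $\tilde G = \Psi'(0)\, G\,\Psi^{-1}$ restricted to $\bar\Pi_\eps$, and $\tilde F = \cren f\circ(H_0\tilde G)^{-1} = \cren f\circ (\Psi'(0)^{-3}\,(\Psi'(0)G\Psi^{-1})^3)^{-1} = \cren f\circ(G^3\circ\Psi^{-1})^{-1}$. The point is that $\tilde F$ and $\tilde G$ are determined by $f$, by the Fatou coordinate $\Psi = \Psi_f$ (which depends only on $f$), and by $G$ through the map $G\mapsto \Psi'(0)\,G\,\Psi^{-1}$. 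Write $G_i^3 = f(\cdot) - f(0)$ restricted to where $G_i$ is defined (this holds for lifted critical maps by Lemma~\ref{lem-lift-cr}); then $\tilde G_i^3 = \Psi'(0)^3\, G_i^3\circ\Psi^{-1} = \Psi'(0)^3\,(f\circ\Psi^{-1} - f(0)) = \Psi'(0)^3\,(\cren f \cdot(\ldots))$... more to the point, $\tilde G_1^3 = \tilde G_2^3$ identically, since $G_1^3=G_2^3$. So the only freedom is a cube-root-of-unity ambiguity, killed by the normalization $\tilde G_i'(0)=1$ together with real-symmetry on the real slice and continuity in $\mathcal V$. Hence $\tilde G_1 = \tilde G_2$ exactly, and then $\tilde F_1 = \tilde F_2$ exactly, so $\mathcal R$ maps the whole truncated fiber over $f$ to a single point. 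The contraction inequality then holds with, say, $\lambda = 1/2$ and $c$ chosen to absorb the first iterate: $\dist(\mathcal R(F_1,H_0,G_1),\mathcal R(F_2,H_0,G_2)) = 0$, so certainly $\dist(\mathcal R^n\cdot,\mathcal R^n\cdot) = 0 \le c\lambda^n\dist(\cdot,\cdot)$ for $n\ge 1$, and we may take $c = 1/\lambda$ to cover $n=0$ as well.

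The step I expect to be the genuine obstacle is verifying that the cube-root branch really \emph{is} pinned down, i.e. that there is no hidden multivaluedness that would make the fiber a nontrivial finite set on which $\mathcal R$ acts by a permutation (in which case the naive contraction fails). This is where I would invoke: (i) the normalization $G(0)=0$, $G'(0)=1$ from Definition~\ref{def-triples}, which removes the scalar $C$ ambiguity, combined with (ii) the requirement in the definition of $\Omega^{\bbR}$ that $G$ preserves $\bbR$ with $\min_{[-0.5,0.5]}G'\ge 1/C>0$, which forces the branch of $\sqrt[3]{\,\cdot\,}$ to be the one that is real and positive-derivative on the real line — exactly the branch singled out in the proof of Lemma~\ref{lem-lift-cr}. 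On the real slice this determines $G$ uniquely from $f$, hence the fiber over a real critical $f$ is a single point; by analytic dependence (the branch chosen in a neighborhood of $\mathcal D^{cr,\bbR}_\eps$ in Lemma~\ref{lem-lift-cr}) the same holds on $\mathcal V\cap\mathcal D^{cr}_\eps$. Once uniqueness of the branch is in hand, the rest is the short computation above. The uniformity of $N$, $\Omega$, and the constants across $f\in\mathcal V$ is inherited from the uniformity already established in Lemma~\ref{lem-compact} and from the uniform bounds in Lemma~\ref{lem-Psi-estim} on $\Psi_f$.
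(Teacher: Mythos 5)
Your proposal rests on the premise that a triple $(F,H_0,G)$ in $U_{\Omega,0}$ with prescribed projection $f$ is essentially unique, so that the truncated fiber $p^{-1}(f)\cap U_{\Omega,0}$ is a single point (up to branch ambiguity). That premise is false, and the contraction is a genuinely infinite-dimensional phenomenon.

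The critical error is the step ``$a=0$ forces $\pi_{0,G_i}=\id$, as $H_0\circ G_i$ is a circle map.'' The only condition imposed on the $G$-component by membership in $U_{\Omega,0}$ is that $G\colon\bar\Pi_\eps\to\bbC$ is conformal with $G(0)=0$, $G'(0)=1$, and satisfies the $C^2$-bounds in $\Omega$; there is no requirement that $H_0\circ G=G^3$ commute with the unit translation. Lemma~\ref{lem-pi-exist} says $\pi_{a,G}=\id$ \emph{only when} $H_a\circ G$ itself happens to be a translation-commuting map — that holds for the canonical lift $j(f)$ of Lemma~\ref{lem-lift-cr} (where $G^3=f(z)-f(0)$), but not for an arbitrary $G\in\Omega$. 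For a generic $G\in\Omega$, $\pi_{0,G}$ is the nontrivial uniformizing map of the curvilinear rectangle $K_{0,G}$ built via the Beltrami equation, and the $F$-component compatible with the projection $p(F,H_0,G)=f$ is then $F=f\circ(\pi_{0,G}\circ G^3)^{-1}$, which is in general not a translation. So the truncated fiber over $f$ is parametrized by the entire infinite-dimensional family of admissible $G$'s, and consequently your follow-up claims ($G_1^3=G_2^3$, hence $\tilde G_1=\tilde G_2$, hence $\mathcal R$ collapses the fiber in one step) are all unjustified; the remark about cube-root branches addresses a side issue, not the actual freedom in the fiber.

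The real mechanism in the paper is dynamical. Under $\mathcal R$, the $G$-component transforms as $\tilde G=\Psi'(0)\,G\circ\Psi^{-1}$ where $\Psi=\Psi_f$ is the Fatou-type coordinate. Because $\Psi^{-1}$ maps $\bar\Pi_\eps$ into a tiny neighborhood of $0$ (of size $\sim 1/\Psi'(0)$, with $\Psi'(0)$ large by Lemma~\ref{lem-Psi-estim}), and because the normalizations $G(0)=0$, $G'(0)=1$ kill the zeroth and first order of $G_1-G_2$ at the origin, the difference $\tilde G_1''-\tilde G_2''$ is uniformly contracted by a factor $\mu_0<1$ for $N$ large enough (formula~\eqref{eq-G}). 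Analyticity then propagates this contraction to $\pi_{0,G}$ and hence to the $F$-components, since the projections are equal. This argument has no finite-dimensional shortcut; you would need to reproduce the $\Psi$-based rescaling estimate to make the lemma work.
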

\begin{proof}

%The statement on the domains of %image of each fiber $\mathcal R(p^{-1}f)$ follows from the fact that $\mathcal R$ increases domains of $F, G$, see the previous lemma.

The estimate \eqref{eq-G} applied to $G_1, G_2$ and Lemma \ref{lem-Psi-estim} implies  that by choosing sufficiently large $N$ in the definition of $\cren$, we can guarantee that \begin{equation}\label{eq-G''}\dist (\tilde G_{1}'', \tilde G_{2} '') \le \mu_0\dist_{C^2} (G_1, G_2)\end{equation} for some $\mu_0<1$. Indeed, the first summands in \eqref{eq-G} are uniformly small, and in the second summands, $G_1'-G_2'$ is computed on a uniformly small neighborhood of zero and multiplied by a bounded function. Since $G_1'(0)=G_1'(0)=1$, we can estimate $G_1'-G_2'$ on a small neighborhood of zero in terms of $G_1''-G_2''$ and get \eqref{eq-G''}.

Since  we only consider triples normalized by  $G(0)=0$ and $G'(0)=1$, this implies that $G$-components of the triples with the same projection approach each other exponentially quickly.

Since $G$-components of $\mathcal R^n(F_{1,2}, H_0, G_{1,2})$ approach each other exponentially quickly, the same holds for the corresponding maps $\pi_{0,G_{1,2}}$ due to the fact that the dependence of $\pi_{0,G}$ on $G$ is analytic, thus Lipshitz on $\Omega^{\bbR}$.  Since the triples have the same projection, their $F$-components approach each other exponentially quickly. This implies the statement.

%
% ; the chart $\Psi$ is the same for both cases since it is determined by the projection $p(F_1, H_0, G_1) = p(F_2, H_0, G_2)$ to the space of critical maps. We have
% $$
% \sup_{\Pi_{\eps}} |G_{1\,  new} -  G_{2 \, new}  |= \sup_{\Pi_\eps} |(\Psi'(0))^{-1}  (G_1 \Psi - G_2 \Psi)| \le \Psi'(0))^{-1}  \sup_{\Pi_\eps} |G_1 - G_2|
% $$
% provided that $\Psi$ is an expanding map, $\Pi_\eps \subset \Psi(\Pi_\eps)$.
% Due to \cite{}, on a neighborhood of $\mathcal I$, the map $\Psi$ is expanding and $\Psi'(0)$ is uniformly bounded away from $1$ on $\mathcal I$ due to Lemma \ref{lem-Psi-estim}. Thus the distance between the third components of the triples exponentially decreases. Therefore that the distance between the corresponding maps $\pi_{0, G}$ exponentially decreases since $(0, G)\mapsto \pi_{(0,G)}$ is Lipshitz . Since the triples $\mathcal R^n(F_1, H_0, G_1)$ and $\mathcal R^n (F_2, H_0, G_2)$ have the same projections, the distance between their first components also exponentially decreases. This implies the first statement.
%
% Note that $G_{1,new}$ belongs to a compact uniformly bounded set of maps that extend to a uniformly large strip  $\Psi^{-1}(\Pi_\eps)$. Given $f$, the map $G\mapsto F : p(F, H_0, G)$ is continuous due to a continuity of $G\to \pi_{(0,G)}$ and the continuity of the compositional difference, thus its image is compact.
%
% If $N$ is chosen to be sufficiently large, $\Psi'(0)$ is large (Lemma \ref{lem-Psi-estim}) and thus $G_{1,new}$ is close to identity. This implies the last statement.
\end{proof}

Now we can lift $\mathcal I$ to the horseshoe-like set $\Lambda$ in the space of triples. In this lemma, we will fix the choice of $\eps, \eps'$ in the definition of $D$. We will also fix the choice of $\pi_{a,G}$, and thus complete the construction of the projection $p$.

\begin{lemma}
\label{lem-horseshoe}
There exists a choice of $\eps, \eps'$ in the definition of $D$, a choice of the map $(a,G)\to \pi_{a,G}$, and a set $\Lambda \subset D_0$ such that $\Lambda$ is invariant under $\mathcal R$, the operator $\mathcal R$ is an analytic operator with compact derivative on a neighborhood of $\Lambda$, and the projection $p\colon \Lambda \to {\mathcal I}$ is a homeomorphism that conjugates the dynamics of $\cren|_{\mathcal I}$ to the dynamics of $\mathcal R|_{\Lambda}$.

All triples in $\Lambda$ are $\bbR$-preserving.
\end{lemma}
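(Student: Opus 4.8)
The plan is to construct $\Lambda$ fiberwise over $\mathcal I$ as the attractor of the contracting dynamics of $\mathcal R$ on the truncated fibers of $p$. First I would fix $\eps, \eps'$ and the set $\Omega^{\bbR}$ (hence the construction of $\pi_{a,G}$ and of $p$) by applying Lemma~\ref{lem-compact} and Lemma~\ref{lem-fibers}; this requires first choosing $N$ large enough that both lemmas apply simultaneously, using that the estimates in Lemma~\ref{lem-Psi-estim} improve with $N$. Since $\mathcal I \subset \mathcal V$ is precompact and $\cren$ maps $\mathcal I$ to itself, the preimage $p^{-1}(\mathcal I) \cap U_{\Omega^{\bbR},0}$ is a union of fibers, each a closed bounded subset of an affine Banach space; by Lemma~\ref{lem-compact}, $\mathcal R$ maps $U_{\Omega^{\bbR},0}\cap p^{-1}(\mathcal V)$ into $U_{\Omega^{\bbR},0}$, and by Lemma~\ref{lem-projections} it covers $\cren$, so $\mathcal R$ preserves $p^{-1}(\mathcal I)\cap U_{\Omega^{\bbR},0}$.

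Next I would produce the actual set $\Lambda$. Because $\cren|_{\mathcal I}$ is a homeomorphism conjugate to a Bernoulli shift (Theorem~\ref{th-horseshoe}), $\mathcal I = \bigcap_{n\ge 0} \cren^n(\mathcal I)$ and every $f\in\mathcal I$ has a full backward orbit $\dots, f_{-2}, f_{-1}, f_0=f$ in $\mathcal I$. For such an $f$, I would define $\Lambda \cap p^{-1}(f)$ as the set of triples $\hat f$ with $p(\hat f)=f$ that arise as $\lim_{n\to\infty}\mathcal R^n(\hat g_{-n})$ for some (equivalently, any) sequence of triples $\hat g_{-n}\in U_{\Omega^{\bbR},0}$ with $p(\hat g_{-n})=f_{-n}$; Lemma~\ref{lem-fibers} (exponential contraction on fibers with the same projection, with a uniform rate since $\Omega^{\bbR}$ is fixed and the distortion bounds for $\Psi$ are $N$-independent) guarantees the limit exists, is independent of the choice of $\hat g_{-n}$, and is a single point. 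This realizes $\Lambda$ as a graph over $\mathcal I$: $p|_\Lambda$ is a continuous bijection, its inverse is the map $f\mapsto$ that limit, which is continuous because the backward orbit and the renormalization operator depend continuously on $f$ and the contraction is uniform. Invariance $\mathcal R(\Lambda)=\Lambda$ and the conjugacy $p\circ\mathcal R = \cren\circ p$ on $\Lambda$ are immediate from the limit description together with Lemma~\ref{lem-projections}. Real-symmetry of all triples in $\Lambda$ follows because $\mathcal I\subset\dcrepsr$, the lift $j$ of Lemma~\ref{lem-lift-cr} gives $\bbR$-preserving triples over $\dcrepsr$ to start the iteration from, and $\mathcal R$ preserves $\bbR$-symmetry (by Lemma~\ref{lem-pi-exist}, $\pi_{a,G}$ is $\bbR$-preserving for $\bbR$-preserving data, $\Psi_f$ is real-symmetric for real-symmetric $f$, and $\cren$ preserves $\dcrepsr$).

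Finally, since $\Lambda$ lies in a neighborhood of the precompact set $\overline{\mathcal I}$'s lift where $\mathcal R$ was shown analytic with compact derivative (Lemma~\ref{lem-compact}), the regularity statement is inherited. The main obstacle I expect is verifying that the fiberwise limit genuinely exists and is a \emph{single} point with continuous dependence on $f$: Lemma~\ref{lem-fibers} is stated for two triples over the \emph{same} critical map, whereas here the $\hat g_{-n}$ sit over the varying maps $f_{-n}$, so one must combine the fiber contraction with the (already established, via Theorems~\ref{th-horseshoe}--\ref{th-convergence}) geometric convergence $\dist(\cren^n f_{-n}, f)\to 0$ along backward orbits, and check that the $\Omega$-estimates and the domain of $p$ are not violated along the way — i.e. that the iterates stay inside $U_{\Omega,\delta}\cap p^{-1}(\mathcal V)$ uniformly. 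This is where one genuinely uses that $N$ can be taken uniformly large and that $\mathcal R$ maps $U_{\Omega^{\bbR},0}$ strictly into itself.
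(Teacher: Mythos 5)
Your proposal follows the same route as the paper: fix $\Omega^{\bbR}$, $N$, $\eps$, $\eps'$ via Lemmas~\ref{lem-compact} and~\ref{lem-fibers}, define $\Lambda$ fiberwise by pulling back along the (unique) backward $\cren$-orbit of each $f\in\mathcal I$, and use contraction on fibers to get a well-defined limit, with real symmetry and regularity inherited from the construction. The core idea is correct and matches the paper.

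One remark about your ``main obstacle'' paragraph: you worry that Lemma~\ref{lem-fibers} compares triples over the \emph{same} critical map while the $\hat g_{-n}$ sit over varying maps $f_{-n}$, and you propose to fix this by invoking a geometric convergence $\dist(\cren^n f_{-n},f)\to 0$ from Theorems~\ref{th-horseshoe}--\ref{th-convergence}. This is a misdiagnosis: the $f_{-n}$ are defined by $\cren^n f_{-n}=f$ \emph{exactly}, not asymptotically, so for any $m>n$ the triples $\mathcal R^n(\hat g_{-n})$ and $\mathcal R^m(\hat g_{-m})=\mathcal R^n\bigl(\mathcal R^{m-n}(\hat g_{-m})\bigr)$ both lie in the single fiber $p^{-1}(f)$, and $\hat g_{-n}$ and $\mathcal R^{m-n}(\hat g_{-m})$ both lie in $p^{-1}(f_{-n})\cap U_{\Omega^{\bbR},0}$ (using the last assertion of Lemma~\ref{lem-compact} that $\mathcal R$ maps $U_{\Omega^{\bbR},0}\cap p^{-1}(\mathcal V)$ into $U_{\Omega^{\bbR},0}$). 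So Lemma~\ref{lem-fibers} applies verbatim and gives the Cauchy estimate $\dist\bigl(\mathcal R^n(\hat g_{-n}),\mathcal R^m(\hat g_{-m})\bigr)\le c\lambda^n\diam(U_{\Omega^{\bbR},0})$; no appeal to Theorem~\ref{th-convergence} is needed. What actually requires an extra argument, and what the paper addresses but you leave implicit, is that the limit point of these nested sets belongs to the domain of $p$ (the paper uses precompactness of $\mathcal R(U_{\Omega,\delta}\cap p^{-1}(\mathcal V))$ and that the $F,G$-components of the limit extend to strictly larger strips), and that the resulting map $i\colon f\mapsto\lim\mathcal R^n(\hat g_{-n})$ is continuous in $f$ — your one-line justification is heuristic where the paper runs a three-term $\delta/3$ estimate. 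These are fillable, but they, rather than the cross-fiber issue you flag, are the points that need care.
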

\begin{proof}
For each critical circle map $f\in \mathcal I$, construct its lift $j(f) = (F,H_a, G)$ to $D$ using Lemma \ref{lem-lift-cr}: namely, set $$G(z) = \sqrt[3]{f(z)-f(0)}/C\text{, and }F(z) = z+f(0).$$ Then   $p((F, H_0,G))=f$.

All resulting maps $G$ for all $f\in \mathcal I$ are $\bbR$-preserving. Choose $\eps$ small, so that  all these maps   extend to $\bar \Pi_\eps$ univalently with uniform estimates on $\max_{\bar \Pi_\eps} |G''|$, $\min G'|_{[-0.5, 0.5]}$; consider  $C$ so that the set $\Omega^{\bbR}$ from Lemma \ref{lem-compact} contains all $G$-components of all lifts of critical maps  $f\in \mathcal I$. Decrease $\eps$ if needed to satisfy assumptions of Lemma \ref{lem-compact}, and use this lemma to choose $\eps'$. These values of $\eps, \eps'$ will be used in the definition of the space $D$. Finally, fix $\delta$ and $N$ so that assertions of Lemma \ref{lem-compact} and Lemma \ref{lem-fibers} hold true.

The space of triples $D$ is now fixed, and the projection $p$ is defined.

For each $f\in \mathcal I$, consider critical circle maps $f_{-n}\in \mathcal I$ such that $$\cren^n f_{-n}=f;$$ note that this uniquely defines $f_{-n}$.

For each $f_{-n}$, consider  truncated fibers of the projection, $p^{-1}(f_{-n})\cap U_{\Omega,0}$. Due to the choice of $\Omega^{\bbR}$, these sets are non-empty for all $g\in \mathcal I$: every such set contains a lift $j( f_{-n})$ of $f_{-n}$.

Since $\mathcal R$ contracts on truncated fibers (Lemma \ref{lem-fibers}), the diameters of the sets $\mathcal R^n(p^{-1} f_{-n}\cap U_{\Omega, 0}) \subset p^{-1}(f)$ tend to zero. All these sets are non-empty since they contain $\mathcal R^n (j( f_{-n}))$. Since each set is precompact and components of the triples in these sets extend to larger strips (Lemma \ref{lem-compact}), there exists a unique accumulation point $i( f)$ of these sets, and $F,G$- components of the triple $i(f)$ are defined in strips larger than $\Pi_\eps$, $\Pi_{\eps'}$ respectively. By continuity, the projection $p$ is defined on $i(f)$ and $p(i(f))=f$.

The construction implies that $$i(\cren f) = \mathcal R i( f),$$ thus the set $\Lambda = i(\mathcal I)$  is an invariant set for $\mathcal R$, and the dynamics of $\mathcal R|_{\Lambda}$ is conjugate to the dynamics of $\cren|_{\mathcal I}$ via the continuous map $p$.

Since  $i(f)$ is the limit point of $ \bbR$-preserving triples $\mathcal R^n(j( f_{-n}))$,  it is $\bbR$-preserving.

All maps $\mathcal R^n(p^{-1}(f_{-n})\cap U_{\Omega^{\bbR},0})$ belong to $U_{\Omega^{\bbR},0}$ due to the last statement of Lemma \ref{lem-compact}. In particular, $\Lambda\subset U_{\Omega^\bbR, 0}$.   Thus Lemma \ref{lem-compact} implies that $\mathcal R$ is an analytic operator with a compact differential on a neighborhood of $\Lambda$.

It remains to prove that $i$ is continuous, that is, the projection $p$ is a homeomorphism between $ \Lambda$ and $\mathcal I$.

Let $f\in \mathcal I$, $\delta>0$; show that the lifts of sufficiently close points of $\mathcal I$ are $\delta$-close to $i(f)$.  Let $\lambda, c$ be the same as in Lemma \ref{lem-fibers}; choose $n$ so that $c\lambda^n\cdot 2C<\delta/3$. For a map $f_{-n}$, take its neighborhood $U$ in $\mathcal I$, and let $g\in U$. Construct the lifts $j( g), j( f_{-n})$ using Lemma \ref{lem-lift}; clearly, the lifts are close in the space of triples if $U$ is small enough. Due to Lipshitz estimates on $\mathcal R$, we also have that $\mathcal R^n (j( g))$ and $\mathcal R^n(j( f_{-n}))$ are $\delta/3$-close if $U$ is small enough.

Note that the diameters of the sets $$\mathcal R^{n} (U_{\Omega,0}\cap p^{-1}(g))\text{ and }\mathcal R^{n} (U_{\Omega,0}\cap p^{-1}(f_{-n}))$$ are smaller than $c\lambda^n \cdot \diam (U_{\Omega,0})$ due to Lemma \ref{lem-fibers}, and the latter is smaller than $\delta/3$ due to the choice of $n$. Also, the closure of the first set contains both $\mathcal R^n(j( g))$ and $i(\cren^ng)$, and the closure of the second set contains both $\mathcal R^n(j( f_{-n}))$ and $i(f)$.
Since $\mathcal R^n (j( g))$ and $\mathcal R^n(j( f_{-n}))$ are $\delta/3$-close, we conclude that $i(f)$ and $i(\cren^ng)$ are $\delta$-close for $g\in U$. This completes the proof.

%Consider the set $A_C=\{\max_{\Pi_{2\eps}}|G''|<C\}$ in the space of analytic maps of an $\eps$-neighborhood of $\Pi_{\eps}$. For each $f$, consider the map $L_f \colon A_C \to D$ given by $L_f \colon G \to (F, H_0, G)$ with $F = f \circ (\pi_{0, G}H_0 G)^{-1}$.

\end{proof}
The previous lemmas provide the description of the Banach space $D$, its projection $p$, and the operator $\mathcal R$, required for Theorem \ref{th-operator}, with the exception of the last property: uniform hyperbolicity of $\mathcal R$ with two unstable directions.
The next lemma completes the proof of Theorem \ref{th-operator} and provides an explicit formula for the maximal expansion rate along the second one-dimensional unstable distribution of $\mathcal R$.

\begin{lemma}
\label{lem-distributions}
 Along each orbit in $\Lambda$, the operator $\mathcal R$ is hyperbolic, with two-dimensional unstable distribution which intersects $TD_0$ on a one-dimensional distribution $\eta_{\hat f}$. The operator $\mathcal R$ is uniformly hyperbolic on $\Lambda$.

 The stable distribution of $\mathcal R$ is a full preimage under $(dp)$ of the stable distribution of $\cren$. 

The distribution  $(dp)\eta_{\hat f}$ is  the unstable distribution of $\cren$.
At the orbit of the triple $\hat f\in \Lambda$,  the minimal expansion rate $\lambda_1$ of $\mathcal R$ along $\eta_{\hat f}$ coincides with the minimal expansion rate of $\cren$ along its unstable distribution $(dp)\eta_{\hat f}$ at the orbit of $p(\hat f)$.

Set
\begin{equation}\label{eq-lambda2}\lambda_2 =\limsup_{n\to +\infty} \sup_{m\ge 0} \sqrt[n]{\Psi'_{m, n}(0))^2}
\end{equation}
 where $\Psi_{m, n}$ is the uniformizing chart that corresponds to the iterate $\mathcal R^n $ at $\mathcal R^m \hat f$; if $\lambda_1>\lambda_2$, then $\mathcal R$ has an invariant one-dimensional distribution $\xi_{\hat f}$ transversal to $D_0$ along the orbit of $\hat f$, and the maximal expansion rate along this distribution coincides with $\lambda_2$.

Both one-dimensional distributions $\eta_{\hat f}$ and $\xi_{\hat f}$ are generated by  vector fields that belong to $TD^{\bbR}$. 

\end{lemma}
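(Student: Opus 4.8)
The plan is to analyze the operator $\mathcal R$ on the space of triples by "splitting" the tangent space along $\Lambda$ into the fiber directions of $p$ and a complement modeled on the tangent space of $\mathcal D_\eps$, and then transport the known hyperbolicity of $\cren$ (Theorem~\ref{th-horseshoe}) through the projection $p$. The crucial structural facts I would use are: (i) $p\mathcal R = \cren\, p$ (Lemma~\ref{lem-projections}), so $dp$ intertwines $d\mathcal R$ with $d\cren$; (ii) $\mathcal R$ contracts uniformly on the (truncated) fibers of $p$ (Lemma~\ref{lem-fibers}); (iii) $\mathcal R$ has compact derivative on a neighborhood of $\Lambda$ (Lemma~\ref{lem-compact}), so its spectrum off zero is discrete; (iv) on $D_0$ (i.e. $a=0$), the renormalization of triples restricts to a lift of $\cren$ acting on critical circle maps, whose unstable direction is one-dimensional. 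The $a$-coordinate is essentially decoupled: by Definition~\ref{def-R}, $\tilde a = a\,(\Psi'_f(0))^2$, so the $a$-direction is an $\mathcal R$-invariant line along any orbit, transversal to $D_0$, with derivative exactly multiplication by $(\Psi'(0))^2$, which is the source of $\lambda_2$ in \eqref{eq-lambda2}.

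First I would set up the invariant splitting. Along the orbit of $\hat f\in\Lambda$, write $T_{\hat f}D = V_{\hat f}\oplus W_{\hat f}$ where $V_{\hat f} = \Ker(dp|_{\hat f})$ is the fiber direction and $W_{\hat f}$ is a complement; $d\mathcal R$ maps $V$ into $V$ (since fibers map to fibers) and, modulo $V$, acts on $W$ as $d\cren$ via the isomorphism $dp$. On $V$, Lemma~\ref{lem-fibers} gives uniform contraction at rate $\lambda<1$. On $W\cong T_{p\hat f}\mathcal D_\eps$, the operator $\cren$ is uniformly hyperbolic on $\mathcal I$ with a one-dimensional unstable distribution (Theorem~\ref{th-horseshoe}); pulling back, $d\mathcal R$ on $T_{\hat f}D$ has an invariant splitting into a stable part $E^s_{\hat f} = (dp)^{-1}(E^s_{p\hat f})$ (which contains all of $V$, so in particular $TD_0\cap E^s$ projects onto $E^s_{\cren}$) and a two-dimensional unstable part $E^u_{\hat f}$. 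To see that $E^u$ is two-dimensional rather than one: it must project onto the one-dimensional $E^u_{\cren}$, and its kernel inside $V$ is exactly the $a$-direction, on which $d\mathcal R$ acts by $(\Psi'(0))^2>1$ (for large $N$, $\Psi'(0)>K$ by Lemma~\ref{lem-Psi-estim}), hence expanding. So $\dim E^u = 2$, and $E^u\cap TD_0$ is the preimage of $E^u_{\cren}$ inside $TD_0$, which is one-dimensional; call its generating distribution $\eta_{\hat f}$. Uniform hyperbolicity follows from uniform hyperbolicity of $\cren$ on $\mathcal I$ (which is compact by Theorem~\ref{th-horseshoe}), uniform fiber contraction, and the uniform bound $\Psi'(0)>K$; the stable-distribution and minimal-expansion-rate claims for $\eta_{\hat f}$ are then immediate from $dp\,\eta_{\hat f} = E^u_{\cren}$ and the commutation $dp\circ d\mathcal R = d\cren\circ dp$.

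Next, the invariant line $\xi_{\hat f}$ transversal to $D_0$ when $\lambda_1>\lambda_2$. Inside the two-dimensional $E^u_{\hat f}$ we have the invariant line $\eta_{\hat f}\subset TD_0$ on which $d\mathcal R$ expands at rate $\ge\lambda_1$, and a transversal (non-invariant, in general) complementary direction along which $d\mathcal R$ acts, modulo $\eta$, by multiplication by $(\Psi'(0))^2$ (the $a$-coordinate); the $\limsup$ of geometric means of these factors is $\lambda_2$ by \eqref{eq-lambda2}. Since $\lambda_1>\lambda_2$, the bundle $E^u_{\hat f}$ is "dominated": the $\eta$-subbundle is the strong-unstable direction and the quotient is the weak-unstable one. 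Standard dominated-splitting / graph-transform argument in this $2$-dimensional bundle over the orbit of $\hat f$ then produces an invariant complementary line $\xi_{\hat f}$: writing vectors in $E^u$ as $(s\eta + t\, e_a)$ and tracking the slope $t/s$ under $d\mathcal R$, the slope dynamics is a contraction (rate $\lambda_2/\lambda_1<1$ up to the $m$-uniform sup) toward a unique invariant section, which is $\xi_{\hat f}$. By construction $\xi_{\hat f}$ is transversal to $\eta_{\hat f}$, hence to $TD_0$ (as $E^u\cap TD_0 = \eta$), and the expansion rate of $d\mathcal R$ along $\xi$ equals that along the $e_a$ quotient direction, namely $\lambda_2$ from \eqref{eq-lambda2}. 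Finally, the reality statement: all triples in $\Lambda$ are $\bbR$-preserving (Lemma~\ref{lem-horseshoe}), the operator $\mathcal R$ and its derivative commute with the real-structure involution, $\eta_{\hat f}$ is the preimage of the unstable direction of $\cren$, which is real (Theorem~\ref{th-horseshoe}), and the $a$-coordinate is real; so the whole construction is real-equivariant and the invariant sections $\eta_{\hat f},\xi_{\hat f}$ are fixed by the involution, i.e. generated by vector fields in $TD^{\bbR}$.

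**The main obstacle** I anticipate is not the existence of the splitting but making the domination argument genuinely uniform in the base point — i.e. getting the $\sup_{m\ge 0}$ in the definitions of $\lambda_1,\lambda_2$ to cooperate with the graph-transform contraction when $\Lambda$ is only a (pre)compact invariant set and not a nice periodic orbit. Concretely, one must check that $\lambda_1>\lambda_2$ (with these uniform-over-$m$ sup/inf definitions) really forces a uniform gap rather than just an asymptotic one; here one leans on uniform hyperbolicity of $\cren$ on $\mathcal I$ together with the uniform lower bound $\Psi'(0)>K$ (Lemma~\ref{lem-Psi-estim}) and precompactness of $\bar{\mathcal I}$ from Theorem~\ref{th-horseshoe}, which together upgrade the pointwise estimates to uniform ones. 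A secondary technical point is verifying that $E^u_{\hat f}$ really meets $TD_0$ in exactly a line (not in all of $E^u$, which would happen if the $a$-direction were stable) — this is where $\Psi'(0)>1$ is essential and must be invoked explicitly.
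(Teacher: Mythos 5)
Your high-level plan is close to the paper's (use $p\mathcal R = \cren\,p$, fiber contraction, and the known hyperbolicity of $\cren|_{\dcrepsr}$, with the $a$-coordinate supplying the extra expanding direction via $\tilde a = a\,(\Psi'(0))^2$), and your graph-transform argument for the slow unstable line $\xi_{\hat f}$ inside the $2$-dimensional unstable bundle is essentially the same as the paper's upper-triangular-matrix/cone argument. However, there are two concrete errors in the way you locate the $a$-direction in the tangent bundle, and they undercut the step where you conclude $\dim E^u = 2$. First, $\tfrac{\partial}{\partial a}$ is \emph{not} in $V=\Ker(dp)$: the projection $p(F,H_a,G)=F\circ\pi_{a,G}\circ H_a\circ G$ depends nontrivially on $a$ (e.g.\ $(p\hat f)'(0)=-a\,F'(0)\,\pi'(0)$ near $\Lambda$, which vanishes iff $a=0$), and in fact at points of $\Lambda$ one has $\Ker(dp)\subset TD_0$, while $\tfrac{\partial}{\partial a}$ is transversal to $TD_0$. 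So ``its kernel inside $V$ is exactly the $a$-direction'' is false, and the reason $E^u$ picks up a second dimension cannot be read off from $\Ker(dp)$. Second, in the plan overview you assert that ``the $a$-direction is an $\mathcal R$-invariant line''; it is not. What is true (and what the paper uses) is that $TD_0$ is $d\mathcal R$-invariant and the induced map on the $1$-dimensional quotient $TD/TD_0$ is multiplication by $(\Psi'(0))^2$; in coordinates $(x,y,z)$ with $y$ the $a$-component, $d\mathcal R(\tfrac{\partial}{\partial a})$ has $x$- and $z$-components as well, giving the upper-triangular structure $\left(\begin{smallmatrix}\lambda & \tau\\ 0 & \mu\end{smallmatrix}\right)$ on $E^u$ — not a diagonal one. (You later say correctly that the complementary direction is ``non-invariant in general,'' so your detailed argument and the plan overview contradict each other on this point.)

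A more structural gap: you treat the existence of an invariant splitting $E^s\oplus E^u$ as a consequence of hyperbolicity of $\cren$ plus fiber contraction, by ``pulling back.'' But what is known from Theorem~\ref{th-horseshoe} is hyperbolicity of $\cren$ restricted to $\dcrepsr$ (one unstable direction inside $T\dcreps$), not hyperbolicity of $\cren$ as an operator on the larger space $\mathcal D_\eps$ with a two-dimensional unstable bundle, so the unstable bundle of $\mathcal R$ cannot simply be defined as $(dp)^{-1}$ of an unstable bundle that is not yet known to exist. This is exactly why the paper does not pull back but instead \emph{constructs} the stable and unstable subbundles via explicit invariant cone fields in $TD$: stable cones $\{\|v\|>c\|(dp)v\|\}$ along the fibers, unstable cones $\{|L_{\hat f}v|>c\|v\|\}$ inside $TD_0$ to get $\eta_{\hat f}$, and then two-dimensional unstable cones $\{|x|+c|y|>\|z\|\}$ using the $a$-coordinate $y$ to obtain the full $E^u$. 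Once that is done, the upper-triangular structure and the backward-invariant cone $|x|<|y|$ give $\xi_{\hat f}$ with maximal expansion rate $\lambda_2$ — which is the step you did have right. If you replace the incorrect ``$a$-direction $\subset\Ker(dp)$'' claim with the invariance of $TD_0$ and the expansion on the quotient $TD/TD_0$, and supply the cone construction (or cite an abstract result to get the invariant splitting from the block-triangular estimates with uniform rates), the argument would be repaired.
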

\begin{proof}
Roughly speaking, since $\mathcal R$ contracts on the fibers of the projection $p$ and $\mathcal R|_{D_0}$ projects, via $p$, to the hyperbolic operator $\cren$ on $\mathcal D^{cr}_{\eps}$, the operator $\mathcal R$ must be hyperbolic on $D_0$ with one unstable direction that projects to the unstable direction of $\cren$ and has the same expansion rate. Since $\mathcal R^n$ takes $a$ to $(\Psi'_{m,n}(0))^2 a$, it expands in the direction transversal to $D_0=\{a=0\}$, thus it must have one more unstable direction in $D$ with maximal expansion rate $\lambda_2$ as above.

Below we provide detailed proofs of these statements, by constructing the stable foliation and unstable invariant cone fields for $\mathcal R$.

 \textbf{Stable distribution of $\mathcal R$}

 Let $W^s_f\subset \mathcal D_{\eps}^{cr}$ be the local stable manifold of $f\in \mathcal I$ under $\cren$. We will prove that an analytic manifold $p^{-1}(W^s_f)$ is a stable manifold of $i(f)$. Consider the lift $j ({W^s_f})$ constructed as in Lemma \ref{lem-lift-cr}.  Then $p^{-1} (W^s_f)$ is a saturation of $j ({W^s_f})$ by the fibers of $p$, and one of these fibers  passes through both $j( f)$ and $i(f)\in \Lambda$. Since $\mathcal R$ uniformly contracts on the truncated fibers and both $i(f)$, $j(f)$ belong to $U_{\Omega, 0}$, it is sufficient to prove that $p^{-1} (W^s_f)$ is a locally stable manifold of its point $j(f)$.
 
 Let $v$ be any vector of unit length such that $(dp)v\in TW^s_f$. We prove that $v$ is uniformly contracted under the action of $d\mathcal R$. Replacing $\mathcal R$ with its iterate, we may and will assume that $\|d\cren^n|_{W^s_f}\| <\tau^n $ and $\|d\mathcal R^n|_{p^{-1}(f)}\|<\mu^n$ for all $n=1, 2, \dots$ and some $\mu, \tau\in(0,1)$. Let us show that the contraction rate of $d\mathcal R$ on $v$ is at least $\max(\mu, \tau)$.

There exists a uniform $k$ such that for each $v\in T_{\hat f} D$, we can construct a vector $u$ with the same projection, $(dp)u=(dp) v$, such that $\|u\|<k \|(dp)v\|$; indeed, we can take $u = (dj)(dp)v$.

 Take  $c\gg 2k\|dR\|$ and consider the cones $$\mathcal C^s_{\hat f} = \{\|v \| > c \|(dp)v \| \mid v \in T_{\hat f}D\}$$ ``along'' the fibers of the projection $p$ for all $\hat f\in \Lambda$.   Then  for any vector field in this cone, we have a representation $v=u +w$ where $w\in \Ker (dp)$ and $u$ is as above, $\|u\|\le k\|(dp)v\|\le \frac kc \|v\|$. Thus $$\|w\|\le \|v\|+\|u\| \le \|v\|(1+k/c),$$ and  $$  \|dR v\| \le  \|d\mathcal R   w\| + \|d\mathcal R u\| \le  \mu \|v\|(1+k/c) + \|d\mathcal R\| \cdot  \|v\|\cdot  \frac kc. $$ Hence for any $\eps$, for sufficiently large $c$, we have $\|d\mathcal R v\|< (\mu+\eps)\|v\| $: the contraction rate in the cones  $\mathcal C_{\hat f}^s$ is only slightly bigger than $\mu$.

Now, show that  $\|d\mathcal R^n v\|< C (\max (\tau, \mu) +\eps)^n \|v\|$. If all vectors $d\mathcal R^n v$ for all $n$ stay in the corresponding cones $\mathcal C_{\mathcal R^n \hat f}^s$, the statement follows from the above estimate. Otherwise, consider a block $n, n+1, \dots, n+m$ such that $d\mathcal R^n v$ does not belong to the cone $\mathcal C^s_{\mathcal R^n \hat f}$, but the vectors  $d\mathcal R^{n+1} v, \dots, d\mathcal R^{n+m} v$ do. It is sufficient to prove an estimate for each (finite or infinite) block of this form. 

Since $d\mathcal R^n v $ is not in the cone, we have $\|d\mathcal R^n v\|<c   \|(dp)d\mathcal R^n v \|< \tau^n \|dp(v)\| < C \tau^n \cdot \|v\|$ and the statement holds for $n$. Further, $$\|d\mathcal R^{n+j} v\| < (\mu+\eps)^{j-1} \cdot \|d\mathcal R\| \cdot \|d\mathcal R^n v\| < \tilde C\tau^n (\mu+\eps)^{j-1}$$ for $j=1, \dots, m$, thus the estimate holds for the block $n, n+1, \dots, n+m$.

This produces the codimension-1 stable distribution for $\mathcal R$ in $D_0$.

\textbf{One-dimensional unstable distribution for $\mathcal R|_{D_0}$.}

 We will construct unstable cones for this operator. Let $f\in \mathcal I$, $\hat f = i(f)\in \Lambda$. Let $TW^u_f$ be  the unstable distribution for $\cren$. Let $p_{u}\colon T\mathcal D_{\eps}^{cr}\to T\mathcal D_{\eps}^{cr} $ be the projection onto $TW^u_f$  along the stable distribution $TW^s_f$ , and define uniformly bounded linear functionals $L_{\hat f}$ by  $L_{\hat f}= p_u \circ dp$. Let $u_{\hat f} \in  (dj)(TW^u_f)$ be the vectors that belong to the lift of the distributions  $TW^u_f$  to the space $D_0$ constructed as in Lemma \ref{lem-lift-cr}, and normalize them so that $L_{\hat f} u_{\hat f}=1$; note that the norms $\|u_{\hat f}\|$ are uniformly bounded.

 Then $\Ker L_{\hat f}$ is a stable distribution for $\mathcal R$, thus replacing $\mathcal R$ with its iterate we can achieve $$\|d\mathcal R|_{\Ker\, L_{\hat f}}\|<\lambda_3<1;$$ $TW^u_f$  is the unstable distribution for $\cren$, thus  replacing $\mathcal R$ with its iterate we can guarantee that $$L_{\mathcal R {\hat f} }(d\mathcal R u_{\hat f}) =p_u (d\cren (dp) u_{\hat f})  >\lambda_1\gg 1.$$

 Now, it is easy to check that the cones  $$\mathcal C^{uu}_{\hat f} = \{|L_{\hat f}v|>c\|v\|\}$$ are invariant unstable cones for $\mathcal R$ for sufficienly small uniformly chosen $c$. Note that these cones project to the unstable cones of $\cren$.

 Indeed, let $v$ belong to this cone, and represent it as $v = L_{\hat f}(v) \cdot u_{\hat f}+w $ where $w$ belongs to $\Ker L_{\hat f}$. Then $$\|w\|\le \|v\| + |L_{\hat f}(v) |\cdot \|u_{\hat f}\| \le  |L_{\hat f}(v) | (\frac 1c + \|u_{\hat f}\|), $$    $$|L_{\mathcal R{\hat f}} (d\mathcal R v)| = |L_{\hat f}(v) \cdot L_{\mathcal R{\hat f}}d\mathcal R u_{\hat f}|\ge   |L_{\hat f} (v) | \cdot \lambda_1$$ and $$c\|d\mathcal Rv\| = c\| L_{\hat f}(v) \cdot d \mathcal R u_{\hat f} +d\mathcal R w \|  \le  c|L_{\hat f}(v)| \cdot (\|d\mathcal R\|\cdot \|u_{\hat f}\| +\lambda_3 \left(\frac 1c + \|u_{\hat f}\|)\right) ,$$ hence the cones $\mathcal C^{uu}_{\hat f}$ are invariant for small uniform $c$. Also, since   $\|d\mathcal R^nv\|  \ge  |L_{\hat f}(v)| \cdot \| d \mathcal R^n u_{\hat f} \| - \lambda_3^n (\frac 1c + \|u_{\hat f}\|))$,  by using this inequality for sufficiently large iterate of $\mathcal R$ we conclude that high iterates of $\mathcal R$ uniformly expand in the cones.

Now, the unstable distribution $\eta_{\hat f}$ is constructed as the intersection of images of the unstable cones $\mathcal C^{uu}_{\hat f}$.
 Since the projections of the cones (and their iterates) to $T\mathcal D^{cr}_{\eps}$ are unstable cones (and their iterates) for $\cren$,  the unstable distribution for $\mathcal R$ projects to the unstable distribution for $\cren.$

 Since in the unstable cones, we have both lower and upper estimates on $\|v\|$ in terms of  $L_{\hat f} (v)$, and thus in terms of $\|(dp)v\|$, the minimal expansion rates along these distributions coincide.

\textbf{Two-dimensional unstable distribution for $\mathcal R$.}

Let $\tilde \eta_{\hat f}$ be unit vectors that belong to the distribution $\eta_{\hat f}$. 
Let $v\in T_{\hat f} D$, ${\hat f}\in D_0$,  be represented as a sum $v=x \cdot \tilde \eta_{\hat f} + y \cdot \frac{\partial }{\partial a}+z $, where $z\in T W^s_{\hat f}$ and $\frac{\partial }{\partial a}$ is a change of the coordinate $a$ in the space of triples. Recall that $a$ is multiplied by $\Psi'(0)^2$ under $\mathcal R$. Due to Lemma \ref{lem-Psi-estim}, by replacing $\mathcal R$ with its iterate we can guarantee that $\Psi'(0)^2>\mu >1$. Thus $$d \mathcal R (\frac{\partial }{\partial a}) = k \tilde \eta_{\hat f} + l \frac{\partial }{\partial a} +m $$ where $|l|>\mu$. Since $\|dR\|$ is uniformly bounded, there is a uniform bound $\max(|k|, |l|, \|m\|) <K$. Also, $\|d\mathcal R z\| < t \|z\|$ for $t<1$, due to the uniform contraction on $ W^s_{\hat f}$. Now, it is easy to check that
$$\mathcal C^u_{\hat f} = \{v=x \cdot \tilde \eta_{\hat f} + y \cdot\frac{\partial }{\partial a}+z \mid   |x|+c|y|>\|z\|\}$$
is a family of  unstable invariant cones for $d\mathcal R$ if $ c$ is chosen sufficiently big.  This  implies that $d\mathcal R$  has  a two-dimensional unstable distribution transversal to $D_0$ that  contains $\eta_{\hat f}$.

In more detail, assume $d\mathcal R \tilde \eta_{\hat f} = \lambda_{\hat f}  \tilde \eta_{\mathcal R{\hat f}}$ where $|\lambda_{\hat f}|>1$.
  The image of the vector $v=x \tilde \eta_{\hat f}+y \frac{\partial }{\partial a}  +z$ has coordinates  $$\tilde x = \lambda_{\hat f} x+yk,\; \tilde y = ly, \; \tilde z = d\mathcal R z +  y m .$$

  Now, $$\|d\mathcal R z + y m\| <t \|z\|+K|y| < t (|x|+c|y|)+K|y| $$ and we also have
  $$|\lambda_{\hat f} x + y k|+c|l y| > |\lambda_{\hat f}|\cdot | x|+ |y| (c\mu-K) .$$ For a large $c$, the second quantity is bigger and thus the cone field is invariant.

  It remains to prove that the vectors in the cone are uniformly expanded. Indeed, for large $c$ and small $\delta>0$,
\begin{eqnarray*}
  |\lambda_{\hat f} x+yk| + c| l y|+ \frac \delta 2\|d\mathcal R z + y m\|  &>&  (1+\delta) (|x|+c|y|)\\&>& \left(1+\frac \delta 3\right)\left(|x|+c|y| +\frac \delta 2\|z\|\right) ,
\end{eqnarray*}
  that is, the vectors are uniformly  stretched in the norm $|x|+c|y| + \frac{\delta}{2} \|z\|$.
  This implies the existence of the two-dimensional unstable distribution $l_{\hat f}$ that uniformly expands under the action of $\mathcal R$ and depends continuously on a point $\hat f\in \Lambda$. Thus $\mathcal R$ is  uniformly hyperbolic on $\Lambda$.

 \textbf{Second one-dimensional unstable distribution for $\mathcal R$.}

  If $\lambda_1>\lambda_2$, the existence of the unstable distribution uniformly transversal to $D_0$ follows from standard results on operators in  $\bbR^2$. In more detail, one can choose bases in the two-dimensional unstable distributions $l_{\hat f}, l_{\mathcal R\hat f}, l_{\mathcal R^2 \hat f}, \dots $ for $\mathcal R$ so that the first basis vector belongs to the distribution  $\eta_{\mathcal R^n \hat f}$ and its length is uniformly bounded above and below,  while  the second basis vector has a unit projection to $\frac{\partial}{\partial a}$ and its length is uniformly bounded above and below. Then the matrices of $d\mathcal R^n|_{l_{\mathcal R^m \hat f}}$ in these bases have the form $\begin{pmatrix}\lambda_{m,n} & \tau_{m, n} \\ 0 & \mu_{m,n}\end{pmatrix}$ with 
  $$\frac 1C \|d\mathcal R^n|_{\eta_{\mathcal R^m \hat f }}\|<\lambda_{m,n} < C \|d\mathcal R^n|_{\eta_{\mathcal R^m \hat f }}\|\text{ and  }\mu_{m, n} = (\Psi_{m, n}'(0))^2.$$

Due to the definition of $\lambda_1, \lambda_2$, there exists $n$ such that for all $m$, $\lambda_{m, n}>  (\lambda_1-\eps)^n >(\lambda_2+\eps)^n > \mu_{m, n}$. Fix $n$ and increase the lengths of  second basis vectors in $l_{\mathcal R^k \hat f}$ so that $$\tau_{m, n} < (\lambda_1-\eps)^n - (\lambda_2+\eps)^n < \lambda_{m, n}-\mu_{m,n};$$ this is possible since $\|d\mathcal R^n\|$, and thus $\tau_{m,n}$, are uniformly bounded.

Now it is easy to see that  the cones $|x|<|y|$ in $l_{\mathcal R^{m}\hat f}$ are invariant  under $(d\mathcal R)^{-n }$, thus under $d\mathcal R^{-nk}$ for all $k$ (note that $\mathcal R$ is invertible on $l_{\mathcal R^m \hat f}$). Consider their images under $d\mathcal R^{-kn}$ in $l_{\hat f}$. The intersection of this sequence of embedded cones contains vectors $v$ such that their images under $d\mathcal R^{kn}$ stay in $|x|<|y|$, and thus $\|d\mathcal R^{kn} v\|<c \mu_{0,kn}\|v\|$; since $\lambda_{m,n} \gg \mu_{m,n}$, this vector $v\in l_{\hat f}$ is unique up to rescaling. Its images  under $d\mathcal R$ provide us with an invariant one-dimensional distribution in $l_{\mathcal R^k \hat f}$ uniformly transversal to $\eta_{\hat f}$, such that the maximal expansion rate along this distribution equals $$\underset{n\to +\infty}{\limsup} \sup_{m\ge 0} \sqrt[n]{\mu_{m, n}} =  \lambda_2.$$

\textbf{One-dimensional unstable distributions are generated by real vector fields}

Note that the above constructions of unstable cone fields  work equally well in the space of $\bbR$-preserving triples $D^{\bbR}$. In particular, $\eta_f$ is generated by an  $\bbR$-preserving vector field, and $j$ lifts $\bbR$-preserving maps to $\bbR$-preserving triples. Thus the resulting unstable direction fields are generated by some vectors from $TD^{\bbR}$.

 \end{proof}

The following lemma removes the assumption $\lambda_1>\lambda_2$ using the properties of $(-1)$-measure; it will not be used in the proofs of our main results, since  the assumption $\lambda_1>\lambda_2$ is still necessary to prove smoothness of Arnold tongues. 
\begin{lemma}
\label{lem-weakstable}
The statements of the previous lemma hold true without the assumption $\lambda_1>\lambda_2$: the one-dimensional unstable distribution $\xi_{\hat f}$ exists for all $\hat f\in \Lambda$ and belongs to the codimension-1 subspace $L_{\hat f} v=0$ given by
$$L_{\hat f} v =  \int_{\bbR/\bbZ} (dp)v|_{(p(\hat f))^{-1}(z)} d\mu_{p(\hat f)}=0.$$ 
\end{lemma}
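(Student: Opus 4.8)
The key new input is Theorem \ref{th-measure}: for a critical circle map $g$ with irrational rotation number $\alpha$, the $(-1)$-measure $\mu_g$ has no atoms (Lemma \ref{lem-noatoms}), varies weakly-continuously with $g$, and the functional $v\mapsto \int (dp)v|_{g^{-1}(z)}\, d\mu_g$ cuts out the tangent space to $\{\rot f=\alpha\}$ inside $\dcrepsr$, hence inside $W$. The plan is to show that the hyperplane field $\{L_{\hat f}v = 0\}$, where $L_{\hat f}v = \int (dp)v|_{g^{-1}(z)}\, d\mu_g$ with $g=p(\hat f)$, is invariant under $d\mathcal R$ in the following weak sense: it contains the full stable distribution of $\mathcal R$ together with the one-dimensional distribution $\eta_{\hat f}$ of critical-map type, and $d\mathcal R$ maps $\Ker L_{\hat f}$ into $\Ker L_{\mathcal R\hat f}$. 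Granting this, the ambient $\mathcal R$-invariant hyperplane field $\Ker L_{\hat f}$ is a ``weakly stable'' distribution: on it, $\mathcal R$ expands only through the $\eta$-direction with rate $\lambda_1$, while the complementary direction transversal to $D_0$ is expanded by $\mathcal R$ with rate $\lambda_2$ governed by $(\Psi'_{m,n}(0))^2$. Since the quotient operator $d\mathcal R / \Ker L_{\hat f}$ acts on a one-dimensional space transversal to $D_0$ with that controlled rate, the graph-transform / invariant-cone argument used in the previous lemma for the second unstable direction goes through verbatim, now with the roles of $\Ker L$ and its complement replacing the artificial splitting $l_{\hat f}$; crucially it needs no gap hypothesis because $\Ker L_{\hat f}$ already collapses all of the stable directions and the $\eta$-direction into one invariant subspace, so the only surviving ``competition'' is between a genuinely invariant hyperplane and a single transversal line.

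The first step is to verify that $L_{\hat f}$ is a well-defined, uniformly bounded linear functional on $T_{\hat f}D$ along $\Lambda$: this follows since $(dp)$ is bounded, $g^{-1}$ depends continuously on $g\in\mathcal I$ (which is precompact in uniform convergence), and $\mu_g$ is a probability measure depending weakly-continuously on $g$ by Theorem \ref{th-measure}; the no-atoms property of Lemma \ref{lem-noatoms} guarantees that $v|_{g^{-1}(z)}$, a continuous function of $z$, is integrated against a nonatomic measure so the functional behaves well under the weak limits we will take. The second step is the invariance $L_{\mathcal R\hat f}(d\mathcal R v)=0$ whenever $L_{\hat f}v=0$. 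Here I would use that $p\mathcal R = \cren p$, so $(dp)\,d\mathcal R\,v = d\cren\,(dp)v$ on $\mathcal D_\eps^{cr}$, reducing the claim to the corresponding statement for $\cren$ on critical circle maps; and for $\cren$ the invariance of $\{\int w|_{g^{-1}(z)}d\mu_g=0\}$ should follow from the transformation rule of $(-1)$-measures under the first-return construction — $\cren g$ is a first-return map of $g$ in the Fatou/uniformizing chart $\Psi_g$, and the $(-1)$-measure pushes forward (up to normalization) under this operation, which by the defining invariance law $\int\phi\,d\mu_g = \int g'(g^{-1}(x))\phi(g^{-1}(x))\,d\mu_g$ translates the codimension-one condition on vector fields for $g$ into the one for $\cren g$. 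The third step is to check $\eta_{\hat f}\subset \Ker L_{\hat f}$ and $TW^s_{\hat f}\subset \Ker L_{\hat f}$: the stable directions project to $TW^s_f$, which by Theorem \ref{th-measure} lies in the kernel of $w\mapsto\int w|_{g^{-1}}d\mu_g$ (that theorem identifies this kernel, restricted to $T\dcrepsr$, with the tangent to $\{\rot=\alpha\}$, and the stable manifold of $\cren$ is exactly this leaf); and $\eta_{\hat f}$ projects to the unstable direction of $\cren$, which — since the tangent to $\{\rot f=\alpha\}$ inside $W$ is codimension one in $W$ and equals $\Ker L$ — must not lie in $\Ker L$... wait: in fact $\eta_{\hat f}$ should be \emph{complementary}, so I must instead argue that $\eta_{\hat f}$ together with $TW^s$ spans a hyperplane and that this hyperplane is $\Ker L_{\hat f}$. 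The cleanest route: $L_{\hat f}$ has one-dimensional kernel complement inside $TD_0$ pulled back from $\mathcal D_\eps^{cr}$, and by dimension count the tangent to $\{\rot=\alpha\}\cap\dcrepsr$ — which is $TW^s\oplus$(fiber directions of $p$) inside $D_0$, a hyperplane in $T D_0$ — is exactly $\Ker L_{\hat f}\cap TD_0$; then $\eta_{\hat f}$, projecting to the $\cren$-unstable direction, is transversal to $\Ker L_{\hat f}$ in $TD_0$, and the full weakly-stable hyperplane in $TD$ is $\Ker L_{\hat f}$, which contains $TW^s$ and the $p$-fibers but \emph{not} $\eta_{\hat f}$ nor $\partial/\partial a$. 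The invariant line $\xi_{\hat f}$ is then obtained as the transversal complement to $\Ker L_{\hat f}$ given by the graph-transform limit, exactly as in the previous lemma but with the (now automatic, gap-free) spectral separation between the hyperplane $\Ker L$ (which contains the slow unstable rate $\lambda_1$ along $\eta$ but also all stable rates) — no, the point is simpler: $\xi_{\hat f}$ is forced to be transversal to $D_0$ and we only need the invariant line inside the $2$-dimensional unstable distribution $l_{\hat f}$ that is complementary to $\eta_{\hat f}$; since $\Ker L_{\hat f}\cap l_{\hat f}=\eta_{\hat f}$ is the $\mathcal R$-invariant line in $l_{\hat f}$, its $\mathcal R$-invariant complement inside $l_{\hat f}$ exists automatically because a $2\times 2$ invertible operator with one invariant line always has a second invariant line exactly when... it need not, in general — but here the second line is supplied precisely by the invariant hyperplane: $\xi_{\hat f}:= \Ker\big(L_{\hat f}|_{l_{\hat f}}\big)$ is one-dimensional, transversal to $\eta_{\hat f}$, hence transversal to $D_0$, and $d\mathcal R$-invariant by Step 2.

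The main obstacle I anticipate is Step 2, the transformation law of $(-1)$-measures under $\cren$: making precise that the $(-1)$-measure of the renormalized critical map $\cren g$ is, up to a positive normalizing constant and the chart $\Psi_g$, the restriction of $\mu_g$ to the fundamental crescent, and that this identity is exactly what converts the codimension-one condition $\int w|_{g^{-1}}d\mu_g=0$ into $\int \tilde w|_{(\cren g)^{-1}}d\mu_{\cren g}=0$ for the induced vector field $\tilde w$. This requires carefully tracking how a vector field $w$ generating a deformation of $g$ induces a vector field $\tilde w$ generating the deformation of the first-return map $\cren g$, and matching it with the pushforward of $\mu_g$; the uniqueness of the $(-1)$-measure (Theorem \ref{th-measure}) together with weak continuity should let one verify the identity first for diffeomorphisms (where the classical Douady–Yoccoz theory and Slammert's computations apply, Theorem \ref{th-1meas}) and then pass to the limit $\mu_1\to 0$ as in the proof of Theorem \ref{th-C1}. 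Once that identity is in hand, everything else is the same cone-field bookkeeping already carried out in Lemma \ref{lem-distributions}, now without the gap assumption because the surviving transversal quotient is one-dimensional.
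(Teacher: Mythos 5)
Your final construction — take the $d\mathcal R$-invariant hyperplane field $\Ker L_{\hat f}$ and intersect it with the two-dimensional unstable bundle $l_{\hat f}$ from Lemma \ref{lem-distributions} to get $\xi_{\hat f}$ — is exactly what the paper does, and the transversality and dimension counting you eventually arrive at (after the self-correction) are right. But you treat the invariance $d\mathcal R\,\Ker L_{\hat f}\subset \Ker L_{\mathcal R\hat f}$ as ``the main obstacle,'' to be proved by tracking how $(-1)$-measures push forward under the first-return construction and then passing to the limit of critical maps. This detour is unnecessary: by Theorems \ref{th-measure} and \ref{th-C1}, $\Ker L_f$ is precisely the tangent space at $f$ to the level set $\{\rot g=\alpha\}$, and since a renormalization operator maps $\{\rot g=\alpha\}$ to $\{\rot g=G^N\alpha\}$, its differential carries tangent spaces to tangent spaces; invariance of the kernel is immediate, with no transformation law for measures, no pushforward normalization, and no tracking of induced vector fields. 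Your parenthetical about Slammert's computations and the limit $\mu_1\to 0$ is essentially this observation, but you still insist on routing it through a measure-level identity that the paper never needs.

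Two smaller points. First, your opening plan asserts that $\Ker L_{\hat f}$ ``contains the full stable distribution of $\mathcal R$ together with $\eta_{\hat f}$,'' which is false — $\eta_{\hat f}$ projects under $dp$ to the $\cren$-unstable direction, which is transversal to $\Ker L_f$ — and, if true, would make $\xi_{\hat f}=\Ker L_{\hat f}\cap l_{\hat f}$ coincide with $\eta_{\hat f}\subset D_0$, killing the transversality to $D_0$ that the lemma requires. You catch this mid-argument, but it should have been caught before stating the plan; the correct statement, which the paper derives directly from Theorem \ref{th-C1}, is that $\Ker L_{\hat f}\cap TD_0$ equals the stable subspace of $\mathcal R$, hence is transversal to $\eta_{\hat f}$. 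Second, you never address the last claim of the lemma, namely that the maximal expansion rate along $\xi_{\hat f}$ still equals $\lambda_2$; the paper disposes of this in one line because $d\mathcal R$ multiplies the $\partial/\partial a$-component by $\Psi'(0)^2$, and any invariant line transversal to $D_0$ therefore has the same rate.
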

\begin{proof}
Note that  codimension-1 spaces $\{L_{\hat f} v=0\}$ are preimages under $p$ of the spaces $\{L_f v=0\}$ in $\mathcal D_{\eps}$. Recall that for circle diffeomorphisms, the condition $L_f v=0$ defines tangent spaces to $\{\rot f=\alpha\}$, thus $\mathcal R$ takes $\{L_{\hat f} v=0\}$ to $\{L_{\mathcal R\hat f} v=0\}$ if $p(\hat f)$ is a circle diffeomorphism. The limit transition, possible due to Theorem \ref{th-C1}, implies that the same holds if $p(\hat f)$ is a cubic critical circle map, in particular for all  $\hat f \in \Lambda$. Also, Theorem \ref{th-C1} implies that $\{L_{\hat f}v=0\}$ intersects $D_0$ on the stable subspace of $\mathcal R$, thus is uniformly transversal to $\eta_{\hat f}$.

The previous lemma implies that $\mathcal R$ has a two-dimensional unstable  distribution $l_{\hat f}$. The intersection of $l_{\hat f}$ with $\{L_{\hat f} v=0\}$ is a one-dimensional invariant distribution $\xi_{\hat f}$ that is uniformly transversal to $\eta_{\hat f}$. Since $\mathcal R$ multiplies the projection of any vector to $d/da$ by $\Psi'(0)$, the maximal expansion rate along $\xi_{\hat f}$ is the same as in the previous lemma. This completes the proof.
\end{proof}

\section{Uniform estimates on stable manifolds far from critical circle maps}
\label{sec-derivatives}

Let $\hat {\mathcal  V}_\alpha\subset  D$ be a germ of the manifold $\{\rot p(\hat f) = \alpha\}$. In this section we give some geometric estimates on $\hat\cV_\alpha$ outside of a neighborhood of critical circle maps.

We start with the following result that was essentially proved  in \cite{GY},  but was only formulated for the Arnold's family, see Corollary 1.9.

Denote  $\alpha_{-1} =1$, $\alpha_k = G^k(\alpha)$, where $G(x)$ is the Gauss map, $G(x)  = \{1/x\}$. Recall that the Yoccoz-Brjuno function is
$$
\Phi(\alpha) = \sum_{j=0}^{\infty} \alpha_{-1} \alpha_0  \dots \alpha_{j-1} \log \frac{1}{\alpha_j} 
$$ 
and the set of Brjuno rotation numbers $\mathcal B\subset \bbR$ consists of $\alpha\notin\bbQ$ for which  $\Phi(\alpha)$ is finite. 
For any sequence $s_n\to 0$, let $\mathcal B_{\{s_n\}}\subset \mathcal B$ be the set of numbers $\alpha=[a_1, a_2, \dots]$ such that 
$$
\sum_{j=n}^{\infty} \alpha_{-1} \alpha_0  \dots \alpha_{j-1} \log \frac{1}{\alpha_j} \le s_n.
$$
In \cite{GY}, we defined an analytic renormalization operator $$\mathcal R\colon \mathcal D_\nu \to \mathcal D_\nu$$ and proved its hyperbolicity on Brjuno rotations. For any $\alpha\in \mathcal B$, let $R_\alpha(z)=z+\alpha$ be the corresponding rotation and let $\mathcal V_\alpha\subset \mathcal D_\nu$ be the local stable manifold of $\mathcal R$ at $R_\alpha$.

\begin{lemma}
\label{lem-deriv-diff-rot}
 For any $\nu>0$ and any sequence $\{s_n\}, s_n\to  0$, there exists $\delta>0$ such that the surfaces $\mathcal V_\alpha\subset \mathcal D_\nu$ for $\alpha\in \mathcal B_{\{s_n\}}$ are graphs of analytic functions over the open ball $U_\delta(0) \subset V_0 = \{f(0)=0\}$. They consist of maps that are conjugate to $R_\alpha$ in  a strip of width $\Pi_{0.5\nu}$.

%  Over the ball of radius $\delta/2$ centered at zero, these surfaces are  the graphs of functions with uniformly bounded $n$-th derivatives;  the bound only depends on $\nu, C, n$.
\end{lemma}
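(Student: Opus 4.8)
The plan is to deduce this statement from the hyperbolicity of the renormalization operator $\mathcal R\colon \mathcal D_\nu \to \mathcal D_\nu$ established in \cite{GY}, together with the codimension-one structure of its stable foliation at Brjuno rotations. First I would recall the precise output of \cite{GY}: the operator $\mathcal R$ has a horseshoe-like invariant set consisting of the rigid rotations $R_\alpha$ for $\alpha\in\mathcal B$, it is uniformly hyperbolic on this set with a single unstable direction corresponding to the rotation angle, and the local stable manifolds $\mathcal V_\alpha$ are complex-analytic codimension-one submanifolds whose tangent spaces at $R_\alpha$ are transversal to the one-parameter family of rotations. Moreover, as stated in \cite{GY}, the maps in $\mathcal V_\alpha$ are analytically conjugate to $R_\alpha$, with the conjugacy defined on a fixed strip; the only new point here is to extract a \emph{uniform} radius $\delta$ of the graph property over $\alpha\in\mathcal B_{\{s_n\}}$ and a uniform width of the strip of linearization.

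The key steps, in order, would be: (1) identify the complementary coordinate: since $\mathcal V_\alpha$ has codimension one and is transversal to the rotation direction $\partial_\alpha R_\alpha = \mathrm{const}$, it projects diffeomorphically onto a neighborhood of $0$ in the hyperplane $V_0=\{f(0)=0\}$, so near $R_\alpha$ it is the graph of an analytic function $\varphi_\alpha\colon U_{\delta(\alpha)}(0)\subset V_0 \to \mathbb{R}_{\mathrm{rot}}$ (or its complexification). (2) Uniformize $\delta(\alpha)$ over $\alpha\in\mathcal B_{\{s_n\}}$: the set $\mathcal B_{\{s_n\}}$ is precisely the set on which the renormalization orbit of $R_\alpha$ converges to the rotation attractor at a rate controlled uniformly by $\{s_n\}$ — this is how the Brjuno condition with a uniform convergence rate was packaged in \cite{GY} (cf. Risler's uniform Brjuno condition). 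Hyperbolicity with uniform rates over this set gives uniform estimates on the size of the local stable manifold, hence a uniform $\delta$ and uniform bounds on $\|\varphi_\alpha\|_{C^1}$ on $U_\delta(0)$; this is the statement that $\mathcal V_\alpha$ is a graph over the \emph{same} ball $U_\delta(0)$ for all such $\alpha$. (3) Transfer the linearization: any $f\in\mathcal V_\alpha$ satisfies $\mathcal R^n f \to R_\alpha$ geometrically, and unwinding the renormalization tower (each step is a first-return construction in an analytic chart, with uniformly bounded geometry on $\mathcal B_{\{s_n\}}$) produces an analytic circle map $h$ with $h^{-1} f h = R_\alpha$; controlling the domains at each step, using that all the charts are univalent on strips of width bounded below in terms of $\nu$ and $\{s_n\}$, yields a linearization valid on $\Pi_{0.5\nu}$. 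Finally, (4) pass from the complex statement to the real one by restriction to $\mathcal D_\nu^{\mathbb R}$, using real-symmetry of $\mathcal R$.

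I expect the main obstacle to be step (3): tracking the width of the strip on which the conjugacy is defined through the infinite renormalization tower. Each renormalization step composes $f$ with inverses of uniformizing charts and restricts to a sub-strip, so a priori the domain of the conjugacy could shrink to nothing in the limit; the point is to show it stabilizes at width $0.5\nu$. This requires the quantitative control built into $\mathcal B_{\{s_n\}}$: the charts $\Psi$ (or their diffeomorphism-renormalization analogues) expand by factors whose product is controlled by the partial sums of the Brjuno series, so that the preimages $\Psi^{-1}(\Pi_\nu)$ contain strips whose widths do not accumulate to zero. This is essentially the content of \cite[Corollary 1.9]{GY} for the Arnold family, and the work here is to observe that the argument used only the uniform Brjuno rate and the hyperbolicity, not any special feature of the family $f_{a,b}$, so it applies verbatim to the abstract setting of $\mathcal D_\nu$. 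The transversality and graph-property parts (steps 1–2) are then routine consequences of uniform hyperbolicity via the standard graph-transform / Hadamard–Perron estimates, with the uniformity over $\alpha$ coming directly from the uniformity of the hyperbolic constants on $\mathcal B_{\{s_n\}}$.
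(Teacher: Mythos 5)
Your high-level diagnosis is right: the lemma really is Corollary 1.9 of \cite{GY} divorced from the Arnold family, and the bulk of the proof is a citation/packaging exercise around the machinery already in \cite{GY}. However, the paper's logical structure is different from yours in a way worth flagging, and your steps (1)--(2) lean on a premise that the paper deliberately avoids.

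The paper does not derive the uniform graph property from an off-the-shelf Hadamard--Perron estimate. Instead it proceeds in two explicit stages. For the ``easy'' regime $\nu > c_1\Phi(\alpha)+c_2$, the uniform graph property comes from a specific constructive fact, Lemma~5.8 of \cite{GY}: the surfaces $\mathcal V_\alpha$ were built there as limits of surfaces $\mathcal V_n$ associated to the periodic approximants $[a_1,\dots,a_n,a_1,\dots,a_n,\dots]$, and it is this limiting construction, not an abstract graph transform, that yields a single ball in $V_0$ over which every $\mathcal V_\alpha$ with $\alpha\in\mathcal B_{\{s_n\}}$ is a graph; the linearization in $\Pi_{0.7\nu}$ is Lemma~5.5 of \cite{GY}. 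For arbitrary $\nu$, the paper \emph{reverses} your ordering: it first pushes down the renormalization tower (Section~7 of \cite{GY}, a high-order renormalization $\mathcal R_{\tilde\eps,m}$ landing in $\mathcal D_{\tilde\eps}$ with $\tilde\eps$ large), obtains from the first stage a linearization close to the identity, and only then deduces the uniform slope bound on $T\mathcal V_\alpha$, which is what gives the uniform ball $U_\delta(0)$. So the linearization implies the graph property, not the other way around. Your step (2) appeal to ``uniformity of the hyperbolic constants on $\mathcal B_{\{s_n\}}$'' is the weak point: for fixed small $\nu$ the operator $\mathcal R\colon\mathcal D_\nu\to\mathcal D_\nu$ need not have uniform hyperbolic estimates near $R_\alpha$ across the whole set $\mathcal B_{\{s_n\}}$; the reduction to large $\tilde\eps$ exists precisely to restore that control, and without it the ``routine'' graph-transform argument does not go through. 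Your step (3) essentially identifies the right mechanism (the expansion factors of the successive uniformizing charts are summable against the Brjuno tails), so with the stages reordered to match the paper — reduce first, linearize, then read off the slope bound and the common ball — your outline fills in to a correct proof.
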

\begin{proof}
Risler's theorem implies that for any Brjuno number $\alpha$, the set  $\mathcal V_\alpha$ is a local analytic submanifold of codimension 1 at $R_\alpha$ that consists of maps that are conjugate to $R_\alpha$.

%In Lemma 5.5 \cite{GY}, we proved that for a small $\kappa=\kappa(C, \nu)$, if $f\in \mathcal V_\alpha$ is $\kappa$-close to the rotation $R_\alpha$, then $f$ is conjugate to $R_\alpha$ in  a strip of width $\Pi_{0.4\nu}$.  In Lemma 5.8, we used this to show that relative boundaries of analytic surfaces $\mathcal V_\alpha$ cannot belong to $\kappa$-neighborhood of rotations, and concluded that $V_\alpha$ are graphs over the open ball of radius $\kappa$ in the space $V_0 = \{f(0)=0\}$.  This lemma was formulated for a particular sequence $\alpha_n$ of periodic rotation numbers, but is proved in the same way for any $\alpha\in \mathcal B_C$, since  we know that $\mathcal V_\alpha$ is analytic. This completes the proof.

The proof of Theorem 1.5 in \cite{GY} shows that the statement of Lemma \ref{lem-deriv-diff-rot} holds if $\nu  > c_1 \Phi(\alpha)+c_2$ for large universal $c_1, c_2$. Indeed, for $\alpha = [a_1, a_2, \dots]$, the analytic surface $\cV_\alpha$ was constructed as a limit of the sequence of analytic surfaces $\cV_n$ that correspond to periodic continued fractions of the form $[a_1, a_2, \dots, a_n, a_1, a_2, \dots, a_n, \dots]$, and Lemma 5.8 shows that all these surfaces for all $\alpha\in \mathcal B_{s_n}$ are graphs over the same ball in $V_0$.  This implies that all $\cV_\alpha$ are graphs over the same ball in $V_0$ if $\nu > c_1 \Phi(\alpha)+c_2$. Moreover, they consist of maps that are conjugate to $R_\alpha$ in $\Pi_{0.7\nu}$ due to Lemma 5.5. 

The reduction to the case of an arbitrary $\nu$ is the same as in Sec. 7 of \cite{GY}. Namely, set $\alpha_k=G^k(\alpha)$; use a renormalization $\mathcal R_{\tilde \eps, m}$ of a high order $m=m(\{s_n\})$ to take the $\delta$-neighborhood of the rotation $R_\alpha$ to a $\delta'$-neighborhood of rotation $R_{\alpha_k}$ in a space $\mathcal D_{\tilde \eps}$, where  $\tilde \eps > c_1 \Phi(\alpha_k)+c_2$. Due to the above,  $\mathcal V_{\alpha_k}\cap U_{\delta'}(R_{\alpha_k})$ are analytic submanifolds that  project to one and the same ball in $V_0$ and consist of maps that are analytically conjugate to $\mathcal R_{\alpha_k}$ in $
\Pi_{0.7\tilde \eps}$. For large $\tilde \eps$, this implies that the conjugacy is close to the identity.

In \cite{GY}, we further define $\mathcal V_\alpha$ as a preimage of $\mathcal V_{\alpha_k} $ under $\mathcal R_{\tilde \eps, m}$.
The construction of renormalization implies that the maps  $f\in \mathcal V_\alpha\cap U_\delta(R_\alpha)$ for small $\delta$ are analytically conjugate to $R_\alpha$, with conjugacy defined in $\Pi_{0.5\eps} $ and close to the identity.  This estimate on the conjugacy implies a uniform  estimate on the slope of $T \mathcal V_\alpha$.  Thus $\mathcal V_\alpha$, $\alpha\in \mathcal B_{\{s_n\}}$,  project to one and the same ball in $V_0$.

% Over the ball of radius $\delta/2$, the $n$-th derivatives of $\mathcal V_\alpha$ are bounded, due to Cauchy estimates; the bound only depends on $n, \nu, C$.

\end{proof}
Roughly speaking, this lemma shows that $\cV_\alpha$, $\alpha\in \mathcal B_{\{s_n\}}$, form an analytic foliation near $R_\alpha$: the union  $(\bigcup_{\alpha \in \mathcal B_{\{s_n\}}} \cV_\alpha)$ is a union of graphs of analytic functions over the same codimension-1 ball.

Now, we prove that the same statement holds in the space of triples in a neighborhood of any triple $\hat f$ with Herman rotation number.
We use the conjugacy of $p(\hat f)$ with rotation to reduce this statement to the previous lemma.

% Then we will use the conjugacy to rotation to prove the statement for any circle map $f$ with Herman rotation number.
\begin{lemma}
\label{lem-local-bound-deriv}

Consider  an $\bbR$-preserving triple $\hat f\in D$ such that $p$ is defined on its neighborhood and $p(\hat f)$ is a circle diffeomorphism with $\rot (p(\hat f)) = \alpha_0 \in \mathcal H$.

 %Consider a circle diffeomorphism $f\in \mathcal D_\nu$, $\rot f =\alpha_0 \in \mathcal H$, $f\in \mathcal D_{\nu}$. Let $d\mathcal D_\nu$ be represented as $V+W$ where the dimension of $W$ is $1$ and $W$ is transversal to $\mathcal V_{\alpha_0}$ at $f$. 
 
 Then for any $\{s_n\}, s_n\to 0$, such that $\alpha_0\in \mathcal B_{\{s_n\}}$, there exists $\delta$ such that whenever  $\alpha\in\mathcal B_{\{s_n\}}$ satisfies $|\alpha-\alpha_0|<\delta$, there exist local analytic submanifolds $\hat {\mathcal V}_\alpha \subset  D$ near $\hat f$. They consist of triples $\hat g$ such that $p(\hat g)$ is a map of an annulus that is conjugate to $R_\alpha$ in a strip of width bounded from below.

 Let $V+W$ be some splitting of $T D$ with $\dim W=1$ such that $W$ is transversal to $T\hat {\mathcal V}_{\alpha_0}$ at $\hat f$.
 Then all $\hat {\mathcal V}_\alpha$, $|\alpha-\alpha_0|<\delta$, can be represented as graphs in $\hat f + V+W$ over one and the same ball $U_{\tau}(0)$ in $V$.
 Moreover, if $\hat {\mathcal  V}_{\alpha}$ is represented as a graph $(x, Q_\alpha(x))$ in the coordinates $V, W$, where $Q_{\alpha}\colon V \to W$, then for any $n$, the derivatives  $Q_\alpha^{(n)}$ are bounded on $U_{\tau/2}(0)$ uniformly on $\alpha$.
\end{lemma}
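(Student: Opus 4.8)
The plan is to reduce the statement to Lemma~\ref{lem-deriv-diff-rot} by conjugating $p(\hat f)$ to the rotation $R_{\alpha_0}$ and pulling the whole picture back through $p$. Since $\alpha_0\in\mathcal H$, Yoccoz's theorem provides an analytic circle diffeomorphism $h_0$, defined in a strip of positive width, with $h_0^{-1}\circ p(\hat f)\circ h_0 = R_{\alpha_0}$. Conjugation by $h_0$, $\Theta\colon g\mapsto h_0^{-1}\circ g\circ h_0$, is then a rotation-number-preserving local biholomorphism from a neighborhood of $p(\hat f)$ in some $\mathcal D_\tau$ onto a neighborhood of $R_{\alpha_0}$ in some $\mathcal D_\nu$. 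I would first check that $\Phi:=\Theta\circ p\colon D\to\mathcal D_\nu$ is an analytic submersion near $\hat f$ with complemented kernel; the only point is surjectivity of $dp|_{\hat f}$, which I would get by varying the $F$-component of $\hat f=(F,H_a,G)$: this moves $p(\hat f)=F\circ\phi$ by $w\circ\phi$, where $\phi=\pi_{a,G}\circ H_a\circ G$, and since $p(\hat f)$ is a circle diffeomorphism we have $a\neq0$ and $\phi$ is a local biholomorphism near $\bbR/\bbZ$, so precomposition by $\phi$ is invertible and $dp$ has a bounded right inverse.

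Next I would invoke Lemma~\ref{lem-deriv-diff-rot} with this $\nu$ and the given sequence $\{s_n\}$ (legitimate, as $\alpha_0\in\mathcal B_{\{s_n\}}$): for $\alpha\in\mathcal B_{\{s_n\}}$ the local stable manifolds $\mathcal V_\alpha\subset\mathcal D_\nu$ are graphs $\{f_0+P_\alpha(f_0):f_0\in U_{\delta_0}(0)\}$ of analytic functions over the fixed ball $U_{\delta_0}(0)\subset V_0=\{f(0)=0\}$, consisting of maps conjugate to $R_\alpha$ in $\Pi_{0.5\nu}$; by Risler's theorem these are also the local representatives of $\{\rot g=\alpha\}$ near $R_\alpha$. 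Restricting to $|\alpha-\alpha_0|<\delta$ for small $\delta\le\delta_0$, all these graphs sit in a fixed bounded subset of $\mathcal D_\nu$, so the $P_\alpha$ are uniformly bounded on $U_{\delta_0}(0)$ and (by the Cauchy estimates) all their derivatives $P_\alpha^{(n)}$ are bounded on $U_{\delta_0/2}(0)$ uniformly in $\alpha$; the proof of Lemma~\ref{lem-deriv-diff-rot} also yields a uniform bound on the slopes of $T\mathcal V_\alpha$. I would then define $\hat{\mathcal V}_\alpha:=\Phi^{-1}(\mathcal V_\alpha)$ near $\hat f$, which is a local analytic codimension-$1$ submanifold through a point close to $\hat f$ (as $\Phi$ is a submersion and $R_\alpha$ is close to $R_{\alpha_0}$); since $\Phi=\Theta\circ p$ preserves rotation numbers, $p(\hat g)=h_0\circ\Phi(\hat g)\circ h_0^{-1}$ is conjugate to $R_\alpha$, via $h_0$ composed with the conjugacy supplied by $\mathcal V_\alpha$, in a strip of width bounded from below independently of $\alpha$; in particular $\rot p(\hat g)=\alpha$, so $\hat{\mathcal V}_\alpha$ is the germ of $\{\rot p=\alpha\}$.

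The remaining step is to transport the ``common graph with uniformly bounded derivatives'' property through the fixed map $\Phi$. I would straighten $\Phi$ in fixed analytic charts near $\hat f$ and near $R_{\alpha_0}$ so that it becomes the linear projection onto $V_0\oplus\bbC$; since this normal form and the chart changes do not depend on $\alpha$, the preimages $\hat{\mathcal V}_\alpha$ become graphs over one and the same ball whose $C^n$ norms are controlled by those of $P_\alpha$ together with fixed chart data, hence uniform in $\alpha$ for every $n$. Finally I would pass to the prescribed splitting $V\oplus W$: the hypothesis that $W$ is transversal to $T_{\hat f}\hat{\mathcal V}_{\alpha_0}$, together with the uniform slope bound transported through the fixed $d\Phi^{-1}$, makes $W$ uniformly transversal to all $T\hat{\mathcal V}_\alpha$ near $\hat f$, so re-expressing the graphs in the coordinates $\hat f+V+W$ is a fixed change of frame preserving uniform $C^n$ bounds. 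This gives $\tau>0$ and analytic $Q_\alpha\colon U_\tau(0)\subset V\to W$ with $\hat{\mathcal V}_\alpha=\{x+Q_\alpha(x)\}$, and one last application of the Cauchy estimates to the uniformly bounded $Q_\alpha$ bounds every $Q_\alpha^{(n)}$ on $U_{\tau/2}(0)$ uniformly in $\alpha$.

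I expect the main obstacle to be the first step: verifying cleanly that $p$, and hence $\Phi$, is a genuine analytic submersion with complemented kernel, and keeping honest track of the strip widths so that $\Theta\circ p$ really lands in a space $\mathcal D_\nu$ with $\nu>0$ and the conjugacies to $R_\alpha$ survive in strips of width bounded below. Once $\Phi$ is known to be a \emph{fixed} analytic submersion, the two genuinely analytic inputs --- the quantitative statement of Lemma~\ref{lem-deriv-diff-rot} and the implication ``uniformly bounded family of analytic functions $\Rightarrow$ uniformly bounded derivatives on a slightly smaller ball'' (Cauchy estimates) --- carry the whole argument, and the transfer through $\Phi$ and the passage to $V\oplus W$ are bookkeeping; the one additional delicate point is the uniform transversality of $W$ to all $\hat{\mathcal V}_\alpha$, which rests on the uniform slope estimate already contained in Lemma~\ref{lem-deriv-diff-rot}.
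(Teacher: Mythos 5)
Your proposal is correct and follows essentially the same route as the paper: Yoccoz's linearization to conjugate $p(\hat f)$ to $R_{\alpha_0}$, an appeal to Lemma~\ref{lem-deriv-diff-rot} for the graph property of $\mathcal V_\alpha$ over a common ball in $V_0$, a pullback through $p$ to define $\hat{\mathcal V}_\alpha$, and Cauchy estimates for the uniform bounds on $Q_\alpha^{(n)}$. The one place where your framing is heavier than it needs to be is the claim that $p$ is a full analytic submersion with complemented kernel; the paper sidesteps the domain-matching issue you flag by only proving the weaker fact that $dp|_{\hat f}$ is transversal to the codimension-one subspace $T(L_h\mathcal V_\alpha)$ — concretely, the vector $dp\bigl(\partial/\partial F\bigr)$, which is the constant field $1$, does not lie in $T\mathcal V_\alpha$ because that tangent space at a rotation is $\{\int v\,dx=0\}$ — and that transversality alone makes $p^{-1}(L_h\mathcal V_\alpha)$ a codimension-one analytic submanifold, so the ``straighten $\Phi$'' step in your argument can be replaced by this single observation.
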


\begin{proof}
%Let $\hat f = (F,H_a, G)$. In this proof, we  consider the lift of $F$ from $\bbC/\bbZ$ to $\bbC$. Slightly abusing the notation, we will use the same letter $F$ for the lift.

\noindent
\textbf{Construction of $\hat  {\mathcal V}_{\alpha}$.}

%Let $f=p(\hat f)$. Since $f$ is an analytic circle diffeomoprhism, it extends univalently to  a certain strip  $\Pi_\mu$, and the same holds for projections of all $\delta$-close triples if $\delta$ is sufficiently small.

%Note that for small $a_1$, all  $f\in p(K_{[a_1, a_0]})$ are analytic circle diffeomorphisms that are univalent in the strip of width $$\mu:=(0.5\sqrt{a_0}).$$ Indeed, elements of each triple $\hat f\in K_{[a_1, a_0]}$ are well-defined in a $2 \eps$-neighborhood of $\bbR$, and the critical points of $p(\hat  f)$ are outside the strip $$G^{-1} (\{\Im z \in [-\sqrt{a_0}, \sqrt{a_0}]\}).$$

%Thus the first statement follows from the Risler's theorem applied to $f\in \mathcal D^{diff}_\mu$. Indeed, Risler's theorem implies that for a Herman number $\alpha$, the condition $\{\rot g =\rot f\}$ is analytic, thus $\hat {\mathcal V_\alpha}= \{\rot p(\hat g) = \rot p(\hat f\}) $ is an analytic condition in the space of triples; since the tangent plane to $\hat {V_{\alpha_0}}$ at $f$ is transversal to $W$, the map $Q_\alpha$ is analytic.

Let $f=p(\hat f)$. Due to Yoccoz's theorem \cite{Yoccoz2002}, $f$ is conjugate to $R_{\alpha_0}$ via some analytic circle diffeomorphism $h$; let $L_h$ be a conjugacy with $h$, $f=L_h R_\alpha$. Fix $\mu>0$ and consider analytic codimension-1 submanifolds $\mathcal V_\alpha\subset \mathcal D_\mu$, $\alpha\in \mathcal B_{\{s_n\}}$, near $R_{\alpha_0}$. Due to the previous lemma, for some $\tau$, all $\mathcal V_\alpha$ are graphs over the $\tau$-ball $U_\tau(0)$ in $V_0$, and thus the relative boundaries of $\mathcal V_\alpha$ are uniformly detached from $R_\alpha$. Also,  all maps in $\mathcal V_\alpha \cap U_{\tau}(R_\alpha)$ are conjugate to the rotation $R_{\alpha}$ in $\Pi_{0.5\mu}$. In what follows, we trim $\mathcal V_\alpha$ to $\mathcal V_\alpha \cap U_{\tau}(R_\alpha)$.

Note that the tangent spaces of $\mathcal V_\alpha$ at rotations are parallel, given by $\int_{\bbR/\bbZ} v dx =0$. Since $\mathcal V_\alpha$ are graphs of bounded functions, their second derivatives are bounded in $U_{\tau/2}(0)$ and thus their tangent spaces $T\mathcal V_\alpha$ are close to $T\mathcal V_{\alpha_0}$ at $R_{\alpha_0}$ for $\alpha\approx \alpha_0$ and small $\tau$.

%Trim $\mathcal V_\alpha$ to these graphs over $U_\delta(0)$ and c
Consider $\hat {\mathcal V}_{\alpha} = p^{-1}(L_h (\mathcal V_{\alpha}))\cap U_{\delta}(\hat f)$. These are analytic submanifolds, since $p$ is analytic and  $L_h$ is an analytic invertible operator. They are non-degenerate and have codimension 1, since $dp$ does not take $TD$ into $T\mathcal V_\alpha$ (indeed, the tangent vector field to the family of triples  $(F+\eps, H_a, G)$ is a unit vector field that does not belong to $T\mathcal V_\alpha$). On  $\hat {\mathcal V}_\alpha$,  projections of all triples are conjugate to $R_{\alpha}$ on some fixed strip $\Pi_\nu\subset h^{-1}(\Pi_{0.5\mu})$ that depends on $f$ only. Finally, tangent spaces of $\hat {\mathcal V}_\alpha$ in a neighborhood of $\hat f$ are close to the tangent space of $\hat {\mathcal V}_{\alpha_0}$ at $\hat f$, since $T\mathcal V_{\alpha}$ is  close to $T|_{R_{\alpha_0}}\mathcal V_{\alpha_0}$.

\noindent

\textbf{Boundaries of $\hat {\mathcal V}_\alpha$ and the domain of $Q_\alpha$}

If $\delta$ was chosen sufficiently small, the relative boundaries of $\hat {\mathcal V}_{\alpha}$ belong to the boundary of $U_\delta(\hat f)$: indeed, the relative boundaries of $\mathcal V_\alpha$ are detached from $R_\alpha$, and thus the relative boundaries of $p^{-1}(L_h (\mathcal V_{\alpha}))$ are detached from $\hat f$.  After trimming by $U_\delta(\hat f)$ with sufficiently small $\delta$, all relative boundaries will belong to the boundary of $U_\delta(\hat f)$.

Tangent spaces to $\hat{\mathcal V}_{\alpha}$ at $\hat g\approx \hat f$ are uniformly transversal to $W$ since they are close to $T|_{\hat f} \mathcal V_{\alpha_0}$, which is transversal to $W$.   Thus for $|\alpha-\alpha_0|<\delta'$ with small $\delta'$, all $\hat{\mathcal V}_{\alpha}$ are graphs over one and the same subset of $V$, i.e. $Q_\alpha$ have same domain of definition $U_\tau(0)\subset V$.

%Recall that if a point of a relative boundary of trimmed submanifold $\hat {\mathcal V_\alpha}$ is located inside $B_\delta$, then it is  mapped under $L_h p$  to the boundary of $\mathcal V_\alpha$ that is detached from $R_\alpha$. Due to continuity of $L_h p$, is detached from $\hat f$. Due to the bound on $(Q_\alpha)'$, we conclude that  for $\alpha\approx \alpha_0$,  $\mathcal V_{\alpha}$ are graphs over the one and the same ball in $V$, i.e. $Q_\alpha$ are all defined on the same ball in $V$.  From now on, we restrict $Q_\alpha$ to this ball.

\noindent
\textbf{Derivatives of $Q_\alpha$.}

Since the graphs of $Q_\alpha$ are located in a small neighborhood of $\hat f$, $Q_\alpha$ are bounded in $U_\tau(0)$.
 Cauchy estimates imply that on $U_{\tau/2}(0)$ all functions $Q_\alpha$, $\alpha\approx \alpha_0$, have uniformly bounded derivatives. 
\end{proof}

%Let $\mathcal V_\alpha$ be represented as a graph of the function $q_\alpha$ in $V+W\sim d\mathcal D_\nu$.
%The value of $q_\alpha$ is defined modulo one; however, we can lift these maps to $\bbC$ assuming $\dist (q_\alpha(0), 0)<1$. For the lifts, we get the following result.
%\begin{lemma}
% For small $\delta$, $q_\alpha|_{U_\delta(0)}$ are uniformly bounded for all $\alpha\approx\alpha_0$.
%\end{lemma}
%\begin{proof}

%\end{proof}
%The previous two lemmas, together with Cauchy estimates, imply that in a neighborhood of $f$, all functions $q_\alpha$, $\alpha\approx \alpha_0$, have uniformly bounded derivatives. This contradicts our choice of $f$. The contradictions proves Lemma \ref{lem-deriv-diff}.

%\end{proof}

\section{General statement on smoothness of stable-unstable manifolds and proof of  Theorem \ref{th-smoothness}}
\label{sec-stableunstable}

In this section, we formulate a general version of the theorem on the smoothness of invariant manifolds of hyperbolic operators with a two-dimensional unstable bundle, and use this result to prove Theorem \ref{th-smoothness}. The proof of Theorem \ref{th-Banach-op} will be given in Section \ref{sec-Banach-op}.

\begin{theorem}[Smoothness of stable-unstable  invariant manifolds for operators in Banach spaces]
\label{th-Banach-op}
 Consider a real Banach space $D$, a linear closed codimension-1 space $D_0$ and open half-spaces $D_{-}$ and $D_+$ such that $D = D_0 \cup D_- \cup D_+$. Consider a sequence of smooth operators $\{R_n\}$, $$R_n \colon D\to D\text{ with }R_n D_0\subset D_0;$$ choose a point $x_0$ and set $\{x_n\}$ to be its orbit, $x_{n+1} = R_n x_n$. Fix $k\in \bbN$ and for a  certain $\delta>0$, assume  that each $R_n$ is defined and $k$ times differentiable on a semi-neighborhood $B_\delta(x_n)\cap (D_0 \cup D_-)$ and has bounded derivatives up to the order $k$, with bounds independent on $n$.

 For  a splitting $D = \xi_0 + \eta_0 + l_0$ where $$\dim \xi_0 = \dim \eta_0=1,\; l_0+\eta_0 = D_0,$$ we  set $$\xi_{n+1} = dR_n|_{x_n} \xi_n, \; \eta_{n+1} = dR_n|_{x_n} \eta_n,\; l_{n+1} = dR_n|_{x_n} l_n.$$
 Suppose that $l_n$ is a stable distribution for $\{R_n\}$ and $\xi_n+\eta_n$ is its unstable distribution.
Suppose that $\xi_n$ is uniformly transversal to $D_0$,  the minimal expansion rate $\lambda_1$ of $ \{R_n\}$ along $\eta_n$ is larger than the  maximal expansion rate $\lambda_2$ along $\xi_n$, and moreover, $$k<  \frac {\log \lambda_1}{\log \lambda_2}.$$ 

Suppose that there exist codimension-1 local manifolds $M_n$ with border  that pass through $x_n$, such that  $(R_n M_n )\cap B_{\delta}(x_0)\subset  M_{n+1}$. Suppose that $M_n\cap D_0$ is a stable foliation for $R_n$, $T(M_n\cap D_0)=l_n$. Let   $M_n$ be given by a function $$Q_n \colon  (l_n+\xi_n) \to \eta_n;$$ suppose that $Q_n$  is well-defined with uniformly bounded derivative in $ (l_n+\xi_n) \cap (D_0\cup D_-)\cap B_\delta(x_n) $ where $\delta$ does not depend on $n$. Suppose that $Q_n $ is $k$-differentiable in $ (l_n+\xi_n)\cap D_- \cap B_\delta(x_n)$, and for any $\delta_1<\delta$, all its derivatives up to the order $k$ are uniformly (on $n$) bounded on $D_-\cap (l_n+\xi_n)\cap (B_\delta(D_0)\setminus B_{\delta_1}(D_0))$.

 Then $Q_n^{(l)}(x)$ with $l\le k $ has a limit as $x\to D_0$, thus $Q_n(x)$ is $k$ times continuously differentiable.
\end{theorem}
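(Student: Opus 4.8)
\emph{Proof plan.}

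The plan is to prove, by induction on $l=0,1,\dots,k$, that $D^lQ_n$ extends continuously from $(l_n+\xi_n)\cap D_-\cap B_\delta(x_n)$ to the boundary part in $D_0$, with a limit that is continuous in the $D_0$–variable and bounded uniformly in $n$; the case $l=0$ (and the existence of a bounded $DQ_n$) is given. The mechanism is to iterate the functional equation for $M_n$ \emph{forward} along the orbit: a point with a very small $\xi_n$–coordinate is pushed by $R_n,R_{n+1},\dots$ into the region where $Q$ is already known to be uniformly $C^k$, because the $\xi_n$–direction expands (at worst at rate $\lambda_2$, at best at some rate $>1$) while $D_0$ is invariant. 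Concretely I would first fix moving-frame coordinates $(\ell,s,u)\in l_n\times\xi_n\times\eta_n$ centered at $x_n$, in which $D_0=\{s=0\}$, $D_-=\{s<0\}$, $M_n=\{u=Q_n(\ell,s)\}$; since the distributions $l_n,\xi_n,\eta_n$ are $dR$–invariant, $dR_n|_{x_n}=\operatorname{diag}(A_n,B_n,C_n)$ with $\|A_n\|<1$, $B_n,C_n$ scalars satisfying $|B_{n+m-1}\cdots B_n|\le(\lambda_2+\eps)^m\to\infty$ and $|C_{n+m-1}\cdots C_n|\ge(\lambda_1-\eps)^m$, and moreover $R_n^{\,s}$ vanishes identically on $\{s=0\}$. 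Replacing $\{R_n\}$ by blocks $R_{nN+N-1}\circ\cdots\circ R_{nN}$ for $N$ large, I may assume the contraction $(\lambda_2+\eps)^l/(\lambda_1-\eps)<\theta<1$ holds \emph{per step} for every $l\le k$; this is exactly where the hypothesis $k<\log\lambda_1/\log\lambda_2$ is used.

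Writing the graph-transform map $G_n(x)=\bigl(R_n^{\,\ell},R_n^{\,s}\bigr)(x,Q_n(x))$, differentiating the functional equation $R_n^{\,u}(x,Q_n(x))=Q_{n+1}(G_n(x))$ exactly $l$ times and collecting the order-$l$ terms gives
$$D^lQ_n(x)=\Theta_n(x)^{-1}\,D^lQ_{n+1}(G_n(x))\,(DG_n(x))^{\otimes l}+\Theta_n(x)^{-1}\,\mathcal E_n^{(l)}(x),$$
where $\Theta_n(x)=\partial_uR_n^{\,u}-DQ_{n+1}(G_n(x))\,\partial_u(R_n^{\,\ell},R_n^{\,s})$ equals $C_n+o(1)$ near the orbit, $DG_n(x)=\operatorname{diag}(A_n,B_n)+o(1)$ near the orbit, and the ``error'' $\mathcal E_n^{(l)}$ depends only on the derivatives of $R_n$ up to order $l$ (uniformly bounded by hypothesis) and on $Q_n,Q_{n+1}$ and their derivatives up to order $l-1$. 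By the inductive hypothesis these last are continuous up to $D_0$ and bounded uniformly in $n$, so $\mathcal E_n^{(l)}$ is continuous on the whole semi-neighbourhood and uniformly bounded.

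Then I would iterate this identity. For $x=(\ell,s)$ with $|s|$ small, let $m=m(x)$ be the first time the forward graph orbit $x_0=x$, $x_{j+1}=G_{n+j}(x_j)$ reaches $|s_m|\ge\delta_1$; this $m$ is finite because the $s$–coordinate is multiplied each step by a factor bounded below by a constant $>1$, and $|s_m|\le\delta$ for $\delta_1$ small, so every $x_j$ with $j\le m$ stays in $B_\delta(x_{n+j})$. Iterating expresses $D^lQ_n(x)$ as $\bigl(\prod_{j<m}\Theta_{n+j}^{-1}(\cdot)(DG_{n+j})^{\otimes l}\bigr)D^lQ_{n+m}(x_m)$ plus $\sum_{j<m}\bigl(\prod_{i<j}\Theta_{n+i}^{-1}(\cdot)(DG_{n+i})^{\otimes l}\bigr)\Theta_{n+j}^{-1}\mathcal E_{n+j}^{(l)}(x_j)$. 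The key estimate is that the composed linear operator over $j$ steps has norm $\lesssim\bigl((\lambda_2+\eps)^l/(\lambda_1-\eps)\bigr)^j\le\theta^j$, since $\prod|\Theta^{-1}|\approx\prod|C^{-1}|\le(\lambda_1-\eps)^{-j}$ while $\|(DG_{n+j-1}\cdots DG_n)^{\otimes l}\|\lesssim(\lambda_2+\eps)^{lj}$ (the stable block only helps). Hence, as $x\to D_0$ (so $m\to\infty$), the first term $\to0$ because $D^lQ_{n+m}(x_m)$ is uniformly bounded in the ``good'' region $B_\delta(D_0)\setminus B_{\delta_1}(D_0)$, and the sum converges absolutely and uniformly. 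Comparing with the corresponding \emph{infinite} sum at the limit point $(\ell^\ast,0)\in D_0$ — whose forward graph orbit stays in $D_0$ and contracts towards the moving origin, so $\Theta_{n+j},DG_{n+j},\mathcal E_{n+j}^{(l)}$ evaluated along it are defined and bounded — and using continuity of $G_n,\Theta_n,DG_n,\mathcal E_n^{(l)}$, one concludes $D^lQ_n(x)\to$ that infinite sum, which is continuous in $\ell^\ast$ and bounded uniformly in $n$. This closes the induction and gives $Q_n\in C^k$ up to $D_0$.

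The main obstacle I expect is the bookkeeping in the $l$-times-differentiated functional equation: making precise that the \emph{only} surviving order-$l$ term on the right is $\Theta_n^{-1}D^lQ_{n+1}(G_n)(DG_n)^{\otimes l}$ — with contraction factor $(\lambda_2+\eps)^l/(\lambda_1-\eps)$ — while every other contribution is genuinely of order $\le l-1$ and hence already controlled; combined with keeping all constants uniform in $n$ so that the telescoping sum converges and the passage to the limit on $D_0$ goes through. Everything else (the coordinate normalization, the blocking-up to make $\theta<1$ hold per step, and the elementary fact that the $s$–coordinate grows geometrically) is routine.
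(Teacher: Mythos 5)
Your proposal follows essentially the same route as the paper's proof: the moving-frame splitting adapted to $\xi_n,\eta_n,l_n$, differentiating the graph-transform functional equation $l$ times to isolate the top-order term with coefficient $\Theta_n^{-1}(DG_n)^{\otimes l}$ (the paper's $B_n^{-1}$ and $D_n$), exploiting the per-step contraction $\|D_n\|\,\|B_n^{-1}\|<1$ coming from $k<\log\lambda_1/\log\lambda_2$, iterating forward until the orbit escapes $B_{\delta_1}(D_0)$ where the derivatives are already uniformly bounded, and passing to the limit. The only part you label ``routine'' that the paper devotes a separate lemma to is the geometric fact that the orbit of a point near $M_n\cap D_0$ stays in $B_\delta(x_k)$ while its $\xi$-coordinate grows past $\delta_1$ (the paper's Lemma~\ref{lem-delta1}), but your sketch captures the mechanism correctly.
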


\subsection*{Assumptions of Theorem \ref{th-Banach-op} hold under the assumptions of  Theorem \ref{th-smoothness}}

Choose the space of triples $D_{\eps, \eps'}$ that satisfies Lemma \ref{lem-distributions}, and let $D$ be the space of $\bbR$-preserving triples $D^{\bbR}_{\eps, \eps'}$. Let $D_0 = \{a=0\}$, $D_- = \{a<0\}$, $D_+ = \{a>0\}$. Set $R_n=\mathcal R$. Let $x_0=\hat g$, take $\eta_n$, $\xi_n$ to be the unstable distributions $\eta_{\hat g_n}$ and $\xi_{\hat g_n}$ respectively. Take $l_n$ to be the stable distibution of $\mathcal R$ at $\mathcal R^n (\hat g) = \hat g_n$; we will also use the notation $l_{\hat g}$ for the stable distribution of $\mathcal R$ at $\hat g$. 

Since the orbit $\{\hat g_n\}$ belongs to a compact set $$\Lambda_c = \{f\in \Lambda, \rot f\text{ is of type bounded by }c\},$$ we conclude that an analytic operator $\mathcal R$ is defined and has uniformly bounded derivatives of each order in  $B_\delta(\hat g_n) \cap (D_0\cup D_-)$ for a small $\delta$.

The inequality on $\lambda_1, \lambda_2$ and the transversality condition is included in the assumptions of Theorem \ref{th-smoothness}. It remains to establish the required properties of the invariant manifolds $$M_n = \{\hat f \in D \mid \rot p(\hat f) = \rot p(\hat g_n) \} \cap B_\delta( \hat g_n)$$ for sufficiently small $\delta$.

Recall that we denote $\mathcal V_\alpha = \{\rot f=\alpha\}\subset \mathcal D_\eps$.
We will not restrict ourselves to the sequence $\hat g_n$,  we will rather establish the required properties for all the manifolds $\hat {\mathcal V}_{\alpha} = p^{-1}(\mathcal V_\alpha)\subset D $ with $\alpha$ of type bounded by $c$. Note that  $M_n=\hat {\mathcal V}_{\rot (p(\hat g_n))} \cap B_\delta(\hat g_n)$.

   Due to  Theorem \ref{th-C1}, the manifolds $\mathcal V_\alpha$ are smooth in $\mathcal D_\eps$ up to $\mathcal D_{\eps}^{cr}$. Thus $\hat {\mathcal V}_{\alpha}$  are smooth in $D$ up to $D_0$. Theorem \ref{th-C1} also  implies that the tangent space to  $\hat {\mathcal V}_\alpha \cap D_0$ at the critical triple $\hat g\in D_0$ is the preimage under $dp$ of the stable subspace of  $\mathcal R_{cyl}\colon \mathcal D_{\eps}^{cr}\to \mathcal D_{\eps}^{cr}$, thus coincides with the stable subspace of $\mathcal R \colon D\to D$. This implies that for each $\hat g\in \Lambda$, $T_{\hat g}\hat {\mathcal V}_{\alpha}$ is transversal  to the unstable direction $\eta_{\hat g}$ of $\mathcal R$. Thus, $\hat {\mathcal V}_{\alpha}\cap B_\delta(\hat g)$ can be represented as a graph $(x,Q_{\hat g}(x))$ of a smooth function $Q_{\hat g}\colon (l_{\hat g } + \xi_{\hat g})\to \eta_{\hat g}$ defined in some neighborhood of $0$. 

Let us prove that all $Q_{\hat g}$ are defined in a $\delta'$-semi-neighborhood of $\hat g\in \Lambda_c$ for a certain $\delta'$, and have uniformly bounded derivatives.  
 
Recall that for bounded-type $\alpha$,  $\hat {\mathcal V}_\alpha \cap D_-$ is an analytic codimension-1 manifold at each its point, due to Risler's theorem and analyticity of $p$. Also, $\hat {\mathcal V}_\alpha \cap D_0$ is an analytic codimension-1 manifold at each its point, namely a leaf of the stable foliation of $\mathcal R_{cyl}$ (see \cite[Theorem 3.8]{Ya4}). Thus the boundary of the domain of the corresponding function $Q_{\hat g}$ consists of points $x$ such that either $(x, Q_{\hat g}(x))\in \partial B_\delta(\hat g)$, or  $\|dQ_{\hat g}\|=\infty$. 
 The next lemma shows that $Q_{\hat g}$ have bounded derivatives on a certain small semi-neighborhood of $D_0$.  Thus $Q_{\hat g}$ are defined on one and the same neighborhood of $\hat g$.

\begin{lemma}
 For each $c$, there exists $\delta$ and $K$ such that  if  $\hat g\in \Lambda_c$, if  $$(x, Q_{\hat g}(x)) \subset B_\delta(\hat g),$$ then  $\|dQ_{\hat g}|_{x}\|<K$.
\end{lemma}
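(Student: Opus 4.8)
The plan is to argue by contradiction, reducing the statement to a convergence property of $(-1)$-measures established in Section~\ref{sec-measure}. Suppose the lemma fails for some $c$. Then for each $j\in\bbN$ there are a triple $\hat g^{(j)}\in\Lambda_c$, with $\alpha_j:=\rot p(\hat g^{(j)})$ of type $\le c$, and a point $y^{(j)}$ on the graph of $Q_{\hat g^{(j)}}$ lying in $B_{1/j}(\hat g^{(j)})\cap(D_0\cup D_-)$ with $\|dQ_{\hat g^{(j)}}|_{y^{(j)}}\|>j$. Since $\Lambda_c$ is compact, after passing to a subsequence $\hat g^{(j)}\to\hat g^*\in\Lambda_c$ and hence $y^{(j)}\to\hat g^*$; the numbers $\alpha_j$ lie in the compact set of rotation numbers of type $\le c$, so (along a further subsequence) $\alpha_j\to\alpha^*$, irrational of bounded type, equal to $\rot p(\hat g^*)$ by continuity of $\rot$. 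I would then set $f^*=p(\hat g^*)$ (a cubic critical circle map) and $f_j=p(y^{(j)})$; the $f_j$ are circle homeomorphisms with $\rot f_j=\alpha_j$ (diffeomorphisms when $y^{(j)}\in D_-$, critical when $y^{(j)}\in D_0$), and $f_j\to f^*$ uniformly on a strip, whence $f_j^{-1}\to(f^*)^{-1}$ and $f_j'\circ f_j^{-1}\to (f^*)'\circ(f^*)^{-1}$ uniformly on $\bbR/\bbZ$.

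Next I would establish that the $(-1)$-measures converge. Each $f_j$ has a unique $(-1)$-measure $\mu_{f_j}$ (Theorem~\ref{th-1meas} or Theorem~\ref{th-measure}), normalized to a probability measure. The computation of Step~1 in the simultaneous proof of Theorems~\ref{th-measure}--\ref{th-C1} uses only $f_j\to f^*$ (not equality of rotation numbers), so every weak limit of $\{\mu_{f_j}\}$ is a $(-1)$-measure of $f^*$; by uniqueness and weak-$*$ compactness of probability measures on $\bbR/\bbZ$, the whole sequence converges, $\mu_{f_j}\to\mu_{f^*}$ weakly. I would then upgrade this to operator-norm convergence of the defining functionals on $T\mathcal D_\eps$: for $v$ in the unit ball, Cauchy estimates give $\|v'\|_\infty\le C$ on $\bbR/\bbZ$, so $\{v\circ(f^*)^{-1}:\|v\|\le1\}$ is equicontinuous and equibounded while $\|v\circ f_j^{-1}-v\circ(f^*)^{-1}\|_\infty\le C\|f_j^{-1}-(f^*)^{-1}\|_\infty\to0$ uniformly in $v$; writing
\[
\int v\!\circ\! f_j^{-1}\,d\mu_{f_j}-\int v\!\circ\!(f^*)^{-1}\,d\mu_{f^*}
=\int\!\big(v\!\circ\! f_j^{-1}-v\!\circ\!(f^*)^{-1}\big)d\mu_{f_j}
+\int v\!\circ\!(f^*)^{-1}\,d(\mu_{f_j}-\mu_{f^*}),
\]
the first term tends to $0$ uniformly and the second does too, because weak convergence applied to an equicontinuous, equibounded family of functions is uniform. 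Consequently the codimension-$1$ spaces $T_{f_j}\mathcal V_{\alpha_j}=\{v:\int v\circ f_j^{-1}d\mu_{f_j}=0\}$ (Slammert's description for diffeomorphisms, Theorem~\ref{th-measure}/\ref{th-C1} for critical maps) converge, in the sense of their defining functionals, to $L:=\{v:\int v\circ(f^*)^{-1}d\mu_{f^*}=0\}\subset T_{f^*}\mathcal D_\eps$.

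Finally I would convert this into the desired slope bound. Since $p$ is a submersion, $T_{y^{(j)}}\hat{\mathcal V}_{\alpha_j}=(dp_{y^{(j)}})^{-1}(T_{f_j}\mathcal V_{\alpha_j})$ and $dp_{y^{(j)}}\to dp_{\hat g^*}$, so $T_{y^{(j)}}\hat{\mathcal V}_{\alpha_j}\to (dp_{\hat g^*})^{-1}(L)=:L^*$. On the other hand, by uniform hyperbolicity of $\mathcal R$ on $\Lambda$ with the gap $\lambda_1>\lambda_2$, the strong-unstable line field $\eta$ is continuous on $\Lambda$, so $\eta_{\hat g^{(j)}}\to\eta_{\hat g^*}$. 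By Lemma~\ref{lem-distributions}, $dp_{\hat g^*}(\eta_{\hat g^*})$ is the unstable direction of $\cren$ at $f^*$, which lies in $T\dcreps$; by Theorem~\ref{th-measure}, $L\cap T\dcrepsr$ is the tangent space of the stable manifold of $\cren$ at $f^*$, complementary to that unstable direction, so $dp_{\hat g^*}(\eta_{\hat g^*})\not\subset L$, i.e. $\eta_{\hat g^*}$ is transversal to $L^*$. Transversality of the limits, together with the two convergences, forces $T_{y^{(j)}}\hat{\mathcal V}_{\alpha_j}$ to be transversal to $\eta_{\hat g^{(j)}}$ with angle bounded below for large $j$; equivalently $\|dQ_{\hat g^{(j)}}|_{y^{(j)}}\|$ is bounded, contradicting $\|dQ_{\hat g^{(j)}}|_{y^{(j)}}\|>j$.

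The routine part is the compactness bookkeeping. The main obstacle I anticipate is the convergence of the $(-1)$-measures $\mu_{f_j}\to\mu_{f^*}$ for a family $\{f_j\}$ that mixes diffeomorphisms and critical maps with merely \emph{convergent} (rather than constant) rotation numbers, and in particular the upgrade of this weak convergence to norm convergence of the defining functionals: this rests on equicontinuity of bounded families of analytic vector fields and, implicitly, on the absence of atoms of $\mu_{f^*}$ (Lemma~\ref{lem-noatoms}). Assembling this with the submersion property of $p$ and continuity of $\eta$ on $\Lambda$ is then what yields the uniform bound.
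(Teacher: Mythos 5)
Your proposal is correct and follows essentially the same route as the paper: a compactness argument by contradiction, weak convergence of $(-1)$-measures (with the same observation that Step~1 of the proof of Theorems~\ref{th-measure}--\ref{th-C1} does not require constant rotation numbers), and transversality of the unstable direction of $\cren$ to the codimension-one space cut out by $L_{f^*}$. The paper's proof is slightly more direct — it tests the tangency condition on a single unit vector $v_k$ and splits the identity $\int (dp)v_k|_{h_k^{-1}}\,d\mu_k + C_k\int \eta_{g_k}|_{h_k^{-1}}\,d\mu_k = 0$ into a bounded term and a term proportional to $C_k$ — rather than phrasing the conclusion as operator-norm convergence of defining functionals, but the two are the same argument.
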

\begin{proof}
 Suppose that for some $K$, this statement fails for any $\delta$. Due to the compactness of $\Lambda_c$, there exists a sequence of critical triples $\hat g_{k} \to \hat g\in \Lambda_c$, non-critical triples $\hat h_k\to \hat g$ with the same rotation numbers $\rho_k$, and vector fields $v_k\in l_k+ \xi_k\subset T|_{\hat g_k}D$  such that $\|v_k\|=1$, and the differentials $dQ_{\hat g_k}$ at $\hat h_k$ satisfy $dQ_{\hat g_k} v_k=C_k \eta_k$ with $|C_k| \ge K$. Due to the definition of $Q_{\hat g_k}$, the vector field  $v_k + dQ_{\hat g_k} v_k$ is tangent to the manifold $\{\rot p(\hat f) = \rot p(\hat g_k)\}$ at $\hat h_k$.

 Let $g=p(\hat g), g_k=p(\hat g_k), h_k=p(\hat h_k)$ be the corresponding circle maps.
 Let $\mu_k$, $\tilde \mu_k$, and $\mu$  be the (-1)-measures for $h_k, g_k, g$.  Then   $\mu_k\to \mu$ and $\tilde \mu_k \to \mu$ weakly due to Theorems \ref{th-measure} and \ref{th-C1}.
 
 Since $v_k+dQ_{\rho_k} v_k$ is tangent to $\{\rot p(\hat f) = \text{const}\}$ at $\hat h_k$,  we have
 \begin{equation}
 \label{eq-v-1}
 \int (dp)v_k|_{h_k^{-1}} d\mu_k +\int (dp)(dQ_{\rho_k} v_k)|_{h_k^{-1}} d\mu_k  =0.
 \end{equation} 
 The first summand is uniformly bounded since $\|v_k\|=1$, $\|dp\| $ is uniformly bounded on a compact set $\Lambda_c$, and $\mu_k$ is a probability measure. Thus  $\int (dp)dQ_k v_k|_{h_k^{-1}} d\mu_k $ is uniformly bounded. On the other hand, $dQ_k v_k = C_k \eta_k$ with $|C_k|\ge K$, and $ (dp)\eta_k = \eta_{g_k}$ is the unstable direction for $\mathcal R_{cyl} $ at $g_k$.   Since $\eta_{g_k}$ is uniformly transversal to the stable distribution, the values of the linear functional $L_{g_k} $ on $\eta_k$ are bounded away from zero, i.e. $|\int \eta_k|_{g_k^{-1}} d\tilde \mu_k |$ is bounded away from zero.  Since $\tilde \mu_k\to\mu$, $\mu_k\to\mu$, this implies an upper bound on $K$. 
 
 Thus the statement of the lemma holds for sufficiently large $K$. 
\end{proof}
\begin{remark}
One can prove that the statement holds for any $K$, i.e. the first derivatives of $Q_{\hat f}$ uniformly tend to zero as $(x, Q_{\hat f}(x))\to D_0$. Indeed, due to Lemma \ref{lem-weakstable}, $l_{k}+\xi_{k}$ coincides with the subspace given by $\int (dp)v|_{g_k^{-1}} \tilde d\mu_{k}=0  $, and since $v_k\in l_k+\xi_k$ and both $g_k$ and $h_k$ tends to $g$, we have $\lim_{k\to \infty } \int (dp)v_k|_{h_k^{-1}} d\mu_k =  \lim_{k\to\infty} \int (dp)v_k|_{g_k^{-1}} d\tilde \mu_{k} =0$. Since  the first summand in \eqref{eq-v-1} tends to zero, the second summand also tends to zero,  and thus $|C_k|>K$ is not possible for any $K$. 
\end{remark}

For negative $a_1, a_0$, let the set $K_{a_0, a_1}$ of triples $(F, H_a, G)$ be given by $a_1< a<a_0$.
It remains to prove the uniform estimate on higher derivatives of $Q_n$ on the set $D_-\cap (l_n+\xi_n)\cap B_\delta(D_0)\setminus B_{\delta_1}(D_0)$, i.e. in $K_{a_0, a_1}$ with small $a_0, a_1$.

The following lemma completes the reduction.
\begin{lemma}
\label{lem-qQ}
For any $0>a_0>a_1$, the  functions $Q_n\colon (l_n+\xi_n)\to \bbR$ have uniformly bounded derivatives on $ (l_n+\xi_n)\cap K_{a_0, a_1}$.
\end{lemma}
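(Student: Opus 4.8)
The plan is to trade the statement about the triples $Q_n$ living in the Banach space $D$ for a statement about the finite-dimensional parametrization coming from the renormalization of diffeomorphisms far from the critical locus. On the region $K_{a_0,a_1}$ the $a$-coordinate is bounded away from $0$, so $p(\hat f)$ is an honest analytic circle diffeomorphism; moreover $\rot p(\hat f)$ is of bounded type (bounded by $c$, say), hence a Herman number, because by construction $\rot p(\hat g_n)=\rot p(\hat g)=\alpha$ is of bounded type. Therefore each point of $\hat{\mathcal V}_{\rot p(\hat g_n)}\cap K_{a_0,a_1}$ is a triple whose projection is a diffeomorphism conjugate to a rotation, and we are exactly in the setting of Lemma \ref{lem-local-bound-deriv}.

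First I would fix, for each triple $\hat g_n$ and each point $\hat f\in \hat{\mathcal V}_{\rot p(\hat g_n)}\cap K_{a_0,a_1}$, the conjugacy $h$ of $p(\hat f)$ to the rotation $R_{\alpha_n}$, $\alpha_n=\rot p(\hat g_n)$; by Yoccoz's theorem and the analytic dependence established in \cite{GY} this conjugacy exists and depends analytically on the triple, with uniform bounds on $K_{a_0,a_1}$ because this set is precompact (it lies in a bounded part of $D$ with $|a|$ bounded away from both $0$ and from the domain boundary, and $\rot$ bounded type). Then Lemma \ref{lem-local-bound-deriv} applies at each such $\hat f$: it produces local analytic submanifolds $\hat{\mathcal V}_\beta$ for $\beta$ near $\alpha_n$, realized as graphs $(x,Q_\beta(x))$ over a fixed ball $U_\tau(0)\subset V$ in a splitting transversal to $T\hat{\mathcal V}_{\alpha_n}$, with all derivatives $Q_\beta^{(l)}$ uniformly bounded on $U_{\tau/2}(0)$. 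Choosing $W=\eta_{\hat g_n}$ (which is transversal to $T\hat{\mathcal V}_{\alpha_n}$ by Lemma \ref{lem-distributions}) and $V=l_{\hat g_n}+\xi_{\hat g_n}$ identifies $Q_\beta$ with the restriction of our $Q_n$; after a finite covering of the compact slice $\{a_1\le a\le a_0\}\cap \hat{\mathcal V}_{\alpha_n}$ by such Lemma \ref{lem-local-bound-deriv}-charts, and using that the transition maps between the local splittings have uniformly bounded derivatives, the local bounds assemble into a global bound on all derivatives of $Q_n$ on $(l_n+\xi_n)\cap K_{a_0,a_1}$, uniform in $n$ because the covering and all constants can be chosen uniformly over the compact parameter set $\Lambda_c$.

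The main obstacle is uniformity in $n$: the manifolds $\hat{\mathcal V}_{\alpha_n}$ change with $n$, their tangent spaces, the conjugacies $h_n$, and the fixed balls $U_\tau(0)$ of Lemma \ref{lem-local-bound-deriv} all depend on $n$, and one must check that these can all be chosen uniformly. This is handled by the compactness of $\Lambda_c$: the orbit $\{\hat g_n\}$ has its closure inside $\Lambda_c$, so by a standard compactness-continuity argument the finitely many Lemma \ref{lem-local-bound-deriv}-charts and the bounds they produce vary continuously with the base point and hence admit uniform lower bounds on domain size and uniform upper bounds on the derivatives; since $K_{a_0,a_1}$ keeps $|a|$ away from the critical locus $D_0$ and from $\partial U_\delta(\hat g_n)$, no degeneration of the graph representation can occur there, so the finitely-many-chart argument closes.
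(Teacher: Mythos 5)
The approach is in the right spirit — reduce to Lemma~\ref{lem-local-bound-deriv} and use compactness — but your compactness step has a gap that the paper's proof is specifically designed to avoid. You assert that the set you want to cover by Lemma~\ref{lem-local-bound-deriv}--charts is precompact ``because it lies in a bounded part of $D$ with $|a|$ bounded away from $0$.'' This is false in a Banach manifold: bounded sets of analytic functions on a fixed strip, with the sup-norm on that strip, are not precompact. The points $(x,Q_n(x))$ with $x\in(l_n+\xi_n)\cap K_{a_0,a_1}$ range over an infinite-dimensional set near $\hat g_n$, and compactness of $\Lambda_c$ only controls the base points $\hat g_n$, not the points of $M_n$ off $D_0$ where Lemma~\ref{lem-local-bound-deriv} is actually applied. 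Since the bounds in Lemma~\ref{lem-local-bound-deriv} (the size $\tau$ of the ball, the derivative bounds on $Q_\alpha$) depend on the base triple $\hat f$ through the conjugacy of $p(\hat f)$ to a rotation, you cannot extract uniformity without first making this set of base triples sit inside something compact. Your ``finite covering'' of the slice therefore does not exist as stated.

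The paper closes exactly this gap with two moves you omit. First, it argues by contradiction, so it only needs to compactify a \emph{sequence} $\hat h_k$, not cover a whole set. Second, it passes to the space $D_{\tilde\eps,\tilde\eps'}$ with strictly smaller analyticity domains $\tilde\eps<\eps$, $\tilde\eps'<\eps'$. By Montel/Cauchy, a bounded family of analytic maps on $\Pi_\eps$ is precompact in the sup-norm on $\Pi_{\tilde\eps}$, so the sequence $\hat h_k$ admits a convergent subsequence $\hat h_{n_k}\to\hat h$ \emph{in this larger ambient space}. One then checks that shrinking the strip does not kill the unboundedness of the derivative norms (the norms on the source subspaces $l_n+\xi_n$ only decrease, so the multilinear forms $Q_n^{(s)}$ have at least as large a norm), that $\rot p(\hat h)$ is of bounded type, and that the tangent subspaces converge so that the graph functions over the limiting splitting still have unbounded $s$-th derivatives. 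Applying Lemma~\ref{lem-local-bound-deriv} at $\hat h$ in $D_{\tilde\eps,\tilde\eps'}$ then gives the contradiction. You should incorporate the shrinking-the-strip device and switch to a contradiction argument; the rest of your reduction to Lemma~\ref{lem-local-bound-deriv} is sound.
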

\begin{proof}
Assume the contrary: suppose that derivatives $Q_k^{(s)}$ have unbounded norms at some points $\hat h_k$. Consider a larger space $D_{\tilde \eps, \tilde \eps'}$ of triples with slightly smaller domain of definition $\tilde \eps<\eps, \tilde \eps'<\eps'$. Clearly, the corresponding derivatives still have unbounded norms. Extracting a convergent subsequence from $\hat h_k$ in this new space,  we get the limit $\hat h=\lim_{k\to \infty} \hat h_{n_k}\in D_{\tilde \eps, \tilde \eps'} \cap K_{[a_0, a_1]}$. Note that  $\rot p(\hat h) =\lim \alpha_{n_k}$ has bounded type. Consider the splitting $T D = V+W$ where $V$ is the limit of $l_{n_k}+\xi_{n_k}$ and $W$ is the limit of $\eta_{n_k}$. Let $q_{n_k}\colon V\to W$ be  the functions whose graphs coincide with $M_{n_k}$; they also have unbounded derivatives of order $s$, since $l_{n_k}+\xi_{n_k} \to V$ and $\eta_{n_k}\to W$.

This contradicts Lemma \ref{lem-local-bound-deriv}, applied to the triple $\hat h\in D_{\tilde \eps, \tilde \eps'}$.  
\end{proof}

\subsection*{Assertions of Theorem \ref{th-smoothness} follow from the assertions of Theorem \ref{th-Banach-op}}

We will only use the fact that all vector fields $d  f_\mu/d\mu_2$ for $\mu_1=0$ are transversal to the surface $\{\rot f = \text{const}\}$. This clearly follows from the assumptions, since the positive vector field  $v=df_{\mu}/d\mu_2>0$ cannot satisfy $\int v|_{f^{-1}(z)}d \mu_f =0$ for a (-1) -measure $\mu_f$  and thus cannot be tangent to the surface $\{\rot f=\text{const}\}$.  

First, let us reduce the general statement to the case when the critical map  $f_0$ is close to $g\in \mathcal I$. Indeed, results of \cite{Ya3} imply that $\tilde f_\mu = \mathcal R_{cyl}^n f_\mu$ is well-defined and close to $\mathcal I$. In brief, let $\mathcal R_{\text{pairs}}$ be the renormalization of  commuting pairs. Then $\mathcal R_{\text{pairs}}^n(f_\mu)$ is a rescaled pair $(f^{q_n}_\mu, f^{q_{n+1}}_\mu)$. Due to \cite{Ya3}, for $\mu=0$, for large $n$, this pair is close to a commuting pair from the attractor of $\mathcal R_{\text{pairs}}$. In particular, $f^{q_n}_\mu$ with small $\mu$ admits a fundamental crescent that joins hyperbolic repelling points of $f^{q_n}_\mu$, and the corresponding linearizing chart $\Psi$ is defined. So $\tilde f_\mu = \cren^n f_\mu$ is well-defined and close to $\mathcal I$.
%%\fixme{I made another correction to have $\mathcal R_{pairs}$ only appear in the reference -- this way we have no obligation to define it. Misha, could you have a look again? }

Replacing $f_\mu$ with $\tilde f_\mu$, we may and will assume that $f_{\mu}$ is close to $g\in \mathcal I$ and thus $f_0$ belongs to the stable manifold  $W_g^s$ of $g$ for $\cren$. 

The transversality condition is preserved since $\cren$ preserves the foliation $\{\rot f =\text{const}\}$ in $D_0$ and has a transversal unstable direction. Also, the condition $\rot f_\mu = \rot g$ is equivalent to $\rot \tilde f_\mu = \rot \cren^n g$ for $\mu\approx 0$, hence we do not change the Arnold tongue when we replace $f_\mu$ with $\tilde f_\mu$. 

Lemma \ref{lem-lift} provides us with an analytic family  $\hat f_\mu\subset D$ such that $p(\hat f_\mu) = h_\mu f_\mu h_{\mu}^{-1}$; for $\mu_1=0$ (i.e. when $f_\mu$ is critical), we have $h_\mu=id$ and $\hat f_\mu=j(f_\mu)$. Note that  the vector field $d(h_\mu f_\mu h_\mu^{-1})/d\mu_2$ is  still transversal to the surface $\{\rot f=\alpha\}$ since the conjugacy is an invertible analytic operator that takes tangent spaces to these surfaces to tangent spaces, thus the condition of transversality is preserved.
Thus  $d \hat f_\mu/d\mu_2 \in TD$  is transversal to the surface given by $$\{\rot (p(F, H_a, G))=\alpha\},$$ since it projects under $d p$ to a vector  $d(h_\mu f_\mu h_\mu^{-1})/d\mu_2$ that is transversal to $\{\rot f=\alpha\}$.

%We may  assume that $\hat f$ belongs to a $\delta$-neighborhood of $\hat g$, where $\delta$ is provided by Theorem \ref{th-Banach-op}. 

%Now reduce the general statement of Theorem \ref{th-smoothness} to the particular case of  $f$ close to the invariant horseshoe $\mathcal I$, with the assumption that all vector fields $d (\tilde f_\mu)/(d\mu_2)$ are transversal to the surface $\{\rot f = const\}$.

Since $f_0\in W^s_g$ for $\cren$, the construction of the stable foliation of $\mathcal R$ (Lemma \ref{lem-distributions}) implies that $j(f_0)=\hat f_0$ belongs to the stable manifold for $\mathcal R$ that contains $\hat g$, thus  for any $\delta$, we can find $n$ such that  $\mathcal R^n \hat f_0$ is $\delta$-close to $\mathcal R^n \hat g\in \Lambda$.
  Note that   $d (\mathcal R^n \hat f_\mu)/d\mu_2 \in TD$  is still  transversal to the surface  $$\{\rot (p(F, H_a, G))=\rot p(\mathcal R^n \hat g)\}$$ for $\mu_1=0$ since $\mathcal R^n$ preserves transversality to its invariant manifolds.

%If the statement of Theorem \ref{th-smoothness}  holds for the family $\tilde f_a = \cren^n f_a$ that passes through the critical map $\mathcal R^n_{cyl} f$, then it also holds for the initial family, since the condition $\{\rot f_\mu=\alpha\}$ on $\mu$ is equivalent to the condition $\{\mathcal R^n f_\mu  = \alpha_n\}$.

%It is easy to verify that the new family $\mathcal R^n f_\mu$  satisfies all assumptions of Theorem \ref{th-smoothness} except $d (\mathcal R^n f_\mu)/(d\mu_2)>0$. Note that since $df_{\mu}/d\mu_2>0$, this vector field cannot be tangent to the surface $\{\rot f=const\}$ since it cannot satisfy $\int v(f^{-1})d\mu $ for a (-1)-measure $\mu$.   Hence the new vector field $d (\mathcal R^n f_\mu)/(d\mu_2) = d\mathcal R^n (df_{\mu}/d\mu_2)$ is not tangent to this surface as well.  This completes the reduction.

Theorem \ref{th-Banach-op} implies that the function $Q_0$ that defines the surface $\{(F, H_a, G) \in D \mid \rot p(F, H_a, G)=\rot p(\mathcal R^n \hat g)\}$ is $k$ times continuously differentiable at $\mathcal R^n (\hat f_\mu)$ for $\mu_1=0$ and small $\mu_2, \dots, \mu_n$. 

%The spaces $l_n, \eta_n$ project under $dp$ to the unstable and stable distributions $\tilde l_n, \tilde \eta_n$  of $\cren|_{D_0}$ at $f$; $\xi_n$ also projects to a certain one-dimensional distribution $\tilde \xi_n$. We conclude that the function $q_0$ that defines a surface $\{\rot f=\alpha\}$ in $\tilde \xi +\tilde l_n, \tilde\nu_n$ coordinates is also $k$ times continuously differentiable.

The Implicit Function Theorem, applied to $Q_0$ and $\mu \mapsto (\mathcal R^n \hat f_\mu)$, implies that the function $\mu_2(\mu_1, \mu_3\dots, \mu_n)$ defined implicitly by $$\{\mu\mid \rot p(\mathcal R^n \hat f_\mu)=\rot p(\mathcal R^n \hat g)\}$$ is $k$ times continuously differentiable at any point $(0,\mu_3, \dots, \mu_n)$ sufficiently close to zero. Since $\rot p(\mathcal R^n \hat f_\mu)=\rot p(\mathcal R^n \hat g)$ is equivalent to  $\rot f_\mu = \rot p(\hat f_\mu)=\rot (\hat g)=\alpha$ in a small neighborhood of $\mu=0$, this function defines the Arnold $\alpha$-tongue in the initial family $f_\mu$, which completes the proof.

\section{Proof of Theorem \ref{th-Banach-op} on the smoothness of stable-unstable manifolds.}
\label{sec-Banach-op}
\begin{proof}[Proof of Theorem \ref{th-Banach-op}]
  $\;$

  \smallskip
  \noindent
\textbf{Step 1. Coordinates on $TD$ and notation.}

We are going to make an appropriate choice of the vectors  $\tl\xi_n$, $\tl\eta_n$ which generate the direction fields $ \xi_n, \eta_n$. 
Namely, for a small value $\eps>0$ to be fixed later (in Lemma \ref{lem-DnBn}), we choose $\tl\eta_n$ so that  $$d |_{x_n} R_n \tl\eta_{n} =k \tl\eta_{n+1}\text{ with }|k|\ge \lambda_1-\eps$$
 and  $\|\tl \eta_n\|$ are bounded away from zero and infinity.  Hence the ratio of the standard norm and the coordinate norm on the one-dimensional subspace $\eta_n$ that corresponds to the basis $\tl\eta_n$ is bounded. 
 This choice is possible due to the fact that  $\lambda_1$ is the minimal expansion rate along $\eta_n$.  

Set $L_n = \xi_n  + l_n $. Similarly, we  will choose vectors  $\tl\xi_n$ that generate the spaces $\xi_n$, and the norms on $l_n$ uniformly equivalent to the standard norm,  so that $$\|(d|_{x_n} R_n)|_{L_n}\| \le \lambda_2+\eps$$ with respect to the induced norm on $L_n =  \xi_n  + l_n $. The ratio of the standard norm and this norm on $L_n$ is bounded. The choice is possible  due to the fact that  $\lambda_2>1$ is the maximal expansion rate along $\xi_n$ and $l_n$ is stable.

Let $\pi_s^n$ be the projection onto $L_n$ along $\tl\eta_n$ and let $\pi_u^n$ be the projection onto $\tl\eta_n$ along $L_n$.
Let $d_u$ be the derivative along $\tl\eta_n$ and $d_s$ be the derivative along $L_n$.
We will use the representation $T_zD =  \eta_n + L_n$ for tangent vectors at each point $z\in M_n$, and we will also use the notation  $(a,b) $ for a point $x_n + a + b\tl\eta_n$ in a neighborhood of $x_n$, where $a\in L_n$, $b\in \bbR$.

Recall that  $Q_n\colon L_n\to \bbR$ is the function defined on a neighborhood of zero in $L_n$ such that its graph $(y, Q_n(y))$  coincides with $M_n$.

\medskip
\noindent
\textbf{Step 2: Choosing neighborhoods.}

\begin{lemma}
\label{lem-estim}
For any $\eps>0$, there exists $\mu>0$ such that for all $n$, in a $\mu$-neighborhood $U_n$ of zero  in $L_n$, for  $y\in U_n$, $z=(y, Q_n(y))$, we have:
\begin{enumerate}
\item $ \|d_s \pi_s R_n|_{z}\|<\lambda_2+2\eps $;

%  \item $\|d \pi_s R|_z\|<(1+\eps)\|d\pi_s R|_{f_n}\|$;
%  \item the tangent space to $M_n$ at $z$ stays in the cone on which $\|dR|_z\|<(1+\eps)\|d\pi_s R|_z\|$;
 \item  $\|d_u(\pi_s R_n)|_{z}\|<\eps$; $\|d_s( \pi_u R_n)|_{z} \|<\eps$;
 \item   $d\pi_u R_n|_{z} \tl\eta_n = \nu_n \tl\eta_{n+1}$ where $|\nu_n| > \lambda_1-2\eps$.
\end{enumerate}
\end{lemma}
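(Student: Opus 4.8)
The plan is to prove the three estimates simultaneously by comparing, for each of the quantities in question, its value at $z=(y,Q_n(y))$ with its value at the base point $x_n=(0,0)$, and showing that the discrepancy is uniformly small in $n$ once $\mu$ is small. Throughout, the statement is read on the semi-neighborhood $U_n=B_\mu(0)\cap L_n\cap(D_0\cup D_-)$, on which $R_n$ and $Q_n$ are defined, and $\pi_s,\pi_u$ denote the projections of $T_{x_{n+1}}D$ onto $L_{n+1}$, resp.\ $\eta_{n+1}$, along the complementary subspace.

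First I would record the exact identities at $x_n$ coming from Step~1: $d|_{x_n}R_n$ carries $L_n=\xi_n+l_n$ into $L_{n+1}$ with $\|(d|_{x_n}R_n)|_{L_n}\|\le\lambda_2+\eps$ in the chosen coordinate norms, while $d|_{x_n}R_n\,\tl\eta_n=k_n\tl\eta_{n+1}$ with $|k_n|\ge\lambda_1-\eps$. Since $\pi_s$ is the identity on $L_{n+1}$ and annihilates $\eta_{n+1}$, and $\pi_u$ is the identity on $\eta_{n+1}$ and annihilates $L_{n+1}$, one gets at $x_n$: $d_u(\pi_s R_n)=\pi_s(k_n\tl\eta_{n+1})=0$, $d_s(\pi_u R_n)=\pi_u\circ(d|_{x_n}R_n|_{L_n})=0$, $d_s(\pi_s R_n)=d|_{x_n}R_n|_{L_n}$ of norm $\le\lambda_2+\eps$, and $d(\pi_u R_n)\tl\eta_n=k_n\tl\eta_{n+1}$. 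Thus at $x_n$ all three assertions hold verbatim, with $\eps$ (resp.\ $0$) in place of $2\eps$ (resp.\ $\eps$).

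Next I would bound $\|z-x_n\|$. The graph of $Q_n$ passes through $x_n$, so $Q_n(0)=0$, and by hypothesis $\|dQ_n\|\le C_Q$ uniformly in $n$ on $B_\delta(x_n)\cap(D_0\cup D_-)\cap L_n$; hence $|Q_n(y)|\le C_Q\|y\|$ and, using the Step~1 bound on $\|\tl\eta_n\|$, $\|z-x_n\|\le(1+C_Q\|\tl\eta_n\|)\|y\|\le C'\mu$ for $y\in U_n$, with $C'$ uniform in $n$; shrinking $\mu$ keeps $z\in B_\delta(x_n)$. The key input is then a uniform modulus of continuity for $\{dR_n\}$: the assumption that the derivatives of $R_n$ up to order $k$ are bounded independently of $n$ gives (for $k\ge2$) a uniform Lipschitz bound $\|d|_zR_n-d|_{x_n}R_n\|\le C_2\|z-x_n\|$, since the segment $[x_n,z]$ lies in the convex set $B_\delta(x_n)\cap(D_0\cup D_-)$ where $R_n$ is twice differentiable. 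Combining this with the uniform bounds on $\|\pi_s\|,\|\pi_u\|$ and on the equivalence of the coordinate and standard norms on $\eta_n,L_n$ (all from Step~1), each of the three quantities at $z$ differs from its value at $x_n$ by at most $C\mu$ in the coordinate norm, where $C$ is uniform. Choosing $\mu<\eps/C$ then gives $\|d_s\pi_s R_n|_z\|<\lambda_2+2\eps$, $\|d_u(\pi_s R_n)|_z\|<\eps$, $\|d_s(\pi_u R_n)|_z\|<\eps$, and, writing $d(\pi_u R_n)|_z\tl\eta_n=\nu_n\tl\eta_{n+1}$ (automatic, since the image is a multiple of $\tl\eta_{n+1}$), $|\nu_n|\ge|k_n|-C\mu>\lambda_1-2\eps$.

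I expect the only genuinely delicate point to be the uniformity in $n$ of all the constants involved — the Lipschitz constant $C_2$ of $dR_n$, the bound $C_Q$ on $dQ_n$ near $D_0$, the operator norms of the projections $\pi_s^n,\pi_u^n$, and the constants relating the coordinate and standard norms on $\eta_n$ and $L_n$ — so that a single threshold $\mu$ works for every $n$. Each of these is supplied by the hypotheses of Theorem~\ref{th-Banach-op} or by the normalizations fixed in Step~1. (In the intended application $R_n=\mathcal R$ is analytic with compact derivative on a neighborhood of the compact set $\Lambda_c$, so the uniform bound on $d^2R_n$ is available even when $k=1$; for the abstract statement with $k=1$ one simply includes a uniform modulus of continuity for $\{dR_n\}$ among the hypotheses, and in that range the conclusion of Theorem~\ref{th-Banach-op} is in any case just the $C^1$-smoothness already obtained in \S~\ref{sec-measure}.)
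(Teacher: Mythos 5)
Your proof is correct and takes essentially the same route as the paper's: the block-diagonality of $dR_n|_{x_n}$ in the $(\tl\eta_n, L_n)\to(\tl\eta_{n+1}, L_{n+1})$ coordinates fixed in Step~1 gives the exact identities $d_u(\pi_s R_n)|_{x_n}=0$, $d_s(\pi_u R_n)|_{x_n}=0$, $\|d_s\pi_s R_n|_{x_n}\|\le\lambda_2+\eps$, $|k_n|\ge\lambda_1-\eps$, and the uniform bounds on the derivatives of $R_n$ then propagate these estimates to a $\mu$-neighborhood with $\mu$ independent of $n$. Your remark that the abstract statement with $k=1$ requires a uniform modulus of continuity for $dR_n$ beyond literal boundedness of first derivatives is a valid refinement the paper leaves implicit, though immaterial in the intended application where $\mathcal R$ is analytic on a neighborhood of the compact set $\Lambda_c$.
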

\begin{proof}
With the choice of $\tl\eta_n$ as above, the matrix of $dR_n|_{x_n}$ in $(\tl\eta_n, L_n)$ coordinates in the domain and $(\tl\eta_{n+1}, L_{n+1})$ coordinates in the image is block-diagonal. In particular,  $$d_s \pi_u R_n|_{x_n} =0,\; d_u(\pi_s R_n)|_{x_n} =0,\; \|d_s (\pi_s R|_{x_n}) \| \le \lambda_2+\eps,$$ and $$d_u (\pi_u R|_{x_n}) \tl\eta_n = \nu_n \tl\eta_{n+1},\;|\nu_n| \ge \lambda_1-\eps.$$ Since we have uniform estimates on derivatives of $R_n$ on a $\delta$-neighborhood of $\{x_n\}$, the required estimates hold in $\mu$-neighborhoods of $x_n$ for a certain $\mu$ independent of $n$.

\end{proof}

% We will also fix $c$ so that $\|d^{(m)}(R)|_{z}\|<c$ for $m\le k$ in all these neighborhoods. Uniform choice of $c$ is possible due to compactness of $\{g\in \Lambda, \rot g \in \mathcal B_C\}$. Finally, fix $K$ so that in a neighborhood of $\mathcal I$, $\|dQ_n|_{z}\|<K$ for all $n$. \fixme{explain why possible}

\noindent
\textbf{Step 3. Recurrent relation between higher-order derivatives along orbits of $ R_n$.}

We have for $y\in U_n$ such that $R_n(y, Q_n(y))\in M_{n+1}$:
% $$
% Q_{n}(y) = \frac{1}{\lambda_1} Q_{n+1} (\pi_s R(y, Q_n(y))) - \frac{1}{\lambda_1} (\pi_u R(y, Q_n(y)) - \lambda_1 Q_n(y))
% $$
% Indeed, this is equivalent to
$$ \pi_u R_n(y, Q_n(y)) =  Q_{n+1} (\pi_s R_n(y, Q_n(y)))$$
since  a point $R_n(y, Q_n(y)) =(\pi_s R_n(y, Q_n(y)),  \pi_u R_n(y, Q_n(y)))$ belongs to $M_{n+1}=\{(y, Q_{n+1}(y))\}$.

Now, we differentiate this equality $m\le k$ times with respect to $y$ and separate away the terms that involve $m$-th derivatives of $Q_n$ and $Q_{n+1}$. We get:
\begin{multline}
\label{eq-R}
d_u(\pi_u R_n)Q^{(m)}_n(y) + \left[ d_u d_s \pi_u R_n Q^{(m-1)}_n(y)+\dots +d_s^{(m)} \pi_u R_n \right] = \\= Q_{n+1}^{(m)}|_{ (\pi_s R_n(y, Q_n(y)))} \cdot (d \, \pi_s R_n(y, Q_n(y)))^m  +  dQ_{n+1} d_u(\pi_s R_n)Q_n^{(m)}(y) +\\+ \left[ dQ_{n+1} d_u d_s \pi_s R_n Q_n^{(m-1)}(y))) + \dots\right].
\end{multline}
Here $Q^{(m)}$ is an $m$-linear form on $L_n$, it is defined on $m$-tuples of vectors from $L_n$, and $ {(d \, \pi_s R_n(y, Q_n(y)))^m}$ applies the operator  $d \, \pi_s R_n(y, Q_n(y))$ to each vector of the $m$-tuple.

This relates $Q^{(m)}_n(y)$ to  $Q^{(m)}_{n+1}$ at a point $\pi_s R_n(y, Q_n(y))$.

We inroduce the following notation:
\begin{itemize}
 \item $D_n$ acts on tuples of $m$ vectors in $TD$ by
 $$D_n :=  {(d \, \pi_s R_n (y, Q_n(y)))^m}$$
(this operator takes a tuple of $m$ vectors $v_1,\dots, v_m$ in $L_n$ to the tuple $d \, \pi_s R_n(y, Q_n(y))) v_1$, $d \, \pi_s R_n(y, Q_n(y))) v_2,$ $\dots $, where all derivatives are computed at $(y, Q_n(y))$),
\item  $ B_n\colon \tl \eta_n  \to  \tl\eta_{n+1} $ is given by $$B_n:= - dQ_{n+1}  d_u(\pi_s R_n)+   d_u \pi_u R_n$$ at $(y, Q_n(y))$;
\item $A_n$ is the sum of all components in the square brackets in \eqref{eq-R}; this sum involves derivatives of $ R_n$ and lower derivatives of $Q_n$, $Q_{n+1}$.
\end{itemize}

Now,  \eqref{eq-R} turns into the relation
  $$B_n Q^{(m)}_{n} (y)  =  Q^{(m)}_{n+1}|_{\pi_s R_n(y, Q_n(y)) } D_n +A_n.$$

% Now consider this relation for larger and larger $n$, in the (decreasing) domain in $L$ for which $R^n(x, F(x))$ still belongs to $M$ and to the domain where $F$ is analytic with bounded derivatives. We will later see that this domain shrinks to $L_1$.

\noindent
\textbf{Step 4. Inverting $B_n$ and iterating the recurrent relation.}

The proof of Theorem \ref{th-smoothness} is by induction on $k$. Namely, we will prove that derivatives $Q_m^{(s)}$ have limits as $(y, Q_m(y))$ tends to $x\in D_0$ and are bounded on $M_n$ uniformly on $n$. The base $k=1$ is included in the assertions of the theorem. Suppose that derivatives of $Q_n$ up to order $m-1$ are bounded uniformly on $n$ and have limits as $(y, Q_m(y))\to x\in D_0$.
\begin{lemma}
\label{lem-An}
 For sufficiently small $\eps>0$,  the terms $A_n$ are bounded by the same constant for all $n$ and have limits as $(y, Q_n(y))\in M_n$ tends to $x\in D_0$.
\end{lemma}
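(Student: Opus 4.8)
The plan is to make the dependence of $A_n$ on lower‑order data completely explicit and then to bound it and pass to the limit factor by factor. Recall that $A_n$ is, by construction, precisely the collection of terms that arise when the identity $\pi_u R_n(y,Q_n(y)) = Q_{n+1}(\pi_s R_n(y,Q_n(y)))$ is differentiated $m$ times in $y$, \emph{after} the two top‑order contributions $B_nQ^{(m)}_n(y)$ and $Q^{(m)}_{n+1}|_{\pi_s R_n(y,Q_n(y))}D_n$ have been stripped off by the definitions of $B_n$ and $D_n$. So the first step is the bookkeeping one: writing $z=(y,Q_n(y))$ and $z'=\pi_s R_n(z)$, a higher‑order chain‑rule and Leibniz expansion shows that $A_n(y)$ is a single universal polynomial expression — with coefficients depending on $m$ but not on $n$ or $y$ — in the partial derivatives $d_u^i d_s^j \pi_s R_n$, $d_u^i d_s^j \pi_u R_n$ of total order $2\le i+j\le m$ evaluated at $z$, in $Q_n^{(1)}(y),\dots,Q_n^{(m-1)}(y)$, and in $dQ_{n+1}(z'),Q_{n+1}^{(2)}(z'),\dots,Q_{n+1}^{(m-1)}(z')$. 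The one point that needs care is to check that no derivative of $Q_n$ or of $Q_{n+1}$ of order $\ge m$ survives inside $A_n$; granting this, the two assertions follow mechanically.

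For the uniform bound, the $R_n$‑derivatives appearing in $A_n$ are bounded independently of $n$ by the standing hypothesis that each $R_n$ has derivatives up to order $k\ge m$ bounded uniformly in $n$ on $B_\delta(x_n)\cap(D_0\cup D_-)$ (this is the same input Lemma~\ref{lem-estim} rests on); the quantities $Q_n^{(j)}(y)$ for $j\le m-1$ and $Q_{n+1}^{(j)}(z')$ for $2\le j\le m-1$ are uniformly bounded by the inductive hypothesis of Step~4; and $dQ_{n+1}$ is uniformly bounded by the hypotheses of the theorem. A fixed polynomial in finitely many uniformly bounded quantities is uniformly bounded, which is the first assertion.

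For the limit as $z=(y,Q_n(y))\to x\in D_0$ I would argue factor by factor. Since $z\in M_n$, the limit point $x$ lies in $M_n\cap D_0$; as $R_nD_0\subset D_0$ and $R_nM_n\subset M_{n+1}$, the image $R_nz$ stays on $M_{n+1}$ and converges to $R_nx\in M_{n+1}\cap D_0$, so $z'=\pi_s R_n(z)$ stays in the domain of $Q_{n+1}$ and converges to $\pi_s(R_nx)\in D_0$ through the graph of $Q_{n+1}$. Consequently the $R_n$‑derivatives $d_u^i d_s^j \pi_s R_n(z)$, $d_u^i d_s^j \pi_u R_n(z)$ converge (the derivatives of $R_n$ of order $\le m$ extending continuously to $D_0$), the derivatives $Q_n^{(j)}(y)$ for $j\le m-1$ converge by the inductive hypothesis, and $dQ_{n+1}(z')$, $Q_{n+1}^{(j)}(z')$ for $j\le m-1$ converge by the inductive hypothesis applied at the orbit point $z'\to D_0$. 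A fixed polynomial in convergent quantities converges, which gives $\lim_{z\to x}A_n$.

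The main obstacle here is combinatorial rather than analytic: one must verify that once $B_nQ_n^{(m)}$ and $Q_{n+1}^{(m)}D_n$ are removed, the remainder $A_n$ really involves only derivatives of $Q_n$ and $Q_{n+1}$ of order $\le m-1$ — so that the induction of Step~4 actually feeds it — and that the half‑space structure is compatible with the dynamics, i.e. that $z'=\pi_s R_n(z)$ lands in the domain of $Q_{n+1}$ and tends to $D_0$ exactly when $z$ does. This compatibility is what licenses invoking the inductive limit statement for $Q_{n+1}$ at the image point.
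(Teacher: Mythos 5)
Your proposal is correct and takes essentially the same approach as the paper's (very brief) proof: $A_n$ is a fixed polynomial in derivatives of $R_n$ up to order $m$ and in derivatives of $Q_n$, $Q_{n+1}$ up to order $m-1$, all of which are uniformly bounded and converge as the argument approaches $D_0$, by the standing hypotheses and the inductive statement respectively. You merely spell out the bookkeeping (the universal polynomial structure, and the check that $z'=\pi_s R_n(z)$ stays on the graph of $Q_{n+1}$ and tends to $D_0$ when $z$ does) that the paper leaves implicit.
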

\begin{proof}
 This holds since $A_n$ is a combination of lower derivatives of $Q_n$ (that are uniformly bounded and have limits due to the inductive statement) and derivatives of $R_n$ that are continuous and  uniformly bounded in a $\delta$-neighborhood of $\{x_n\}$.
\end{proof}

The constant $\eps>0$ will be chosen to satisfy the following. 

\begin{lemma}
\label{lem-DnBn}
 Suppose that the norms of $dQ_n$ in $U_n$ are bounded by $K$.  Then   $\|D_n\|\le (\lambda_2+\eps+2K\eps)^m$ and $\|B_n-\nu_n\|< 2K\eps$ for some  $|\nu_n| > \lambda_1-2\eps$.

 For sufficiently small $\eps$, $B_n$ is invertible and $\|D_n\|\cdot \|B_n^{-1}\|<1$.
\end{lemma}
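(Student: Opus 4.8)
Here is a plan for the proof of Lemma~\ref{lem-DnBn}.

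The plan is to read both estimates off Lemma~\ref{lem-estim} and the standing bound $\|dQ_n\|\le K$, and then use the inequality $\lambda_2^m\le\lambda_2^{k}<\lambda_1$ --- which holds since $m\le k<\log\lambda_1/\log\lambda_2$ and $\lambda_2>1$ --- to force $\|D_n\|\cdot\|B_n^{-1}\|<1$ once $\eps$ is taken small enough. Uniformity in $n$ will be automatic: the ratios between the coordinate and the standard norms on $\eta_n$ and on $L_n$ are bounded and $\|\tl\eta_n\|$ is bounded away from $0$ and $\infty$ (Step~1), while $\|dQ_n\|\le K$ holds for all $n$ by hypothesis.

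First I would bound $D_n$. With $z=(y,Q_n(y))$, the operator raised to the $m$-th power in \eqref{eq-R} is the total derivative $G_n'(y)$ of the map $y\mapsto\pi_sR_n(y,Q_n(y))$, which by the chain rule is $d(\pi_sR_n)|_z\circ(\mathrm{id}_{L_n}+\tl\eta_n\otimes dQ_n|_y)$; thus $\|D_n\|=\|G_n'(y)\|^m$ in the sense that $\|Q_{n+1}^{(m)}\circ D_n\|\le\|Q_{n+1}^{(m)}\|\,\|D_n\|$. For $v\in L_n$,
\[
\|G_n'(y)v\|\le\|d_s(\pi_sR_n)|_z\|\,\|v\|+\bigl|dQ_n|_y(v)\bigr|\cdot\|d_u(\pi_sR_n)|_z\|\cdot\|\tl\eta_n\|,
\]
so applying Lemma~\ref{lem-estim} with a suitable small multiple of $\eps$ in place of $\eps$, together with $\|dQ_n\|\le K$, gives $\|G_n'(y)\|\le\lambda_2+\eps+2K\eps$ on $U_n$, hence $\|D_n\|\le(\lambda_2+\eps+2K\eps)^m$. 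One has to check that $G_n(y)=\pi_sR_n(y,Q_n(y))$ lands in the neighborhood $U_{n+1}$ on which $\|dQ_{n+1}\|\le K$ holds (this is also the point at which $dQ_{n+1}$ is evaluated in the definition of $B_n$); this follows from $G_n(0)=0$ --- because $x_n\in M_n$ and $R_nx_n=x_{n+1}$ --- together with the bound on $\|G_n'\|$, provided the radius $\mu$ supplied by Lemma~\ref{lem-estim} was taken small relative to the ($n$-independent) radius of the domain of bounded derivative of $Q_{n+1}$.

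Next I would bound $B_n=-dQ_{n+1}\circ d_u(\pi_sR_n)|_z+d_u(\pi_uR_n)|_z$, regarded as a map $\tl\eta_n\to\tl\eta_{n+1}$, i.e.\ as multiplication by a scalar $b_n$. By item~(3) of Lemma~\ref{lem-estim}, $d_u(\pi_uR_n)|_z=\nu_n\tl\eta_{n+1}$ with $|\nu_n|>\lambda_1-2\eps$, and by item~(2) together with $\|dQ_{n+1}\|\le K$ we get $\|dQ_{n+1}\circ d_u(\pi_sR_n)|_z\|<2K\eps$; hence $\|B_n-\nu_n\|<2K\eps$ and $|b_n|>\lambda_1-2\eps-2K\eps$. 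Since $\lambda_1>\lambda_2>1$, for $\eps$ small (depending only on $K$, $\lambda_1$, $\lambda_2$ and $k$) we have $\lambda_1-2\eps-2K\eps>1$, so $B_n$ is invertible with $\|B_n^{-1}\|=|b_n|^{-1}<(\lambda_1-2\eps-2K\eps)^{-1}$, and
\[
\|D_n\|\cdot\|B_n^{-1}\|<\frac{(\lambda_2+\eps+2K\eps)^m}{\lambda_1-2\eps-2K\eps}.
\]
At $\eps=0$ the right-hand side equals $\lambda_2^m/\lambda_1\le\lambda_2^k/\lambda_1<1$; both sides are continuous in $\eps$ and the inequality is strict, so it persists for all sufficiently small $\eps$, uniformly over $m\in\{1,\dots,k\}$, and we fix such an $\eps$ (and the corresponding $\mu$). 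The computation is essentially bookkeeping; the only genuinely delicate point is the one noted above: since $\|d_s(\pi_sR_n)\|$ may exceed $1$ (because $\lambda_2>1$), the neighborhood $U_n$ must be chosen with a safety margin so that $\pi_sR_n$ does not push points out of the region where $\|dQ_{n+1}\|\le K$ --- this is harmless precisely because that region has $n$-independent size.
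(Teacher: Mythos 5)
Your proposal is correct and follows essentially the same route as the paper: decompose $d\,\pi_s R_n(y,Q_n(y))$ via the chain rule into $d_s\pi_s R_n + d_u\pi_s R_n\cdot dQ_n$, read the bounds off Lemma~\ref{lem-estim} together with $\|dQ_n\|\le K$, and close with the spectral-gap inequality $\lambda_2^m\le\lambda_2^k<\lambda_1$. The only extra content in your version — checking that $\pi_s R_n(y,Q_n(y))$ actually lands in $U_{n+1}$ so that $\|dQ_{n+1}\|\le K$ applies, and absorbing the $\eps$-bookkeeping into a rescaled $\eps$ — is a small amount of care that the paper leaves implicit (it is covered by the $n$-independent radius $\delta$ in the hypotheses of Theorem~\ref{th-Banach-op} and the freedom in choosing $\mu$ in Lemma~\ref{lem-estim}), not a genuinely different argument.
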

\begin{proof}
The first statement follows from Lemma \ref{lem-estim} since $$d \, \pi_s R_n(y, Q_n(y))) = d_s \pi_s R_n + d_u \pi_s R_n \cdot dQ_n$$ and its norm is thus bounded by $\lambda_2+\eps+2K\eps$.

The second estimate is a direct corollary of Lemma \ref{lem-estim} and a bound on $dQ_n$.

For small $\eps$,  $B_n$ is invertible and $\|B_n^{-1}\|< (\lambda_1-2\eps-2K\eps)^{-1}$, thus $\|D_n\|\cdot \|B_n^{-1}\|<1$ for small $\eps$ since $\lambda_2^m < \lambda_1$.
\end{proof}

We get
$$Q^{(m)}_{n} (y)  =  B_n^{-1} Q^{(m)}_{n+1}|_{ (\pi_s R_n(y, Q_n(y)))} D_n +B_n^{-1}A_n.$$

% Put
% $$A_n =  \sum_{s<m} Q_n^{(s)}\alpha_s.$$

%  If we have a bound on  $Q^{(l)}$ (which will be the case if $\pi_s R^n(x, F(x))$ is in the domain of analyticity for $F$), this term tends to zero uniformly as $n\to \infty$ in the shrinking domains as above, since $(\lambda_2+\eps)^l\lambda_1^{-1}<1$ due to the choice of $\eps$;

Suppose that $L_n$-projections of images of the point $(y, Q_0(y))$ under $ R_1, R_2R_1, \dots, R_{n}\dots R_2 R_1$ stay in the domains $U_n$. Iterating the relation between $Q^{(m)}_n$ and $Q^{(m)}_{n+1}$, we get
\begin{multline}
\label{eq-recur}
 Q^{(m)}_{0} (y)  =  B_1^{-1} B_2^{-1} \dots B_n^{-1} Q^{(m)}_{n+1}D_n\dots  D_2 D_1 + \\+ \dots + B_1^{-1}B_2^{-1}A_2 D_1+  B_1^{-1}A_1
\end{multline}
where $Q^{(m)}_{n+1}$ is computed at a point $\pi_s R_nR_{n-1}\dots R_1(y, Q_{0}(y))$.

\medskip
\noindent
\textbf{Step 5. End of the proof.}

\begin{lemma}
\label{lem-delta1}
For sufficiently small $\delta>0$, there exists $\delta_1>0$, $\delta_1<\delta$, with the following property. For any $x\in M_0\cap D_0$ that belongs to $B_{\delta/2}(x_0)$, for any  $z\in M_0\cap D_-$ with $\dist(z, x)<\delta_1$, there exists $N$ such that $N$ iterates of $z$ under $R_1, R_2R_1, \dots$ stay in neighborhoods $B_\delta(x_k)$, and $R_N\dots R_1(z) $ is outside $B_{\delta_1}(D_0)$.
\end{lemma}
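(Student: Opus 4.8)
\emph{Proof plan for Lemma~\ref{lem-delta1}.} The guiding idea is that, for the nonautonomous dynamics restricted to the manifolds $M_n$, the set $M_n\cap D_0$ is the local stable manifold of the orbit $\{x_n\}$, while the $D_0$-transverse direction $\xi_n$ is uniformly expanding; so a point $z\in M_0\cap D_-$ lying just off $M_0\cap D_0$ is repelled from $D_0$ at a geometric rate while its forward orbit shadows $\{x_n\}$, and it crosses the threshold $\delta_1$ long before it could leave $B_\delta$. To make this precise I would first introduce adapted charts on the $M_n$. Since $T(M_n\cap D_0)=l_n$ and $\xi_n$ is uniformly transversal to $D_0$, near $x_n$ the set $M_n\cap D_0$ is a codimension-one $C^k$ submanifold of $M_n$ tangent to $l_n$; choose charts $\Phi_n$ on neighborhoods of $x_n$ in $M_n$ of a fixed size, with $\Phi_n(0)=x_n$, $M_n\cap D_0=\Phi_n(\{w^\xi=0\})$ where $w=(w^l,w^\xi)\in l_n\times\bbR$, all chart data bounded uniformly in $n$ (possible by the uniform derivative bounds on $R_n$ and $Q_n$ in Theorem~\ref{th-Banach-op}). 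In these charts $R_n|_{M_n}$ becomes $g_n$, fixing $0$, preserving $\{w^\xi=0\}$, with $dg_n(0)$ block-triangular: contracting on $l_n$ and multiplication by $\mu_n\ne0$ on the $w^\xi$-line. Because $\xi_n$ is $dR_n$-invariant and sits inside the uniformly expanding unstable distribution $\xi_n+\eta_n$, it is itself uniformly expanding: by uniform hyperbolicity, applied to $(dR_n^{(j)}|_{E^u_n})^{-1}$, whose restriction to $\xi_{n+j}$ is multiplication by $\big(\prod_{i=n}^{n+j-1}\mu_i\big)^{-1}$, one gets $\prod_{i=m}^{m+j-1}|\mu_i|\ge c_*\lambda_*^{\,j}$ for fixed $c_*>0,\ \lambda_*>1$ and all $m,j$, so in particular $c_0:=\inf_n|\mu_n|>0$. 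Finally, if $\ell$ is the unit functional with $\Ker\ell=D_0$, then on $M_n$ one has $\ell\circ\Phi_n(w)=w^\xi\,h_n(w)$ with $h_n$ bounded away from $0$ and $\infty$, so $\dist(z,D_0)\asymp|w^\xi|$ for $z\in M_n$ near $x_n$; orienting the generators $\tilde\xi_n$ so that $D_0\cup D_-$ corresponds to $w^\xi\le0$, we also get that $R_n$ maps this side to itself near the orbit, so the forward orbit of $z$ stays defined while it remains in the sets $B_\delta(x_n)$.

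Next I would run the escape estimate. The orbit $\hat x_n$ of $x$ lies on $M_n\cap D_0$ (the stable leaf of $x_0$), which is forward invariant and exponentially contracted toward $\{x_n\}$, so $\dist(\hat x_n,x_n)\le\kappa^n\dist(x,x_0)\le\delta/2$ for all $n$ (uniform $\kappa\in(0,1)$), and for $\delta$ small this orbit stays in the chart neighborhoods. Let $z_0=z,\ z_{n+1}=R_nz_n$ be the forward orbit of $z$ and $t_n=\ell(z_n)$. Since $R_nD_0\subset D_0$, the $C^k$ function $\ell\circ R_n$ vanishes on $D_0$ with a simple zero there (its $\xi_n$-derivative at $x_n$ is $\mu_n\ell(\tilde\xi_{n+1})\ne0$), so near $x_n$ we may write $\ell(R_nz)=\ell(z)\,\Theta_n(z)$ with $\Theta_n$ uniformly Lipschitz and $|\Theta_n(x_n)|\asymp|\mu_n|\ge c_0$. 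Hence $t_{n+1}=\Theta_n(z_n)\,t_n$, so $t_n=t_0\prod_{j<n}\Theta_j(z_j)$; as $t_0=\ell(z)\ne0$ and each $\Theta_j(z_j)\ne0$ once $z_j$ is close to $x_j$, the orbit of $z$ never meets $D_0$. Let $N\le\infty$ be the first index with $\dist(z_N,D_0)\ge\delta_1$. I claim by induction on $n\le N$ that $\dist(z_j,x_j)<\delta$ for all $j<n$: granting this for $j<n$, the chart estimates bound $\dist(z_n,\hat x_n)$ by $|t_n|$ plus the $w^l$-drift, which is in turn controlled by $\dist(z,x)$ plus $\sum_{j<n}|t_j|$; and $\sum_{j<n}|t_j|\le C\delta_1$ because, the $\Theta$-products over windows of a fixed length $L_0$ being $\ge2$ (by the uniform expansion of $\xi$, once $\dist(z_j,x_j)$ is small so that $|\Theta_j(z_j)|$ is comparable to $|\mu_j|$), this sum splits over blocks each bounded by a fixed multiple of its first term, the first terms grow with ratio $\ge2$ up to the last one, which is $\le$ a fixed multiple of $\delta_1$ since $|t_j|\asymp\dist(z_j,D_0)<\delta_1$ for $j<N$. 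Together with $\dist(\hat x_n,x_n)\le\delta/2$ this gives $\dist(z_n,x_n)\le C\delta_1+\delta/2<\delta$ for $\delta_1$ small, completing the induction. Moreover $\prod_{j<n}|\Theta_j(z_j)|\ge\tfrac12\prod_{j<n}|\Theta_j(x_j)|\ge c\lambda_*^{\,n}$, so $|t_n|\ge c\,|t_0|\,\lambda_*^{\,n}\to\infty$, forcing $N<\infty$.

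It remains to rule out overshoot. For $n\le N$ we have $\dist(z_n,D_0)\le C_3\delta_1$: it is $<\delta_1$ for $n<N$, and $\dist(z_N,D_0)\asymp|t_N|=|\Theta_{N-1}(z_{N-1})|\,|t_{N-1}|\le C_3\delta_1$ since $|\Theta_{N-1}|\le C_3$ by the uniform upper bound $|\mu_n|\le\|dR_n|_{E^u_n}\|$. Thus $\delta_1\le\dist(z_N,D_0)\le C_3\delta_1<\delta$ once $\delta_1<\delta/C_3$, while by the induction $z_0,\dots,z_N\in B_\delta(x_n)\cap(D_0\cup D_-)$; so all iterates $R_{j}\cdots R_1(z)$ up to time $N$ are defined and stay in $B_\delta(x_j)$, and $R_N\cdots R_1(z)$ lies outside $B_{\delta_1}(D_0)$. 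This is the required $N$. (The order of quantifiers is respected: $\delta$ is fixed small first --- for the adapted charts, the uniform hyperbolicity estimates, forward invariance of the stable leaf, and smallness of $\sum_j\dist(\hat x_j,x_j)\le\delta/(2(1-\kappa))$ --- and then $\delta_1$ is chosen small relative to $\delta$.)

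The main obstacle is the induction of the second paragraph: propagating the smallness of $\dist(z_n,x_n)$ along the orbit so that the perturbed products $\prod\Theta_j(z_j)$ stay comparable to the model products $\prod\mu_j$, which is precisely what makes the distance to $D_0$ escape geometrically. The one genuine subtlety is that individual one-step factors $|\mu_n|$ need not exceed $1$ --- only their long products do, by uniform hyperbolicity --- so both this comparison and the summability $\sum_{j<N}|t_j|\le C\delta_1$ must be organized over blocks; once that is in place, everything else is routine nonautonomous-hyperbolic bookkeeping using the uniform bounds from the hypotheses of Theorem~\ref{th-Banach-op} (and the estimates of Lemma~\ref{lem-estim}).
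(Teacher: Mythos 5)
Your proposal is correct, and it takes a route that is genuinely different from the paper's. The paper selects a reference point $w\in M_0\cap D_0$ with $z-w\in\xi_0+\eta_0$, bounds the orbit separation above by the crude estimate $\dist(R_n\cdots R_1 z,R_n\cdots R_1 w)<(2A)^n\dist(z,w)$ (where $A$ is a uniform bound on $\|dR_n\|$), bounds $\dist(R_n\cdots R_1 z,D_0)$ below by $\tilde c(1+\rho/2)^n\dist(z,w)$, and then compares the confinement time $K$ with the escape time $N$. You instead factorize $\ell\circ R_n(z)=\ell(z)\Theta_n(z)$ (exploiting $R_nD_0\subset D_0$), track $t_n=\ell(z_n)$ directly as a nonautonomous scalar cocycle, and use a block decomposition together with summability $\sum_{j<N}|t_j|\lesssim\delta_1$ to control the stable drift; the escape follows from the lower bound $\prod|\mu_i|\ge c_*\lambda_*^j$ derived from $\xi_n\subset E^u_n$, and the overshoot is controlled by $|\mu_n|\le\|dR_n\|$.

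What each buys: the paper's argument is shorter, but the upper bound $(2A)^n$ on the separation grows much faster than the lower bound $(1+\rho/2)^n$ on $\dist(\cdot,D_0)$, and plugging the paper's formulas for $K$ and $N$ in shows $K/N\to\log(1+\rho/2)/\log(2A)<1$ as $\dist(z,w)\to 0$; so the stated inequality $K\ge 2(N-1)$ cannot hold uniformly for $z$ arbitrarily close to $D_0\cap M_0$, and the paper's bookkeeping as written does not close without using that $z_n$ stays on the manifold $M_n$. Your approach sidesteps this entirely: by working in chart coordinates on $M_n$, the $\eta$-component is slaved to the $(l,\xi)$-coordinates through the graph map $Q_n$ (uniformly bounded derivative), so the only quantities you need to iterate are $t_n$ and the $l$-drift, and the summability $\sum|t_j|\lesssim\delta_1$ gives the linear-in-$\delta_1$ confinement needed to close the induction. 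In short, your proof replaces the paper's crude separation estimate with a sharper one that actually matches the rate at which the orbit is repelled from $D_0$, and the block decomposition correctly handles the fact that individual factors $|\mu_n|$ need not exceed $1$ (only their long products do). One small remark: the Lipschitz estimate on $\Theta_n$ requires $R_n\in C^2$ with uniformly bounded second derivative; since the lemma is only invoked in the inductive step of the proof of Theorem~\ref{th-Banach-op} for $m\ge 2$, this is available (the paper's own proof also uses $R''_n$).
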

\begin{proof}
Select a point $w\in M_0\cap D_0$ such that $(z-w)\in \xi_0+\eta_0$; this is possible since $TM_0\cap D_0 = l_0$ is transversal to $\xi_0+\eta_0$ at $x_0$, thus in its neighborhood. 

Let $z-w = a \tl \xi_0+b\tl\eta_0$, then $\dist (z, D_0)=a$ in our metric. 
Since  $\|dQ_0\|$ is bounded, the fraction $b:a$ is uniformly bounded for all points $\in B_\delta(x_0)$ and thus the fraction $\dist(z, w): \dist(z, D_0)$ is bounded by some constant $C$ that does not depend on $z$. 

Since $\xi_n$  is an unstable distribution for $\{R_n\}$,  we have $$\| dR_{n}\dots R_1  (z-w)\| > c(1+\rho)^n \dist (z, D_0) > c_1 (1+\rho)^n \dist (z, w)$$ for some $\rho>0$. Since $\|dR_k\|$ are uniformly bounded, we have  $$\|d R_n\dots R_1 (z-w)\|< A^n \|z-w\|.$$
Due to the uniform estimate on $R''_n$, for sufficiently small $\delta$, this implies $$\dist (R_k \dots R_1z, D_0)> \tilde c (1+\rho/2)^n \dist (z,w)$$ and   $$ \dist ( R_n\dots R_1 z, R_n\dots R_1 w) < (2A)^n \dist (z, w)  $$  while $R_n\dots R_1 z$ stays in $B_{\delta}(x_n)$.

Since $ w$ is close to $x_0$ and belongs to the stable manifold $M_0\cap D_0$ of $\{R_n\}$, we have $\dist (R_n \dots R_1 w, x_n) \to 0 $; since $x\in B_{\delta/2}(x_0)$ and $z\in B_{\delta_1}(x)$, we can choose  $\delta_1$ so that the  future orbit of $w$   stays in $B_{3\delta/4}(x_n) $. 

Now, at least $K = \lfloor \log_{2A} \frac{\delta/4}{\dist(z, w)} \rfloor $ images of $z $ under $\{R_n\}$ stay in $B_{\delta/4}(R_n\dots R_1 w)$ and thus in $B_{\delta}(x_n)$. Also, the iterate of order  $N= \lfloor \log_{1+\rho/2} \frac{\delta_1}{\tilde c \cdot  \dist (z, w)}\rfloor +1$ is outside $B_{\delta_1}(D_0) $; note that $N\ge 1$ since $z$ itself belongs to $B_{\delta_1}(D_0)$.  Choose  $\delta_1>0$ such that $ \log_{2A}(\delta/4)>2\log_{1+\rho/2}(\delta_1/\tilde c)$. Then we have $K\ge 2(N-1)\ge N$. Finally, $N $ iterates of $z$  belong to $B_{\delta}(x_n)$, while the $N$-th iterate is outside $B_{\delta_1}(D_0)$. This completes the proof.

\end{proof}
% Passing to a subsequence, we may and will assume that projections of $N$ iterates of $(y_N, Q(y_N))$ stay in respective  neighborhoods $U_n$, and the projection of an $N$-th iterate $R_N R_{N-1} \dots R_1 (y_N, Q(y_N))$ is at a distance $d$, $\delta/ (2|\lambda_2|)<d< \delta$, from  zero in $L_n$. This follows from the estimate on the norm of $\pi_s R$.

% Fix $a_0$ so that $|a|>|a_0|$ outside $\delta / (2\lambda_2)$-neighborhoods of zero in $l_n$. This is possible since $a$ is detached from zero on these neighborhoods.

Consider $(y_s, Q_0(y_s))\to x\in D_0\cap M_0$, and apply this lemma to  $z=(y_s, Q_0(y_s))$. Fix the corresponding $\delta_1$, and recall that $Q^{(m)}_n$ are uniformly bounded on   $B_{\delta}(D_0)\setminus B_{\delta_1}(D_0)$.  
For  $y=y_s$, use the formula \eqref{eq-recur} with the number of iterates $n+1=N$ provided by Lemma \ref{lem-delta1}; note that $N$ depends on $s$ and tends to infinity as $s\to \infty$. The  relation \eqref{eq-recur} applies since iterates of $(y_s, Q_0(y_s))$ stay in $B_\delta(x_n)$. Due to the choice of $N$, we have  $$\pi_s R_nR_{n-1}\dots R_1(y_s, Q_{0}(y_s))\in B_{\delta}(D_0)\setminus B_{\delta_1}(D_0).$$ Thus the first term in \eqref{eq-recur} tends to zero as $s\to \infty$ since $Q^{(m)}_{n+1}$ is bounded  and $\|D_n\| \cdot \|B_n^{-1}\|<1$ (Lemma \ref{lem-DnBn}).

The remaining terms in  \eqref{eq-recur} decrease at a uniform geometric rate  because all $A_n$-s are bounded by the same constant (Lemma \ref{lem-An}) and $\|B_k^{-1}\| \cdot \|D_k\|<1$. Each term has a limit as $s\to \infty$. Thus the sum of these terms converges as $s\to \infty$  to the sum of limits of all  the terms.  
%Thus for any $\nu>0$, one can find $N$  such that for all $y_s$,  $$\sum_{k>N} \|B_1^{-1}\dots B_{k}^{-1} A_k D_{k-1}\dots D_1\|<\nu.$$ Also, $$\sum_{k\le N} \|B_1^{-1}\dots B_{k}^{-1} A_k D_{k-1}\dots D_1\|$$ has a limit as $y_n\to x$ since all of its terms have limits. 

We conclude
that as $n\to \infty$, the sequence $Q^{(m)}_{0} (y_s)$ has a limit. Since arguments work for any $m\le k$, we conclude that  $M_0$ is $k$-smooth.
Repeating the same arguments for $(y_s, Q_n(y_s)) \to x_n$, we show that all $Q_n^{(m)}$ are $k$-smooth. 

To complete the induction step, it remains to provide a uniform bound on $Q_n^{(m)}$ in small neighborhoods of $x_n$ with size independent on $n$. The above arguments work in the $\delta_1$-neighborhood of any point $x\in B_{\delta/2}(x_n)\cap D_0\cap M_n$.  Both the first term of \eqref{eq-recur} and the sum of the remaining terms are uniformly bounded due to the same estimates as above, thus $Q_n^{(m)}$ are uniformly bounded in  $\delta_1$-neighborhoods of $D_0\cap B_{\delta/2}(x_n)\cap M_n$. Also, outside $B_{\delta_1}(D_0)$, the derivatives $Q_n^{(m)}$ are uniformly bounded due to the assumptions. Thus they are uniformly bounded in $ B_{\delta/2}(x_n)$. This completes the proof.

\end{proof}

\section{Uniform hyperbolicity and expansion rates of the renormalization operator}
\label{sec-unifhyp}
In this section we present a new construction of the expanding direction of renormalization, different
from the approach taken in \cite{Ya3} and \cite{GorYa}. The proof we present uses (-1)-measures constructed in \S~\ref{sec-measure}, and 
%Uniform hyperbolicity for cylinder renormalization was proved in \cite{Ya3}. However, the proof contains a minor mistake, namely in the construction of unstable cones for $\mathcal R_{cyl}$, in the estimate on the expansion rate.
provides explicit estimates on the expansion rate of the renormalization operator as needed for Proposition \ref{prop-exponents} and Theorem \ref{th-smoothness-htype2}.

Recall that the cylinder renormalization $\cren f$  of a critical circle map $f$ is the first return map to a fundamental domain of the map  $f^{q_{N}}$, in the straightening chart.
In this section, it will be convenient for us to use $n=N$ as a parameter; we will write $\crenn f$ to denote the cylinder renormalization on the fundamental crescent of the iterate $f^{q_n}$. 
%%%We will omit $n$ if this does not lead to a confusion.
Here and below we assume that $n$  is even; for odd $n$, all proofs are analogous, but the straightening map reverses the orientation on the circle, etc.

Recall that  $\Psi=\Psi_n$ is the straightening coordinate  used in the cylinder renormalization: $\Psi$  is defined $C\cup f^{q_n}(C)$ where $C$ is a crescent-shaped fundamental domain of $f^{q_n}$ joining fixed points of $f^{q_n}$, and $\Psi$ conjugates $f^{q_n}$ to $z\to z-1$ in $C\cup f^{q_n}(C)$. We may and will assume that $f^{q_n}(C)$ contains $[f^{q_n+q_{n+1}}(0), f^{q_{n+1}}(0)]$, and $\Psi(0)=0$.
%From now on, we assume that $C$ contains zero (we can always modify $C$ using iterates of $f^{q_n}$ in a neighborhood of $(0, f^{-q_n}(0))$ where $f^{q_n}$ is univalent). Then $\Psi(0)=0$.

Take a vector field $v\in T\mathcal D^{cr}_{\eps}$. Consider the family $f_a=f+av$. Let $P_a$ be the first-return map to $[f_a^{q_n+q_{n+1}}, f^{q_{n+1}}_a(0)]$. The map $\mathcal R_{cyl} f_a$ for small $a$ coincides with $\Psi_a P_a \Psi_a^{-1}$, where $\Psi_a$ is the straightening chart for $f_a^{q_n}$ as above.
 We have
 \begin{multline}\label{eq-iter}d|_f\mathcal R_{cyl, n} v =\\ \frac d{da} \mathcal R_{cyl, n} f_a =  \Psi'_a  |_{P\Psi^{-1}} + \Psi'|_{P\Psi^{-1}} \cdot P'_a|_{\Psi^{-1}(x)}  - (\Psi P \Psi^{-1})'(x)  \cdot\Psi'_a |_{\Psi^{-1}(x)} = \\ \Psi'_a  |_{\Psi^{-1}(\mathcal R_{cyl}f(x)) }   - (\mathcal R_{cyl}f )' (x)\cdot  \Psi'_a |_{\Psi^{-1}(x)}+ \Psi'|_{P\Psi^{-1}(x)} \cdot P'_a|_{\Psi^{-1}(x)}.
   \end{multline}
  In the first summand, we choose the representative of  $(\mathcal R_{cyl}f)(x)\in \mathcal \bbR/\bbZ$ in $\bbR$ that belongs to  $\Psi([f^{q_n+q_{n+1}}(0), f^{q_{n+1}}(0)])$.

Since the map $P$ equals either $f^{q_{n+1}}$ or $f^{q_{n+1}+q_n}$, we have the following two expressions for  $P'_a|_{\Psi^{-1}(x)}.$  Let $y=\Psi^{-1}(x)$.
\begin{itemize}
\item On the arc of the circle where $P(y)  = f^{q_{n+1}}(y)$, we have
\begin{equation}
 \label{eq-sum1}
 P'_a(y)  =  \sum_{l=1}^{q_{n+1}} (f^{q_{n+1}-l})'(y_{l}) v(y_{l-1}),
\end{equation}
 where $y_l = f^l (y)$;
 \item On the arc of the circle where $P(y)  = f^{q_n+q_{n+1}}(y)$, we have    \begin{equation}
\label{eq-sum2}
P'_a(y) = \sum_{l=1}^{q_n+q_{n+1}} (f^{q_n+q_{n+1}-l})'(y_{l}) v(y_{l-1}).
\end{equation}
% the term that corresponds to $l=q_{n+1}$ in the sum is $1 \cdot v(x_{q_{n+1}-1})$.
%Analogous formula with $q_n+q_{n+1} $ instead of $q_{n+1}$ holds for the part of the circle where $Pf=f^{q_n+q_{n+1}}$.
\end{itemize}

Recall that  (-1)-measure $\mu_f$ of a smooth circle map $f$ is a probability measure such that for any continuous test function $\phi$, we have
\begin{equation}
 \label{eq-measure}
 \int \phi(x) d\mu_{f} = \int \left. [f'\phi ]\right|_{f^{-1}(x)} d\mu_{f}.
\end{equation}
 In Section \ref{sec-measure}, we proved that any cubic critical circle map has a unique  (-1)-measure $\mu_f$. Also, we proved that the stable distribution for $\cren$ is given by the condition $L_f(v)=0$ where linear functionals $L_f$ on $T\mathcal D^{cr}_\eps$ are given by  $ L_f v = \int v|_{f^{-1}(z)} d\mu_f$. Here and below, integrals are computed along the circle $\bbR/\bbZ$.
 We will prove the following.

\begin{lemma}
\label{lem-mu-zero}
The sum of the first two summands $$\Xi(x) = \Psi'_a  |_{\Psi^{-1}(\crenn f(x)) }   - (\crenn f )' (x)\cdot \Psi'_a |_{\Psi^{-1}(x)}$$ in \eqref{eq-iter} satisfies $$ L_{\crenn f} (\Xi) = \int  \Xi|_{(\crenn f)^{-1}(x)} d\mu_{\crenn f} =0.$$

\end{lemma}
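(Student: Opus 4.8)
The plan is to recognize $\Xi$ as an infinitesimal coboundary for the renormalized map $g:=\crenn f$ and then to apply the defining relation \eqref{eq-measure} of its $(-1)$-measure $\mu_g$, in exactly the spirit of the cancellations used to prove Theorem~\ref{th-measure}. Write $w(x):=\Psi'_a|_{\Psi^{-1}(x)}$ for the $a$-derivative of the straightening chart pulled back by $\Psi^{-1}$; then the expression in the statement reads $\Xi(x)=w(g(x))-g'(x)\,w(x)$. Moreover, differentiating $\Psi_a\circ P\circ\Psi_a^{-1}$ in $a$ at $a=0$ \emph{with the return map $P=P_0$ held fixed} produces precisely $\Xi$ (the remaining summand of $d|_f\crenn v$ in \eqref{eq-iter} is the one coming from the variation of $P_a$); and since $\Psi_a\circ P\circ\Psi_a^{-1}=\phi_a\circ g\circ\phi_a^{-1}$ with $\phi_a:=\Psi_a\circ\Psi^{-1}$, $\phi_0=\id$, the vector field $\Xi$ is the derivative at $g$ of a one-parameter family of conjugates of $g$ — i.e.\ it has the form of a vector field tangent to the conjugacy class of $g$.

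Granting this description, the computation is immediate. Applying \eqref{eq-measure} to $\mu_g$ with the continuous test function $\phi=w$ gives $\int w\,d\mu_g=\int (g'w)|_{g^{-1}(x)}\,d\mu_g$, and hence, using $w(g(g^{-1}(x)))=w(x)$,
$$L_{g}(\Xi)=\int \Xi|_{g^{-1}(x)}\,d\mu_g=\int w\,d\mu_g-\int (g'w)|_{g^{-1}(x)}\,d\mu_g=0,$$
which is the assertion. Equivalently, and without re-deriving the cancellation, since all conjugates $\phi_a\,g\,\phi_a^{-1}$ carry the rotation number $\rot g=\alpha$, the vector $\Xi$ is tangent to $\{\rot h=\alpha\}$ at the cubic critical circle map $g$, and by Theorem~\ref{th-measure} this tangent space is exactly $\Ker L_g$.

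The only point that genuinely needs care — and which I expect to be the main obstacle — is the bookkeeping of domains, because the charts $\Psi_a$ live on the fundamental crescent $C\cup f^{q_n}(C)$ and do not descend to the cylinder $\CC/\ZZ$ (they conjugate $f_a^{q_n}$, not the fixed $f^{q_n}$), so $w$ and $\phi_a$ are honest single-valued functions only on an arc covering $\RR/\ZZ$, not periodic functions. Accordingly I would fix the fundamental arc $A=\Psi([f^{q_n+q_{n+1}}(0),f^{q_{n+1}}(0)])$ as the representative of $\RR/\ZZ$ for $g$, extend $w$ to $A\cup g^{-1}(A)$ via the single-valued inverse of $\Psi$ on the crescent, note that $\Xi$ is nonetheless a well-defined periodic vector field (being the corresponding summand of the circle vector field $d|_f\crenn v$), and invoke that $\mu_g$ has no atoms (Lemma~\ref{lem-noatoms}) so that the change of variables $y=g^{-1}(x)$ and the identification of the endpoints of $A$ are harmless. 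Once this is set up, the displayed cancellation is literally the $(-1)$-measure identity \eqref{eq-measure}, and the lemma follows.
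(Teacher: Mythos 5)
Your proof is correct and follows essentially the same route as the paper: rewrite $\Xi$ as the coboundary $w\circ g - g'\cdot w$ with $w=\Psi'_a\circ\Psi^{-1}$, $g=\crenn f$, and apply the $(-1)$-measure identity \eqref{eq-measure} to $\mu_{\crenn f}$, invoking Lemma~\ref{lem-noatoms} to handle the single jump discontinuity of $w$ on the fundamental arc. Your added observation that $\Xi$ is tangent to the conjugacy class of $g$ (and hence to $\{\rot h=\alpha\}$) is a correct conceptual gloss on the same cancellation, consistent with Theorem~\ref{th-measure} since the normalization $\Psi_a(0)=0$ forces $w(0)=0$ and hence $\Xi'(0)=\Xi''(0)=0$.
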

\begin{proof}

%%We conclude that 
%%$$\int _{\mathcal Rf[0,1]} \Xi|_{(\mathcal Rf)^{-1}(z)} d\mu_{\mathcal Rf} > 
%\int _{[0,1]} \psi(z) d\mu_{\mathcal Rf}- \int_{\mathcal Rf [0,1]}  (\mathcal Rf)'|_{(\mathcal Rf)^{-1}(z)}  \phi({(\mathcal Rf)^{-1}(z)}) =0.$$

%
%Lift $\mathcal R f$ and the measure $\mu_{\mathcal Rf}$ to the line $\bbR$, preserving the same notation. 
%We will use the expression $\mathcal Rf = \Psi f^{q_{n+1}} \Psi^{-1}$ on the interval $[0,1]$, and thus replace the target integral with $\int _{\mathcal Rf[0,1]} \Xi|_{(\mathcal Rf)^{-1}(x)} d\mu_{\mathcal Rf}$. 

 Recall that by Lemma \ref{lem-noatoms}, the (-1)-measure $\mu_{f}$ has no atoms.
 %an atom at zero. Indeed, let $\mu_{f}(\{0\})=a$ and  use this formula for a sequence of continuous functions $\phi_n$ that approximate $\chi_{[-\frac 1n, \frac 1n]}$, then the limit of the left-hand side is $a$, and the limit of the right-hand side is zero, since $f'(0)=0$. We conclude that $a=0$. One can prove that (-1)-measures of critical maps with irrational rotation numbers cannot have atoms, but we will not need this fact here.
 Thus  the formula \eqref{eq-measure} holds true for any function $\phi$ that has a single jump discontinuity.
Let $\phi =\Psi'_a |_{\Psi^{-1}(x)} $; this function is continuous on $$I=\Psi([f^{q_n+q_{n+1}}(0), f^{q_{n+1}}(0)]) = [(\crenn f)(0)-1, (\crenn f)(0)] $$ and has a single jump discontinuity at $\crenn f(0)\in \bbR/\bbZ$.
Let $x\in I$. We will write $(\crenn f)^{-1}(x)\in \mathbb R$ for the lift of the point     $(\crenn f)^{-1}(x)\in \mathbb R/\mathbb Z$ that belongs to $I$.
Then the first summand of $\Xi$, computed at a point $(\crenn f)^{-1}(x)$, equals $\phi(x)$. Also, the second summand of $\Xi$, computed at a point $(\crenn f)^{-1}(x)$, is  $\left. [(\crenn f)' \cdot   \phi ]\right|_{(\crenn f)^{-1}(x)}. $ We get
\begin{multline*}\int \Xi|_{(\crenn f)^{-1}(x)} d\mu_{\crenn f} = \\ \int  \phi(x) d\mu_{\crenn f}- \int \left. [ (\crenn f)' \phi ]\right|_{(\crenn f)^{-1}(x)}d\mu_{\crenn f} =0\end{multline*}
due to \eqref{eq-measure} applied to $\mu_{\crenn f}$.
%It remains to prove that the first integral is greater than $\int _{\mathcal Rf ([0,1])} \phi(z) d\mu $, i.e. $\Phi'_a|_{\Phi^{-1}(z+1)} > \Phi'_a|_{\Phi^{-1}(z)}$ for $z\in \mathcal Rf[0,1]\setminus [0,1]$. Indeed, since we have 
%$\Psi(f^{q_n}(z)) = \Psi(z)-1,$
%$\Psi'_a|_{f^{q_n(z)}} + \Psi' (f^{q_n})'_a = \Psi'_a$, and thus $\Psi'_a|_{f^{q_n(z)}} < \Psi'_a|_z$ for any $z$. We conclude that 
%$$\int _{\mathcal Rf[0,1]} \Xi|_{(\mathcal Rf)^{-1}(z)} d\mu_{\mathcal Rf} > 
%\int _{[0,1]} \psi(z) d\mu_{\mathcal Rf}- \int_{\mathcal Rf [0,1]}  (\mathcal Rf)'|_{(\mathcal Rf)^{-1}(z)}  \phi({(\mathcal Rf)^{-1}(z)}) =0.$$
%This completes the proof.
\end{proof}

%
% We get the following corollary:
%\begin{corollary}
%\label{cor-deriv-estim}
% For any critical map $f$ with irrational rotation number, for any $x\in I_n$, we have $$(f^{q_{n+1}-l})' (f^l(x)) > c(f) \frac{M_n}{  |J|} \text{ if } 0<l<q_{n+1}$$  and $$ (f^{q_{n}-l})' (f^l(x)) > c(f) \frac{M_n}{  |J|} \text{ if } 0<l<q_{n}$$ where $J=f^l (I_n) = [f^{l+q_n}(0), f^{l}(0)]$ and $c(f)$ does not depend on $n$.
%
%These bounds are  universal on the invariant horseshoe: for $f\in \mathcal I$, we can replace $c(f)$ by a universal constant $c$.
%
%\end{corollary}
%\begin{proof}
%Let $0<l<q_{n+1}$, and apply Theorem  \ref{th-realbounds}(b) for $i= l, j = q_{n+1}$.
%We get $(f^{q_{n+1}-l})' (f^l(x)) > \frac{1}{C_0} \frac{|f^{q_{n+1}}(I_n) |}{| f^l (I_n) |}$ for $x\in I_n$.
%
%The denominator is $|J|$.
%Since the interval $f^{q_{n+1}}(I_n)$ covers $I_{n+1}$, commensurability of adjacent intervals $I_n$ and $I_{n+1}$ of the partition $\mathcal P_n$  (Theorem \ref{th-realbounds}(a))  implies  $|f^{q_{n+1}}(I_n)|> M_n/C_0$.  We conclude that $(f^{q_{n+1}-l})' (f^l(0)) > \frac{1}{C_0^2} \frac{M_n }{| J|}$.
%
%Similarly, let $0<l<q_n$ and apply  Theorem  \ref{th-realbounds}(b) for $i= l, j = q_{n}$. We get $(f^{q_{n}-l})' (f^l(x)) > \frac{1}{C_0} \frac{|f^{q_{n}}(I_n) |}{| f^l (I_n) |}$ for $x\in I_n$. The denominator is $|J|$. The interval $f^{q_{n}}(I_n)$ belongs to $\mathcal P_n$, thus it is commensurable with the adjacent interval $I_{n}$. We get    $|f^{q_{n}}(I_n)|>M_n/C_0$, and the statement follows.
%
%\end{proof}

Now we will estimate the value of the linear functional $L_{\mathcal R_{cyl}f}$ on $d\mathcal R_{cyl} v$ for the unit vector field $v=1$, thus the expansion rate of the renormalization operator.
\begin{theorem}
\label{th-dR-estim}
 For any map $f\in \mathcal I$, we have an estimate
 \begin{equation}
\label{eq-dR-estim}
 L_{\crenn f}( d|_f\crenn \, 1 ) \ge \frac{c}{|J_{n}|}
 \end{equation}
where $c$ is a universal constant,  and $J_n$ is the shortest interval of the partition $\mathcal P_n$ that has the form $J_n = f^l(I_n)$, $0<l<q_{n+1}$.
 \end{theorem}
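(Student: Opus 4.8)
The plan is to start from formula \eqref{eq-iter} for $d|_f\crenn\,1$ (i.e.\ $v\equiv 1$) and invoke Lemma \ref{lem-mu-zero}, which already shows that the first two summands $\Xi$ contribute nothing: $L_{\crenn f}(d|_f\crenn\,1)=L_{\crenn f}(\Xi)+L_{\crenn f}(h)=L_{\crenn f}(h)$, where $h(x)=\Psi'|_{P\Psi^{-1}(x)}\cdot P'_a|_{\Psi^{-1}(x)}$ is the third summand of \eqref{eq-iter} with $v\equiv1$. So it suffices to bound $L_{\crenn f}(h)=\int h((\crenn f)^{-1}(x))\,d\mu_{\crenn f}(x)$ below by $c/|J_n|$.

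The first key step is a pointwise rewriting of the integrand. Put $g=\crenn f=\Psi P\Psi^{-1}$, so that $g^{-1}=\Psi P^{-1}\Psi^{-1}$ and $\Psi^{-1}(g^{-1}(x))=P^{-1}(z)$ with $z:=\Psi^{-1}(x)$ ranging over the (real trace of the) fundamental crescent, which is comparable to $f^{q_{n+1}}(I_n)=[f^{q_n+q_{n+1}}(0),f^{q_{n+1}}(0)]=:D$. Using the variational formula behind \eqref{eq-sum1}--\eqref{eq-sum2} (so that for $v\equiv1$ one has $P'_a(y)=\sum_{l=1}^{m}(f^{m-l})'(f^l y)$, with $m=m(y)\in\{q_{n+1},q_n+q_{n+1}\}$ the first-return time of $y$) one obtains
$$h(g^{-1}(x))=\Psi'(z)\cdot P'_a(P^{-1}(z))=\Psi'(z)\sum_{r=0}^{m-1}(f^r)'(f^{-r}(z)),$$
where $m$ is the first-return time of $P^{-1}(z)$. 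All derivatives of $f$ are non-negative, so this sum dominates any single one of its terms.

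The second step is a lower bound that holds for every $x$. For any $l$ with $0<l<q_{n+1}$ the sum contains the term $r=q_{n+1}-l$ (since $q_{n+1}-l\le q_{n+1}-1\le m-1$); the point $f^{-(q_{n+1}-l)}(z)$ lies in an interval comparable to $f^l(I_n)$, and $f^{q_{n+1}-l}$ sends it onto (something comparable to) $D\ni z$ with distortion bounded by the universal constant of real \emph{a priori} bounds (Theorem \ref{th-realbounds}(a),(b), with $i=l$, $j=q_{n+1}$), so $(f^{q_{n+1}-l})'(f^{-(q_{n+1}-l)}(z))\ge c_1|D|/|f^l(I_n)|$. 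Choosing $l$ with $f^l(I_n)=J_n$ gives $\sum_{r}(f^r)'(f^{-r}(z))\ge c_1|D|/|J_n|$. On the other hand $\Psi$ maps $D$ to an interval of length $1$ and, by Lemma \ref{lem-Psi-estim}, has universally bounded distortion on it, so $\Psi'(z)\ge c_2/|D|$ for all $z\in D$. Hence $h(g^{-1}(x))\ge c_1c_2/|J_n|$ for every $x$, and integrating against the probability measure $\mu_{\crenn f}$ yields \eqref{eq-dR-estim} with $c=c_1c_2$. (For odd $n$ one uses the orientation-reversing convention for $\Psi$; the argument is the same.)

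The structure above is short; the work — and the step I expect to be the main obstacle — is making the constants genuinely universal over $f\in\mathcal I$ and matching the idealized interval $D$ with the actual real trace of the crescent $C$. Concretely, one must check that this trace is comparable to $f^{q_{n+1}}(I_n)$, that pulling it back by $f^{-(q_{n+1}-l)}$ along the first-return orbit yields an interval comparable to $f^l(I_n)$ with distortion controlled by real \emph{a priori} bounds (including for the neighboring partition intervals, via part (a)), and that $\Psi'\asymp1/|D|$ on all of it — which also requires $(\crenn f)(0)$ to stay away from $0$ and $1$. All of this rests on the uniform real \emph{a priori} bounds available on $\mathcal I$ (one may take $N_1=N_2=1$, cf.\ the remark after Theorem \ref{th-intervals}), the precompactness of the family of renormalizations $\{\crenn f:f\in\mathcal I\}$, and Lemma \ref{lem-Psi-estim}; the careful bookkeeping of the variational formula, of $g^{-1}=\Psi P^{-1}\Psi^{-1}$, and of these a priori bounds is the bulk of the argument.
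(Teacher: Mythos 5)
Your proposal is correct and follows essentially the same route as the paper: use Lemma \ref{lem-mu-zero} to eliminate the first two summands of \eqref{eq-iter}, then bound the remaining integrand pointwise by combining the distortion estimate for $\Psi$ on the fundamental interval $D$ (Lemma \ref{lem-Psi-estim}) with real \emph{a priori} bounds applied to a single term of the sum giving $P'_a$. The only cosmetic difference is that you push the argument backward from $z\in D$ via $P^{-1}$, which lets you avoid the two-case split (return time $q_{n+1}$ versus $q_n+q_{n+1}$) that appears in the paper's Proposition \ref{prop-Pa}; the underlying estimates and the universality argument over $\mathcal I$ are the same.
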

\begin{proof}
 Lemma \ref{lem-mu-zero} implies that $ L_{\mathcal R_{cyl, n}f} (\Xi)=0$; it remains to estimate the integral of the last summand in \eqref{eq-iter}. Since the interval  $ [f^{q_{n+1}+q_n}(0), f^{q_{n+1}}(0)]$ covers $I_{n+1} $ and is contained in $I_n\cup I_{n+1}$, Theorem \ref{th-realbounds}(a) implies that it is commensurable with $I_n$. The map $\Psi$ takes this interval to the inverval $[\crenn f(0)-1, \mathcal R_{cyl}f(0)] $ of length one; since $f(0)\neq 0$ on $\overline {\mathcal I}$, the point  $\crenn f(0)-1$ is detached from $0$ and $-1$, and Lemma \ref{lem-Psi-estim} implies that the distortion of the map $\Psi$ on $[f^{q_{n+1}+q_n}(0), f^{q_{n+1}}(0)]$ is uniformly bounded for $f\in \mathcal I$.  Thus $$| \Psi'|_{P\Psi^{-1}(x)}| >\frac c{M_n},$$ where $M_n=|I_n|$. It remains to prove the following:

 \begin{proposition}
 \label{prop-Pa}
In the family $f_a=f+a$, we have   $ P'_a(y) > c\frac{M_n}{|J_n|}$ for $y\in  [f^{q_{n+1}+q_n}(0), f^{q_{n+1}}(0)]$.
 \end{proposition}
% \begin{proposition}
% \label{prop-Pa}
%We have   $ P'_a(y) > c\frac{M_n}{|J_{n+1}|}$ for any $y = \Psi^{-1}(x)\in [0, f^{-q_n}]$ such that $P(y) = f^{q_{n}+q_{n+1}}(y)$.
% \end{proposition}
 \begin{proof}

 If $P(y) = f^{q_{n+1}}(y)$, then $y\in [f^{q_{n}+q_{n+1}}(0), 0]\subset I_n$. Hence  $f^l(y)\in f^l(I_{n})$ and
 $$(f^{q_{n+1}-l})' (f^l(y)) >c \frac{|[f^{q_{n+1}+q_n}(0), f^{q_{n+1}}(0)]|}{|f^l(I_n)|}$$
 due to Theorem \ref{th-realbounds}b. The denominator is $|J_n|$, the numerator is commensurable with $M_n$ as explained above. Thus $(f^{q_{n+1}-l})' (f^l(y)) >c\frac{M_n}{|J_n|}$, and since other summands of the sum \eqref{eq-sum1} are positive, the statement follows.

 If $P(y) = f^{q_n+q_{n+1}}(y)$, then $y\in [0, f^{q_{n+1}}(0)]$. We have  $f^{q_n}(y)\in I_n$, thus  $f^{q_n+l}(y)\in f^l(I_{n})$ and $$(f^{q_{n+1}-l})' (f^{l+q_n}(y)) >c\frac{[f^{q_{n+1}+q_n}(0), f^{q_{n+1}}(0)]|}{|f^l(I_n)|}> \tilde c\frac{M_n}{|J_n|}.$$ Since other summands of the sum \eqref{eq-sum2} are positive, the statement follows.

 \end{proof}

The proof of Theorem~\ref{th-dR-estim} is thus completed.
\end{proof}

Now, we are ready to complete the proof of uniform hyperbolicity of $\mathcal R_{cyl}$ by providing an unstable invariant cone field.

\begin{theorem}
\label{th-cones}
 There exists a choice of $N$ in $\crenN\equiv\cren$ and a universal constant $c<1$  such that the cone field $$\mathcal C_f=\{v\in \mathcal D^{cr}_\eps \mid
\, |L_f(v) |>c\|v -  L_f(v) \|\}$$ defined on a neighborhood of $\mathcal I$ is invariant under $d\cren$ and vectors in the cones are uniformly expanding under $d\cren$.
\end{theorem}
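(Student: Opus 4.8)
The plan is to read the cone invariance off a near-diagonal description of $d\cren$ along orbits in $\cI$, in the ``coordinates'' given by the functional $L_f$ and a uniformly chosen transversal. The transversal of choice is the constant vector field $\mathbf 1\equiv 1\in T\dcreps$: it is admissible (since $\mathbf 1'(0)=\mathbf 1''(0)=0$) and, crucially, $L_f(\mathbf 1)=\int \mathbf 1|_{f^{-1}(z)}\,d\mu_f=1$ for \emph{every} $f$, because $\mu_f$ is a probability measure; so $\{v-L_f(v)\}$ in the definition of $\mathcal C_f$ is to be read as the $\Ker L_f$-component $v-L_f(v)\mathbf 1$. First I would record the stable part. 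By Theorems~\ref{th-measure}, \ref{th-C1} and Lemma~\ref{lem-weakstable}, $\Ker L_f$ is the tangent space to $\{\rot g=\rot f\}$, and $d\cren$ maps $\Ker L_f$ into $\Ker L_{\cren f}$ because renormalization carries rotation-number classes into rotation-number classes; then Theorem~\ref{th-convergence} together with compactness of $\overline{\cI}$ yields $N_0$ and $\lambda_s<1$ such that $\|d\crenN|_{\Ker L_f}\|\le\lambda_s$ for all $N\ge N_0$ and all $f$ in a neighborhood of $\cI$ (all objects here depend continuously on $f$, so the strict inequalities proved on $\cI$ survive on a neighborhood; I use this tacitly below).

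Next I would quantify the expansion of the transversal. Write $d\crenN\mathbf 1=\Lambda_f\mathbf 1+w_f$ with $\Lambda_f:=L_{\crenN f}(d\crenN\mathbf 1)$ and $w_f\in\Ker L_{\crenN f}$. Theorem~\ref{th-dR-estim} gives $\Lambda_f\ge c_0/|J_N|$ with a universal $c_0$, and the real a priori bounds (Theorems~\ref{th-realbounds}, \ref{th-intervals}, with $N_1=N_2=1$ on $\cI$) give $|J_N|\le\delta^N$, so $\Lambda_f\ge c_0\delta^{-N}$, which I will make as large as I please by taking $N$ large. The heart of the argument is a \emph{universal} bound $\|w_f\|\le C\Lambda_f$, with $C$ independent of $N$ and of $f$: in formula~\eqref{eq-iter}, the term $\Xi$ satisfies $L_{\crenN f}(\Xi)=0$ by Lemma~\ref{lem-mu-zero} and is controlled in sup-norm by Lemma~\ref{lem-Psi-estim}, hence contributes at most $C\Lambda_f$ once $N$ is large; the remaining term $\Psi'|_{P\Psi^{-1}}\cdot P'_a|_{\Psi^{-1}}$ oscillates boundedly along the fundamental domain by the bounded-distortion estimate of Theorem~\ref{th-realbounds}(b) and Lemma~\ref{lem-Psi-estim}, so it differs from a constant function by at most a bounded multiple of its $L_{\crenN f}$-value — and that value equals $\Lambda_f$, again because $\mu$ is a probability measure and $L_g(\mathbf 1)=1$. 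Combining gives $\|w_f\|\le C\Lambda_f$.

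Then the cone argument is routine. Fix a universal $c\in(0,1)$ with $cC<\tfrac12$ ($C$ from the previous step), and then fix $N\ge N_0$ so large that $c_0\delta^{-N}>2\lambda_s$ and $c_0\delta^{-N}>1+1/c$. For $v\in\mathcal C_f$, write $v=L_f(v)\mathbf 1+u$ with $u\in\Ker L_f$ and $\|u\|<\tfrac1c|L_f(v)|$. Then
$$d\crenN v=\Lambda_f L_f(v)\,\mathbf 1+\bigl(L_f(v)w_f+d\crenN u\bigr),$$
so $L_{\crenN f}(d\crenN v)=\Lambda_f L_f(v)$, while the $\Ker L_{\crenN f}$-component has norm at most $|L_f(v)|\,\|w_f\|+\lambda_s\|u\|\le|L_f(v)|(C\Lambda_f+\lambda_s/c)$. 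Invariance $d\crenN v\in\mathcal C_{\crenN f}$ is the inequality $\Lambda_f>c(C\Lambda_f+\lambda_s/c)$, i.e. $(1-cC)\Lambda_f>\lambda_s$, which holds by the choice of $c$ and $N$. Uniform expansion follows from $\|d\crenN v\|\ge|L_{\crenN f}(d\crenN v)|=\Lambda_f|L_f(v)|\ge\frac{\Lambda_f}{1+1/c}\|v\|$ (using $\|L\|\le 1$ and $\|v\|\le(1+1/c)|L_f(v)|$ in the cone), together with $\Lambda_f/(1+1/c)>1$. Since $c$ and $N$ are now fixed and all the bounds are uniform over a neighborhood of $\cI$, this would complete the proof.

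The hard part is the universal bound $\|w_f\|\le C\Lambda_f$ of the second step. The continued-fraction entries of $\rot f$ are unbounded on $\cI$, so $\|d\crenN\mathbf 1\|$ and $\Lambda_f$ may both be large, and the entire argument hinges on their being \emph{comparable} with a constant that does not see the size of those entries. This is precisely where one must carefully combine the commensurability and bounded-distortion a priori bounds (which force the parameter-derivative $P'_a$ to oscillate boundedly along the fundamental domain), the distortion control of the chart $\Psi$ from Lemma~\ref{lem-Psi-estim}, and the normalization $L_g(\mathbf 1)=1$, which is what converts ``boundedly oscillating'' into ``$L$-value comparable to sup-norm''. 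Everything else is bookkeeping.
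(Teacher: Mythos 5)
Your proposal follows the same overall scheme as the paper's proof: take $\Ker L_f$ as the stable hyperplane (contracted by Theorem~\ref{th-convergence}, or~\cite[Theorem 6.4]{Ya4}), take the constant field $\mathbf 1$ as the transversal, use Theorem~\ref{th-dR-estim} to make $L_{\cren f}(d\cren\,\mathbf 1)$ as large as desired by increasing $N$, and then run a routine cone calculation. In that sense you have rediscovered the argument.

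The one place where you add something is your second step, and you are right to single it out. When checking invariance one needs the lower bound $|L_{\cren f}(d\cren v)| = |L_f(v)|\,|L_{\cren f}(d\cren\,\mathbf 1)|$ to beat $c$ times $\|d\cren v - L_{\cren f}(d\cren v)\mathbf 1\|$, and the latter contains a term proportional to the \emph{sup-norm} $\|d\cren\,\mathbf 1\|$; so one genuinely needs $\|d\cren\,\mathbf 1\|\lesssim L_{\cren f}(d\cren\,\mathbf 1)$ with a constant independent of $f$. The paper's proof writes both quantities as ``$\|d\cren\,\mathbf 1\|$'', which obscures that this comparability is actually being invoked, and it is never explicitly established there. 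Your reformulation as $\|w_f\|\le C\Lambda_f$ is exactly the missing statement, and your route to it (kill $\Xi$ with Lemma~\ref{lem-mu-zero}, control its size with Lemma~\ref{lem-Psi-estim}, use bounded distortion from Theorem~\ref{th-realbounds}(b) and Lemma~\ref{lem-Psi-estim} to show $\Psi'\cdot P'_a$ has bounded oscillation along the fundamental interval, and convert ``bounded oscillation'' to ``sup comparable to $L$-value'' via $L_g(\mathbf 1)=1$) is the right way to get an $N$-independent constant. What you have written is only a sketch of that comparability; in particular, the bounded-oscillation claim for $P'_a$ deserves a genuine argument that handles both branches of $P$ (the $f^{q_{n+1}}$ and $f^{q_n+q_{n+1}}$ arcs) and matches them across the common endpoint. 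Note also that if one is content with a constant $c=c(N)$ (which is consistent with the theorem's phrasing, since $N$ is being fixed), there is a cheaper alternative: for a \emph{fixed} $N$, continuity of $f\mapsto d|_f\crenN\mathbf 1$ and of $f\mapsto\mu_{\crenN f}$ together with compactness of $\overline\cI$ and the strict positivity $L_{\cren f}(d\cren\,\mathbf 1)>0$ from Theorem~\ref{th-dR-estim} already give a uniform bound on the ratio $\|d\cren\,\mathbf 1\|/L_{\cren f}(d\cren\,\mathbf 1)$, and then $c$ is chosen afterward. Your $N$-independent version is cleaner but not logically required.

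One small omission: you do not say why $c<1$ can be taken in $(0,1)$ once $C$ and $N$ are in hand. This is harmless (your inequalities force $c<1/(2C)$, so just also demand $c<1$), but it should be said, since the theorem explicitly asks for $c<1$. Everything else in your cone bookkeeping — the decomposition $v=L_f(v)\mathbf 1+u$, the invariance inequality $(1-cC)\Lambda_f>\lambda_s$, and the expansion estimate $\|d\cren v\|\ge\Lambda_f|L_f(v)|\ge\frac{\Lambda_f}{1+1/c}\|v\|$ — matches the paper's computation.
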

Since linear functionals $L_f$ depend continuously on $f$  in weak topology, this cone field is continuous.
\begin{proof}
Choose $\lambda>1$.

Note that in Theorem \ref{th-dR-estim}, the interval $J_n$ is the smallest out of $q_{n+1}-2$ non-intersecting intervals, thus  $|J_n|<\frac 1{q_{n+1}-2}$. The denominators $q_{n}$ increase at a uniform exponential rate, thus we can choose $N$ so that $$\|d|_f\crenN \, 1\|>c\frac{1}{|J_n|}>\lambda\text{ for all  }f\in \mathcal I.$$

Increase $N$ if needed so that  for all $f\in \mathcal I$, the operator $d|_f\crenN$ uniformly contracts on a stable distribution: for any $w\in T \mathcal D^{cr}_{\eps}$ with $L_f(w)=0$, we have  $\|d_f\crenN w\|< \tau \|w\| $ for a universal  $\tau<1$. This is possible due to \cite[Theorem 6.4]{Ya4}.

From now on, we will write $\cren$ instead of $\crenN$ and $d\cren$ instead of $d|_f \cren$ for shortness.

 Note that we have $$L_{\cren f} (d\cren\xi)  = (d\cren \, 1) \cdot  L_f (\xi)$$  for any vector field $\xi\in T|_f \mathcal D^{cr}_{\eps}$, since this holds both for $\xi=1$ and for any $\xi\in \mathrm{ Ker}\,  L_f$.

Take $v\in T_f \mathcal D^{cr}_{\eps}$, and let  $k= L_f(v)$, then the vector field $v -  k=w$ belongs to the stable distribution, $L_f(w) =0$. Suppose that $v$ belongs to the cone  $\mathcal C_f$, i.e.  $c\|w\| <|k|$.
 Let us prove that $d\cren v$ belongs to the cone $\mathcal C_{\cren f}$ if $c$ was properly chosen. Indeed, $$|L_{\cren  f} (d\cren  v)| = \|d\cren  \, 1\| \cdot |L_f (v)| = \|d\cren  \, 1\| \cdot |k| $$ and $$\|d\cren v \|  = \|d\cren (w+k)\|\le\\ \tau \|w\| + |k|\cdot  \|d\cren  \, 1\|, $$ thus
 \begin{multline*}
\|d\cren v -L_{\cren  f} d\cren  v \|  \le\tau \|w\| + 2|k|\cdot \|d\cren  \, 1\|  \le \tau \frac {|k|}c + 2|k|\cdot \|d\cren  \, 1\|.
 \end{multline*}

It remains to find $c$ so that  $$\|d\cren  \, 1\| \cdot |k|> \tau |k| +2 c |k|\cdot  \|d\cren  \, 1\|.$$
 Since  $\|d\cren  \, 1\|>\lambda>1>\tau$, the inequality holds for small universal $c$.

 Now, let us show that the vectors in the cones are uniformly expanded. We use Theorem~\ref{th-dR-estim} again to choose $n$ so that
 $$\|d\cren^n \, 1\| = \|d{\mathcal R}_{\text{cyl}, nN} \, 1\|\gg 1+1/c\text{ for all }f\in \mathcal I.$$  Then for any $v\in \mathcal C_f$, using the representation $v=w+k$ again, we have $$\|d\cren ^n v\|> |k|\cdot  \|d\cren ^n \, 1\| - \tau \|w\|> |k|(  \|d\cren ^n\, 1\| -\tau/c)$$ and $\|v\|< |k|+\|w\| < |k| (1+1/c)$, thus $\|d\cren ^n v\| > C \|v\|$ for $C>1$, which implies uniform expansion.
\end{proof}

Standard techniques now imply uniform hyperbolicity of $\cren $:
\begin{itemize}
\item  the unstable distribution is constructed as an intersection of images of unstable cones;
\item This intersection may only be generated by one vector at each point  due to contraction in the transversal direction, and the resulting distribution  is unstable since it belongs to unstable cones;
\item the distribution depends continuously on a point since this is true for images of cones under $d\cren ^k$ for each $k$;
\item uniform transversality of stable and unstable distribution follows from the fact that the unstable vector $v$ at each point belongs to the unstable cone:  $|L_f(v) |>c\|v -  L_f(v) \|$, so if $v$ is normalized by $\|v\|=1$, then $L_f(v)$ is bounded away from zero.
\end{itemize}

Below we obtain finer estimates on the minimal expansion rate along the unstable direction of $\cren $, as required for Proposition \ref{prop-exponents} and Theorem \ref{th-smoothness-htype2}.
The next lemma shows that the partition $\mathcal P_n$ contains an interval $f(I_n)$ of length $\sim M_n^3$, which provides an upper estimate on the length of the shortest interval in $\mathcal P_n$.

\begin{lemma}
\label{lem-intervals2}
 For any critical map $f$ with irrational rotation number,  $|f(I_n)| <  c(f) M_n^{3}$ where $c(f)$ does not depend on $n$.

This bound is  universal on the invariant horseshoe.
\end{lemma}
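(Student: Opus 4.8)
The idea is that the critical point $0$ is an endpoint of $I_n$, so on $I_n$ the map $f$ is controlled by its cubic behaviour at $0$; the proof then reduces to a local estimate plus a trivial case split on the size of $M_n=|I_n|$.

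First I would record the local cubic estimate. Since $f'(0)=f''(0)=0$ and $f'''(0)>0$, in a holomorphic chart at $0$ the function $g_f(z)=(f(z)-f(0))/z^{3}$ is holomorphic near $0$ with $g_f(0)=f'''(0)/6>0$; for $f\in\mathcal I$ it is holomorphic on $\Pi_\eps$, the triple zero of $f(z)-f(0)$ at $0$ being removable. Since $\mathcal I$ is precompact in the sup-norm and all its maps extend to the common strip $\Pi_\eps$, the maximum principle gives a radius $r>0$ and a constant $C_0<\infty$, independent of $f\in\mathcal I$, with $|f(z)-f(0)|\le C_0|z|^{3}$ whenever $|z|\le r$; after shrinking $r$ we may also assume that $C_0 r^{3}$ is small, so that for such $z$ the displacement $f(z)-f(0)$ is a genuine small complex number and $|f(z)-f(0)|$ records the circle distance $\dist(f(z),f(0))$. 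For a single critical map with irrational rotation number, the same holds with $r=r(f)$, $C_0=C_0(f)$ by analyticity alone.

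Next comes the case split. If $M_n\le r$, then every point of the arc $I_n$ lies within circle-distance $M_n\le r$ of its endpoint $0$, hence $\dist(f(z),f(0))\le C_0 M_n^{3}$ for every $z\in I_n$; since $f(I_n)$ is a proper arc with endpoint $f(0)$ and is contained in the arc of radius $C_0 M_n^{3}$ about $f(0)$, this gives $|f(I_n)|\le 2C_0 M_n^{3}$. If instead $M_n> r$, then $|f(I_n)|<1<r^{-3}M_n^{3}$ simply because $f(I_n)$ is a proper sub-arc of the circle and $M_n^3>r^3$. In either case $|f(I_n)|<(2C_0+r^{-3}+1)\,M_n^{3}$, and since $r$ and $C_0$ are uniform over $\mathcal I$ this constant is universal on the horseshoe; for a general critical map it depends on $f$ only through $r(f)$ and $C_0(f)$.

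I do not expect a genuine obstacle. The only points that need a little care are the passage from the cubic estimate in the chart at $0$ to a bona fide circle-distance bound — handled by shrinking $r$ so that $f(z)-f(0)$ stays small — and the claim that $r$ and $C_0$ may be chosen uniformly over $\mathcal I$, which is where precompactness of $\mathcal I$ in the uniform topology and the common strip of analyticity enter. One could instead extract the small-$n$ estimates from the real \emph{a priori} bounds of Theorem~\ref{th-realbounds}, but the case split above sidesteps that.
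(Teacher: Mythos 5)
Your argument is correct and follows the same route as the paper's proof: the cubic Taylor expansion at the critical point $0$ (an endpoint of $I_n$) gives $|f(z)-f(0)|\lesssim |z|^{3}$ near $0$, and precompactness of $\overline{\mathcal I}$ in the uniform topology makes the constants uniform on the horseshoe. Your version is a little more careful than the paper's one-line proof — the explicit case split on $M_n\lessgtr r$ lets you dispense with the observation that $M_n\to 0$, and using the holomorphic function $g_f(z)=(f(z)-f(0))/z^3$ together with the maximum principle gives a clean, quantitative reason for the uniform constant — but there is no substantive difference in the method.
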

\begin{proof} Since $|I_n|\to 0$ with $n$ and any cubic critical map has the Taylor series expansion $c_0+c_1x^3+\dots $ at zero,
for any critical map $f$  we have $|f(I_n)| < c(f) |I_n|^3 = c(f) M_n^3$ where $c(f)$ does not depend on $n$.

On the invariant horseshoe $\mathcal I$, there is a uniform estimate on $c(f)$ due to compactness of $\overline {\mathcal I}$.

\end{proof}

 Now we are ready to provide finer estimates on the expansion rates of the renormalization operator. Recall that $\delta\in(0,1)$ is a constant that satisfies   $M_n<C\delta^n$ for any $f\in \mathcal I$, see  Theorem \ref{th-intervals}.

Let $M_{m, n}$ be the length of the interval $[0, g^{{q_n}}(0)]$ that corresponds to the map $g=\mathcal R_{cyl, N}^m f$.

\begin{lemma}
\label{lem-Lyap-lowtype}
  For any irrational number  $\rho$, if $f\in \mathcal I$ has rotation number $\rho$, then  the minimal expansion rate of $\mathcal R_{cyl, N}$ over its unstable distribution $\eta_f$ along the orbit of $f$  is bounded below: $$\lambda_1\ge \left(\liminf_{n\to\infty} \inf_{m\ge 0} \sqrt[n]{\frac 1 {M_{m,n}^3}}\right)^N.$$ In particular,  $$\lambda_1\ge  \delta^{-3N}.$$ 
\end{lemma}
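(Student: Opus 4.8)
The plan is to derive the lower bound from Theorem~\ref{th-dR-estim} and Lemma~\ref{lem-intervals2}, via the relation $\cren^{n}=\mathcal R_{cyl,nN}$ (the compatibility of the cylinder renormalization of $f^{q_N}$ iterated $n$ times with the cylinder renormalization of $f^{q_{nN}}$, already invoked in the proof of Theorem~\ref{th-cones}), and then to unwind the definition of the minimal expansion rate. Throughout write $g_m=\mathcal R_{cyl,N}^{m}f$; since $\mathcal I$ is invariant under $\cren=\mathcal R_{cyl,N}$, every $g_m$ lies in $\mathcal I$, so all uniform bounds of \S~\ref{sec-unifhyp} (real {\it a priori} bounds with $N_1=N_2=1$, Lemma~\ref{lem-intervals2}, Theorem~\ref{th-intervals}, uniform transversality of the unstable direction) hold along the whole orbit with constants depending only on $\mathcal I$.

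First I would record the two algebraic properties of the functionals $L_f$. Since $\mu_f$ is a probability measure, $L_f(1)=\int 1\,d\mu_f=1$, and moreover $|L_f(v)|\le\|v\|$ for every $v$; also, the proof of Theorem~\ref{th-cones} gives the proportionality $L_{\cren f}(d\cren\,\xi)=L_{\cren f}(d\cren\,1)\cdot L_f(\xi)$ for all $\xi$ (the kernel of $L_f$ is the $\cren$-invariant stable subspace, so $\xi\mapsto L_{\cren f}(d\cren\,\xi)$ annihilates it and is hence a multiple of $L_f$; evaluate at $\xi=1$). Iterating and using $\cren^{n}=\mathcal R_{cyl,nN}$,
$$L_{g_{m+n}}\!\big(d|_{g_m}\mathcal R_{cyl,nN}\,\xi\big)=L_{g_{m+n}}\!\big(d|_{g_m}\mathcal R_{cyl,nN}\,1\big)\cdot L_{g_m}(\xi)\qquad\text{for all }\xi.$$
Normalise the generators $\eta_{g_m}$ of the unstable distribution by $\|\eta_{g_m}\|=1$; by uniform transversality $|L_{g_m}(\eta_{g_m})|$ is bounded below, and $\le 1$ above, uniformly in $m$. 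Since $\eta$ is a one-dimensional invariant distribution, $d|_{g_m}\mathcal R_{cyl,nN}\,\eta_{g_m}=\mu\,\eta_{g_{m+n}}$ with $\mu$ satisfying $\mu\,L_{g_{m+n}}(\eta_{g_{m+n}})=L_{g_{m+n}}(d|_{g_m}\mathcal R_{cyl,nN}\,\eta_{g_m})=L_{g_{m+n}}(d|_{g_m}\mathcal R_{cyl,nN}\,1)\cdot L_{g_m}(\eta_{g_m})$, and $\|d\mathcal R_{cyl,N}^{n}|_{\eta_{g_m}}\|=|\mu|$; combining these, $\|d\mathcal R_{cyl,N}^{n}|_{\eta_{g_m}}\|\ge c_0\,L_{g_{m+n}}(d|_{g_m}\mathcal R_{cyl,nN}\,1)$ with $c_0$ uniform.

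Next, apply Theorem~\ref{th-dR-estim} to $g_m\in\mathcal I$ with the running index equal to $nN$: the right-hand side above is $\ge c/|J_{nN}(g_m)|$, where $J_{nN}(g_m)$ is the shortest interval of the partition $\mathcal P_{nN}$ of $g_m$ of the form $g_m^{\ell}(I_{nN})$, $0<\ell<q_{nN+1}$. Since $q_{nN+1}\ge 2$, the interval $g_m(I_{nN}(g_m))$ ($\ell=1$) is one of these, so Lemma~\ref{lem-intervals2} applied to $g_m$ gives $|J_{nN}(g_m)|\le|g_m(I_{nN}(g_m))|\le c(\mathcal I)\,M_{m,nN}^{3}$. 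Hence $\|d\mathcal R_{cyl,N}^{n}|_{\eta_{g_m}}\|\ge c_1/M_{m,nN}^{3}$ with $c_1$ uniform. It then remains to manipulate $\lambda_1=\liminf_{n}\inf_{m\ge0}\sqrt[n]{\|d\mathcal R_{cyl,N}^{n}|_{\eta_{g_m}}\|}$: taking $n$-th roots, writing $\sqrt[n]{1/M_{m,nN}^{3}}=(\sqrt[nN]{1/M_{m,nN}^{3}})^{N}$, then passing to $\inf_m$ (both $t\mapsto t^{N}$ and multiplication by the positive constant $c_1^{1/n}$ are increasing on $(0,\infty)$) and finally to $\liminf_n$ (here $c_1^{1/n}\to1$ and $t\mapsto t^{N}$ is continuous), one gets
$$\lambda_1\ \ge\ \Big(\liminf_{n\to\infty}\ \inf_{m\ge0}\ \sqrt[nN]{1/M_{m,nN}^{3}}\Big)^{N}\ \ge\ \Big(\liminf_{j\to\infty}\ \inf_{m\ge0}\ \sqrt[j]{1/M_{m,j}^{3}}\Big)^{N},$$
the last step because $\{nN\}_{n\ge1}$ is a subsequence of $\{j\}_{j\ge1}$. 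For the ``in particular'': since $g_m\in\mathcal I$ we may take $N_2=1$ in Theorem~\ref{th-intervals}, so $M_{m,j}\le C\delta^{j}$ for all $j$ with $C,\delta$ uniform, whence $\sqrt[j]{1/M_{m,j}^{3}}\ge C^{-3/j}\delta^{-3}\to\delta^{-3}$ and $\lambda_1\ge\delta^{-3N}$.

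The only step that is not routine bookkeeping is the identity $\cren^{n}=\mathcal R_{cyl,nN}$ needed to feed Theorem~\ref{th-dR-estim} at the $n$-th iterate; but this is the same functoriality of cylinder renormalization already used in the proof of Theorem~\ref{th-cones} (``$\|d\cren^{n}\,1\|=\|d\mathcal R_{cyl,nN}\,1\|$''), so no new argument is required. Everything else is the interaction of Theorem~\ref{th-dR-estim}, Lemma~\ref{lem-intervals2}, Theorem~\ref{th-intervals}, and the elementary arithmetic of $\liminf$.
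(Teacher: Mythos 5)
Your proof is correct and follows essentially the same route as the paper's: the key ingredients are Theorem~\ref{th-dR-estim}, Lemma~\ref{lem-intervals2}, and Theorem~\ref{th-intervals}, together with the fact that $L_{\cren^n g}$ is given by a probability measure (so that $L_{\cren^n g}(d\cren^n\,1)$ bounds the relevant norm from below). The only cosmetic difference is that the paper first replaces the expansion along $\eta_{g_m}$ by the expansion of the vector $1$ (citing that vectors in the unstable cone all yield the same rate), whereas you bound $\|d\cren^n|_{\eta_{g_m}}\|$ directly via the proportionality relation $L_{\cren f}(d\cren\,\xi)=L_{\cren f}(d\cren\,1)\,L_f(\xi)$ and uniform transversality; both are standard and equivalent, and your version fills in the elementary details (including the check that $g_m(I_{nN})$ is one of the intervals over which the minimum defining $J_{nN}$ is taken, and the $\liminf$ bookkeeping) that the paper leaves implicit.
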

\begin{proof}
The minimal expansion rate can be computed using any vector in the unstable cones, since the difference of such vectors (properly rescaled) belongs to the stable distribution. Thus $$\lambda_1 = \liminf_{n\to \infty} \inf_{m\ge 0}\sqrt[n]{\|d \cren^n|_{\cren^m f} 1\|}.$$   Let $g=\cren^m f$. This norm is estimated below by the value of the linear functional $L_{\cren^n  g}$ on the vector field $d \cren^n \, 1$, since $L_{\cren g}$ is given by the integral with respect to the probability measure. Due to Corollary \ref{lem-intervals2}, we have $|J_n| \le c|M_n|^3$, thus Theorem \ref{th-dR-estim} implies  $$\|d\cren^n|_g \,1\| > \frac{c}{M_{m, Nn}^3}\text{ for all }n.$$ This implies the first statement.

Due to Theorem \ref{th-intervals}, we have $M_{m,n}< C\delta^{n}$. This implies the second statement.
\end{proof}

\begin{remark}[Periodic rotation numbers]
\label{rem-periodic}
It is easy to prove that for any periodic orbit of $\cren $, there exists a limit $\lim_{n\to\infty} \sqrt[n]{M_{m,n}} $ that is uniform with respect to $m$. Thus  $\liminf_{n\to\infty}\inf_m$ can be replaced with the limit in the previous estimate.

Due to Lemma \ref{lem-distributions}, the maximal expansion rate along the second unstable direction of $\mathcal R$ is $$\lambda_2 = \limsup_{n\to \infty} \sup_{m\ge 0} \sqrt[n]{(\Psi'_{m, n}(0))^2}$$ where $\Psi_{m,k}$ is the Douady coordinate for the $n$-th renormalization of $g=\cren^m f$. The estimate on the distortion of $\Psi$ from Lemma \ref{lem-Psi-estim} implies that $$\lambda_2 \le \left( \limsup_{n\to\infty} \sup_{m\ge 0} \sqrt[n]{M_{m,nN}^{-2}}\right)^N.$$
Again, for a periodic orbit we can replace $\limsup$ by a limit. So the inequality  $\lambda_1>\lambda_2$ in Proposition \ref{prop-exponents} will be satisfied for any periodic point of $\cren $.

 However, the previous estimates only guarantee the fraction of logarithms $\log \lambda_1/\log\lambda_2 =  3/2<2$, and we cannot conclude using Theorem \ref{th-smoothness} that the corresponding Arnold tongues are more than $C^1$ smooth. This agrees with the result of \cite{LlaveLuque}: for some periodic rotation numbers, numerical experiments showed that Arnold tongues are less than $C^2$ smooth.
\end{remark}

\begin{lemma}
\label{lem-Lyap-htype}
Let $\rho = [a_1, a_2, \dots].$  For any $A$, $\delta\ge 0$, for any  sufficiently large $K$,   if $\rho=[a_1, a_2,\dots]$ is such that $$ \quad \quad \frac{\#\{k \mid a_k<K, m\le k<m+n\}}{n}\le \delta $$
 for all $m$ and for all sufficiently large $n$, then for any $f\in \mathcal I$ with rotation number $\rho$, the minimal expansion rate of $\mathcal R_{cyl, N}$ over its unstable distribution $\eta_f$ along the orbit of $f$ is  $\lambda_1>A^N$.
\end{lemma}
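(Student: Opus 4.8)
The plan is to run the expansion estimate of Theorem~\ref{th-dR-estim} along the renormalization orbit $\{\cren^m f\}$ and then feed in the arithmetic of $\rho$. Recall (as in the proof of Lemma~\ref{lem-Lyap-lowtype} and Remark~\ref{rem-periodic}) that the minimal expansion rate along $\eta_f$ equals $\lambda_1=\liminf_{n\to\infty}\inf_{m\ge 0}\sqrt[n]{\|d\cren^n|_{\cren^m f}\,1\|}$ (one may use the vector field $1$, since $L_f(1)=\int 1\,d\mu_f=1$ puts $1$ in the unstable cone), that $d\cren^n|_g\,1=d|_g\mathcal R_{\text{cyl},nN}\,1$ for $g\in\mathcal I$, and that $L_h$ is the integral of $v\circ h^{-1}$ against the probability measure $\mu_h$, so $\|v\|\ge |L_h(v)|$. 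Fixing $m$ and putting $g=\cren^m f\in\mathcal I$, Theorem~\ref{th-dR-estim} applied to $g$ at level $nN$ gives
$$\|d\cren^n|_g\,1\|\ \ge\ L_{\mathcal R_{\text{cyl},nN}g}\bigl(d|_g\mathcal R_{\text{cyl},nN}\,1\bigr)\ \ge\ \frac{c}{|J^{(g)}_{nN}|},$$
where $J^{(g)}_{nN}$ is the shortest of the $q^{(g)}_{nN+1}-1$ pairwise disjoint intervals $g^l(I^{(g)}_{nN})$, $0<l<q^{(g)}_{nN+1}$. Hence the trivial bound $|J^{(g)}_{nN}|\le 1/(q^{(g)}_{nN+1}-1)$ yields $\|d\cren^n|_g\,1\|\ge \tfrac c2\,q^{(g)}_{nN+1}$ for $n$ large.

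Next I would convert the denominator into a product of partial quotients. Since $\rot g=G^{mN}(\rho)=[a_{mN+1},a_{mN+2},\dots]$, the standard continued-fraction recursion $q_j=a_jq_{j-1}+q_{j-2}\ge a_jq_{j-1}$ gives $q^{(g)}_{nN+1}\ge \prod_{k\in W_{m,n}}a_k$ for a window $W_{m,n}$ of consecutive integers of length at least $nN-1$ contained in $[mN,\,mN+nN+1]$; in particular $q^{(g)}_{nN+1}\ge K^{\,\#\{k\in W_{m,n}:\,a_k\ge K\}}$. Here I use the density hypothesis in its form uniform in $m$ — there is $n_0$ with $\#\{k\in W:\,a_k<K\}\le\delta|W|$ for every window $W$ of length $\ge n_0$ — which is exactly what controlling a $\liminf_n\inf_m$ requires. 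For $n$ large this gives, uniformly in $m$, $\#\{k\in W_{m,n}:a_k\ge K\}\ge(1-\delta)|W_{m,n}|\ge(1-\delta)nN-O(1)$, so that $\sqrt[n]{\|d\cren^n|_{\cren^m f}\,1\|}\ge(\tfrac c2)^{1/n}K^{(1-\delta)N-o(1)}\to K^{(1-\delta)N}$ as $n\to\infty$, uniformly in $m$. Taking $\liminf_n\inf_m$ gives $\lambda_1\ge K^{(1-\delta)N}$. We may assume $A>1$ (otherwise the conclusion is automatic, since $q^{(g)}_j$ grows at least geometrically and so $\lambda_1>1$) and $\delta<1$ (for $\delta\ge1$ the hypothesis is vacuous and the assertion false); then it suffices to take $K>A^{1/(1-\delta)}$, which forces $\lambda_1\ge K^{(1-\delta)N}>A^N$.

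Every step above is routine except for two bookkeeping points, which are where I expect the only genuine care to be needed: (i) the uniform-in-$m$ reading of the density hypothesis — forced by the $\liminf_n\inf_m$ structure of $\lambda_1$, since a merely pointwise-in-$m$ density bound does not suffice (one can arrange $\rho$ with long alternating blocks for which $\lambda_1$ stays geometric even though every window with fixed left endpoint eventually has density $\to0$); and (ii) the identification $d\cren^n|_g\,1=d|_g\mathcal R_{\text{cyl},nN}\,1$ for $g$ in the horseshoe, which is what allows Theorem~\ref{th-dR-estim} to be applied at level $nN$. Alternatively one can route the argument through Lemma~\ref{lem-Lyap-lowtype}: real {\it a priori} bounds give $M_{m,n}\le C_0^{\,n}\prod_k a_k^{-1}$ over the first $\sim n$ partial quotients of $\rot(\cren^m f)$ (via $|I_{k-1}|\asymp a_{k+1}|I_k|$), whence $\liminf_n\inf_m M_{m,n}^{-3/n}\ge C_0^{-3}K^{3(1-\delta)}$ and $\lambda_1\ge\bigl(C_0^{-3}K^{3(1-\delta)}\bigr)^N$, again exceeding $A^N$ once $K$ is large enough.
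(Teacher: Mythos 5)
Your proof is correct and follows essentially the same route as the paper's: apply Theorem~\ref{th-dR-estim} to $g=\cren^m f$ at level $nN$, bound $|J_{nN}|$ above by $1/(q_{nN+1}-2)$, and then use the arithmetic hypothesis to force $q_{nN+1}$ past $A^{nN}/c$. You usefully make explicit the step the paper only asserts (that the $q_{nN+1}$ ``grow quicklier than any given geometric progression'') via the recursion $q_j\ge a_j q_{j-1}$ and the resulting bound $q_{nN+1}\ge K^{(1-\delta)nN-O(1)}$; and your observation (i), that the density hypothesis must be read uniformly in $m$ (i.e.\ $\exists\,n_0\;\forall n\ge n_0\;\forall m$) because $\lambda_1$ is a $\liminf_n\inf_m$, is a genuine and correct clarification that the paper's argument uses implicitly.
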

\begin{proof}
Again, the minimal expansion rate can be computed using any vector in the unstable cones; we will use $v=1$.
Fix $A$, $\delta$.

% Let $n_k\to \infty$ be given by $a_{n_k}>K$.
% Due to Lemma \ref{lem-htype1}, for sufficiently large $n$,$K$, we have  $|d\mathcal R^{n_k} \, 1| > \frac{c}{|J_{n_k \cdot N}| }$ whenever $a_s>K$;  here   $J_s$ is the shortest interval of the form $f^l(I_s) $, $l<q_{s+1}$, on the circle.

By increasing $K$, we can guarantee that for any irrational $\rho$ and any large $n$, if at most the $\delta$-proportion of terms  $a_j$, $m\le j\le m+n N$, are smaller than $K$, then the denominators $q_{nN}$ of the continued fractional expansion of $G^m(\rho)$ grow quicklier than any given geometric progression. In particular, for any large $n$ and sufficiently large $K$, we have $q_{nN+1}> A^{nN}/c+2$ where $c$ is the same as in Theorem \ref{th-dR-estim}. Let $g=\cren^m f$, and apply Theorem \ref{th-dR-estim} to $g$. Since $J_{nN}$ is the shortest out of $q_{nN+1}-2$ non-intersecting intervals $g^l(I_{nN})$, $0<l<q_{nN+1}$, on the circle, we have  $|J_{nN}|<1/(q_{nN+1}-2) < c/A^{nN} $.
Thus for sufficiently large $n$, for all $m$, we have $\|d\cren^n|_g \, 1\| >  A^{nN}$ due to Theorem \ref{th-dR-estim}.

%It remains to obtain a lower estimate for any $n$.
%Since $\cren $ expands on the unstable cones and $v=1$ belongs to these cones, we have  $|d_f\mathcal R^{n}_{cyl} \, 1| > |d_f\mathcal R^{n_k}_{cyl} \, 1| > ( (2A)^{\frac 1{1-\delta}})^{n_k}$ where $n_k$ is the largest number with $a_{n_k}>K$ and $n_k<n$. The estimate on the proportion of terms with  $a_n<K$ implies that $n_k>n-n\delta $. Thus $|d_f\mathcal R^{n}_{cyl}\, 1| > |d_f\mathcal R^{n_k}_{cyl} v| >  (2A)^n$.

This implies the required estimate on the minimal expansion rate.

\end{proof}

Due to Lemma \ref{lem-distributions}, the minimal expansion rate along the unstable distribution of $\mathcal R|_{D_0}$ coincides with that of $\cren $, while the maximal expansion rate along the other unstable direction of $\mathcal R$ is bounded by the maximal value of  $(\Psi'_N(0))^{2}$ over $\overline{ \mathcal I}$. Thus Lemma \ref{lem-Lyap-htype} implies Proposition \ref{prop-exponents}
%%%(for $\delta=0$) 
and Theorem \ref{th-smoothness-htype2}.

\bibliographystyle{amsalpha}
\bibliography{biblio}

\end{document}